\documentclass[reqno,oneside,12pt]{amsart}
\usepackage{mathtools}
\usepackage[utf8]{inputenc}
\usepackage{amssymb,amsthm,amsmath, amscd, xcolor}
\usepackage{enumitem}
\usepackage{makecell}
\usepackage[all]{xy}
\usepackage{diagbox}
\usepackage{tikz-cd}
\usepackage{tikz}
\usetikzlibrary{decorations.pathmorphing}
\usepackage{multirow}
\usetikzlibrary{positioning}
\usepackage{todonotes}


\textwidth 6.5truein
\textheight 8.67truein      
\oddsidemargin 0truein
\evensidemargin 0truein
\topmargin 0truein

\numberwithin{equation}{section}
\setcounter{secnumdepth}{3}

\setcounter{tocdepth}{1}

\usepackage{hyperref}
\hypersetup{
linkcolor = {black},
urlcolor = {blue},
citecolor = {black}
}

\theoremstyle{plain}

\newtheorem{theorem}{Theorem}[section]
\newtheorem{lemma}[theorem]{Lemma}
\newtheorem{prop}[theorem]{Proposition}
\newtheorem{cor}[theorem]{Corollary}

\newtheorem*{claim}{Claim}

\theoremstyle{definition}

\newtheorem{definition}[theorem]{Definition}
\newtheorem{example}[theorem]{Example}

\newtheorem*{assumption}{Assumption}

\theoremstyle{remark}

\newtheorem*{remark}{Remark}

\newcommand{\HW}{{\mathcal D}\stackrel{\textup{HW}}\leadsto\tilde{\mathcal D}}

\begin{document}


\title{Existence and orthogonality of stable envelopes for bow varieties}
\author{Catharina Stroppel}
\address{Department of Mathematics, University of Bonn, 53115 Bonn, Germany}
\email{stroppel@math.uni-bonn.de}
\author{Till Wehrhan}
\address{Max-Planck Institute for Mathematics, Vivatgasse 7, 53111 Bonn, Germany}
\email{wehrhan@mpim-bonn.mpg.de}

\begin{abstract} 
Stable envelopes, introduced by Maulik and Okounkov, provide a family of bases for the equivariant cohomology of symplectic resolutions. The theory of stable envelopes provides a fascinating interplay between geometry, combinatorics and integrable systems. In this expository article, we give a self-contained introduction to cohomological stable envelopes of type $A$ bow varieties.  Our main focus is on the existence and the orthogonality properties of stable envelopes for bow varieties. The restriction to this specific class of varieties allows us to illustrate the theory combinatorially and to provide simplified proofs, both laying a basis for explicit calculations.
\end{abstract}

\maketitle

\tableofcontents

\section{Introduction}
\label{section:introduction}

The intersection theory of Schubert varieties on Grassmannians is a key ingredient of Schubert calculus and provides a fruitful interplay between algebraic geometry, combinatorics and representation theory. Schubert varieties can be defined as closures of attracting cells of torus fixed points with respect to a choice of torus cocharacter. The corresponding classes in singular (or more generally torus equivariant) cohomology are known as (equivariant) Schubert classes and form a basis of the respective cohomology ring. Equivariant Schubert classes are uniquely determined by their restrictions to torus fixed point and there is a convenient system of axioms, just involving a normalization, a support and a degree condition, that uniquely determines them, see e.g.~\cite{knutson2003puzzles},~\cite{gorbounov2020yang}.

An interesting aspect of equivariant Schubert calculus is that the base change matrices of equivariant Schubert bases with respect to different choices of cocharacters give solutions to Yang--Baxter equations. The Bethe algebras of the corresponding quantum integrable systems are known to be isomorphic to the equivariant quantum cohomology of Grassmannians providing a fascinating connection between the enumerative geometry of Grassmannians and the combinatorics of lattice models, see e.g.~\cite{gorbounov2014equivariant},~\cite{gorbounov2017quantum},~\cite{gorbounov2020yang}.

Passing to the world of symplectic varieties with torus action, Maulik and Okounkov defined in~\cite{maulik2019quantum} (cohomological) stable envelopes as equivariant cohomology classes satisfying certain axioms that are similar to the axioms for equivariant Schubert classes. They exist for a large family of symplectic varieties including Nakajima quiver varieties and form a basis of the respective localized equivariant cohomology rings. In the special case of cotangent bundles of Grassmannians, stable envelopes can be seen as one parameter deformations of the Schubert classes, see \cite{shenfeld2013abelianization} for an explicit treatment. 

Stable envelopes enjoy many remarkable properties, as described in~\cite{maulik2019quantum}. Most importantly, they produce, just like Schubert classes, solutions of (more general) Yang--Baxter equations. These solutions lead to quantum integrable systems in which the stable envelopes are viewed as spin basis. These systems turn out to be well suited for the study of operators of quantum multiplication and their related quantum differential equations, see \cite{maulik2019quantum}, \cite{tarasov2014hypergeometric}, \cite{tarasov2019q}, \cite{tarasov2022monodromy} and the references therein. 

The theory of stable envelopes has been generalized to equivariant K-theory in~\cite{okounkov2017k} and then further to elliptic cohomology in~\cite{aganagic2021elliptic} and~\cite{okounkov2021inductive}, where they govern the solution of quantum difference equations associated to the moduli of quasi maps, see ~\cite{aganagic2017quasimap}, \cite{felder2018elliptic}, \cite{rimanyi2019elliptic}, \cite{okounkov2020nonabelian}, \cite{kononov2020pursuing}, \cite{kononov2022pursuing}, \cite{okounkov2022quantum} for a partial list of further references. 

In this article, we give a self-contained introduction to the theory of cohomological stable envelopes in the framework of (type $A$) bow varieties following~\cite{maulik2019quantum}. Bow varieties form a rich family of symplectic varieties with a torus action. As special cases, they include type $A$ Nakajima quiver varieties and hence in particular cotangent bundles of partial flag varieties. Motivated by theoretical physics, they were introduced by Cherkis in~\cite{cherkis2009moduli},~\cite{cherkis2010instantons} and~\cite{cherkis2011instantons} as ADHM type descriptions of moduli spaces of instantons. In~\cite{nakajima2017cherkis}, Nakajima and Takayama gave an alternative construction, similar to the construction of Nakajima quiver varieties, starting from representations of quivers and using hamiltonian reduction. This work marked the starting point of an algebro-geometric study of these varieties, complementing Cherkis' differential geometric description (of their smooth parts) as hyper-K\"ahler manifold in terms of Nahm's equation. 

Bow varieties play a particular role in the mathematical manifestation, due to Cherkis and Nakajima--Takayama,  of $3d$ mirror symmetry which is a duality from theoretical physics relating pairs of $d=3,N=4$ supersymmetric quantum field theories. We refer to~\cite{nakajima2017cherkis} for a beautiful overview, precise statements and references to the physics literature. We will see below that a bow diagram comes from a diagrammatical object called brane diagram which encodes on the physics side a brane configuration in a  type IIB string theory in the sense of \cite{HW97} which for us however will appear as a purely combinatorial object. The creation of a new 3-brane has a combinatorial incarnation in terms of a move on the brane diagram. The resulting Hanany--Witten transition creates from a given bow variety a new one. As we will summarize below, this can be made mathematically rigorous on a combinatorial, a representation theoretic and a geometric level. For the connection to physics, see \cite{nakajima2017cherkis}.

The family of bow varieties is well-suited for explicit computations in equivariant cohomology. They in particular have only finitely many torus fixed points with a concrete combinatorial description and classification due to Nakajima, \cite{nakajima2021geometric}. The description of the fixed point combinatorics from \cite{rimanyi2020bow} is in terms of skein diagram like figures which are called tie diagrams. The combinatorics allows to explicitly compute tangent weights at fixed points and equivariant multiplicities of characteristic classes which are important building blocks of the equivariant cohomology algebra.

In our exposition, we focus on the existence and the orthogonality of stable envelope bases in the cohomology of bow varieties. 
We explain, following~\cite[Chapter~3]{maulik2019quantum}, how these bases can be constructed recursively as $\mathbb Z$-linear combination of the equivariant cohomology cycles corresponding to closures of attracting cells. The crucial argument in this recursive argument is that the equivariant multiplicity of lagrangian hyperplanes at a torus fixed point is equal to an integer multiple of the equivariant multiplicity of a lagrangian hyperplane of the tangent space. This result is a consequence of the deformation to the normal cone construction of  Fulton~\cite{fulton1984introduction}. 

A powerful fact in Schubert calculus is that the equivariant Schubert classes corresponding to opposite choices of cocharacters,
or equivalently to opposite choices of Borels, are orthogonal. We will see in Section~\ref{section:orthogonality} that stable envelopes satisfy an analogous orthogonality relation. This result is a consequence of the defining axioms of stable envelopes and the fact that the intersection of the closures of attracting cells corresponding to opposite cocharacters is always proper despite the fact that bow varieties are in general not proper. 

The orthogonality of stable envelopes is useful for computing multiplication operators of equivariant cohomology classes with respect to the stable envelope basis. More concretely, one would like to describe and characterize the operation of multiplication with first Chern classes of tautological bundles. In Schubert calculus, this is given by the famous Chevalley--Monk formulas. The orthogonality properties established below for bow varieties are important ingredients for a description of multiplication operators in~\cite{wehrhan2023chevalley} which can be interpreted as Chevalley--Monk formulas for bow varieties. One might see these results as a first step towards a generalized Schubert calculus for bow varieties. 
\subsubsection*{Acknowledgements} We thank Hiraku Nakajima for sharing his insights and for very useful comments on a preliminary version of the paper, and Rich\'ard Rim\'anyi for helpful discussions. The authors are supported by the Gottfried Wilhelm Leibniz Prize of the German Research Foundation and the Max-Planck Institute for Mathematics (IMPRS Moduli Spaces) respectively. This work will be part of the PhD thesis of the second author.
\section{Preliminaries}
\label{section:preliminaries}

\subsection{Notation and conventions}

In this article, all varieties and vector spaces are over $\mathbb C$ and cohomology coefficients in $\mathbb{Q}$. Given a smooth variety $X$, we denote its tangent bundle by $TX$. The tangent space at a point $x\in X$ is denoted by $T_xX$.

If $Y$ is a variety with an algebraic action of a torus $T=(\mathbb C^\ast)^r$ then we denote its $T$-equivariant cohomology by $H_T^\ast(Y)$ and its $T$-equivariant Borel--Moore homology by $\overline H_\ast^T(Y)$ both with coefficients in $\mathbb Q$. If $Y$ is smooth we identify $H_T^\ast(Y)$ and $\overline H_\ast^T(Y)$ by Poincar\'e duality. We denote by $\operatorname{pt}$ the space consisting just of one single point and use the identification $H_T^\ast(\operatorname{pt})\cong H^\ast_T((\mathbb P^\infty)^r)\cong \mathbb Q[t_1,\ldots, t_r]$ where $t_i$ is the first Chern class of the tautological bundle on the $i$-th factor of $(\mathbb P^\infty)^r$.

Given a $T$-equivariant morphism $f:Y\rightarrow Y'$ between $T$-varieties, we denote the respective push-forward and pull-back morphisms (whenever they are defined) on equivariant cohomology or equivariant Borel--Moore homology by $f_\ast,f^\ast$. If $Y''\subset Y$ is a $T$-invariant closed subvariety, we denote by $[Y'']^T$ its equivariant Borel--Moore cohomology class in $\overline H_\ast^T(Y)$. If $V$ is a $T$-equivariant vector bundle over $Y$, we denote by $e_T(V)$ its Euler class in $H_T^\ast(Y)$.

Given a finite dimensional $\mathbb C^\ast$-representation $W$, we denote by $W^+$ (resp. $W^-$) the subspace generated by all strictly positive (resp. negative) weight spaces. The subspace of $\mathbb C^\ast$-invariant vectors is denoted by $W^0$ and we set $W^{\geq 0}:=W^0\oplus W^+$ and $W^{\le 0}:=W^0\oplus W^-$. 


\subsection{Geometric invariant theory quotients}

In this subsection, we give a brief reminder on geometric invariant theory (short GIT). For more details on this subject, see~\cite{mumford1994geometric} as well as the expository works~\cite{mukai2003introduction} and~\cite{newstead2009geometric}.

We start with recalling the formal definition of GIT quotients. For this, let $X$ be an affine variety with coordinate ring $A$ and let $G$ be a reductive group acting on $X$ from the left with action map $G\times X\rightarrow X, (g,x)\mapsto g.x$. We denote by $A^G$ the algebra of $G$-invariants of $A$. Let $X/\!\!/G=\mathrm{Spec}(A^G)$ be the categorical quotient. Due to a theorem of Hilbert (see e.g. \cite[Theorem~4.51]{mukai2003introduction}), $X/\!\!/G$ is an affine variety. 

The definition of GIT quotients involves a choice of rational character of $G$. Given a rational character $\chi:G\rightarrow \mathbb C^\ast$, the associated \textit{algebra of semi-invariants of $A$} is defined as
\[
A_{\chi}:=\bigoplus_{n\geq 0} A_{\chi^n},\quad\textup{where }A_{\chi^n}=\{f\in A\mid f(g.x)=\chi(g)^nf(x)\textup{ for all }x\in X,g\in G\}.
\]
We endow $A_{\chi}$ with the grading where the $n$-th homogeneous piece of $A$ is given by $ A_{\chi^n}$. 
\begin{definition}
The \textit{GIT quotient} $X/\!\!/_{\!\chi} G$ is defined as the quasi-projective scheme
\[
X/\!\!/_{\!\chi} G := \mathrm{Proj}(A_\chi).
\]
\end{definition}
The definition of $X/\!\!/_{\!\chi} G$ as projective spectrum then directly gives that the inclusion $A^G\hookrightarrow A_\chi$ induces a projective morphism $X/\!\!/_{\!\chi} G\rightarrow X/\!\!/ G$.

Next, we recall the notion of $\chi$-(semi)stable points of $X$ and characterizations thereof:
\begin{definition} Let $x\in X$.
\begin{enumerate}[label=(\roman*)]
\item The point $x\in X$ is called \textit{$\chi$-semistable} if there exists $n\geq1$ and $f\in A_{\chi^n}$ such that $x\in D(f)$, where $D(f)=\{x\in X\mid f(x)\ne0\}$.
\item The point $x\in X$ is called \textit{$\chi$-stable} if there exists $n\geq1$ and $f\in A_{\chi^n}$ such that
\begin{enumerate}[label=(\alph*)]
\item $x\in D(f)$,
\item the action $G\times D(f)\rightarrow D(f)$ is a closed morphism,
\item the isotropy group $G_x$ is finite.
\end{enumerate}
\end{enumerate}
We write $X^{\mathrm{ss}}$ and $X^{\mathrm s}$ for the subset of $\chi$-semistable respectively $\chi$-stable points of $X$.
\end{definition}
Both $X^{\mathrm{ss}}$ and $X^{\mathrm s}$ are open $G$-invariant subvarieties of $X$.
There are several equivalent definitions of $\chi$-semistability respectively $\chi$-stability. 
The following characterization was introduced by King, {\cite[Lemma~2.2]{king1994moduli}}:
\begin{prop}[King's stability] Equip the variety $X\times\mathbb C$ with the $G$-action $g.(x,z)=(g.x,\chi^{-1}(g).z)$, for $g\in G,x\in X$ and $z\in\mathbb C$.
\begin{enumerate}[label=(\roman*)]
\item A point $x\in X$ is $\chi$-semistable if and only if the orbit closure $\overline{G.(x,z)}$ does not intersect $X\times\{0\}$ for all $z\ne0$.
\item A point $x\in X$ is $\chi$-stable if and only if the orbit $G.(x,z)$ is closed and the isotropy group $G_{(x,z)}$ is finite for all $z\ne0$.
\end{enumerate}
\end{prop}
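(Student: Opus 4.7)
The underlying strategy is to set up a correspondence between semi-invariants on $X$ and $G$-invariants on $X \times \mathbb{C}$: the assignment $f \mapsto \tilde f$ defined by $\tilde f(y,w) := f(y) w^n$ for $f \in A_{\chi^n}$ produces a graded-ring isomorphism $A_\chi \cong \mathbb{C}[X \times \mathbb{C}]^G$, under which semi-invariants of strictly positive degree correspond precisely to $G$-invariants vanishing on $X \times \{0\}$. The key verification is $\tilde f(g.y, \chi(g)^{-1} w) = \chi(g)^n f(y) \cdot \chi(g)^{-n} w^n = \tilde f(y,w)$.

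For (i), the forward direction is immediate: a witness $f \in A_{\chi^n}$, $n \geq 1$, with $f(x) \neq 0$ gives a $G$-invariant $\tilde f$ which is nonzero on the orbit $G.(x,z)$ (for $z \neq 0$) and vanishes on $X \times \{0\}$, hence separates the two closures. Conversely, if $\overline{G.(x,1)}$ is disjoint from $X \times \{0\}$, the standard separation-of-invariants principle for reductive group actions on affine varieties yields a $G$-invariant $h \in \mathbb{C}[X \times \mathbb{C}]^G$ with $h(x,1) \neq 0$ and $h|_{X \times \{0\}} \equiv 0$. Expanding $h = \sum_n f_n z^n$ with $f_n \in A_{\chi^n}$, the vanishing on $X \times \{0\}$ forces $f_0 = 0$, while $h(x,1) \neq 0$ produces some $f_n$ of positive degree with $f_n(x) \neq 0$.

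For the forward direction of (ii), finiteness of $G_{(x,z)} = G_x \cap \ker \chi$ is immediate from finiteness of $G_x$. For closedness, fix $f \in A_{\chi^n}$ witnessing stability; $G$-invariance of $\tilde f$, which equals the nonzero value $f(x) z^n$ on the orbit, confines $\overline{G.(x,z)}$ to $D(f) \times \mathbb{C}^*$. The equivariant projection $D(f) \times \mathbb{C}^* \to D(f)$ sends this closure into $G.x$, which is closed in $D(f)$ by hypothesis (b). Combined with the level-set relation $f(y) w^n = f(x) z^n$ coming from $\tilde f$-invariance, the closure is pinned down to the finite union $\bigcup_{\zeta^n = 1} G.(x, \zeta z)$; each piece has dimension $\dim G$ by finiteness of the stabilizer, so the boundary of strictly smaller dimension must be empty and $G.(x,z)$ is closed.

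The backward direction of (ii) is the most delicate, and I expect it to be the main obstacle. From (i) one extracts a semi-invariant $f \in A_{\chi^n}$ of positive degree with $f(x) \neq 0$, but one must upgrade it to a witness satisfying the full stability axioms — the action on its non-vanishing locus must be closed, and $G_x$ must be finite. The plan is to identify the locus of $y \in X^{\mathrm{ss}}$ for which $G.(y,1)$ is closed in $X \times \mathbb{C}$ with finite stabilizer as a $G$-invariant open subset, and to cover this locus by sets $D(f')$ with $f' \in A_{\chi^{n'}}$ on which the action enjoys the requisite closedness. This step, essentially Mumford's characterization of the stable locus, is the technical heart of the statement and is where most of the effort should be concentrated.
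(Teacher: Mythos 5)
The paper does not prove this proposition at all: it is quoted from \cite[Lemma~2.2]{king1994moduli} and used as a black box, so there is no in-paper argument to compare against. Judged on its own, your write-up of part (i) and of the forward implication of (ii) is sound and essentially the standard argument: the identification $A_\chi\cong\mathbb C[X\times\mathbb C]^G$ via $f\mapsto f(y)w^n$, the separation of the two disjoint closed $G$-invariant sets $\overline{G.(x,1)}$ and $X\times\{0\}$ by an invariant function, and the confinement of $\overline{G.(x,z)}$ to the finite union $\bigcup_{\zeta^n=1}G.(x,\zeta z)$ of equidimensional orbits are all correct and complete enough to count as a proof of those parts.

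The backward implication of (ii), however, is not proved — you describe a plan and explicitly defer "the technical heart" of the statement, so as it stands this is a genuine gap, not a finished argument. Two things are missing. First, you must show that $G_x$ itself is finite, not merely $G_{(x,z)}=G_x\cap\ker\chi$: if $\dim G_x\ge 1$ while $G_x\cap\ker\chi$ is finite, then $\chi(G_x)=\mathbb C^\ast$, so $G.(x,z)\supset\{x\}\times\mathbb C^\ast$ and the orbit could not be closed; this short argument needs to be written down. Second, and more seriously, you must actually produce a semi-invariant $f\in A_{\chi^n}$, $n\ge1$, with $f(x)\ne0$ such that the action $G\times D(f)\to D(f)$ is a \emph{closed} morphism. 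The semi-invariant furnished by part (i) need not have this property, and upgrading it requires the openness and saturatedness of the locus $\{y\in X\times\mathbb C\mid G.y\text{ closed},\ \dim G_y=0\}$ with respect to the affine quotient $X\times\mathbb C\to\operatorname{Spec}(A_\chi)$, together with the fact that this open saturated set can be covered by loci $D(\tilde f)=D(f)\times\mathbb C^\ast$ with $f$ \emph{homogeneous} of positive degree (here one should use the auxiliary $\mathbb C^\ast$-scaling of the second factor, which commutes with $G$ and makes the complement of that locus homogeneous), and finally a deduction that closedness of all orbits in $D(f)\times\mathbb C^\ast$, all of maximal dimension, forces the action on $D(f)$ to be closed. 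This is Mumford's Amplification~1.11-type argument; until it is carried out, the equivalence in (ii) is only established in one direction.
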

The standard inclusion $A_\chi\hookrightarrow A[z]$ gives a morphism of schemes $\Phi:X^{\mathrm{ss}}\rightarrow X/\!\!/_{\!\chi} G$. This morphism characterizes the points of $X/\!\!/_{\!\chi} G$ as follows, see {\cite[Theorem~1.10]{mumford1994geometric}}:

\begin{theorem}[GIT-Theorem]\label{thm:PointsOfGITQuotients} The following holds:
\begin{enumerate}[label=(\roman*)]
\item The morphism $\Phi:X^{\mathrm{ss}}\rightarrow X/\!\!/_{\!\chi} G$ is a categorical quotient and surjective.
\item For $x,y\in X^{\mathrm{ss}}$, we have $\Phi(x)=\Phi(y)$ if and only if the orbit closures $\overline{G.x}$ and $\overline{G.y}$ in $X$ intersect non-trivially in $X^{\mathrm{ss}}$, i.e. $\overline{G.x}\cap \overline{G.y}\cap X^{\mathrm{ss}}\ne\emptyset$. 
\item Let $U=\Phi(X^{\mathrm s})$. Then $\Phi_{\mid X^{\mathrm s}}:X^{\mathrm s}\rightarrow U$ is a geometric quotient. In particular, the morphism $\Phi$ induces a bijection
\[
\begin{tikzcd}
\{\textit{$G$-orbits in $X^{\mathrm{s}}$}\} \arrow[r, leftrightarrow,"1:1"] & \{\textit{Points of $U$}\}.
\end{tikzcd}
\]
\end{enumerate}
\end{theorem}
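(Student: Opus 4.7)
The plan is to reduce the three assertions to the well-understood theory of affine categorical quotients by analyzing $\Phi$ locally with respect to the canonical affine cover of $\mathrm{Proj}(A_\chi)$ coming from homogeneous semi-invariants. The starting observation is that for any $f\in A_{\chi^n}$ with $n\geq 1$, the standard affine open $D_+(f)\subset X/\!\!/_{\!\chi} G$ equals $\mathrm{Spec}((A_\chi[f^{-1}])_0)$ and a direct computation with the grading gives a canonical isomorphism
\[
(A_\chi[f^{-1}])_0 \;\cong\; (A[f^{-1}])^G, \qquad a/f^k \longmapsto a/f^k \quad (a\in A_{\chi^{kn}}).
\]
Thus the preimage $\Phi^{-1}(D_+(f))$ is the principal open $D(f)\subset X$, and the restriction $\Phi_{\mid D(f)}\colon D(f)\to D_+(f)$ is precisely the affine categorical quotient $D(f)\to D(f)/\!\!/G$ of the $G$-variety $D(f)$, whose existence and properties are guaranteed by Hilbert's theorem.

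For part (i), I would first note that by definition of $\chi$-semistability the principal opens $D(f)$, for $f$ ranging over $\bigcup_{n\geq 1}A_{\chi^n}$, cover $X^{\mathrm{ss}}$. On each such $D(f)$ the restricted morphism $\Phi_{\mid D(f)}$ is the affine categorical quotient and is surjective. Since being a categorical quotient is a local property on the target, gluing over the cover $\{D_+(f)\}$ of $X/\!\!/_{\!\chi}G$ promotes $\Phi$ to a categorical quotient, and surjectivity is inherited from the local case. For part (ii), given $x,y\in X^{\mathrm{ss}}$ with $\Phi(x)=\Phi(y)$, one can choose a single $f\in A_{\chi^n}$ with $x,y\in D(f)$ (since $\Phi(x)$ lies in some $D_+(f)$ whose preimage is exactly $D(f)$). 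The affine statement for the reductive quotient $D(f)\to D(f)/\!\!/G$ then says that $\Phi(x)=\Phi(y)$ is equivalent to $\overline{G.x}\cap \overline{G.y}\cap D(f)\neq\emptyset$, which gives the stated non-trivial intersection inside $X^{\mathrm{ss}}$; conversely, if orbit closures meet in $X^{\mathrm{ss}}$, a common $D(f)$ contains the meeting point and one concludes $\Phi(x)=\Phi(y)$ by the same affine principle.

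For part (iii), I would exploit the extra conditions in the definition of $\chi$-stability. If $x\in X^{\mathrm s}$, there exists $f\in A_{\chi^n}$ with $x\in D(f)$ such that the $G$-action on $D(f)$ is closed and isotropy groups are finite. Closedness of the action forces every $G$-orbit in $D(f)$ to be closed, so the fibers of the categorical quotient $D(f)\to D(f)/\!\!/G$ coincide with $G$-orbits; combined with finiteness of isotropy this upgrades it to a geometric quotient. Because $X^{\mathrm s}$ is covered by such $D(f)$'s and being a geometric quotient is local on the target, the restriction $\Phi_{\mid X^{\mathrm s}}\colon X^{\mathrm s}\to U$ is a geometric quotient with $U=\Phi(X^{\mathrm s})$; openness of $U$ follows from the fact that each $D_+(f)$ with $f$ satisfying the stability conditions lies in $U$.

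The main obstacle is the careful bookkeeping of the cover and the identification $\Phi^{-1}(D_+(f))=D(f)$: one must verify that the inclusion $A^G\hookrightarrow A_\chi$ really does produce a morphism $X^{\mathrm{ss}}\to X/\!\!/_{\!\chi}G$ whose restriction to each $D(f)$ is the affine quotient, and that the gluing data on overlaps $D(f)\cap D(g)$ is compatible with the $\mathrm{Proj}$ gluing on $D_+(f)\cap D_+(g)$. Once this compatibility is in place, the three conclusions reduce, by gluing, to the classical affine GIT for reductive group actions.
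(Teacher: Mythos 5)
The paper does not prove this theorem at all: it is quoted verbatim as a classical result with a citation to Mumford's \emph{Geometric Invariant Theory} (Theorem 1.10), so there is no internal argument to compare against. Your proposal reconstructs what is essentially the standard proof from that source, and the outline is correct: the identification $\Phi^{-1}(D_+(f))=D(f)$ together with $(A_\chi[f^{-1}])_0\cong(A[f^{-1}])^G$ reduces everything to the affine theory on the cover $\{D_+(f)\}$, and categorical/geometric quotients do glue because both notions are local on the target. Two points deserve explicit attention if you were to write this out. First, the surjectivity of $A_{\chi^{kn}}\to\{h\in A[f^{-1}]:h(g.x)=\chi(g)^{kn}h(x)\}$ in the isomorphism $(A_\chi[f^{-1}])_0\cong(A[f^{-1}])^G$ is not a ``direct computation with the grading'': it uses reductivity of $G$ (decomposition of $A[f^{-1}]$ into isotypic components, or a Reynolds-type projection) to replace an arbitrary numerator by a genuine semi-invariant. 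Second, the converse direction of (ii) silently uses that each $D(f)$ is $G$-invariant (which holds because $f(g.x)=\chi(g)^nf(x)$): a point $z\in\overline{G.x}\cap\overline{G.y}\cap X^{\mathrm{ss}}$ lies in some $D(f)$, and one needs $x$ and $y$ themselves to lie in that same $D(f)$ before invoking the affine statement; $G$-invariance of $D(f)$ plus density of $G.x$ in its closure supplies this. A minor remark on (iii): finiteness of isotropy is part of the definition of stability but is not what upgrades the quotient to a geometric one --- closedness of all orbits in $D(f)$ already forces the fibers of the affine quotient to be single orbits. With these caveats, your reduction is the correct and standard route.
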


Mumford introduced a numerical criterion for $\chi$-(semi)stability~\cite[Chapter~2]{mumford1994geometric} that proved to be very valuable in practical computations. Our formulation of this criterion is following~\cite[Proposition~2.6]{king1994moduli}. Recall that a \textit{one parameter subgroup} of $G$ is an algebraic cocharacter $\lambda:\mathbb C^\ast \rightarrow G$. Moreover, let $\langle \lambda,\chi\rangle$ be the unique integer such that $\chi(\lambda(t))=t^{\langle \lambda,\chi\rangle}$ for all $t\in\mathbb C^\ast$. For a given point $x\in X$, we say that \emph{the limit $\lim_{t\to 0}\lambda(t).x$ exists} in $X$ if and only if the morphism $\mathbb C^\ast\rightarrow X,t\mapsto t.x$ extends to a morphism $\mathbb C\rightarrow X$. Since $X$ is quasi-projective, the limit $\lim_{t\to 0}\lambda(t).x$ exists if and only if the limit exists in the analytic topology of $X$.

\begin{theorem}[Mumford's numerical criterion]  A point $x\in X$ is $\chi$-semistable (resp. $\chi$-stable) if and only if for all non-trivial one parameter subgroups $\lambda$ such that $\lim_{t\to 0}\lambda(t).x$ exists in $X$, we have $\langle \lambda,\chi\rangle\geq 0$ (resp. $\langle \lambda,\chi\rangle>0$).
\end{theorem}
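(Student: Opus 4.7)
The plan is to reduce the criterion to the classical Hilbert--Mumford lemma for orbit closures by way of King's reformulation on $X\times\mathbb C$. The relevant input is the following standard fact: if a reductive group $H$ acts on an affine variety $Y$ and $y\in Y$, then $\overline{H.y}$ contains a unique closed orbit $C$, and there is a non-trivial one parameter subgroup $\mu$ of $H$ with $\lim_{t\to 0}\mu(t).y\in C$. I will apply this to $H=G$ acting on $Y=X\times\mathbb C$ via the twisted action $g.(x,z)=(g.x,\chi(g)^{-1}z)$. The essential computation is
\[
\lambda(t).(x,z) \;=\; \bigl(\lambda(t).x,\; t^{-\langle\lambda,\chi\rangle}z\bigr),
\]
which translates statements about $\lim_{t\to 0}\lambda(t).(x,z)$ into coupled statements about $\lim_{t\to 0}\lambda(t).x$ in $X$ and the sign of $\langle\lambda,\chi\rangle$.

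For necessity in the semistable case, let $x$ be $\chi$-semistable and let $\lambda$ be a non-trivial one parameter subgroup such that $y:=\lim_{t\to 0}\lambda(t).x$ exists in $X$. If $\langle\lambda,\chi\rangle<0$ then for any $z\neq 0$ the limit $(y,0)\in\overline{G.(x,z)}\cap X\times\{0\}$, contradicting part (i) of King's criterion. Hence $\langle\lambda,\chi\rangle\geq 0$.

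For sufficiency in the semistable case, suppose every admissible non-trivial $\lambda$ satisfies $\langle\lambda,\chi\rangle\geq 0$ but $x$ is not $\chi$-semistable. Then some $z_0\neq 0$ satisfies $\overline{G.(x,z_0)}\cap X\times\{0\}\neq\emptyset$. Pick a point $p$ in this intersection; since $X\times\{0\}$ is closed, affine and $G$-invariant, $\overline{G.p}\subset X\times\{0\}$ contains a closed $G$-orbit $C\subset\overline{G.(x,z_0)}$. The Hilbert--Mumford lemma supplies a non-trivial one parameter subgroup $\lambda$ with $\lim_{t\to 0}\lambda(t).(x,z_0)\in C\subset X\times\{0\}$. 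In particular $\lim_{t\to 0}\lambda(t).x$ exists in $X$, while $t^{-\langle\lambda,\chi\rangle}z_0\to 0$ with $z_0\neq 0$ forces $\langle\lambda,\chi\rangle<0$, contradicting the hypothesis.

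The stable case uses part (ii) of King's criterion and follows the same pattern, with two refinements. For necessity one must additionally exclude $\langle\lambda,\chi\rangle=0$: for such $\lambda$ the orbit $\lambda(\mathbb C^\ast).(x,z_0)$ stays in the closed orbit $G.(x,z_0)\cong G/G_{(x,z_0)}$ and the limit lies there; since $G_{(x,z_0)}$ is finite the quotient map is étale, so the limit would lift to a limit of $\lambda$ in $G$, forcing $\lambda$ to be trivial. For sufficiency, non-closedness of $G.(x,z_0)$ supplies via Hilbert--Mumford a non-trivial $\lambda$ whose $\mathbb C$-coordinate converges, so $\langle\lambda,\chi\rangle\leq 0$, while the existence of the $X$-limit combined with the hypothesis gives $\langle\lambda,\chi\rangle>0$, a contradiction; positive-dimensional isotropy would contain a non-trivial 1-PS $\lambda$ fixing $(x,z_0)$, and applying the hypothesis to both $\lambda$ and $\lambda^{-1}$ yields $\langle\lambda,\chi\rangle>0$ and $-\langle\lambda,\chi\rangle>0$, again a contradiction. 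The main obstacle is the invocation of the Hilbert--Mumford lemma for orbit closures, whose proof relies on structural facts about reductive groups (Iwahori decomposition and the detection of specializations by one parameter subgroups); a secondary technicality is the étale-lifting step used to rule out $\langle\lambda,\chi\rangle=0$ in the necessity direction of the stable case.
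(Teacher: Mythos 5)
The paper does not actually prove this theorem; it is quoted from King (Proposition~2.6) and Mumford's GIT, so there is no in-text argument to compare yours against. Your route --- reducing to the affine Hilbert--Mumford/Kempf theorem on $X\times\mathbb C$ via King's reformulation, with the key identity $\lambda(t).(x,z)=(\lambda(t).x,\,t^{-\langle\lambda,\chi\rangle}z)$ --- is exactly the standard proof (it is essentially King's own). The semistable case in both directions, the closedness part of the stable case, and the exclusion of $\langle\lambda,\chi\rangle=0$ in the stable necessity direction are all correct; in particular the \'etale-lifting step is sound, since $G\to G.(x,z_0)$ is finite \'etale when the isotropy is finite, finite \'etale covers of $\mathbb C$ are trivial, and a cocharacter of $G$ that extends to a morphism $\mathbb C\to G$ must be trivial (let $t\to 0$ in $\lambda(st)=\lambda(s)\lambda(t)$).

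The one genuine gap is the last step of the stable sufficiency direction: the assertion that ``positive-dimensional isotropy would contain a non-trivial 1-PS'' is not a general fact about algebraic groups --- a positive-dimensional unipotent group such as the additive group $(\mathbb C,+)$ admits no non-trivial cocharacter, so as written this step would fail. It is true in your situation, but for a non-obvious reason: at that stage the orbit $G.(x,z_0)$ is already known to be closed in the affine variety $X\times\mathbb C$, hence is itself affine, and Matsushima's criterion then guarantees that the isotropy group $G_{(x,z_0)}=G_x\cap\ker\chi$ is reductive; a positive-dimensional reductive group contains a non-trivial torus and hence a non-trivial one parameter subgroup $\lambda$, which fixes $(x,z_0)$, so that $\lim_{t\to0}\lambda(t).x=x$ exists and $\langle\lambda,\chi\rangle=0$, contradicting the hypothesis. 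Without Matsushima (or some substitute exploiting the closedness of the orbit) the finiteness of the stabilizer does not follow. Everything else, including treating the Hilbert--Mumford theorem for orbit closures as a black box, is legitimate and matches the source the paper cites.
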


\section{Bow varieties}
\label{section:bowVarieties}

In this section, we recall the definition of bow varieties from~\cite{nakajima2017cherkis} and discuss some of their geometric properties. Herby, we use the language of brane diagrams and their combinatorics from~\cite{rimanyi2020bow}.

Bow varieties are obtained from a class of varieties that we call affine brane varieties\footnote{This should not be confused with affine bow varieties in ~\cite{nakajima2017cherkis}. In our terminology, {\it affine} should remind the reader on affine varieties instead of affine Dynkin diagrams.

} via hamiltonian reduction. These affine brane varieties are constructed using a family of smaller varieties that are called triangle parts.

We start this section by reviewing the definition of triangle parts and then explain  the construction of affine brane varieties in Subsection~\ref{subsection:AffineBraneVariety}. We continue with the definition of bow varieties in Subsection~\ref{subsection:bowVarieties} 
and then discuss their geometric properties.



\subsection{Triangle part}\label{subsection:trianglePart}

We fix finite dimensional $\mathbb C$-vector spaces $V_1,V_2$ and let
\[
\mathbb M:= \operatorname{Hom}(V_2,V_1)\oplus \operatorname{End}(V_1)\oplus \operatorname{End}(V_2)\oplus \operatorname{Hom}(\mathbb C,V_1) \oplus \operatorname{Hom}(V_2,\mathbb C) .
\]
The elements of $\mathbb M$ are tuples $(A,B_1,B_2,a,b)$ of linear maps as illustrated in the diagram: 
\[
\begin{tikzcd}[scale=0.02]
V_1\arrow[out=120,in=60,loop,looseness=5,"B_1"]&&V_2\arrow[out=120,in=60,loop,looseness=5,"B_2"]\arrow[ll, "A", swap]\arrow[dl,"b"]\\
&\mathbb C\arrow[ul,"a"]
\end{tikzcd}
\]
The group $\mathrm{GL}(V_1)\times\mathrm{GL}(V_2)$ acts on $\mathbb M$ via base change:
\[
(g_1,g_2).(A,B_1,B_2,a,b)=(g_1Ag_2^{-1},g_1B_1g_1^{-1},g_2B_2g_2^{-1},bg_2^{-1},g_1a).
\]
\begin{definition}
The \textit{triangle part} $\mathrm{tri}(V_1,V_2)$ is defined as the $\theta$-semistable locus of $\mu^{-1}(0)$, that is $\{x\in\mu^{-1}(0)\mid x \text{ is $\theta$-semistable}\}$,    where
\begin{equation}\label{eq:nonMomentMap}
\mu:\mathbb M\rightarrow \operatorname{Hom}(V_1,V_2),\quad (A,B_1,B_2,a,b)\mapsto B_1A-AB_2+ab
\end{equation}
and 
\[
\theta:\mathrm{GL}(V_1)\times\mathrm{GL}(V_2)\rightarrow \mathbb C^\ast,\quad (g_1,g_2)\mapsto\frac{\operatorname{det}(g_2)}{\operatorname{det}(g_1)}.
\]
\end{definition}
 We like to characterize $\theta$-semistable points $x=(A,B_1,B_2,a,b)\in\mu^{-1}(0)$. For this, we introduce the following \emph{subspace conditions}, see~\cite[Section~2]{nakajima2017cherkis}:
\begin{enumerate}[label=(S\arabic*)]
\item \label{item:S1} If $S\subset V_2$ is a subspace with $B_2(S)\subset S,A(S)=0,b(S)=0$ then $S=0$.
\item \label{item:S2} If $T\subset V_1$ is a subspace with $B_1(T)\subset T,\mathrm{Im}(A)+\mathrm{Im}(a)\subset T$ then $T=V_1$.
\end{enumerate}
Property~\ref{item:S1} is a useful criterion to check vanishing of subspaces of $V_2$ whereas~\ref{item:S1} is useful for proving that subspaces of $V_1$ actually coincide with $V_1$. We will use this criterion frequently in the proof of the Cocharacter Theorem in Section~\ref{section:torusFixedPoints}.

We further introduce the following \emph{triangle part conditions}:
\begin{enumerate}[label=(T\arabic*)]
\item \label{item:ssS1} If $S_1\subset V_1,S_2\subset V_2$ are subspaces with $B_1(S_1)\subset S_1$, $B_2(S_2)\subset S_2$, $A(S_2)\subset S_1$ and $b(S_2)=0$ then $\operatorname{dim}(S_1)\geq \operatorname{dim}(S_2)$.
\item \label{item:ssS2} If $T_1\subset V_1,T_2\subset V_2$ are subspaces with $B_1(T_1)\subset T_1$, $B_2(T_2)\subset T_2$, $A(T_2)\subset T_1$ and $\mathrm{Im}(a)=T_1$ then $\operatorname{codim}(T_1)\le \operatorname{codim}(T_2)$.
\end{enumerate}
Clearly,~\ref{item:ssS1} implies~\ref{item:S1} by setting $S_1=0,S_2=S$, whereas~\ref{item:ssS2} implies~\ref{item:S2} by setting $T_1=T,T_2=V_2$. The next proposition gives that these conditions are actually equivalent to $\theta$-semistability:

\begin{prop}\label{prop:S1S2SemistabilityConditions}
Let $x=(A,B_1,B_2,a,b)\in\mu^{-1}(0)$. Then, the following are equivalent:
\begin{enumerate}[label=(\roman*)]
\item\label{item:propSemistabilityConditionsI} The point $x$ is $\theta$-semistable.
\item\label{item:propSemistabilityConditionsII} The point $x$ satisfies \ref{item:ssS1} and \ref{item:ssS2}.
\item\label{item:propSemistabilityConditionsIII} The point $x$ satisfies \ref{item:S1} and \ref{item:S2}.
\end{enumerate}
\end{prop}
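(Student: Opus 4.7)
The plan is to supplement the easy direction (ii) $\Rightarrow$ (iii), sketched in the paragraph preceding the proposition, with (iii) $\Rightarrow$ (ii) and (ii) $\Leftrightarrow$ (i). The first is a short moment-map argument; the second is an application of Mumford's numerical criterion stated above.

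For (iii) $\Rightarrow$ (ii), the moment map equation $B_1 A - A B_2 + ab = 0$ is the crucial ingredient. Given $S_1, S_2$ as in the hypothesis of \ref{item:ssS1}, set $K := S_2 \cap \ker A$. Then $A(K) = 0$ and $b(K) \subset b(S_2) = 0$, and for $v \in K$ one computes $A(B_2 v) = B_1(A v) + a(b(v)) = 0$ using $\mu(x) = 0$, so $B_2(K) \subset K$. By \ref{item:S1} it follows that $K = 0$; thus $A|_{S_2}$ is injective, giving $\dim S_2 \leq \dim S_1$. Dually, given $T_1, T_2$ as in the hypothesis of \ref{item:ssS2}, let $T := T_1 + A(V_2) \subset V_1$. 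The moment map gives $B_1(Av) = A(B_2 v) - a(b(v)) \in A(V_2) + \mathrm{Im}(a) \subset T$, while $B_1(T_1) \subset T_1 \subset T$, hence $B_1(T) \subset T$. Since $\mathrm{Im}(A) + \mathrm{Im}(a) \subset T$ trivially, \ref{item:S2} forces $T = V_1$, i.e.\ $A(V_2) + T_1 = V_1$. This makes $\bar A\colon V_2/T_2 \to V_1/T_1$ surjective, yielding $\mathrm{codim}(T_1) \leq \mathrm{codim}(T_2)$.

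For (ii) $\Leftrightarrow$ (i), I apply Mumford's numerical criterion to $G = \mathrm{GL}(V_1)\times\mathrm{GL}(V_2)$ and $\theta$. A 1-PS $\lambda$ of $G$ determines weight decompositions $V_i = \bigoplus_n V_i(n)$ and the associated increasing filtrations $V_i^{\ge k}$. Existence of $\lim_{t\to 0}\lambda(t).x$ in $\mathbb M$ is equivalent to the compatibility conditions $B_i(V_i^{\ge k}) \subset V_i^{\ge k}$, $A(V_2^{\ge k}) \subset V_1^{\ge k}$, $\mathrm{Im}(a) \subset V_1^{\ge 0}$, and $b(V_2^{\ge 1}) = 0$. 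By an Abel-type summation, the pairing $\langle\lambda,\theta\rangle = \sum_n n(\dim V_2(n) - \dim V_1(n))$ decomposes as a sum over levels $k$ of contributions $\pm(\dim V_2^{\ge k} - \dim V_1^{\ge k})$, each corresponding to the 2-step 1-PS obtained by cutting at level $k$. Thus $\langle \lambda,\theta\rangle \geq 0$ for all $\lambda$ with existing limit reduces to the 2-step case: levels $k \ge 1$ produce 2-step conditions matching \ref{item:ssS1} applied to $(S_1, S_2) = (V_1^{\ge k}, V_2^{\ge k})$, while levels $k \le 0$ match \ref{item:ssS2} applied to $(T_1, T_2) = (V_1^{\ge k}, V_2^{\ge k})$.

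The main obstacle is the bookkeeping in this Mumford reduction, especially aligning sign conventions so that non-negativity of $\langle\lambda,\theta\rangle$ becomes precisely $\dim S_1 \geq \dim S_2$ in \ref{item:ssS1} (and $\mathrm{codim}(T_1) \leq \mathrm{codim}(T_2)$ in \ref{item:ssS2}), together with recognising why the condition on $a$ attaches to \ref{item:ssS2} and the condition on $b$ to \ref{item:ssS1}, reflecting the opposite $G$-weights of $a$ and $b$. A minor related subtlety is that \ref{item:ssS2} is stated with $\mathrm{Im}(a) = T_1$, whereas the implication \ref{item:S2} $\Rightarrow$ \ref{item:ssS2} sketched above only uses $\mathrm{Im}(a) \subset T_1$; in the Mumford direction one can always arrange $T_1 = V_1^{\ge 0}$, the minimal filtration piece compatible with the limit-existence condition on $a$, which reconciles the two readings.
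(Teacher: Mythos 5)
Your proposal is correct, and its engine---the reduction of Mumford's criterion to two-step one-parameter subgroups via the filtrations $V_i^{\ge k}$ attached to a 1-PS, with the Abel summation of $\langle\lambda,\theta\rangle$---is the same mechanism the paper uses for \ref{item:propSemistabilityConditionsII}$\Rightarrow$\ref{item:propSemistabilityConditionsI}. The organization differs in two respects. The paper closes the cycle as \ref{item:propSemistabilityConditionsII}$\Rightarrow$\ref{item:propSemistabilityConditionsI}$\Rightarrow$\ref{item:propSemistabilityConditionsIII}$\Rightarrow$\ref{item:propSemistabilityConditionsII}: for \ref{item:propSemistabilityConditionsI}$\Rightarrow$\ref{item:propSemistabilityConditionsIII} it builds, for each subspace violating \ref{item:S1} or \ref{item:S2}, an explicit two-step 1-PS (the $k\ge 1$, resp.\ $k\le 0$, instances of your reduction specialized to $S_1=0$, resp.\ $T_2=V_2$), and it then proves \ref{item:propSemistabilityConditionsIII}$\Rightarrow$\ref{item:propSemistabilityConditionsII} by a case split on $\dim V_1$ versus $\dim V_2$, invoking Proposition~\ref{prop:takayamaS1S2implications} (full rank of $A$) for one of the two triangle conditions in each case. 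Your \ref{item:propSemistabilityConditionsIII}$\Rightarrow$\ref{item:propSemistabilityConditionsII} is uniform: the two steps $S_2\cap\ker A=0$ via \ref{item:S1} and $T_1+\operatorname{Im}(A)=V_1$ via \ref{item:S2} are exactly the two halves the paper distributes over its cases, and running both unconditionally removes the case split and the appeal to the full-rank statement; your direct \ref{item:propSemistabilityConditionsI}$\Rightarrow$\ref{item:propSemistabilityConditionsII} also makes the detour through \ref{item:propSemistabilityConditionsIII} unnecessary for that implication. Both ``obstacles'' you flag are real features of the source rather than gaps in your argument: the application of \ref{item:ssS2} to the pairs $(F_mV_1,F_mV_2)$ only ever provides $\operatorname{Im}(a)\subset T_1$, so \ref{item:ssS2} must indeed be read with containment in place of equality (your reconciliation is the right one, and your proof of \ref{item:propSemistabilityConditionsIII}$\Rightarrow$\ref{item:propSemistabilityConditionsII} goes through verbatim with the weakened hypothesis); and the sign bookkeeping is genuinely delicate---with $\theta=\det(g_2)/\det(g_1)$, the limit taken as $t\to 0$, and \ref{item:ssS1}--\ref{item:ssS2} read literally, the two-step contributions come out $\le 0$ rather than $\ge 0$, so one global convention must be flipped for the criterion to match the stated semistable locus. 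Your Abel-type decomposition is precisely the computation that pins down where.
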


For the proof of Proposition~\ref{prop:S1S2SemistabilityConditions}, we follow \cite[Proposition~2.2]{nakajima2017cherkis} and~\cite[Corollary~2.21]{takayama2016nahm}.
First, we recall the following useful consequence of the subspace conditions~\ref{item:S1} and~\ref{item:S2} from~\cite[Lemma~2.18]{takayama2016nahm} without proof.

\begin{prop}\label{prop:takayamaS1S2implications}
Suppose $x=(A,B_1,B_2,a,b)\in\mu^{-1}(0)$ satisfies~\ref{item:S1} and~\ref{item:S2}. Then, $A$ has full rank.
\end{prop}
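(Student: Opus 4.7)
The plan is to dichotomize on $\ker A$: if $\ker A = 0$, then $A$ is automatically injective and hence $\operatorname{rank}(A) = \dim V_2 = \min(\dim V_1, \dim V_2)$ already gives full rank. So the real content of the proposition is to show that $\ker A \neq 0$ forces $A$ to be surjective, which together with the injective case settles full rank in general.

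Assume $K := \ker A \neq 0$. The first step is to produce $v_0 \in K$ with $b(v_0) = 1$. If instead $b$ vanished on all of $K$, then the moment map identity $A B_2 v = B_1(Av) + a\, b(v)$ applied at any $v \in K$ would give $A(B_2 v) = 0$, so $K$ would be a nonzero $B_2$-invariant subspace of $V_2$ with $A(K) = 0$ and $b(K) = 0$, contradicting \ref{item:S1}. Hence $b|_K \not\equiv 0$, and after rescaling we obtain the desired $v_0$.

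The second step is to check, by induction on $j$, that the smallest $B_1$-invariant subspace $W := \sum_{j \geq 0} B_1^j(\operatorname{Im} a) \subset V_1$ containing $\operatorname{Im} a$ is contained in $\operatorname{Im}(A)$. Writing $e := a(1)$, the moment map evaluated at $v_0$ gives directly $e = a\, b(v_0) = A B_2 v_0 - B_1(A v_0) = A B_2 v_0 \in \operatorname{Im}(A)$, so the base case $\operatorname{Im}(a) \subset \operatorname{Im}(A)$ holds. Inductively, if $B_1^j e = A w_j$ for some $w_j \in V_2$, then the moment map applied to $w_j$ yields $B_1^{j+1} e = B_1 A w_j = A B_2 w_j - b(w_j)\, e$, which lies in $\operatorname{Im}(A)$ since both $A B_2 w_j$ and $e$ do. Hence $W \subset \operatorname{Im}(A)$.

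The final step is to feed \ref{item:S2} with $T := \operatorname{Im}(A) + W = \operatorname{Im}(A)$. By construction $T$ contains $\operatorname{Im}(A) + \operatorname{Im}(a)$, and $T$ is $B_1$-invariant because the moment map gives $B_1 \operatorname{Im}(A) = \operatorname{Im}(B_1 A) = \operatorname{Im}(A B_2 - ab) \subset \operatorname{Im}(A) + \operatorname{Im}(a) \subset T$. Thus \ref{item:S2} forces $T = V_1$, i.e.\ $\operatorname{Im}(A) = V_1$, so $A$ is surjective and hence of full rank. The only mildly delicate part of the argument is organizing the induction in the second step so that the moment map identity $B_1 A = A B_2 - ab$ is applied at the right place; the rest is direct bookkeeping.
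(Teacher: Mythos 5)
Your argument is correct. Note that the paper does not prove Proposition~\ref{prop:takayamaS1S2implications} at all: it imports the statement from \cite[Lemma~2.18]{takayama2016nahm} explicitly ``without proof,'' so your write-up supplies a self-contained derivation rather than replicating anything in the text. The logic is sound: the dichotomy on $\ker A$ covers full rank because injectivity forces $\dim V_2\le\dim V_1$ and surjectivity forces $\dim V_1\le\dim V_2$; Step~1 correctly uses the moment map identity $AB_2=B_1A+ab$ to show that $b$ cannot vanish on a nonzero $\ker A$ without violating \ref{item:S1}; and the application of \ref{item:S2} to $\operatorname{Im}(A)$ is legitimate once $\operatorname{Im}(a)\subset\operatorname{Im}(A)$ is known. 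One remark: your Step~2 is longer than necessary. The induction showing $B_1^j e\in\operatorname{Im}(A)$ for all $j$ is redundant, because the base case $\operatorname{Im}(a)\subset\operatorname{Im}(A)$ already implies, via $B_1(Av)=AB_2v-a\,b(v)$, that $\operatorname{Im}(A)$ itself is $B_1$-invariant and contains $\operatorname{Im}(A)+\operatorname{Im}(a)$, so \ref{item:S2} applies immediately. Collapsing Steps~2 and~3 in this way would shorten the proof without changing its substance.
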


\begin{proof}[Proof of Proposition~\ref{prop:S1S2SemistabilityConditions}] We begin with $\textup{\ref{item:propSemistabilityConditionsII}}\Rightarrow\textup{\ref{item:propSemistabilityConditionsI}}$.
In order to apply Mumford's numerical criterion, let $\sigma:\mathbb C^\ast\rightarrow \mathrm{GL}(V_1)\times\mathrm{GL}(V_2)$ be a one-parameter subgroup such that the limit $\lim_{t\to0}\sigma(t).x$ exists. The vector spaces $V_1$ and $V_2$ decompose into weight spaces
\[
V_i=\bigoplus_{n\in\mathbb Z} V_i^n,\quad\textup{where }V_i^n=\{v\in V_i\mid \sigma(t).v=t^nv\textup{ for all }t\in\mathbb C^\ast\},\quad i=1,2,
\]
with corresponding filtrations
\[
F_m V_i=\bigoplus_{n\geq m}V_i^n,\quad  m\in\mathbb Z,i=1,2.
\]
We further view $\mathbb C$ as filtered vector space with filtration $F_m\mathbb C=0$ if $m<0$ and $F_m\mathbb C=\mathbb C$ if $m\geq0$.
The existence of the limit $\lim_{t\to0}\sigma(t).x$ is equivalent to the condition that all the operators $A,B_1,B_2,a,b$ are morphisms of filtered vector spaces. 
Let $n_0<0,n_1>0$ such that $F_{n_0}V_i=0$ and $F_{n_1}V_i=V_i$ for $i=1,2$.
Applying~\ref{item:ssS1} to the pairs $(F_mV_1,F_mV_2)$ with $m>0$
gives
\begin{equation}\label{eq:V2dominatesV1}
\sum_{j=m}^{n_1} \operatorname{dim}(V_2^j)\geq \sum_{j=m}^{n_1} \operatorname{dim}(V_1^j).
\end{equation}
Similarly, we can apply~\ref{item:ssS2} to the pairs
 $(F_mV_1,F_mV_2)$ with $m<0$ which implies
\begin{equation}\label{eq:V1dominatesV2}
\sum_{j=n_0}^{m} \operatorname{dim}(V_2^j)\le \sum_{j=n_0}^{m} \operatorname{dim}(V_1^j).
\end{equation}
Combining~\eqref{eq:V2dominatesV1} and~\eqref{eq:V1dominatesV2} yields
\begin{align*}
\langle \sigma,\theta \rangle &= \sum_{n\in \mathbb Z} n(\operatorname{dim}(V_2^n)-\operatorname{dim}(V_1^n)) \\
&= \underbrace{\Big( \sum_{n=1}^{n_1}\sum_{j=1}^n\operatorname{dim}(V_2^j)-\operatorname{dim}(V_1^j) \Big)}_{\geq0\textup{ by \eqref{eq:V2dominatesV1}}} +
\underbrace{\Big( \sum_{n=n_0}^{-1} \sum_{j=1}^n\operatorname{dim}(V_1^j)-\operatorname{dim}(V_2^j)  \Big)}_{\geq0\textup{ by \eqref{eq:V1dominatesV2}}} \quad\geq 0.
\end{align*}
Thus, $x$ is $\theta$-semistable. To show $\textup{\ref{item:propSemistabilityConditionsI}}\Rightarrow \textup{\ref{item:propSemistabilityConditionsIII}}$, suppose $x$ is $\theta$-semistable and we are given $S\subset V_2$ satisfying the conditions of~\ref{item:S1}. Let $W\subset V_2$ be a vector space complement of $S$ and define a one-parameter subgroup $\sigma:\mathbb C^\ast\rightarrow\mathrm{GL}(V_1)\times\mathrm{GL}(V_2)$ via
\[
\sigma(t)_{\mid V_1}=\operatorname{id}_{V_1},\quad\sigma(t)_{\mid S}=t^{-1}\operatorname{id}_S,\quad \sigma(t)_{\mid W}=\operatorname{id}_W. 
\]
Then, $\lim_{t\to0}\sigma(t).x$ exists and hence Mumford's numerical criterion implies
$\operatorname{dim}(S)\le 0$, i.e. $S=0$. Analogously, if $T\subset V_1$ satisfies the conditions of~\ref{item:S1} then pick a vector space complement $V\subset V_1$ of $T$ and define
one-parameter subgroup $\tau:\mathbb C^\ast\rightarrow\mathrm{GL}(V_1)\times\mathrm{GL}(V_2)$
via
\[
\tau(t)_{\mid V}=t\operatorname{id}_V,\quad\tau(t)_{\mid T}=\operatorname{id}_T,\quad \tau(t)_{\mid V_2}=\operatorname{id}_{V_2}.
\]
Again, $\lim_{t\to0}\tau(t).x$ exists and Mumford's numerical criterion gives $-\operatorname{dim}(V)\geq 0$, i.e. $T=V_1$. 
To prove finally $\textup{\ref{item:propSemistabilityConditionsIII}}\Rightarrow \textup{\ref{item:propSemistabilityConditionsII}}$, we first consider the case $\operatorname{dim}(V_1)\le \operatorname{dim}(V_2)$. Suppose $S_1\subset V_1,S_2\subset V_2$ satisfy the conditions of~\ref{item:ssS1}. The condition $\mu(x)=0$ implies that $B_2$ maps $\mathrm{ker}(A)\cap \mathrm{ker}(b)$ to $\mathrm{ker}(A)$. As $S_2$ is contained in $\mathrm{ker}(b)$, this implies that $S_2\cap \mathrm{ker}(A)$ is $B_2$-invariant. Hence, $S_2\cap \mathrm{ker}(A)$ satisfies the conditions of \ref{item:S1} which gives $S_2\cap \mathrm{ker}(A)=0$.
Thus, $A_{\mid S_2}$ is injective and we conclude $\operatorname{dim}(S_2)\geq\operatorname{dim}(S_1)$ as $A(S_2)\subset S_1$ which gives~\ref{item:ssS1}. The property~\ref{item:ssS2} follows immediately from the surjectivity of $A$ which follows from Proposition~\ref{prop:takayamaS1S2implications}. 
Next, $\operatorname{dim}(V_1)>\operatorname{dim}(V_2)$. In this case, Proposition~\ref{prop:takayamaS1S2implications} gives that $A$ is injective which directly implies~\ref{item:ssS1}. Assume $T_1\subset V_1,T_2\subset V_2$ satisfy the conditions of~\ref{item:ssS2}. Since $\mu(x)=0$, we conclude that $T_1+\mathrm{Im}(A)$ is $B_1$-invariant and thus, $T_1+\mathrm{Im}(A)$ satisfies the conditions of \ref{item:S2} and hence $T_1+\mathrm{Im}(A)=V_1$. As $A(T_2)\subset T_1$, this implies $\operatorname{codim}(T_1)\le \operatorname{codim}(T_2)$.
\end{proof}

Using Proposition~\ref{prop:takayamaS1S2implications} and the stability conditions \ref{item:S1} and \ref{item:S2}, Takayama constructed certain normal forms for the points in $\mathrm{tri}(V_1,V_2)$. These normal forms coincide with Hurtubise's normal forms of solutions of Nahm's equation over the interval, \cite{hurtubise1989classification}. Based on this, Takayama proved in {\cite[Proposition~2.20]{takayama2016nahm}} the following identifications:

\begin{prop}\label{prop:triangleAffineStructure}
Let $m_1=\operatorname{dim}(V_1),m_2=\operatorname{dim}(V_2)$. Then, the following holds:
\begin{enumerate}[label=(\roman*)]
\item If $m_1\ne m_2$, then $\mathrm{tri}(V_1,V_2)$ is isomorphic to $\mathrm{GL_m}\times \mathbb C^{m^2+m+n}$ where $m=\operatorname{min}(m_1,m_2)$ and $n=\operatorname{max}(m_1,m_2)$.
\item If $m_1=m_2$, then $\mathrm{tri}(V_1,V_2)$ is isomorphic to $\mathrm{GL_{m_1}}\times \mathbb C^{m_1^2+2m_1}$.
\end{enumerate}
In particular, $\mathrm{tri}(V_1,V_2)$ is always smooth and affine.
\end{prop}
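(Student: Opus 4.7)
The plan is to split into three cases based on how $m_1 := \operatorname{dim}(V_1)$ and $m_2 := \operatorname{dim}(V_2)$ compare, using Proposition~\ref{prop:takayamaS1S2implications} as the main structural input: on $\mathrm{tri}(V_1,V_2)$ the map $A$ has full rank, so $A$ is surjective when $m_1\leq m_2$ and injective when $m_1\geq m_2$. In each case the $\mathrm{GL}_m$ factor will come from the algebraic datum of a full-rank linear map (equivalently, from an isomorphism between $V_1$ and an appropriate subquotient of $V_2$), while the moment map equation $B_1A - AB_2 + ab = 0$ will be used to solve for one of $B_1, B_2$ in terms of the remaining data, exhibiting the affine factor.

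I would first treat the square case $m_1 = m_2 = m$. Here $A$ is an isomorphism, hence $A \in \operatorname{Iso}(V_2,V_1) \cong \mathrm{GL}_m$, and one may solve uniquely for $B_1 = (AB_2 - ab) A^{-1}$. This yields a morphism
\[
\mathrm{tri}(V_1,V_2) \longrightarrow \operatorname{Iso}(V_2,V_1) \times \operatorname{End}(V_2) \times \operatorname{Hom}(\mathbb C,V_1) \times \operatorname{Hom}(V_2,\mathbb C),\quad (A,B_1,B_2,a,b) \mapsto (A,B_2,a,b),
\]
with inverse given by the explicit formula for $B_1$. I would then check that every tuple on the right actually gives a $\theta$-semistable point using Proposition~\ref{prop:S1S2SemistabilityConditions}: condition \ref{item:S1} is automatic since $A$ injective forces $S=0$, and \ref{item:S2} is automatic since $\operatorname{Im}(A) = V_1$. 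The target is naturally isomorphic to $\mathrm{GL}_m \times \mathbb C^{m^2 + 2m}$, as claimed.

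For the rectangular cases $m_1 \neq m_2$, I would reduce to $m_1 < m_2$ (so $m = m_1$, $n = m_2$); the case $m_1 > m_2$ is handled by the symmetric argument after interchanging the roles of injectivity and surjectivity. Since $A$ is surjective, the moment map equation determines $B_1$ uniquely on $\operatorname{Im}(A) = V_1$, provided the compatibility $(AB_2 - ab)(\ker A) = 0$ holds. Following Takayama \cite[Proposition~2.20]{takayama2016nahm}, I would produce the $\mathrm{GL}_m$-factor by parameterizing the open subset of surjective maps $V_2 \to V_1$ as a $\mathrm{GL}_m$-torsor via a Hurtubise-type normal form (coming from boundary conditions of solutions to Nahm's equation), and then collect $B_2$ restricted to the complement of a chosen section, together with the data of $a$ and $b$, into a single affine space $\mathbb C^{m^2 + m + n}$ of the correct dimension.

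The main obstacle is the non-square case. In contrast to the square case, the naive projection $(A,B_1,B_2,a,b) \mapsto (A,B_2,a,b)$ fails to give an isomorphism onto a product of $\mathrm{GL}_m$ with an affine space: the variety of surjective maps $V_2 \to V_1$ is not globally such a product, and the compatibility condition $(AB_2 - ab)(\ker A) = 0$ cuts out a nontrivial subvariety. Resolving this requires the explicit normal forms of Takayama, which use Proposition~\ref{prop:takayamaS1S2implications} together with the moment map equation to trivialize the relevant $\mathrm{GL}_m$-torsor algebraically. Smoothness and affineness of $\mathrm{tri}(V_1,V_2)$ then follow automatically from the resulting isomorphism, since both $\mathrm{GL}_m$ and $\mathbb C^N$ are smooth affine varieties.
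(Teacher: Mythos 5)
First, a remark on the comparison itself: the paper does not prove this proposition. It is quoted from Takayama \cite[Proposition~2.20]{takayama2016nahm}, preceded only by the remark that the proof goes through Hurtubise-type normal forms. So there is no argument in the paper to measure against, and your treatment of the square case $m_1=m_2$ is a genuine addition: $A$ is invertible on the semistable locus by Propositions~\ref{prop:S1S2SemistabilityConditions} and~\ref{prop:takayamaS1S2implications}, the equation $B_1A-AB_2+ab=0$ solves uniquely for $B_1=(AB_2-ab)A^{-1}$, and every tuple with $A$ invertible satisfies \ref{item:S1} and \ref{item:S2}; this is a complete and correct proof of part (ii).

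The non-square case, however, contains a genuine gap, and not merely because you ultimately defer the trivialization to Takayama's normal forms (which is no more than the paper does). The announced shape of the isomorphism cannot be achieved. The differential of $\mu$ is surjective at every point where $A$ has full rank (vary $B_2$ alone when $A$ is surjective, $B_1$ alone when $A$ is injective), so $\mathrm{tri}(V_1,V_2)$ is smooth of pure dimension $m_1^2+m_2^2+m_1+m_2=m^2+n^2+m+n$, whereas $\mathrm{GL}_m\times\mathbb C^{m^2+m+n}$ has dimension $2m^2+m+n$; these agree only when $m=n$. Concretely, for $m_1=0$ and $m_2=n$ the data reduce to $(B_2,b)$ and condition \ref{item:S1} says that $\ker b$ contains no nonzero $B_2$-invariant subspace, i.e. the covectors $b,bB_2,\dots,bB_2^{n-1}$ form a frame of $V_2^\ast$; sending $(B_2,b)$ to this frame together with the characteristic polynomial of $B_2$ gives $\mathrm{tri}(0,\mathbb C^n)\cong \mathrm{GL}_n\times\mathbb C^{n}$. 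Thus the general linear factor is $\mathrm{GL}_{\max(m_1,m_2)}$ — the statement as printed has a typo, $\mathrm{GL}_m$ should be $\mathrm{GL}_n$ — and it does not arise from the full-rank map $A$ (which is zero in this example) but from the Krylov-type frame of the \emph{larger} space that \ref{item:S1}/\ref{item:S2} produce out of $A$, $a$, $b$ and iterates of $B_1,B_2$. Your plan of extracting $\mathrm{GL}_{\min}$ from the surjection $A$ and packing the remaining entries of $B_2$, $a$, $b$ into $\mathbb C^{m^2+m+n}$ is therefore not completable even in outline; any correct argument must build the $\mathrm{GL}_n$-valued coordinate as in the $m=0$ model above.
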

In addition, it was shown in~\cite[Proposition~5.7]{nakajima2017cherkis} that $\mathrm{tri}(V_1,V_2)$ always carries an algebraic $(\mathrm{GL}(V_1)\times\mathrm{GL}(V_2))$-invariant symplectic form. It admits a moment map:
\begin{prop}[Moment map]\label{prop:MomentMapTrianglePart} The morphism
\[
m:\mathrm{tri}(V_1,V_2)\rightarrow \operatorname{End}(V_1)\oplus \operatorname{End}(V_2),\quad (A,B_1,B_2,a,b)\mapsto(B_1,-B_2)
\]
is a moment map for the symplectic structure on $\mathrm{tri}(V_1,V_2)$.
\end{prop}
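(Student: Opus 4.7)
The plan is to check directly the two defining properties of a moment map: $G$-equivariance of $m$ (for the coadjoint action, identified via the trace with the adjoint action on $\mathfrak{g} := \mathfrak{gl}(V_1) \oplus \mathfrak{gl}(V_2)$), and the Hamilton identity $dH_\xi = \iota_{\xi_\sharp}\omega$ for every $\xi = (\xi_1, \xi_2) \in \mathfrak{g}$, where $\omega$ is the symplectic form on $\mathrm{tri}(V_1, V_2)$ from \cite[Proposition~5.7]{nakajima2017cherkis}, $\xi_\sharp$ is the fundamental vector field of $\xi$, and
$$H_\xi := \operatorname{tr}(B_1 \xi_1) - \operatorname{tr}(B_2 \xi_2).$$
Equivariance is immediate: conjugation preserves traces and the components $B_1$ and $-B_2$ of $m$ transform by conjugation under $G$, which matches the coadjoint action under the trace identification.

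For the Hamilton identity, first I would compute the infinitesimal action. Differentiating the $G$-action from Subsection~\ref{subsection:trianglePart} at the identity yields
$$\xi_\sharp(A, B_1, B_2, a, b) = \big([\xi_1, B_1],\; [\xi_2, B_2],\; \xi_1 A - A\xi_2,\; \xi_1 a,\; -b\xi_2\big),$$
and the differential of $H_\xi$ acts on a tangent vector $\delta x = (\delta A, \delta B_1, \delta B_2, \delta a, \delta b)$ as
$$dH_\xi(\delta x) = \operatorname{tr}(\delta B_1 \, \xi_1) - \operatorname{tr}(\delta B_2 \, \xi_2).$$
Only the $\delta B_i$-components contribute, so the identity asserts that $\iota_{\xi_\sharp}\omega$ pairs the $\delta B_i$ with $\xi_i$ via the trace and annihilates the $\delta A$, $\delta a$, $\delta b$ components.

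To verify this, I would use the construction of $\omega$ in \cite[Proposition~5.7]{nakajima2017cherkis}, where $\mathrm{tri}(V_1, V_2)$ is realized as (an open subset of) a symplectic quotient of a larger symplectic vector space by an auxiliary group. In that realization, the map $\mu$ of~\eqref{eq:nonMomentMap} arises as the auxiliary moment map while $(B_1, -B_2)$ is the restriction of the ambient $G$-moment map. The principle of moment maps in stages then forces $(B_1, -B_2)$ to descend to a moment map for the residual $G$-action, and the Hamilton identity on $\mathrm{tri}(V_1, V_2)$ follows by restriction. The $\theta$-semistability characterized in Proposition~\ref{prop:S1S2SemistabilityConditions} guarantees that the Marsden--Weinstein reduction proceeds smoothly over the open locus $\mathrm{tri}(V_1, V_2)$.

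The main obstacle is unpacking the ambient symplectic construction cleanly enough that moment maps in stages applies. An alternative, more computational route is to use the normal forms of Proposition~\ref{prop:triangleAffineStructure} (the Hurtubise/Takayama parametrizations from \cite{takayama2016nahm}) to write $\omega$ in coordinates and verify the Hamilton identity directly; the final check then reduces to routine trace manipulations such as $\operatorname{tr}([\xi_1, B_1] X) = \operatorname{tr}(\xi_1 [B_1, X])$.
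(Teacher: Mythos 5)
The paper does not actually prove this proposition; it is quoted from Nakajima--Takayama (the symplectic form and its moment map are imported from \cite[Proposition~5.7]{nakajima2017cherkis}), so your proposal has to stand on its own. Your general framework is fine --- equivariance plus the Hamilton identity $dH_\xi=\iota_{\xi_\sharp}\omega$ with $H_\xi=\operatorname{tr}(B_1\xi_1)-\operatorname{tr}(B_2\xi_2)$ --- and your formulas for the fundamental vector field and for $dH_\xi$ are correct. The problem is that your main route rests on a false premise about where $\omega$ comes from. The ambient space $\mathbb M$ is \emph{not} a symplectic vector space: its dimension $m_1m_2+m_1^2+m_2^2+m_1+m_2$ has the wrong parity in general (already for $\dim V_1=\dim V_2=1$ it is $5$), so $\mathrm{tri}(V_1,V_2)$ cannot be a symplectic quotient of $\mathbb M$. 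Correspondingly, the map $\mu$ of \eqref{eq:nonMomentMap} is not a moment map for any auxiliary group acting on $\mathbb M$ --- its target $\operatorname{Hom}(V_1,V_2)$ is not the dual of the Lie algebra of anything in sight, which is exactly why the equation is labelled \texttt{nonMomentMap}. So ``moment maps in stages'' as you set it up cannot be run: there is no stage to reduce by.

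The correct conceptual source, in Nakajima--Takayama, is the identification of $\mathrm{tri}(V_1,V_2)$ with a moduli space of solutions of Nahm's equations on an interval, i.e.\ an (infinite-dimensional) hyper-K\"ahler quotient of Nahm data by a gauge group; there $B_1$ and $-B_2$ arise as boundary values of the complex Nahm matrix, and the residual boundary action of $\mathrm{GL}(V_1)\times\mathrm{GL}(V_2)$ does have moment map given by those boundary values (the sign discrepancy between the two endpoints is the standard one). A reduction-in-stages argument works with \emph{that} ambient space, not with $\mathbb M$. Your fallback --- writing $\omega$ explicitly in the Hurtubise/Takayama normal-form coordinates of Proposition~\ref{prop:triangleAffineStructure} and checking the Hamilton identity by trace manipulations --- is viable, but it requires the explicit formula for $\omega$ from \cite[Proposition~5.7]{nakajima2017cherkis} and is not carried out here, so as written the proposal does not yet constitute a proof.
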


\subsection{Brane diagrams and affine brane varieties}\label{subsection:AffineBraneVariety}
For the next step in the construction of bow varieties we use multiple triangle parts as building blocks to construct varieties that we call \emph{affine brane varieties}. For their construction, we use the language of brane diagrams from \cite{rimanyi2020bow} which we briefly recall.

A {brane diagram} is an object like this:
\[
\begin{tikzpicture}
[scale=.5]
\draw[thick] (0,0)--(2,0);
\draw[thick] (3,0)--(5,0);
\draw[thick] (6,0)--(8,0);
\draw[thick] (9,0)--(11,0);
\draw[thick] (12,0)--(14,0);
\draw[thick] (15,0)--(17,0);
\draw[thick] (18,0)--(20,0);
\draw[thick] (21,0)--(23,0);
\draw[thick] (24,0)--(26,0);

\draw [thick,red] (2,-1) --(3,1); 
\draw [thick,red] (8,-1) --(9,1); 
\draw [thick,red] (11,-1) --(12,1); 
\draw [thick,red] (18,1) --(17,-1);

\draw [thick,blue] (5,1) --(6,-1); 
\draw [thick,blue] (15,-1) --(14,1);  
\draw [thick,blue] (21,-1) --(20,1); 
\draw [thick,blue] (24,-1) --(23,1);  

\node at (1,0.5){$0$};
\node at (4,0.5){$3$};
\node at (7,0.5){$2$};
\node at (10,0.5){$3$};
\node at (13,0.5){$5$};
\node at (16,0.5){$3$};
\node at (19,0.5){$4$};
\node at (22,0.5){$1$};
\node at (25,0.5){$0$};
\end{tikzpicture}
\]
That is, a \textit{brane diagram} is a finite sequence of black horizontal lines drawn from left to right. Between each consecutive pair of horizontal lines there is either a blue SE-NW line \textcolor{blue}{$\backslash$} or a red SW-NE line \textcolor{red}{$\slash$}. Each horizontal line $X$ is labeled by a non-negative integer $d_X$. Moreover, we demand that the first and the last horizontal line is labeled by $0$.

\begin{remark}
The terminology in ~\cite{nakajima2017cherkis} and~\cite{rimanyi2020bow} is motivated from string theory; the horizontal lines are called D3 branes, the blue lines D5 branes and the red lines NS5 branes. Since it suffices for our purposes to view brane diagrams as purely  combinatorial objects, we will refer to the lines just by their colors.
\end{remark}

\begin{definition}\label{uglyDef}
We denote by $h(\mathcal D),b(\mathcal D)$ and $r(\mathcal D)$ the set of black, blue and red lines in a given brane diagram $\mathcal D$. If $M=|r(\mathcal D)|$ is the number of red lines, we denote by $V_1,\ldots,V_M$ the actual lines numbered from right to left. If $N$ is the number of blue lines then we denote by $U_1,\ldots,U_N$  these lines, numbered from left to right. The black lines are denoted by $X_1,\ldots,X_{M+N+1}$ also numbered from left to right.
\end{definition}

\begin{remark}
Note that the convention to number the red lines from right to left differs from the convention in~\cite{rimanyi2020bow}.
\end{remark}

Among the black lines impose the equivalence relation $X \sim X'$ if all colored lines between $X$ and $X'$ are blue. We call the equivalence classes with respect to this relation \textit{blue segments}.
Given a blue line U, we denote the black line directly to the left respectively to the right of $U$ by $U^-$ resp. $U^+$. Similarly, the colored lines directly left and right to a black line $X$ are denoted by $X^-$ and $X^+$. Moreover, to each black line $X$, we attach the vector space $W_X:=\mathbb C^{d_X}$ and set $W:=\bigoplus_{X\in h(\mathcal D)} W_X$. We denote $W_{X_l}$ also by $W_l$.

Now, we continue with the construction of affine brane varieties. This is a variety which is assigned to a brane diagram $\mathcal D$ in the following way:
for any red line $V$, define the variety
\[
\mathbb M_V:= \operatorname{Hom}(W_{V^+},W_{V^-})\oplus  \operatorname{Hom}(W_{V^-},W_{V^+}).
\]
The elements of $\mathbb M_V$ are denoted by $y_V=(C_V,D_V)$ and we equip $\mathbb M_V$ with the usual $(\mathrm{GL}(W_{V^-})\times \mathrm{GL}(W_{V^+}))$-action
\[
(g_-,g_+).(C_V,D_V)= (g_-C_Vg_+^{-1},g_+D_Vg_-^{-1}),\quad g_-\in\mathrm{GL}(W_{V^-}),g_+\in \mathrm{GL}(W_{V^+}).
\]
The variety $\mathbb M_V$ admits the algebraic $(\mathrm{GL}(W_{V^-})\times \mathrm{GL}(W_{V^+}))$-equivariant symplectic form $dC_V \wedge dD_V$
with associated moment map
\[
m_V:\mathbb M_V \rightarrow \operatorname{End}(V^-)\oplus\operatorname{End}(V^+),\quad (C_V,D_V)\mapsto (-C_VD_V,D_VC_V).
\]
To any blue line $U$, we attach the variety $\mathbb M_U:=\mathrm{tri}(W_{U^-},W_{U^+})$. We denote the elements of $\mathbb M_U$ by $x_U=(A_U,B_U^-,B_U^+,a_U,b_U)$ and by $m_U$ the moment map from Proposition~\ref{prop:MomentMapTrianglePart}. 
\begin{definition} The \textit{affine brane variety associated to $\mathcal D$} is defined as
\begin{equation}\label{eq:affinevar}
\widetilde{\mathcal M}(\mathcal D):= \prod_{U\in b(\mathcal D)} \mathbb M_U \times \prod_{V\in r(\mathcal D)}\mathbb M_V.
\end{equation}
\end{definition}
We denote points of $\widetilde{\mathcal M}(\mathcal D)$ as tuples $((A_U,B_U^+,B_U^-,a_U,b_U)_U,(C_V,D_V)_V)$.
By construction, $\widetilde{\mathcal M}(\mathcal D)$ is a smooth affine variety endowed with an algebraic (base change) action of the group 
\[
\mathcal G:=\prod_{X\in h(\mathcal D)} \mathrm{GL}(W_X).
\]
Since it is a product of algebraic symplectic varieties, $\widetilde M(\mathcal D)$ admits an algebraic symplectic form which is $\mathcal G$-equivariant. Furthermore, $\widetilde M(\mathcal D)$ admits the moment map
\begin{equation}\label{m}
m: \widetilde{M}(\mathcal D)\rightarrow \bigoplus_{\textup{$X\in h(\mathcal D)$}}\operatorname{End}(W_X),
\quad ((x_U)_U,(y_V)_V)\mapsto \sum_{\textup{$U\in b(\mathcal D)$}}m_U(x_U) +  \sum_{\textup{$V\in r(\mathcal D)$}}m_V(y_V).
\end{equation}
Explicitly, for a black line $X$, the corresponding component $m((x_U)_U,(y_V)_V)_X$ is given by
\begin{equation}\label{eq:MomentMapEquations}
m((x_U)_U,(y_V)_V)_X=
\begin{cases}
			   B_{X^+}^- - B_{X^-}^+ &\textup{if $X^+,X^-$ are both blue,} \\
			D_{X^-}C_{X^-}-C_{X^+}D_{X^+} &\textup{if $X^+,X^-$ are both red,}\\
			D_{X^-}C_{X^-} + B_{X^+}^- &\textup{if $X^+$ is blue and $X^-$ is red,}\\
			-C_{X^+}D_{X^+}-B_{X^-}^+ &\textup{if $X^+$ is red and $X^-$ is blue.}
\end{cases}
\end{equation}
The conditions \ref{item:S1} and \ref{item:S2} imply that the points of $m^{-1}(0)$ satisfy the following injectivity resp. surjectivity properties:
\begin{prop}\label{prop:ZeroLocusMomentMapInjectivityAndSurjectivity}
For each $x=((A_U,B_U^+,B_U^-,a_U,b_U)_U,(C_V,D_V)_V)\in m^{-1}(0)$ the following holds:
\begin{enumerate}[label=(\roman*)]
\item\label{item:ZeroLocusMomentMapInjectivity}
Given a local configuration in $\mathcal D$ of the form:
\begin{equation*}
\begin{tikzpicture}
[scale=.5]
\draw[thick] (0,0)--(2,0);
\draw[thick] (3,0)--(5,0);
\draw[thick] (6,0)--(8,0);

\draw [thick,red] (5,-1) --(6,1);
\draw [thick,blue] (3,-1) --(2,1);

\node[red] at (5,-1.5) {$V$};
\node[blue] at (3,-1.5) {$U$};
\node at (1,0.5){$d_{j-1}$};
\node at (4,0.5){$d_j$};
\node at (7,0.5){$d_{j+1}$};

\end{tikzpicture}
\end{equation*}
Then, the map $\alpha:W_j\rightarrow W_{j-1}\oplus W_{j+1}\oplus\mathbb C$, $v\mapsto (A_U(v),D_V(v), b_U(v))$ is injective.
\item\label{item:ZeroLocusMomentMapSurjectivity}
Given a local configuration in $\mathcal D$ of the form:
\begin{equation*}
\begin{tikzpicture}
[scale=.5]
\draw[thick] (0,0)--(2,0);
\draw[thick] (3,0)--(5,0);
\draw[thick] (6,0)--(8,0);

\draw [thick,blue] (6,-1) --(5,1);
\draw [thick,red] (2,-1) --(3,1);

\node[red] at (2,-1.5) {$V$};
\node[blue] at (6,-1.5) {$U$};
\node at (1,0.5){$d_{j-1}$};
\node at (4,0.5){$d_j$};
\node at (7,0.5){$d_{j+1}$};

\end{tikzpicture}
\end{equation*}
Then, the map $\beta:W_{j-1}\oplus W_{j+1}\oplus\mathbb C \rightarrow W_j$, $(v_1,v_2,v_3) \mapsto (D_V(v_1),A_U(v_2), a_U(v_3))$ is surjective.
\end{enumerate}
\end{prop}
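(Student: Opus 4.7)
The plan is to deduce each statement by applying the triangle part semistability conditions \ref{item:S1} and \ref{item:S2} to the blue-line component $x_U \in \mathrm{tri}(W_{U^-},W_{U^+})$ (legitimate by Proposition~\ref{prop:S1S2SemistabilityConditions}), and to use the appropriate case of the moment map equation~\eqref{eq:MomentMapEquations} at the central black line $X_j$ to supply the missing $B_U^{\pm}$-invariance hypothesis.

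For~\ref{item:ZeroLocusMomentMapInjectivity}, set $S := \ker(\alpha) \subset W_j$. In the notation of Subsection~\ref{subsection:trianglePart} applied to $\mathbb{M}_U$, we have $V_1 = W_{U^-} = W_{j-1}$ and $V_2 = W_{U^+} = W_j$, so $B_U^+$ plays the role of $B_2$. By definition $A_U(S) = 0$ and $b_U(S) = 0$, hence to invoke~\ref{item:S1} it suffices to verify $B_U^+(S) \subset S$. Since at $X_j$ the line $X_j^-$ is blue and $X_j^+$ is red, the fourth case of~\eqref{eq:MomentMapEquations} yields $B_U^+ = -C_V D_V$; because $D_V$ vanishes on $S$, so does $B_U^+$, and~\ref{item:S1} forces $S = 0$.

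For~\ref{item:ZeroLocusMomentMapSurjectivity}, set $T := \operatorname{Im}(\beta) = \operatorname{Im}(D_V) + \operatorname{Im}(A_U) + \operatorname{Im}(a_U) \subset W_j$. Here $V_1 = W_{U^-} = W_j$ and $V_2 = W_{U^+} = W_{j+1}$, so now $B_U^-$ plays the role of $B_1$. By construction $\operatorname{Im}(A_U) + \operatorname{Im}(a_U) \subset T$, so to invoke~\ref{item:S2} it suffices to check $B_U^-(T) \subset T$. This time $X_j^-$ is red and $X_j^+$ is blue, so the third case of~\eqref{eq:MomentMapEquations} gives $B_U^- = -D_V C_V$, whence $B_U^-(W_j) \subset \operatorname{Im}(D_V) \subset T$; in particular $B_U^-(T) \subset T$. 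Applying~\ref{item:S2} yields $T = W_j$, i.e.\ $\beta$ is surjective.

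The whole argument is a matter of careful bookkeeping, and the only place that requires any thought is identifying in each configuration which of $W_{U^\pm}$ plays the role of $V_1$ versus $V_2$ in the triangle part, and reading off the correct case of~\eqref{eq:MomentMapEquations} from the local color pattern at $X_j$; once this is set up, both claims reduce to a single application of the corresponding subspace condition.
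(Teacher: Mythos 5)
Your proposal is correct and follows essentially the same route as the paper: use the moment map equation at $X_j$ to supply the missing $B_U^{\pm}$-invariance (via $B_U^+=-C_VD_V$ resp.\ $B_U^-=-D_VC_V$) and then apply the subspace conditions \ref{item:S1} resp.\ \ref{item:S2} for the triangle part $\mathbb M_U$. The only difference is that you write out part \ref{item:ZeroLocusMomentMapSurjectivity} explicitly, where the paper merely declares it analogous; all identifications of $V_1,V_2,B_1,B_2$ and of the relevant case of \eqref{eq:MomentMapEquations} are accurate.
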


\begin{proof} 
For \ref{item:ZeroLocusMomentMapInjectivity}, note that $\mathrm{ker}(D_V)\subset \mathrm{ker}(B_U^+)$ as $C_VD_V=-B_U^+$ by the moment map equations \eqref{eq:MomentMapEquations}. Thus, $\mathrm{ker}(\alpha)$ is $B_U^+$-invariant and therefore satisfies the conditions of \ref{item:S1}. Hence, $\mathrm{ker}(\alpha)=0$ which gives the injectivity of $\alpha$.
The proof of \ref{item:ZeroLocusMomentMapSurjectivity} is analogous.
\end{proof}

\begin{example}\label{example:AffineBraneVariety}
Let $\mathcal D=0\textcolor{red}{\slash}1\textcolor{blue}{\backslash}1\textcolor{blue}{\backslash}1\textcolor{red}{\slash}0$.
The data specifying an element of $\widetilde{\mathcal M}(\mathcal D)$ is encoded in the following diagram:
\begin{equation}\label{eq:ExampleAffineBraneVarietyQuiver}
\begin{tikzcd}
0\arrow[r, in=-150,out=-30,"D_2", swap]&
\mathbb C \arrow[l, in=-330,out=-210,"C_2", swap]\arrow[in=60,out=120,"B_1^-",loop,looseness=5]
&&
\mathbb C \arrow[ll,"A_1"]\arrow[in=100,out=150,"B_1^+",loop,looseness=5]\arrow[in=30,out=80,"B_2^-",loop,looseness=5]\arrow[dl,"b_1"]
&&
\mathbb C \arrow[in=60,out=120,"B_2^+",loop,looseness=5] \arrow[ll,"A_2"] \arrow[r, in=-150,out=-30,"D_1", swap] \arrow[dl,"b_2"]
&
0\arrow[l, in=-330,out=-210,"C_1", swap] \\
&&
\mathbb C \arrow[ul,"a_1"]&&
\mathbb C \arrow[ul,"a_2"]
\end{tikzcd}
\end{equation}
The conditions~\ref{item:S1} and~\ref{item:S2} are equivalent to $A_1,A_2\ne 0$. Thus, we have an isomorphism of varieties
$
\widetilde{\mathcal M}(\mathcal D)\xrightarrow{\sim} (\mathbb C^\ast \times \mathbb C^3)^2$
given by
\[ ((A_i,B_i^-,B_i^+,a_i,b_i)_{i=1,2},(C_i,D_i)) \mapsto (A_1,B_1^+,a_1,b_1,A_2,B_2^+,a_2,b_2).
\]
\end{example}

We next construct bow varieties as hamiltonian reductions of affine brane varieties.

\subsection{Bow varieties}\label{subsection:bowVarieties}
Let $\mathcal D$ be a brane diagram and $\widetilde{\mathcal M}(\mathcal D)$ its associated affine brane diagram. Moreover, let $\mathcal G$ and $m$ be as in the previous subsection. 
\begin{definition}
The \textit{bow variety associated to $\mathcal D$} is defined as 
\[
\mathcal C(\mathcal D):= m^{-1}(0)/\!\!/_{\!\!\chi}\; \mathcal G,
\]
where \[
\chi:\mathcal G\rightarrow\mathbb C^\ast,\quad (g_X)_X\mapsto \prod_{X\in M_{\mathcal D}}\operatorname{det}(g_X)
\]
and  $M_{\mathcal D}$ denote the set of black lines $X$ such that $X^-$ is red.
\end{definition}
Recall from Theorem~\ref{thm:PointsOfGITQuotients}, that the points of $\mathcal C(\mathcal D)$ are characterized by the $\chi$-semistability condition.
Using Mumford's numerical criterion, Nakajima and Takayama, {\cite[Proposition~2.8]{nakajima2017cherkis}}, proved the following criterion for $\chi$-(semi)stability:

\begin{prop}[(Semi-)Stability for bow varieties]\label{prop:nakajimaSemistabilityCriterion} Let \[x=((A_U,B_U^+,B_U^-,a_U,b_U)_U,(C_V,D_V)_V))\in m^{-1}(0).\] Then the following holds:
\begin{enumerate}[label=(\roman*)]
\item The point $x$ is $\chi$-semistable if and only if $x$ satisfies the following condition:
for all graded subspaces $T=\bigoplus_{\textup{$X\in h(\mathcal D)$}}T_X \subset W$ such that $\mathrm{Im}(a_U)+\mathrm{A_U}\subset T_{U^-}$ and $A_U$ induces an isomorphism $W_{U^+}/T_{U^+}\rightarrow W_{U^-}/T_{U^-}$ for all $U\in b(\mathcal D)$, we have
\begin{equation}\label{eq:SemistabilityCodimensionInequality}
\sum_{X\in M_{\mathcal D}} \operatorname{codim}(T_X) \le 0.
\end{equation}
\item The point $x$ is $\chi$-stable if and only if we have an strict inequality in~\eqref{eq:SemistabilityCodimensionInequality} unless $T=W$.
\end{enumerate}
\end{prop}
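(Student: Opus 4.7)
The proof applies Mumford's numerical criterion to the $\mathcal{G}$-action on $m^{-1}(0)$ with the character $\chi$. The bridge between Mumford's criterion and the subspace condition of the proposition is the dictionary between one-parameter subgroups (1-PS) $\sigma:\mathbb{C}^\ast\to \mathcal{G}$ and graded decompositions $W_X=\bigoplus_{n\in\mathbb{Z}}W_X^n$ of each $W_X$, with associated filtrations $F_mW_X:=\bigoplus_{n\geq m}W_X^n$. Exactly as in the proof of Proposition~\ref{prop:S1S2SemistabilityConditions}, $\lim_{t\to 0}\sigma(t).x$ exists precisely when every structure map of $x$ is a morphism of filtered vector spaces, and a direct weight computation gives
\[
\langle \sigma,\chi\rangle=\sum_{X\in M_{\mathcal{D}}}\sum_{n\in\mathbb{Z}} n\dim W_X^n.
\]

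For the direction \emph{$\chi$-semistability $\Rightarrow$ subspace condition}, I would start with an admissible graded subspace $T$ and construct a 1-PS $\sigma_T$ via $\sigma_T(t)|_{T_X}=\operatorname{id}_{T_X}$ and $\sigma_T(t)|_{T_X^c}=t^{-1}\operatorname{id}_{T_X^c}$ for a carefully chosen complement $T_X^c$. A direct count then gives $\langle \sigma_T,\chi\rangle=-\sum_{X\in M_{\mathcal{D}}}\operatorname{codim} T_X$. The hypotheses on $T$ immediately yield $A_U(T_{U^+})\subset T_{U^-}$ and $\operatorname{Im}(a_U)\subset T_{U^-}$, and the iso-on-quotients condition lets me pick the complements so that $A_U$ restricts to an isomorphism $T_{U^+}^c\xrightarrow{\sim}T_{U^-}^c$. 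Using the moment-map identity $B_U^-A_U-A_UB_U^++a_Ub_U=0$ and the injectivity/surjectivity properties of Proposition~\ref{prop:ZeroLocusMomentMapInjectivityAndSurjectivity}, I would propagate compatibility of the two-step filtration $T_X\subset W_X$ to the remaining maps $B_U^\pm$, $b_U$, $C_V$ and $D_V$, so that $\lim_{t\to 0}\sigma_T(t).x$ exists. Mumford's criterion then gives $\langle\sigma_T,\chi\rangle\geq 0$, i.e.\ $\sum_X\operatorname{codim} T_X\leq 0$.

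For the converse direction, I would take an arbitrary 1-PS $\sigma$ with existing limit at $x$ and, for each cutoff $m\in\mathbb{Z}$, study the subspace $F_mW:=\bigoplus_X F_mW_X$. As in the proof of Proposition~\ref{prop:S1S2SemistabilityConditions}, the triangle-part conditions \ref{item:ssS1} and \ref{item:ssS2} applied at each blue line force the graded pieces $A_U^n$ to behave rigidly in the relevant weight ranges, and produce from $F_mW$ a canonical saturation satisfying the iso-on-quotients condition of the proposition. Applying the hypothesis to each such saturated subspace and summing the resulting codimension inequalities over $m$ recovers $\langle \sigma,\chi\rangle\geq 0$. The $\chi$-stable case follows by the same argument with strict inequalities, using that a non-trivial choice $T\neq W$ produces a non-trivial $\sigma_T$ and vice versa.

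The principal difficulty is the propagation step in the first direction: the hypothesis of the proposition only explicitly constrains $A_U$ and $a_U$, yet the existence of $\lim_{t\to 0}\sigma_T(t).x$ requires \emph{all} remaining structure maps to preserve the two-step filtration $T_X\subset W_X$. This rigidity is forced by the iso-on-quotients condition together with the moment-map equations~\eqref{eq:MomentMapEquations}, and making this propagation work requires a globally consistent choice of complements $T_X^c$ across the whole brane diagram. Once this is in place, the rest of the proof is bookkeeping around Mumford's criterion.
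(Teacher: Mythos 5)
The paper itself offers no proof of this proposition: it is quoted from Nakajima--Takayama \cite[Proposition~2.8]{nakajima2017cherkis}, whose argument does run through Mumford's numerical criterion, so your overall framework (the dictionary between one-parameter subgroups of $\mathcal G$ and filtrations of $W$, the formula $\langle\sigma,\chi\rangle=\sum_{X\in M_{\mathcal D}}\sum_n n\dim W_X^n$, and the two-step filtration attached to a destabilizing $T$) is the right one. However, the step you yourself single out as the principal difficulty is a genuine gap, and the way you propose to close it cannot work. For $\lim_{t\to 0}\sigma_T(t).x$ to exist, every structure map must preserve $T$, and the invariance of $T$ under $B_U^{\pm}$, $C_V$, $D_V$ is \emph{not} forced by the conditions on $A_U$ and $a_U$ together with the moment map equations. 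Concretely, in the bow variety of Subsection~\ref{subsection:ExampleAttractingCells}, take $T_2=T_3=0$ and $T_4=T_5=T_6=W$: whenever $a_1=0$ the conditions involving $A_U$ and $a_U$ all hold, yet $C_{V_2}(T_{V_2^+})=\operatorname{Im}(C_2)$ need not lie in $T_3=0$, so $\sigma_T$ has no limit at $x$ and Mumford's criterion gives nothing; and indeed such points can be $\chi$-semistable (the paper's own computation there shows semistability only requires $\operatorname{Im}(a_1)+\operatorname{Im}(A_1C_2a_2)+\operatorname{Im}(A_1C_2a_3)=\mathbb C$). The resolution in the source is that invariance of $T$ under all of $B_U^{\pm},C_V,D_V$, together with $A_U(T_{U^+})\subset T_{U^-}$, is part of the \emph{hypothesis} on $T$, not a consequence; the displayed condition in the proposition is elliptic at exactly this point, and the paper's later applications always verify invariance under all operators before invoking it (Lemma~\ref{lemma:abcdinvariance}, Lemma~\ref{lemma:NS5invariance}). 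Your forward direction should therefore start from the correctly enlarged class of subspaces rather than attempt the propagation.

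The converse direction is also thinner than it reads. For a one-parameter subgroup with existing limit, the filtration steps $F_mW$ with $m\le 0$ are invariant under all maps and contain $\operatorname{Im}(a)$, but $A_U$ need not induce an isomorphism $W_{U^+}/F_mW_{U^+}\to W_{U^-}/F_mW_{U^-}$; conditions \ref{item:ssS1} and \ref{item:ssS2} only yield dimension inequalities between $F_mW_{U^-}$ and $F_mW_{U^+}$. So the ``canonical saturation'' you invoke --- modifying $F_mW$ so that the quotient maps become isomorphisms while retaining invariance under all the other operators and control over $\sum_{X\in M_{\mathcal D}}\operatorname{codim}$ --- is the actual technical content of this implication and is not constructed in your sketch. (An alternative is to bypass saturation entirely and sum the inequalities from \ref{item:ssS1}, \ref{item:ssS2} and the surjectivity statement of Proposition~\ref{prop:ZeroLocusMomentMapInjectivityAndSurjectivity} directly into $\langle\sigma,\chi\rangle\ge 0$, in the spirit of the proof of Proposition~\ref{prop:S1S2SemistabilityConditions}, but then that bookkeeping across the red lines has to be carried out explicitly.) As written, neither direction of the argument is complete.
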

We immediately obtain that the semistable and the stable locus of $m^{-1}(0)$ coincide:
\begin{cor}[Semistable=stable]\label{cor:SemistabilityEqualsStability}
We have $m^{-1}(0)^{\mathrm{ss}}=m^{-1}(0)^{\mathrm{s}}$.
\end{cor}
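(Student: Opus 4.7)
The plan is to apply Proposition~\ref{prop:nakajimaSemistabilityCriterion} and exploit the fact that the quantities $\operatorname{codim}(T_X)$ appearing in \eqref{eq:SemistabilityCodimensionInequality} are nonnegative integers. The key observation is that $\chi$-semistability of a point $x$ already forces, for \emph{every} admissible graded subspace $T \subset W$, that $T_X = W_X$ for each $X \in M_{\mathcal D}$; then a short propagation argument across blue lines forces $T = W$ entirely, which is exactly the $\chi$-stability criterion from part (ii) of Proposition~\ref{prop:nakajimaSemistabilityCriterion}.

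First I would reformulate the two conditions. Since each $\operatorname{codim}(T_X) \geq 0$, the inequality $\sum_{X \in M_{\mathcal D}} \operatorname{codim}(T_X) \leq 0$ is equivalent to the equalities $\operatorname{codim}(T_X) = 0$ for all $X \in M_{\mathcal D}$. Hence $x$ is $\chi$-semistable iff every admissible $T$ satisfies $T_X = W_X$ for all $X \in M_{\mathcal D}$; and since $\sum_{X \in M_{\mathcal D}} \operatorname{codim}(T_X) < 0$ is impossible, $x$ is $\chi$-stable iff every admissible $T$ is in fact equal to $W$. The task therefore reduces to showing: if $T_X = W_X$ for every $X \in M_{\mathcal D}$, then $T_X = W_X$ for every black line $X$.

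The main step is a propagation lemma across blue lines. For each blue line $U$, the admissibility condition requires that $A_U$ induces an isomorphism $W_{U^+}/T_{U^+} \xrightarrow{\sim} W_{U^-}/T_{U^-}$, so in particular $\operatorname{codim}(T_{U^+}) = \operatorname{codim}(T_{U^-})$, and consequently $T_{U^-} = W_{U^-}$ iff $T_{U^+} = W_{U^+}$. Within any blue segment (an equivalence class of black lines connected through blue lines, as in Definition~\ref{uglyDef} and the paragraph thereafter), either all black lines $X$ satisfy $T_X = W_X$ or none do. I would then check that every blue segment contains some black line at which $T = W$: every blue segment other than the leftmost one begins immediately to the right of a red line, and its leftmost black line therefore lies in $M_{\mathcal D}$, where $T = W$ by hypothesis; and the leftmost blue segment contains $X_1$, which has label $d_{X_1} = 0$ by the definition of a brane diagram, so that $T_{X_1} = W_{X_1} = 0$ holds trivially.

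Combining these observations, $T_X = W_X$ for every black line $X$, i.e.\ $T = W$, proving stability. I do not see a genuine obstacle: the argument is a short combinatorial walk across the brane diagram once one has unpacked what $\chi$-semistability says at the level of codimensions. The only point requiring care is that the admissibility conditions in Proposition~\ref{prop:nakajimaSemistabilityCriterion} are imposed only across blue lines and not across red ones, which is precisely why the elements of $M_{\mathcal D}$ (together with the leftmost label-zero line $X_1$) are needed to seed the propagation in every blue segment.
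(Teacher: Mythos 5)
Your proposal is correct and is precisely the argument the paper leaves implicit (the corollary is stated as an immediate consequence of Proposition~\ref{prop:nakajimaSemistabilityCriterion}): nonnegativity of the codimensions turns the inequality \eqref{eq:SemistabilityCodimensionInequality} into equalities at the lines of $M_{\mathcal D}$, and the isomorphism condition on $A_U$ propagates $\operatorname{codim}(T_X)=0$ across each blue segment, seeded either by a line of $M_{\mathcal D}$ or by $d_{X_1}=0$. No gaps.
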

By Theorem~\ref{thm:PointsOfGITQuotients} and Corollary~\ref{cor:SemistabilityEqualsStability}, we have that $\mathcal C(\mathcal D)=m^{-1}(0)^{\mathrm s}/\mathcal G$ is a geometric quotient. Moreover, Nakajima and Takayama proved that the $\chi$-stable points of $m^{-1}(0)$ have trivial stabilizers~\cite[Lemma~2.10]{nakajima2017cherkis} and that $m^{-1}(0)^{\mathrm s}$ is actually a smooth variety~\cite[Proposition~2.13]{nakajima2017cherkis}. 
Thus, by Luna's slice theorem, $\mathcal C(\mathcal D)$ is a smooth variety and the quotient morphism $\pi:m^{-1}(0)^{\mathrm s}\rightarrow \mathcal C(\mathcal D)$ is an \'etale princial $\mathcal G$-bundle. Since $\mathcal G$ is a special group, see e.g. \cite[Theorem~11.4]{milne1998lectures}, the quotient morphism is actually a principal $\mathcal G$-bundle in the Zariski topology.
Moreover, since $\widetilde M(\mathcal D)$ is algebraic symplectic with moment map $m$ the algebraic version of the Marsden--Weinstein theorem (see e.g.~\cite[Theorem~9.53]{kirillov2016quiver}) gives that also $\mathcal C(\mathcal D)$ is algebraic symplectic. We denote the symplectic form on $\mathcal C(\mathcal D)$ by $\omega_{\mathcal C(\mathcal D)}$.

The Example~\ref{example:AffineBraneVariety} in fact reproduces a very familiar quasi-projective variety:
\begin{example} \label{THEexample} We show that $\mathcal C(\mathcal D)$, for $\mathcal D$ as in Example~\ref{example:AffineBraneVariety}, is isomorphic to the cotangent bundle of the projective line $T^\ast\mathbb P^1$. For this, let $S$ denote the tautological bundle and $Q$ the universal quotient bundle of $\mathbb P^1$. Then, $T^\ast\mathbb P^1$ is isomorphic to the total space of the vector bundle $\operatorname{Hom}(S,Q)$. So the points of $T^\ast\mathbb P^1$ are given by
\[
T^\ast\mathbb P^1 = \{(V,f)\mid V\in\mathbb P^1, f\in \operatorname{End}(\mathbb C^2), \mathrm{Im}(f)\subset V,V\subset \mathrm{ker}(f)\}.
\]
Recall the data to specify elements of $\mathcal C(\mathcal D)$ from~\eqref{eq:ExampleAffineBraneVarietyQuiver}, and that the pair~\ref{item:S1} and~\ref{item:S2} is equivalent to $A_1,A_2\ne 0$. By Proposition~\ref{prop:nakajimaSemistabilityCriterion}, the $\chi$-semistability condition is equivalent to $(a_1,a_2)\ne (0,0)$.
Moreover, by definition, see ~\eqref{eq:nonMomentMap} and~\eqref{eq:MomentMapEquations}, a tuple
\[((A_i,B_i^+,B_i^-,a_i,b_i)_{i=1,2},(C_i,D_i)_{i=1,2})\] is contained in $m^{-1}(0)$ if and only the following equations are satisfied:
\begin{gather*}
B_1^-A_1-A_1B_1^++a_1b_1 =0,\quad 
B_2^-A_2-A_2B_2^++a_2b_2 =0 ,\quad 
B_1^-=0, \quad B_1^+=B_2^-,\quad B_2^+=0.
\end{gather*}
It follows that $\mathrm{ker}(a_1,a_2)$ is of dimension $1$ and we have
\[
\frac{a_1b_1}{A_1} + \frac{a_2b_2}{A_2} =0.
\]
Thus, there exists an isomorphism of varieties $\mathcal C(\mathcal D)\xrightarrow{\sim}T^\ast\mathbb P^1$ given by
\[
[(A_i,B_i^-,B_i^+,a_i,b_i)_{i=1,2},(C_i,D_i)_{i=1,2}]\mapsto \Big(\mathrm{ker}(a_1A_1^{-1},a_2), \begin{pmatrix} b_1 \\ b_2A_2^{-1} \end{pmatrix} \begin{pmatrix} a_1A_1^{-1}& a_2
\end{pmatrix} \Big).
\]
\end{example}
This example shows a very special instance of the general fact that each Nakajima quiver variety of type $A$ is isomorphic to a bow variety,~\cite[Theorem~2.15]{nakajima2017cherkis}. In particular, the family of bow varieties includes the cotangent bundles of partial flag varieties.

A further interesting property of bow varieties is that the injectivity and surjectivity constrains  yield that $\mathcal C(\mathcal D)$ is empty unless $\mathcal D$ satisfies the following properties:
\begin{cor}\label{cor:EmptyBowVarieties} 
If $\mathcal C(\mathcal D)\ne \emptyset$ then
$
d_j\le d_{j-1}+d_{j+1}+1
$
for all local configurations $d_{j-1}\textcolor{red}{\slash}d_{j}\textcolor{blue}{\backslash} d_{j+1}$ and $d_{j-1}\textcolor{blue}{\backslash} d_j\textcolor{red}{\slash} d_{j+1}$ in $\mathcal D$.
\end{cor}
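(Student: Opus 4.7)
The plan is to derive the inequality as a direct consequence of the injectivity and surjectivity statements in Proposition~\ref{prop:ZeroLocusMomentMapInjectivityAndSurjectivity} applied to any point of $m^{-1}(0)$. The assumption $\mathcal C(\mathcal D) \ne \emptyset$ guarantees, by Theorem~\ref{thm:PointsOfGITQuotients}(i), that $m^{-1}(0)^{\mathrm{ss}}$ is nonempty, so in particular $m^{-1}(0)$ contains a point $x$ to which Proposition~\ref{prop:ZeroLocusMomentMapInjectivityAndSurjectivity} applies.

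Then I would split into the two cases. For a local configuration $d_{j-1}\textcolor{blue}{\backslash} d_j \textcolor{red}{\slash} d_{j+1}$, part~\ref{item:ZeroLocusMomentMapInjectivity} of Proposition~\ref{prop:ZeroLocusMomentMapInjectivityAndSurjectivity} furnishes an injective linear map
\[
\alpha : W_j \;\longrightarrow\; W_{j-1} \oplus W_{j+1} \oplus \mathbb C,
\]
so comparing dimensions of source and target yields $d_j \le d_{j-1} + d_{j+1} + 1$. For the mirror configuration $d_{j-1}\textcolor{red}{\slash} d_j \textcolor{blue}{\backslash} d_{j+1}$, part~\ref{item:ZeroLocusMomentMapSurjectivity} instead produces a surjective linear map
\[
\beta : W_{j-1} \oplus W_{j+1} \oplus \mathbb C \;\longrightarrow\; W_j,
\]
and dimension counting for $\beta$ gives the same bound $d_j \le d_{j-1} + d_{j+1} + 1$.

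There is no real obstacle: the substantive work is already encapsulated in Proposition~\ref{prop:ZeroLocusMomentMapInjectivityAndSurjectivity}, whose proof used the moment map equations~\eqref{eq:MomentMapEquations} together with the subspace conditions~\ref{item:S1} and~\ref{item:S2} to identify $\ker(\alpha)$ (respectively a complement of $\operatorname{Im}(\beta)$) as a subspace satisfying the hypotheses of~\ref{item:S1} (respectively~\ref{item:S2}), forcing it to vanish. Once that is granted, the corollary is purely a dimension count.
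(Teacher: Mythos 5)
Your proposal is correct and matches the paper's intended argument: the corollary is stated there as an immediate consequence of the injectivity/surjectivity properties of Proposition~\ref{prop:ZeroLocusMomentMapInjectivityAndSurjectivity}, and you have the two local configurations matched to the correct parts (injectivity of $\alpha$ for $\textcolor{blue}{\backslash}\,\textcolor{red}{\slash}$, surjectivity of $\beta$ for $\textcolor{red}{\slash}\,\textcolor{blue}{\backslash}$), with the nonemptiness of $m^{-1}(0)$ correctly supplied by surjectivity of $\Phi$ in Theorem~\ref{thm:PointsOfGITQuotients}.
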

Consequently, we restrict our attention to bow varieties corresponding to brane diagrams satisfying the inequalities from Corollary~\ref{cor:EmptyBowVarieties}:
\begin{definition} We call a brane diagram $\mathcal D$ \textit{admissible} if 
$
d_j\le d_{j-1}+d_{j+1}+1
$
for all local configurations $d_{j-1}\textcolor{red}{\slash}d_{j}\textcolor{blue}{\backslash} d_{j+1}$ and $d_{j-1}\textcolor{blue}{\backslash} d_j\textcolor{red}{\slash} d_{j+1}$ in $\mathcal D$.
\end{definition}

\begin{assumption} From now on we assume that each brane diagram $\mathcal D$ is admissible.
\end{assumption}

\begin{remark} Nakajima and Takayama gave a more general definition of  bow varieties depending on more stability parameters
$\nu_{\sigma}^{\mathbb C}$ and $\nu_{\sigma}^{\mathbb R}$. For simplicity, we only consider bow varieties corresponding to the specializations $\nu_{\sigma}^{\mathbb C}=0$ and $\nu_{\sigma}^{\mathbb R}=-1$. One nice feature of this family of bow varieties is that they are smooth, which is not true in general. 
\end{remark}

Bow varieties share many properties with Nakajima quiver varieties; they for instance also admit a family of tautological vector bundles. Given a black line $X$, we have a free diagonal $\mathcal G$-action on the product $W_X\times\widetilde{\mathcal M}(\mathcal D)^{\mathrm s}$, and we can define the following vector bundles. 
\begin{definition}\label{definition:TautologicalBundles}
The \textit{tautological (vector) bundle} $\xi_X$ corresponding to $X$ is defined as the geometric quotient
$$
\xi_X:= W_X\times m^{-1}(0)^{\mathrm s}/\mathcal G.
$$ Furthermore, we call $\xi:=\bigoplus_{\textup{$X\in h(\mathcal D)$}}\xi_X$ the \textit{full tautological bundle over $\mathcal C(\mathcal D)$}.
\end{definition}
Note that as the projection $m^{-1}(0)^{\mathrm s}\rightarrow\mathcal C(\mathcal D)$ is a principal $\mathcal G$-bundle in the Zariski topology, $\xi_X$ is indeed a vector bundle over $\mathcal C(\mathcal D)$ in the Zariski topology.

\subsection{Torus action}
\label{subsection:torusAction}
We will use two kind of torus actions on bow varieties. The first one is pretty obvious from the construction of the varieties and leaves the symplectic form on $\mathcal C(\mathcal D)$ invariant. We refer to this action as the \textit{obvious action}. The second one was introduced in~\cite[Section~6.9.3]{nakajima2017cherkis}. Since it scales the symplectic form, we refer to it as the \textit{scaling action}. We follow here the conventions from~\cite[Section~3.1]{rimanyi2020bow}, for the precise connection to the definition of Nakajima and Takayama see~\cite[Section~3.4]{rimanyi2020bow}.

The following two tori and the combination $\mathbb T=\mathbb A\times \mathbb C^\ast_h$ will be used:
\begin{itemize}
\item $\mathbb A=(\mathbb C^\ast)^N$, and its elements are denoted by $(t_1,\ldots,t_N)$ or $(t_U)_{U\in b(\mathcal D)}$ or just by $(t_U)_U$.
\item $\mathbb C^\ast_h=\mathbb C^\ast$, and its elements are usually called $h$.
\end{itemize}
Recall from Definition~\ref{uglyDef} that $N=|b(\mathcal D)|$.

\subsubsection*{The obvious action} For any brane diagram $\mathcal D$, the torus $\mathbb A$ acts algebraically on $\widetilde{\mathcal M}(\mathcal D)$ via
\[
(t_U)_{U}.((A_U,B_U^+,B_U^-,a_U,b_U)_U,(C_V,D_V)_V)=
((A_U,B_U^+,B_U^-,a_Ut_U^{-1},t_Ub_U)_U,(C_V,D_V)_V).
\]
By construction, the $\mathbb A$-action commutes with the $\mathcal G$-action, and $m$ from \eqref{m} is $\mathbb A$-equivariant. From the explicit description, \cite[Proposition~5.7]{nakajima2017cherkis}, of the symplectic form on $\widetilde{\mathcal M}(\mathcal D)$ it follows that the symplectic form on $\widetilde{\mathcal M}(\mathcal D)$ is $\mathbb A$-invariant. Thus, we obtain an induced $\mathbb A$-action on $\mathcal C(\mathcal D)$, explicitly given by
\[
(t_U)_{U}.[(A_U,B_U^+,B_U^-,a_U,b_U)_U,(C_V,D_V)_V]=
[(A_U,B_U^+,B_U^-,a_Ut_U^{-1},t_Ub_U)_U,(C_V,D_V)_V].
\]
As the symplectic form on $\widetilde{\mathcal M}(\mathcal D)$ is $\mathbb A$-invariant, so is the symplectic form $\omega_{\mathcal C(\mathcal D)}$ on $\mathcal C(\mathcal D)$.

\subsubsection*{The scaling action}
There is an algebraic $\mathbb C^\ast_h$-action on $\widetilde{\mathcal M}(\mathcal D)$ given by
\[
h.((A_U,B_U^+,B_U^-,a_U,b_U)_U,(C_V,D_V)_V)=((A_U,hB_U^+,hB_U^-,a_U,hb_U)_U,(hC_V,D_V)_V).
\]
This action also commutes with the $\mathcal G$-action and $m^{-1}(0)^{\mathrm s}$ is a $\mathbb C^\ast_h$-invariant locally closed subvariety of $\widetilde{\mathcal M}(\mathcal D)$. It follows from \cite[Proposition~5.7]{nakajima2017cherkis} that the symplectic form on $\widetilde{\mathcal M}(\mathcal D)$ is scaled by this action.
Hence, we get an induced $\mathbb C^\ast_h$-action on $\mathcal C(\mathcal D)$ given by
\[
h.[(A_U,B_U^+,B_U^-,a_U,b_U)_U,(C_V,D_V)_V]=[(A_U,hB_U^+,hB_U^-,a_U,hb_U)_U,(hC_V,D_V)_V].
\]
By construction, $\omega_{\mathcal C(\mathcal D)}$ gets also scaled by the $\mathbb C^\ast_h$-action. 

By construction, the obvious and the scaling action commute and give rise to an action of $\mathbb T:=\mathbb A\times\mathbb C_h^\ast$. The $\mathbb T$-equivariant cohomology ring of $\mathcal C(\mathcal D)$ will be one of the main players in our study of the theory of stable envelopes in the context of bow varieties.

\subsection{Hanany\textbf{--}Witten transition}
The family of bow varieties comes with an interesting collection of isomorphisms between bow varieties, called Hanany--Witten isomorphisms. We will use here some of their properties and the underlying combinatorics. For the explicit construction see \cite[Section~7]{nakajima2017cherkis} and also the exposition in~\cite[Section~3.3]{rimanyi2020bow}. 

We start be describing the combinatorics underlying the Hanany--Witten isomorphisms. 
\begin{definition}
Let $\mathcal D$ and $\tilde{\mathcal D}$ be brane diagrams. We say that $\tilde{\mathcal D}$ \textit{is obtained from $\mathcal D$ via a Hanany--Witten transition} if $\tilde{\mathcal D}$ differs from $\mathcal D$ by performing a local move of the form
%
%
%
%
%

\begin{equation*}
\begin{tikzpicture}
[scale=.5]
\draw[thick] (0,0)--(2,0);
\draw[thick] (3,0)--(5,0);
\draw[thick] (6,0)--(8,0);

\draw [thick,red] (5,-1) --(6,1);
\draw [thick,blue] (3,-1) --(2,1);

\node[red] at (5,-1.5) {$V_j$};
\node[blue] at (3,-1.5) {$U_i$};
\node at (1,0.5){$d_{k-1}$};
\node at (4,0.5){$d_k$};
\node at (7,0.5){$d_{k+1}$};
\node at (11,0.5){\tiny{HW}};
\draw[->, decorate, decoration=zigzag, line width=0.3mm](10,0) -- (12,0);
\draw[thick] (14,0)--(16,0);
\draw[thick] (17,0)--(19,0);
\draw[thick] (20,0)--(22,0);
\draw [thick,red] (16,-1) --(17,1);
\draw [thick,blue] (20,-1) --(19,1);
\node[red] at (16,-1.5) {$V_j$};
\node[blue] at (20,-1.5) {$U_i$};
\node at (15,0.5){$d_k$};
\node at (18,0.5){$\tilde{d}_{k+1}$};
\node at (21,0.5){$d_k$};
\end{tikzpicture}
\end{equation*}
where $d_{k-1}+d_{k+1}+1=d_k+\tilde d_k$, and write short $\HW$.
\end{definition}
The following result, \cite[Proposition~7.1]{nakajima2017cherkis} and \cite[Theorem~3.9]{rimanyi2020bow}, indicates the importance of the Hanany--Witten transforms:
\begin{prop}[HW-isomorphisms]\label{prop:HWTransition}
Assume $\HW$ exchanging the blue line $U_i$ with a red line. 
Then, there exists a $\mathbb T$-equivariant isomorphism of symplectic varieties \[\Psi:\mathcal C(\mathcal D)\rightarrow \mathcal C(\tilde{\mathcal D}),\]
where the $\mathbb T$-action $\mathcal C(\tilde{\mathcal D})$ is twisted by the algebraic group automorphism 
\begin{equation*}\label{HWiso}
\rho_i:\mathbb T\rightarrow \mathbb T,\quad (t_1,\ldots,t_N,h)\mapsto (t_1,\ldots, t_{i-1},ht_i,t_{i+1},\ldots,t_N,h).
\end{equation*}
That is, $
\Psi(t.x)=\rho_i(t).\Psi(x)$, for all $x\in\mathcal C(\mathcal D)$, $t\in\mathbb T$.
\end{prop}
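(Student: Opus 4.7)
The plan is to construct $\Psi$ by an explicit local change of coordinates at the level of the affine brane varieties, descend through the GIT quotients, and finally check the $\mathbb T$-equivariance with the twist. Since the Hanany--Witten move alters only the blue line $U_i$, the adjacent red line (say $V_j$), and the middle black line $X_k$ between them, every other factor of $\widetilde{\mathcal M}(\mathcal D)$ is carried over to $\widetilde{\mathcal M}(\tilde{\mathcal D})$ unchanged. The whole problem therefore reduces to constructing a canonical identification between the local data on the $\mathcal D$-side---the triangle part $\mathbb M_{U_i}=\mathrm{tri}(W_{k-1},W_k)$ together with $\mathbb M_{V_j}=\operatorname{Hom}(W_{k+1},W_k)\oplus\operatorname{Hom}(W_k,W_{k+1})$ subject to the local moment map constraint at $X_k$---and the analogous local data on the $\tilde{\mathcal D}$-side with middle dimension $\tilde d_k=d_{k-1}+d_{k+1}+1-d_k$.

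For the core local construction, the key input is Proposition~\ref{prop:ZeroLocusMomentMapInjectivityAndSurjectivity}. In the (blue,\,red)-configuration the map $\alpha\colon W_k\to W_{k-1}\oplus W_{k+1}\oplus\mathbb C$, $v\mapsto(A_{U_i}(v),D_{V_j}(v),b_{U_i}(v))$ is injective, so its cokernel has dimension exactly $\tilde d_k$. The plan is to set $\tilde W_k:=\operatorname{coker}(\alpha)$ and to read off the new maps $\tilde D_{V_j}\colon W_{k-1}\to\tilde W_k$, $\tilde A_{U_i}\colon W_{k+1}\to\tilde W_k$ and $\tilde a_{U_i}\colon\mathbb C\to\tilde W_k$ as the three components of the cokernel projection $W_{k-1}\oplus W_{k+1}\oplus\mathbb C\to\tilde W_k$. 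By construction this triple assembles to the surjective $\beta$ of the (red,\,blue)-configuration. A dual construction (using $C_{V_j}$, $a_{U_i}$ and the moment map relation $C_{V_j}D_{V_j}+B^+_{U_i}=0$) supplies $\tilde C_{V_j}$, $\tilde b_{U_i}$, and the new $B^\pm$; verifying the new moment map equations and the stability conditions~\ref{item:ssS1},~\ref{item:ssS2} from Proposition~\ref{prop:nakajimaSemistabilityCriterion} are direct linear algebra checks using the $\mathcal D$-side relations. Swapping the roles of $\mathcal D$ and $\tilde{\mathcal D}$ (so that $W_k$ appears as the kernel of the new $\beta$-map) then inverts this construction.

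The resulting morphism is $\bigl(\prod_{X\ne X_k}\mathrm{GL}(W_X)\bigr)$-equivariant by construction, and it intertwines the $\mathrm{GL}(W_k)$- and $\mathrm{GL}(\tilde W_k)$-actions via the canonical $\mathrm{GL}(\tilde W_k)$-action on the cokernel. It therefore descends to a morphism $\Psi\colon\mathcal C(\mathcal D)\to\mathcal C(\tilde{\mathcal D})$, and the inverse recipe gives $\Psi^{-1}$. The symplectic form on $\widetilde{\mathcal M}(\mathcal D)$ is a sum of local forms on the individual $\mathbb M_U$ and $\mathbb M_V$; outside the affected window the forms match tautologically, and inside it a direct computation using the explicit symplectic structure coming from Proposition~\ref{prop:MomentMapTrianglePart} and the canonical pairing $dC\wedge dD$ on $\mathbb M_{V_j}$ shows that the local identification preserves the combined symplectic form, whence $\Psi^\ast\omega_{\mathcal C(\tilde{\mathcal D})}=\omega_{\mathcal C(\mathcal D)}$.

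For the $\mathbb T$-equivariance with twist $\rho_i$, recall that the obvious action only scales $a_{U_i}\mapsto t_i^{-1}a_{U_i}$, $b_{U_i}\mapsto t_ib_{U_i}$, while the scaling action multiplies $B^\pm, b, C$ by $h$. Since in the recipe above the new $\tilde a_{U_i}$ and $\tilde b_{U_i}$ are built out of $a_{U_i}, b_{U_i}$ together with an extra factor of $C_{V_j}$ or $D_{V_j}$ (the blue line must now pass through the red one), the new local data picks up one additional factor of $h$ precisely on the $\mathbb A$-factor indexed by $i$; this yields exactly the shift $t_i\mapsto ht_i$ defining $\rho_i$. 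The main obstacle is the explicit linear-algebra construction in the second paragraph: one needs a recipe for $\tilde W_k$ and the new maps that simultaneously satisfies the new moment map equations, preserves stability, and interacts with both torus actions so that the twist appears precisely as $\rho_i$, and that is compatible with the reverse recipe that produces the inverse. Once the recipe is on paper, all remaining verifications---descent, preservation of $\omega$, and equivariance---reduce to direct computations.
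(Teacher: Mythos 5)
The paper does not actually prove this proposition: it is imported from \cite[Proposition~7.1]{nakajima2017cherkis} and \cite[Theorem~3.9]{rimanyi2020bow}, so there is no in-text argument to compare against. Your outline follows the strategy of those references --- the isomorphism is local to the window $d_{k-1}\textcolor{blue}{\backslash}d_k\textcolor{red}{\slash}d_{k+1}$, the new middle space is the cokernel of the injective map $\alpha$ of Proposition~\ref{prop:ZeroLocusMomentMapInjectivityAndSurjectivity}.\ref{item:ZeroLocusMomentMapInjectivity}, and the new triple $(\tilde D_{V_j},\tilde A_{U_i},\tilde a_{U_i})$ is read off from the cokernel projection, which automatically assembles into the surjective $\beta$ of the transformed configuration. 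That is the right skeleton.

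As written, however, this is a plan rather than a proof, and you flag the gap yourself: the ``dual construction'' producing $\tilde C_{V_j}$, $\tilde b_{U_i}$ and the new $B^{\pm}$ is exactly where the content lies, and it is not carried out. Concretely one must (i) write an explicit endomorphism of $W_{k-1}\oplus W_{k+1}\oplus\mathbb C$ built from the old data and check, using the moment map equation at $X_k$, that it preserves $\operatorname{im}(\alpha)$ and hence descends to $\operatorname{coker}(\alpha)$; (ii) verify the new moment map equations at $X_{k-1},X_k,X_{k+1}$ together with the stability conditions; (iii) show that the reverse recipe (realizing $W_k$ inside the kernel data of the new $\beta$) composes with the forward one to the identity --- this uses Proposition~\ref{prop:takayamaS1S2implications} and the stability conditions and is what makes $\Psi$ an isomorphism rather than merely a morphism; and (iv) carry out the symplectic form computation, which is not ``tautological'' on the affected window. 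Your heuristic for the twist $\rho_i$ also needs care: under the scaling action only $C_V$ (not $D_V$) and only $b_U$ (not $a_U$) carry $h$-weight, so the claim that the new data ``picks up one additional factor of $h$'' on the $i$-th $\mathbb A$-factor must be verified against the actual formulas for $\tilde a_{U_i}$ and $\tilde b_{U_i}$, not inferred from the slogan that the blue line passes through the red one. None of these steps should fail --- they are precisely the computations of \cite[Section~7]{nakajima2017cherkis} --- but until they are written down the proposal does not establish the proposition.
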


These \textit{Hanany--Witten isomorphisms} will be crucial later. They allow for instance to move  all red lines in a brane diagram to the left of all blue lines while not changing the isomorphism type of the bow variety.  This significantly simplifies e.g. the calculation of $\mathbb T$-fixed points.

\section{Torus fixed points}
\label{section:torusFixedPoints}
The localization theorem in equivariant cohomology allows to describe the equivariant cohomology ring $H_{\mathbb T}^\ast(\mathcal C(\mathcal D))$ via the equivariant cohomology rings of all the fixed points and their interaction.  A good understanding of the set of torus fixed points is essential for concrete calculations related to stable envelopes.
 
Nakajima proved in \cite{nakajima2021geometric} that each bow variety $\mathcal C(\mathcal D)$ admits only finitely many $\mathbb A$-fixed points and the $\mathbb A$-fixed points can be classified 
\cite[Theorem A.5]{nakajima2021geometric} in terms of of certain combinatorial data attached to $\mathcal D$ which can be seen as variants of Maya diagrams. As $\mathcal C(\mathcal D)^{\mathbb A}$ is finite, it follows that $\mathcal C(\mathcal D)^{\mathbb A}=\mathcal C(\mathcal D)^{\mathbb T}$. 
In this section, we briefly recall this parametrization of $\mathcal C(\mathcal D)^{\mathbb T}$ using the language of Rim\'anyi and Shou from \cite{rimanyi2020bow}. Then we follow \cite{nakajima2021geometric} to prove the crucial Theorem~\ref{thm:fixedPointsGeneric1ParameterTorus}, which we call the 
\textit{Generic Co\-cha\-rac\-ter The\-o\-rem}: the $\mathbb T$-fixed locus of any bow variety coincides with the fixed locus corresponding to a generic one-parameter subgroup of $\mathbb A$. This result will later be used to set up the theory of of stable envelopes using attracting cells of torus fixed points.

\subsection{Tie and butterfly diagrams}\label{subsection:TiesAndButterflies}

We start by associating to a brane diagram $\mathcal D$ further combinatorial objects. Up to slight reformulations all this can be found in \cite{rimanyi2020bow}.

Let $Y_1,Y_2$ be lines in $\mathcal D$. We write $Y_1 \triangleleft Y_2$ if $Y_1$ is to the left of $Y_2$.
Assume we are given a pair $(Y_1,Y_2)$ of colored lines and a black line $X$ in $\mathcal D$ with $Y_1\triangleleft Y_2$. We say that the pair $(Y_1,Y_2)$ \textit{covers} $X$ if $Y_1\triangleleft X\triangleleft Y_2$.


\begin{definition}\label{definition:TieDiagrams}
A \textit{tie data with underlying brane diagram $\mathcal D$} is the data of $\mathcal D$ together with a set $D$ of pairs of colored lines of $\mathcal D$ such that the following holds:
\begin{itemize}
\item If $(Y_1,Y_2)\in D$ then $Y_1\triangleleft Y_2$.
\item If $(Y_1,Y_2)\in D$ then either $Y_1$ is blue and $Y_2$ is red, or $Y_1$ is red and $Y_2$ is blue.
\item For all black lines $X$ of $\mathcal D$, the number of pairs in $D$ covering $X$ is equal to $d_X$.
\end{itemize}
\end{definition}

Usually we fix a brane diagram $\mathcal D$ and denote by $D$ any tie data associated with it and call it just \textit{a tie data} without referring to $\mathcal D$. 

One can easily observe, that not every brane diagrams $\mathcal D$ admits a tie data.

Given  $\mathcal D$, we visualize a tie data $D$ by attaching to the brane diagram $\mathcal D$ dotted curves  connecting a red line with a blue line according to the following algorithm. 

We consider all pairs $(Y_1,Y_2)\in D$ of one red and one blue line. 
\begin{itemize}
\item If $Y_1$ is blue and $Y_2$ is red, we draw a dotted connection \textit{below} the diagram $\mathcal D$. 
\item If  $Y_1$ is red and $Y_2$ is blue, we draw a dotted connection \textit{above} the diagram $\mathcal D$. 
\end{itemize}
The resulting diagram is called the \textit{tie diagram} of $D$ and the dotted curves are called \textit{ties}. Conversely a diagram with connections between red and blue lines, drawn from red to blue at the top and from blue to red at the bottom, is a visualization of a tie data, if each black line $X$ is covered from the top and bottom by a total number of $d_X$ arcs. 

\begin{example}
Let for example $\mathcal D$ be the brane diagram
\[
\begin{tikzpicture}
[scale=.5]
\draw[thick] (0,0)--(2,0);
\draw[thick] (3,0)--(5,0);
\draw[thick] (6,0)--(8,0);
\draw[thick] (9,0)--(11,0);
\draw[thick] (12,0)--(14,0);
\draw[thick] (15,0)--(17,0);

\draw [thick,red] (2,-1) --(3,1);
\draw [thick,red] (11,-1) --(12,1);
\draw [thick,blue] (6,-1) --(5,1);
\draw [thick,blue] (9,-1) --(8,1);
\draw [thick,blue] (15,-1) --(14,1);

\node at (1,0.5){$0$};
\node at (4,0.5){$2$};
\node at (7,0.5){$2$};
\node at (10,0.5){$3$};
\node at (13,0.5){$2$};
\node at (16,0.5){$0$};
\end{tikzpicture}
\]
Then the pairs  $D=\{(V_2,U_1),(V_2,U_3),(U_1,V_1),(U_2,V_1),(V_1,U_3)\}$ give the data of a  tie diagram. Its visualization is given by  
\[
\begin{tikzpicture}
[scale=.5]
\draw[thick] (0,0)--(2,0);
\draw[thick] (3,0)--(5,0);
\draw[thick] (6,0)--(8,0);
\draw[thick] (9,0)--(11,0);
\draw[thick] (12,0)--(14,0);
\draw[thick] (15,0)--(17,0);

\draw [thick,red] (2,-1) --(3,1);
\draw [thick,red] (11,-1) --(12,1);
\draw [thick,blue] (6,-1) --(5,1);
\draw [thick,blue] (9,-1) --(8,1);
\draw [thick,blue] (15,-1) --(14,1);

\draw[dashed] (3,1) to[out=45,in=135] (5,1);
\draw[dashed] (3,1) to[out=45,in=135] (14,1);
\draw[dashed] (6,-1) to[out=315,in=225] (11,-1);
\draw[dashed] (9,-1) to[out=315,in=225]  (11,-1);
\draw[dashed] (12,1) to[out=45,in=135] (14,1);

\node at (1,0.5){$0$};
\node at (4,0.5){$2$};
\node at (7,0.5){$2$};
\node at (10,0.5){$3$};
\node at (13,0.5){$2$};
\node at (16,0.5){$0$};
\end{tikzpicture}
\]
Note that for instance for the first label 2 the two curves run above the brane diagram, whereas for the second 2 one curve runs above and one below the diagram.
\end{example}
As tie data and their corresponding tie diagrams are in obvious one-to-one correspondence, we do not distinguish between them.

\subsection{Torus fixed points of bow varieties} Next we assign to each tie diagram $D$ a $\mathbb T$-fixed point $x_D\in\mathcal C(\mathcal D)$. By \cite{nakajima2021geometric}, the assignment $D\mapsto x_D$ then provides the following 

\begin{theorem}[Classification Theorem of $\mathbb T$-fixed points]\label{thm:torusFixedPointsOfBowVarieties}
There is a bijection
\[
\begin{tikzcd}
\{\textit{Tie diagrams of $\mathcal D$}\}\arrow[r,leftrightarrow, "1:1"] &\mathcal C(\mathcal D)^{\mathbb T}.
\end{tikzcd}
\]
\end{theorem}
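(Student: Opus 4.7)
The plan is to construct the bijection explicitly in both directions. First, I would define the forward map $D \mapsto x_D$ combinatorially: given a tie diagram $D$, I set $W_X = \bigoplus_\tau \mathbb{C}\cdot e_\tau$ where $\tau$ ranges over the ties of $D$ covering $X$, so the covering condition of Definition~\ref{definition:TieDiagrams} yields $\dim W_X = d_X$. The structure maps are then specified on the tie basis by combinatorial rules: at a blue line $U$ the map $A_U$ acts as the identity on $e_\tau$ if $\tau$ covers both $U^-$ and $U^+$ and as zero otherwise, while $a_U$ and $b_U$ are determined by the ties with an endpoint at $U$; at a red line $V$ the maps $C_V, D_V$ are specified analogously; and all $B_U^{\pm}$ are set to zero. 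One then checks locally at each black line that $x_D \in m^{-1}(0)$ using~\eqref{eq:MomentMapEquations}, and that $x_D$ is $\chi$-semistable via Proposition~\ref{prop:nakajimaSemistabilityCriterion}: any graded subspace $T \subsetneq W$ satisfying the hypotheses of that proposition would have to omit some tie $\tau$, and the propagation of $e_\tau$ through the diagram forces the correct sign in the codimension inequality. Finally, one defines a cocharacter $\phi_D : \mathbb T \to \mathcal G$ assigning each $e_\tau$ a weight equal to a product of certain $t_U$'s and $h$ determined by the shape of $\tau$; this cocharacter compensates the $\mathbb T$-action on $x_D$, showing that $[x_D] \in \mathcal C(\mathcal D)^{\mathbb T}$.

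For the inverse direction, let $[x] \in \mathcal C(\mathcal D)^{\mathbb T}$ and lift to $\hat{x} \in m^{-1}(0)^{\mathrm s}$. Because $\mathcal G$ acts freely on $m^{-1}(0)^{\mathrm s}$ by \cite[Lemma~2.10]{nakajima2017cherkis}, there exists a unique cocharacter $\phi : \mathbb T \to \mathcal G$ with $t.\hat{x} = \phi(t).\hat{x}$ for all $t \in \mathbb T$, and this produces $\mathbb T$-weight decompositions $W_X = \bigoplus_\lambda W_X^\lambda$ with respect to which every structure map is homogeneous. Using the injectivity and surjectivity statements of Proposition~\ref{prop:ZeroLocusMomentMapInjectivityAndSurjectivity} together with the moment map equations~\eqref{eq:MomentMapEquations}, I would show that each nonzero weight component traces a trajectory through the brane diagram starting at a colored line of one color and ending at a colored line of the opposite color; these trajectories will be exactly the ties of a tie diagram $D(x)$. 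The covering condition of Definition~\ref{definition:TieDiagrams} then follows from the dimension count $\dim W_X = \sum_\lambda \dim W_X^\lambda$.

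It follows directly from the construction that $D(x_D) = D$, and distinct tie diagrams produce weight decompositions with different characters of $\mathbb A$, so the forward map is injective. The main obstacle I expect is the surjectivity step: one must show that each nonzero weight space $W_X^\lambda$ is one-dimensional, and that the resulting combinatorics matches a tie diagram. This requires applying the triangle-part conditions~\ref{item:ssS1} and~\ref{item:ssS2} to weight-graded subspaces of $W$ in order to rule out higher-multiplicity weights, since otherwise a suitably chosen one-parameter subgroup of $\mathcal G$ would violate the $\chi$-stability of $\hat{x}$ by Proposition~\ref{prop:nakajimaSemistabilityCriterion}. Once multiplicity one is established and the admissible characters of $\mathbb T$ are identified with paths between red and blue lines in the brane diagram, the combinatorial dictionary is complete. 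This analysis is essentially Nakajima's argument in \cite{nakajima2021geometric}, reformulated in the tie-diagram language of \cite{rimanyi2020bow}.
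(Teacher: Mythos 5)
Your overall architecture (an explicit forward map $D\mapsto x_D$, an inverse built from the $\mathbb T$-weight decomposition of the fibre of the tautological bundle, and the stability conditions to control multiplicities) is the Nakajima/Rim\'anyi--Shou strategy that the paper follows, but your explicit forward construction is wrong, and the error is not cosmetic. Setting all $B_U^{\pm}=0$ and letting $W_X$ be spanned by one basis vector per tie covering $X$ produces a tuple that in general does not even lie in the triangle part $\mathrm{tri}(W_{U^-},W_{U^+})$, hence not in $\widetilde{\mathcal M}(\mathcal D)$. Indeed, suppose a blue line $U$ carries two or more ties $(V,U)$ running to red lines on its left. These ties cover $U^-$ but not $U^+$, so $\mathrm{Im}(A_U)$ (the span of the ties covering both $U^-$ and $U^+$) has codimension at least $2$ in $W_{U^-}$, while $a_U$ has rank at most one. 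With $B_U^-=0$ the subspace $T=\mathrm{Im}(A_U)+\mathrm{Im}(a_U)$ is then a proper $B_U^-$-invariant subspace of $W_{U^-}$, violating~\ref{item:S2}; dually, two or more ties leaving $U$ to the right produce a nonzero $B_U^+$-invariant subspace of $\ker(A_U)\cap\ker(b_U)$, violating~\ref{item:S1}. This is precisely why the paper's construction of $x_D$ goes through butterfly diagrams: within each $\mathbb A$-weight space the operators $B_U^{\pm}$ must act as nonzero nilpotent shift operators so that $a_U(1)$ is a cyclic vector generating all of $W_{\sigma_U,U^-}$ (compare Corollary~\ref{cor:weightspaceoperatorsD5}.\ref{item:D5generators}), and the additional grading by the $h$-weight is what separates the several ties attached to the same blue line.

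A related inaccuracy appears in your inverse direction: the nonzero weight spaces are not one-dimensional for the $\mathbb A$-action alone --- $\dim W_{\sigma_U,X}$ equals the number of ties joining $U$ that cover $X$, which can be arbitrarily large. One-dimensionality holds only after refining by the $\mathbb C^\ast_h$-weight, and the resulting weight lines correspond to butterfly vertices (a black line together with a tie of $U$ covering it), not to ties; the ties themselves are recovered from the dimension drops $\dim(W_{\sigma_U,V^+})=\dim(W_{\sigma_U,V^-})+1$ across red lines, which is exactly how the paper extracts $D$ from a fixed point in the proof of Theorem~\ref{thm:fixedPointsGeneric1ParameterTorus}. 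With the corrected normal forms the remaining steps of your outline (semistability via Proposition~\ref{prop:nakajimaSemistabilityCriterion}, injectivity from distinct $\mathbb A$-characters, surjectivity from the weight-space analysis) do go through, but as written your proposal does not produce a well-defined map from tie diagrams to points of $\mathcal C(\mathcal D)$.
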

In particular, $\mathcal C(\mathcal D)$ admits a $\mathbb T$-fixed point if and only if we can extend $\mathcal D$ to a tie diagram. For the detailed combinatorics see \cite[Appendix]{nakajima2021geometric} and \cite[Theorem~4.8]{rimanyi2020bow}. 

Our next goal is to illustrate the construction of $x_D$. This requires some preparation.

\subsection{Butterfly diagrams}

Given a tie diagram $D$, we first assign to $D$ a family of colored graphs which are called butterfly diagrams. Based on the structure of these butterfly diagrams, we then define in the subsequent subsection the $\mathbb T$-fixed point $x_D$ in terms of matrices.

We first define the vertex set of the butterfly diagrams. Recall notation from Definition~\ref{uglyDef}.

\begin{definition}\label{defi:ButterflyVertices} Let $D$ be a tie diagram and $U$ a blue line in $\mathcal D$. Let $J\in\{1,\ldots, M+N\}$ such that $U^-=X_J$. The set $V(D,U)$ of \textit{butterfly vertices corresponding to $\mathcal D$ and $U$} is a finite subset of $\mathbb Z^2$  where a point $(j_1,j_2)\in \mathbb Z^2$ is contained in $V(D,U)$ if and only if the following conditions (i)--(iii) are satisfied
\begin{enumerate}[label=(\roman*)]
\item $2-J\le j_1\le M+N-J$, 
\item $c_{D,U,X_{j_1+J}}\le j_2 < c_{D,U,X_{j_1+J}}+d_{D,U,X_{j_1+J}}$,
\item $d_{D,U,X_{j_1+J}}\ne 0$.
\end{enumerate}
Here, $c_{D,U,X}$ and $d_{D,U,X}$ are integers, depending on a black line $X=X_j$,  defined as follows: 
\[
d_{D,U,X} :=
\begin{cases} |\{\textup{$V\in r(\mathcal D)$}\mid (V, U)\in D, V\triangleleft X\}| & \textup{if $X\;\triangleleft \;U$,}\\
 |\{\textup{$V\in r(\mathcal D)$}\mid (U,V)\in D, V\triangleright X\}| & \textup{if $X\triangleright U$}.
\end{cases} 
\]
For $X_j\triangleleft U$ we define $c_{D,U,X_j}$ recursively via $c_{D,U,X_J}=0$ and for $2\le j <J$ as
\[
c_{D, U,X_j}:= \begin{cases}
c_{D, U,X_{j+1}} &\textup{if $X_j^+$ is blue},\\
c_{D, U,X_{j+1}} &\textup{if $X_j^+$ is red and $d_{D, U,X_j}+1=d_{D, U,X_{j+1}}$},\\
c_{D, U,X_{j+1}}-1 &\textup{if $X_j^+$ is red and $d_{D, U,X_j}=d_{D,U,X_{j+1}}$}.
\end{cases}
\]
In case  $ X_j\triangleright U$, we set
\[
c_{D,U,X_j}:= \begin{cases} d_{D,U,X_{J+1}}- d_{D,U,X_j} +1 &\textup{if $d_{D,U,X_J}=0$,}\\
d_{D,U,X_{J+1}}- d_{D,U,X_j} &\textup{if $d_{D,U,X_J}\ne 0$.}
\end{cases}
\]
%
We call the elements of $V(D,U)$ the \textit{butterfly vertices of $D$ and $U$}
and the integers $c_{D,U,X_j}$ the \textit{column bottom indices of $D$ and $U$}.
\end{definition}

\begin{example}\label{example:ButterflyVertices} Let $D$ be the following tie diagram:
\[
\begin{tikzpicture}
[scale=.4]
\draw[thick] (-3,0)--(-1,0);
\draw[thick] (0,0)--(2,0);
\draw[thick] (3,0)--(5,0);
\draw[thick] (6,0)--(8,0);
\draw[thick] (9,0)--(11,0);
\draw[thick] (12,0)--(14,0);
\draw[thick] (15,0)--(17,0);
\draw[thick] (18,0)--(20,0);
\draw[thick] (21,0)--(23,0);
\draw[thick] (24,0)--(26,0);

\draw [thick,red] (-1,-1) -- (0,1);
\draw [thick,red] (2,-1) --(3,1);
\draw [thick,red] (5,-1) --(6,1);
\draw [thick,blue] (8,1) --(9,-1);
\draw [thick,red] (11,-1) --(12,1);
\draw [thick,blue] (14,1) --(15,-1);
\draw [thick,red] (17,-1) --(18,1);
\draw [thick,blue] (20,1) --(21,-1);
\draw [thick,red] (23,-1) --(24,1);

\node at (-2,0.5){$0$};
\node at (1,0.5){$1$};
\node at (4,0.5){$2$};
\node at (7,0.5){$3$};
\node at (10,0.5){$3$};
\node at (13,0.5){$5$};
\node at (16,0.5){$4$};
\node at (19,0.5){$2$};
\node at (22,0.5){$2$};
\node at (25,0.5){$0$};

\draw[dashed] (3,1) to[out=45,in=135] (14,1);
\draw[dashed] (6,1) to[out=45,in=135] (8,1);
\draw[dashed] (0,1) to[out=45,in=135] (14,1);
\draw[dashed] (12,1) to[out=45,in=135] (14,1);
\draw[dashed] (12,1) to[out=45,in=135] (20,1);

\draw[dashed] (9,-1) to[out=315,in=225] (17,-1);
\draw[dashed] (15,-1) to[out=315,in=225]  (17,-1);
\draw[dashed] (15,-1) to[out=315,in=225]  (23,-1);
\draw[dashed] (21,-1) to[out=315,in=225]  (23,-1);
\end{tikzpicture}
\]
 We pick $U=U_2$. In order to determine the integers $d_{D, U,X}$, we  remove all ties \textit{not} connected to $U_2$ and count for each black line $X$ the number of ties which cover $X$:
\[
\begin{tikzpicture}
[scale=.4]
\draw[thick] (-3,0)--(-1,0);
\draw[thick] (0,0)--(2,0);
\draw[thick] (3,0)--(5,0);
\draw[thick] (6,0)--(8,0);
\draw[thick] (9,0)--(11,0);
\draw[thick] (12,0)--(14,0);
\draw[thick] (15,0)--(17,0);
\draw[thick] (18,0)--(20,0);
\draw[thick] (21,0)--(23,0);
\draw[thick] (24,0)--(26,0);

\draw [thick,red] (-1,-1) -- (0,1);
\draw [thick,red] (2,-1) --(3,1);
\draw [thick,red] (5,-1) --(6,1);
\draw [thick,blue] (8,1) --(9,-1);
\draw [thick,red] (11,-1) --(12,1);
\draw [thick,blue] (14,1) --(15,-1);
\draw [thick,red] (17,-1) --(18,1);
\draw [thick,blue] (20,1) --(21,-1);
\draw [thick,red] (23,-1) --(24,1);

\node at (-2,0.5){$0$};
\node at (1,0.5){$1$};
\node at (4,0.5){$2$};
\node at (7,0.5){$2$};
\node at (10,0.5){$2$};
\node at (13,0.5){$3$};
\node at (16,0.5){$2$};
\node at (19,0.5){$1$};
\node at (22,0.5){$1$};
\node at (25,0.5){$0$};

\draw[dashed] (3,1) to[out=45,in=135] (14,1);
\draw[dashed] (0,1) to[out=45,in=135] (14,1);
\draw[dashed] (12,1) to[out=45,in=135] (14,1);

\draw[dashed] (15,-1) to[out=315,in=225]  (17,-1);
\draw[dashed] (15,-1) to[out=315,in=225]  (23,-1);
\end{tikzpicture}
\]
The resulting numbers $d_{D,U,X_j}$ are the new labels. We denote the underlying brane diagram by $\mathcal D_{D,U}$. The column bottom indices $c_{D,U,X_j}$ are 
\begin{center}
\begin{tabular}{| c | c | c | c | c | c | c | c | c |}
  \hline			
  $j$ & $2$ & $3$ & $4$ & $5$ & $6$ & $7$ & $8$ & $9$ \\ \hline
  $c_{D, U,X_j}$ & $-1$ & $-1$ & $0$ & $0$ & $0$ & $0$ & $1$ & $1$ \\
  \hline  
\end{tabular}
\vspace{2mm}
\end{center}
Following Definition~\ref{defi:ButterflyVertices}, we draw the elements of $V(D,U)$ as dots into the coordinate plane. For better illustration, we draw the coordinate plane below the brane diagram $\mathcal D_{D,U}$.
\[
\begin{tikzpicture}
[scale=.4]
\draw[thick] (-3,0)--(-1,0);
\draw[thick] (0,0)--(2,0);
\draw[thick] (3,0)--(5,0);
\draw[thick] (6,0)--(8,0);
\draw[thick] (9,0)--(11,0);
\draw[thick] (12,0)--(14,0);
\draw[thick] (15,0)--(17,0);
\draw[thick] (18,0)--(20,0);
\draw[thick] (21,0)--(23,0);
\draw[thick] (24,0)--(26,0);

\draw [thick,red] (-1,-1) -- (0,1);
\draw [thick,red] (2,-1) --(3,1);
\draw [thick,red] (5,-1) --(6,1);
\draw [thick,blue] (8,1) --(9,-1);
\draw [thick,red] (11,-1) --(12,1);
\draw [thick,blue] (14,1) --(15,-1);
\draw [thick,red] (17,-1) --(18,1);
\draw [thick,blue] (20,1) --(21,-1);
\draw [thick,red] (23,-1) --(24,1);

\node at (-2,0.5){$0$};
\node at (1,0.5){$1$};
\node at (4,0.5){$2$};
\node at (7,0.5){$2$};
\node at (10,0.5){$2$};
\node at (13,0.5){$3$};
\node at (16,0.5){$2$};
\node at (19,0.5){$1$};
\node at (22,0.5){$1$};
\node at (25,0.5){$0$};

\draw[fill] (13,-6) circle [radius=.15];
\draw[fill] (13,-9) circle [radius=.15];
\draw[fill] (13,-12) circle [radius=.15];

\draw[fill] (10,-9) circle [radius=.15];
\draw[fill] (10,-12) circle [radius=.15];

\draw[fill] (7,-9) circle [radius=.15];
\draw[fill] (7,-12) circle [radius=.15];

\draw[fill] (4,-15) circle [radius=.15];
\draw[fill] (4,-12) circle [radius=.15];

\draw[fill] (1,-15) circle [radius=.15];

\draw[fill] (16,-9) circle [radius=.15];
\draw[fill] (16,-12) circle [radius=.15];

\draw[fill] (19,-9) circle [radius=.15];
\draw[fill] (22,-9) circle [radius=.15];

\draw[->] (-0.5, -12) -- (23.5, -12);
  \draw[->] (13, -16.5) -- (13, -1.5);
  \node[anchor=north] at (16,-12) {$1$};
  \draw (13,-11.9) -- (13,-12.1);
  \draw (16,-11.9) -- (16,-12.1);
  \draw (19,-11.9) -- (19,-12.1);
  \draw (22,-11.9) -- (22,-12.1);
  \draw (7,-11.9) -- (7,-12.1);
  \draw (4,-11.9) -- (4,-12.1);
  \draw (1,-11.9) -- (1,-12.1);
  
  \draw (12.9,-15) -- (13.1,-15);
  \draw (12.9,-9) -- (13.1,-9);
  \draw (12.9,-6) -- (13.1,-6);
  \draw (12.9,-3) -- (13.1,-3);
  
  \node[anchor=east] at (13,-9) {$1$};

\end{tikzpicture}
\]
\end{example}
Let still $D$ be a tie diagram and $U$ a fixed blue line in  $\mathcal D$.
\begin{definition} A \textit{butterfly diagram for $( D,U)$} is a finite, directed, colored graph with colors black, blue, red, violet and green with vertex set $V({ D},U)$.
\end{definition}

We assign to each pair $(D,U)$ a butterfly diagram $\operatorname{b}(D,U)$. To encode the vertices in the diagram first define subsets of $V(D,U)$:
\begin{eqnarray*}
V_b^+=\{(i,j)\in V(D,U)\mid X_{i+J}^+ \in b(\mathcal D)\},&\quad& V_b^-=\{(i,j)\in V(D,U)\mid X_{i+J}^- \in b(\mathcal D)\},\\
V_r^+=\{(i,j)\in V(D,U)\mid X_{i+J}^+ \in r(\mathcal D)\},&\quad &V_r^-=\{(i,j)\in V(D,U)\mid X_{i+J}^- \in r(\mathcal D)\}.
\end{eqnarray*}
In addition, we set $V_b=V_b^+\cup V_b^-$ and  $V_r=V_r^+\cup V_r^-$.
The  colored arrows of $\operatorname{b}(D,U)$ are recorded in 
\begin{center}
	\begin{table}[h]
\begin{tabular}{ | c | c  l|}
 \hline 
 Color & \multicolumn{2}{|c|}{Arrows of $\operatorname{b}(D,U)$} \\
 \hline  
 \hline
 black & $(i,j-1)\leftarrow (i,j)$ \quad & $(i,j),(i,j-1)\in V_b$ \\
 \hline
 \textcolor{blue}{blue} & $(i-1,j)\textcolor{blue}{\leftarrow} (i,j)$ & $(i,j)\in V_b^-$, $(i-1,j)\in V_b^+$  \\
 \hline
 \textcolor{red}{red} & $(i+1,j)\textcolor{red}{\leftarrow} (i,j)$ & $(i,j)\in V_r^+$, $(i+1,j)\in V_b^-$\\
 \hline
 \textcolor{violet}{violet} & $(i-1,j-1)\textcolor{violet}{\leftarrow} (i,j)$ & $(i,j)\in V_r^+$, $(i+1,j)\in V_b^-$\\
 \hline
 \multirow{2}{*}{\textcolor{green}{green}} & $(0,d_{D,U,U^-}) \textcolor{green}{\leftarrow} \ast$ & if $d_{D,U,U^-}\ne0$ \\
 \cline{2-3}
 & $\ast\textcolor{green}{\leftarrow} (1,d_{D,U,U^-}+1)$ & if $d_{D,U,U^-}<d_{D,U,U^+}$\\
 \hline
\end{tabular}
\caption{Arrows of the butterfly diagram $\operatorname{b}(D,U)$.}
    \label{table:TiesAndButterflies} 
\end{table}
\end{center}
\begin{example} The butterfly diagram $\operatorname{b}(D,U)$ for  $(D,U)$ as in Example~\ref{example:ButterflyVertices} is the following:
\[
\begin{tikzpicture}
[scale=.4]
\draw[thick] (-3,0)--(-1,0);
\draw[thick] (0,0)--(2,0);
\draw[thick] (3,0)--(5,0);
\draw[thick] (6,0)--(8,0);
\draw[thick] (9,0)--(11,0);
\draw[thick] (12,0)--(14,0);
\draw[thick] (15,0)--(17,0);
\draw[thick] (18,0)--(20,0);
\draw[thick] (21,0)--(23,0);
\draw[thick] (24,0)--(26,0);

\draw [thick,red] (-1,-1) -- (0,1);
\draw [thick,red] (2,-1) --(3,1);
\draw [thick,red] (5,-1) --(6,1);
\draw [thick,blue] (8,1) --(9,-1);
\draw [thick,red] (11,-1) --(12,1);
\draw [thick,blue] (14,1) --(15,-1);
\draw [thick,red] (17,-1) --(18,1);
\draw [thick,blue] (20,1) --(21,-1);
\draw [thick,red] (23,-1) --(24,1);

\node at (-2,0.5){$0$};
\node at (1,0.5){$1$};
\node at (4,0.5){$2$};
\node at (7,0.5){$2$};
\node at (10,0.5){$2$};
\node at (13,0.5){$3$};
\node at (16,0.5){$2$};
\node at (19,0.5){$1$};
\node at (22,0.5){$1$};
\node at (25,0.5){$0$};

\node at (14.5,-3){$\ast$};
\draw[fill] (13,-6) circle [radius=.15];
\draw[fill] (13,-9) circle [radius=.15];
\draw[fill] (13,-12) circle [radius=.15];

\draw[fill] (10,-9) circle [radius=.15];
\draw[fill] (10,-12) circle [radius=.15];

\draw[fill] (7,-9) circle [radius=.15];
\draw[fill] (7,-12) circle [radius=.15];

\draw[fill] (4,-15) circle [radius=.15];
\draw[fill] (4,-12) circle [radius=.15];

\draw[fill] (1,-15) circle [radius=.15];

\draw[fill] (16,-9) circle [radius=.15];
\draw[fill] (16,-12) circle [radius=.15];

\draw[fill] (19,-9) circle [radius=.15];
\draw[fill] (22,-9) circle [radius=.15];

\draw [blue, ->] (15.8, -9) -- (13.2,-9);
\draw [blue, ->] (15.8, -12) -- (13.2,-12);

\draw [blue, ->] (9.8, -9) -- (7.2,-9);
\draw [blue, ->] (9.8, -12) -- (7.2,-12);

\draw [blue, ->] (21.8, -9) -- (19.2,-9);

\draw [red, ->] (16.2, -9) -- (18.8,-9);

\draw [red, ->] (10.2, -9) -- (12.8,-9);
\draw [red, ->] (10.2, -12) -- (12.8,-12);

\draw [red, ->] (4.2, -12) -- (6.8,-12);

\draw [red, ->] (1.2, -15) -- (3.8,-15);

\draw [->] (13, -6.2) -- (13,-8.8);
\draw [->] (13, -9.2) -- (13,-11.8);
\draw [->] (16, -9.2) -- (16,-11.8);
\draw [->] (10, -9.2) -- (10,-11.8);
\draw [->] (7, -9.2) -- (7,-11.8);

\draw [violet, ->] (3.8, -12.2) -- (1.2,-14.8);
\draw [violet, ->] (6.8, -12.2) -- (4.2,-14.8);
\draw [violet, ->] (6.8, -9.2) -- (4.2,-11.8);

\draw [violet, ->] (12.8, -6.2) -- (10.2,-8.8);
\draw [violet, ->] (12.8, -9.2) -- (10.2,-11.8);

\draw[->, green] (14.5,-3.2) to [out=-70,in=120] (13,-5.8);

\end{tikzpicture}
\]
For further examples of butterfly diagrams see~\cite[Section~4.6]{rimanyi2020bow}.
\end{example}

\subsection{Explicit construction of $x_D$}\label{subsection:ExplicitConstructionTorusFixedPoints}

Let $D$ be a tie diagram. To construct the corresponding $\mathbb T$-fixed point $x_D\in\mathcal C(\mathcal D)$ we first assign to each butterfly diagram $\operatorname{b}(D,U)$ a family of linear operators as follows:
Let $F_{D,U}=\bigoplus_{i,j\in\mathbb Z}\mathbb Ce_{U,i,j}$ and let $\mathbb C_{D,U}=\mathbb C$. Assume $a$ is an arrow in $\operatorname{b}(D,U)$ which is not green and let $(i_1,j_1)$ be the source of $a$ and $(i_2,j_2)$ be the target of $a$. Then,
we assign to $a$ the vector space endomorphism
\[
\varphi_a:F_{D,U}\rightarrow F_{D,U},\quad \varphi_a(e_{U,i,j})=\begin{cases}
e_{U,i_2,j_2} &\textup{if }i=i_1,j=j_1,\\
0&\textup{else.}
\end{cases}
\]
By construction, $\operatorname{b}(D,U)$ admits at most one green arrow with starting in $\ast$ and at most one green arrow ending in $\ast$.
If $\operatorname{b}(D,U)$ admits a green arrow $a$ starting in $\ast$ and ending in $(i,j)$ we assign to $a$ the vector space homomorphism 
\[
\psi_a: \mathbb C_{D,U}\rightarrow F_{D,U},\quad \psi_a(1)=e_{U,i,j}.
\]
If $\operatorname{b}(D,U)$ admits a green arrow $b$ starting in $(i_1,j_1)$ and ending in $\ast$, we assign to $b$ the vector space homomorphism
\[
\psi_b':F_{D,U}\rightarrow \mathbb C_{D,U},\quad \psi_b'(e_{U,i,j})=\begin{cases}
1 &\textup{if }i=i_1,j=j_1,\\
0&\textup{else.}
\end{cases}
\]
The column indices of the butterfly vertices define finite dimensional subspaces of $F_{D,U}$:
\[
F_{D,U,X_i}:=\mathrm{span}_{\mathbb C}(e_{U,i-J,j}\mid (i-J,j)\in V(D,U)),\quad\textup{for all $X_i\in h(\mathcal D)$.}
\]
Let $U'$ be a blue line of $\mathcal D$ and $J'\in\{1,\ldots,M+N\}$ such that $X_{J'}=(U')^-$. Using the colored arrows of $\operatorname{b}(D,U)$, we define linear operators
\[
A_{D,U,U'}\in\operatorname{Hom}(F_{D,U,X_{J'+1}},F_{D,U,X_{J'}}),\quad B_{D,U,U'}^+\in\operatorname{End}(F_{D,U,X_{J'+1}}),\quad B_{D,U,U'}^-\in\operatorname{End}(F_{D,U,X_{J'}}) 
\]
as
\begin{align*}
A_{D,U,U'}(e_{U,J'-J+1,j}) &= \sum_{\mathclap{a\in \textcolor{blue}{\operatorname{blue}}(D,U,J')}}\varphi_a(e_{U,J'-J+1,j}),\\
B_{D,U,U'}^+(e_{U,J'+1-J,j}) &= \sum_{\mathclap{a\in \textcolor{black}{\operatorname{black}}(D,U,J'+1)}}(-1)\varphi_a(e_{U,J'+1-J,j}),\\
B_{D,U,U'}^-(e_{U,J'-J,j}) &= \sum_{\mathclap{a\in \textcolor{black}{\operatorname{black}}(D,U,J')}}(-1)\varphi_a(e_{U,J'-J,j}).
\end{align*}
Here and in the following, for any color $c$, we denote by $\operatorname{c}(D,U,j)$ the set of arrows colored $c$ in $\operatorname{b}(D,U)$ with first coordinate of the target equal to $j$.


Next, we analogously construct linear operators for each red line. Given a red line $V$ in $\mathcal D$ and $I\in\{1,\ldots,M+N\}$ such that $X_I=V^-$, we define linear operators:
\begin{equation*}
C_{D,U,V}\in\operatorname{Hom}(F_{D,U,X_{I+1}},F_{D,U,X_I})\quad\textup{and}\quad
D_{D,U,V}\in \operatorname{Hom}(F_{D,U,X_I},F_{D,U,X_{I+1}})
\end{equation*}
via the formulas
\[
C_{D,U,V}(e_{U,I-J+1,j}) =\sum_{\mathclap{a\in \textcolor{violet}{\operatorname{violet}}(D,U,I-J)}}\varphi_a(e_{U,I-J+1,j})
,\quad
D_{D,U,V}(e_{U,I-J,j})=
\sum_{\mathclap{a\in \textcolor{red}{\operatorname{red}}(D,U,I-J+1)}}\varphi_a(e_{U,I-J,j}).
\]
Finally, we also define homomorphisms
\begin{equation}\label{eq:DefinitionaUbU}
a_{D,U}\in \operatorname{Hom}(\mathbb C_U,F_{D,U,U^-})\quad\text{and}\quad b_{D,U}\in \operatorname{Hom}(F_{D,U,U^+},\mathbb C_U),
\end{equation}
\begin{align*}
a_{D,U}(1) &= \begin{cases}
\psi_a(1) &\textup{if $\textcolor{green}{\operatorname{green}}^{\mathrm{out}}(D,U)=\{a\}$}, \\
0 &\textup{if $\textcolor{green}{\operatorname{green}}^{\mathrm{out}}(D,U)=\emptyset $},
\end{cases} \\
b_{D,U}(e_{U,i,j})&= \begin{cases}
\psi_b'(e_{U,i,j}) &\textup{if $\textcolor{green}{\operatorname{green}}^{\mathrm{in}}(D,U)=\{b\}$}, \\
0 &\textup{if $\textcolor{green}{\operatorname{green}}^{\mathrm{in}}(D,U)=\emptyset $},
\end{cases}
\end{align*} 
where $\textcolor{green}{\operatorname{green}}^{\mathrm{in}}(D,U)$ and  $\textcolor{green}{\operatorname{green}}^{\mathrm{out}}(D,U)$ are the sets of green arrows starting respectively ending in the additional vertex $\ast$. Write $F_X=\bigoplus_{U\in b(\mathcal D)}F_{D,U,X}$ for each black line $X$. 

Combining the above pieces, we now define the point $x_D$:
 \begin{definition}[Fixed points]
 Let $\mathcal D$ be a brane diagram and $D$ a tie diagram of $\mathcal D$. 
 Set
\[
x_D:=[(A_{D,U},B_{D,U}^+,B_{D,U}^-,a_{D,U},b_{D,U})_U,(C_{D,V},D_{D,V})_V]\in\mathcal C(\mathcal D),
\]
where 
\begin{eqnarray*}
&A_{D,U}= \bigoplus_{U'\in b(\mathcal D)} A_{D,U',U}, \quad
B_{D,U}^+= \bigoplus_{U'\in b(\mathcal D)}B_{D,U',U}^+,  \quad
B_{D,U}^- = \bigoplus_{U'\in b(\mathcal D)}B_{D,U',U}^-,  &\\
&C_{D,V} = \bigoplus_{U'\in b(\mathcal D)} C_{D,U',V}, \quad
D_{D,V}= \bigoplus_{U'\in b(\mathcal D)} D_{D,U',V} &
\end{eqnarray*}
and $a_U,b_U$ are defined as in \eqref{eq:DefinitionaUbU}.
\end{definition}

Using the explicit construction of the operators from above one can directly show that
\[
y_D:=((A_{D,U},B_{D,U}^+,B_{D,U}^-,a_{D,U},b_{D,U})_U,(C_{D,V},D_{D,V})_V)
\]
is contained in $m^{-1}(0)^{\operatorname{s}}$. In addition, given $t=(t_U)_U\in\mathbb A$, we have
$
(t_U)_U.y_D = g_t.y_D
$,
where $g_t=(g_{t,X})_X\in\mathcal G$ and $g_{t,X}$ acts on each $F_{D,U,X}$ via scalar multiplication by $t_U$. Likewise, for $h\in\mathbb C^\ast_h$, we have
$
h.y_D = g_h.y_D,
$
where $g_h=(g_{h,X})_X\in\mathcal G$ and $g_{h,X}$ acts on each $F_{D,U,X}$ via
$g_h(e_{U,i,j})=h^{j}e_{U,i,j}$. Thus, $x_D$ is indeed a $\mathbb T$-fixed point of $\mathcal C(\mathcal D)$.

\subsection{Fixed point matching under Hanany\textbf{--}Witten transition} By Proposition~\ref{prop:HWTransition}, Hanany--Witten isomorphisms induce bijections in torus fixed point sets. This bijection can explicitly be characterized as follows: let $U_i$ be a blue line in $\mathcal D$ and suppose $V_j$ is a red line directly to the right of $U_i$. Let $\HW$ using a Hanany--Witten transition involving $U_i$ and $V_j$ and let $\Psi:\mathcal C(\mathcal D)\xrightarrow\sim \mathcal C(\mathcal D')$ be the corresponding Hanany--Witten isomorphism.
\begin{definition}[Combinatorial HW-transforms]
We define an isomorphism of sets
\[
\psi:\{\textup{Tie diagrams of $\mathcal D$}\} \xrightarrow{\phantom{x}\sim\phantom{x}} \{\textup{Tie diagrams of $\mathcal D'$}\}
\]
via
\[
\psi(D)= \begin{cases}
D\setminus\{(U_i,V_j)\}&\textup{if $(U_i,V_j)\in D$},\\
D\cup \{(U_i,V_j)\}&\textup{if $(U_i,V_j)\notin D$}.
\end{cases}
\]
\end{definition}
The following proposition from {\cite[Section~4.7]{rimanyi2020bow}} relates this to Theorem~\ref{thm:torusFixedPointsOfBowVarieties}:

\begin{prop}[Fixed point matching]\label{prop:HWFixedPointMatching}
The following diagram commutes:
\[
\begin{tikzcd}
\{\textup{Tie diagrams of $\mathcal D$}\} \arrow[r, "\sim"]\arrow[d,"\psi"]& \mathcal C(\mathcal D)^{\mathbb T}
\arrow[d,"\Psi"]\\
\{\textup{Tie diagrams of $\mathcal D'$}\}\arrow[r, "\sim"] & \mathcal C(\mathcal D')^{\mathbb T}
\end{tikzcd}
\]
Here, the horizontal maps are the bijections from the Classification Theorem of $\mathbb T$-fixed points.
\end{prop}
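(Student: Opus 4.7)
My approach is to use the classification theorem (Theorem~\ref{thm:torusFixedPointsOfBowVarieties}). Since Proposition~\ref{prop:HWTransition} gives that $\Psi$ is $\mathbb{T}$-equivariant up to the torus automorphism $\rho_i$, and $\rho_i$ is an isomorphism, $\Psi$ sends $\mathbb{T}$-fixed points of $\mathcal{C}(\mathcal{D})$ bijectively to $\mathbb{T}$-fixed points of $\mathcal{C}(\mathcal{D}')$. Therefore $\Psi(x_D) = x_{D^*}$ for a uniquely determined tie diagram $D^*$ of $\mathcal{D}'$, and the task reduces to identifying $D^* = \psi(D)$. (One should also briefly verify that $\psi(D)$ is indeed a tie diagram for $\mathcal{D}'$: only the middle label $d_k$ changes to $\tilde d_k = d_{k-1}+d_{k+1}+1-d_k$, and the toggle of $(U_i,V_j)$ adjusts the covering count of the middle black line by exactly the required amount while leaving all other counts invariant.)

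The plan is then to verify the equality $D^* = \psi(D)$ by a local computation comparing the explicit matrix data. For every blue line $U \ne U_i$, the ties incident to $U$ and the position index $J$ of $U^-$ are unchanged by the HW move, so $V(D, U) = V(\psi(D), U)$, and the matrix blocks indexed by $U$ in the explicit constructions of $x_D$ and $x_{\psi(D)}$ agree identically. The explicit HW isomorphism from \cite[Section~7]{nakajima2017cherkis} acts on these blocks only by a gauge transformation of $\mathcal{G}$, so they contribute trivially to the comparison and the proof localizes to the data associated to $U_i$ together with the swapped line $V_j$.

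The nontrivial content then concerns the matrices $A_{U_i}, B_{U_i}^\pm, a_{U_i}, b_{U_i}, C_{V_j}, D_{V_j}$. I would apply the explicit formula for $\Psi$ to these entries in $x_D$ and compare with the corresponding matrices in $x_{\psi(D)}$, read off from the butterfly diagram $\operatorname{b}(\psi(D), U_i)$. The effect of toggling $(U_i, V_j)$ on the vertex set $V(D, U_i)$ is the addition or removal of a single butterfly vertex together with the flip of a green arrow at the additional vertex $\ast$, combined with a shift of the first coordinate by $1$ since $U_i^-$ moves from $X_{k-1}$ to $X_k'$. One has to check that this combinatorial operation produces precisely the matrix transformation prescribed by the HW formula, modulo a gauge transformation of the gauge group associated to $\mathcal{D}'$.

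The main obstacle is exactly this combinatorial bookkeeping: one must carefully track how the integers $d_{D, U_i, X}$ and the column bottom indices $c_{D, U_i, X}$ change under the toggle and the reindexing, and verify that the resulting black, blue, red, violet, and green arrows of $\operatorname{b}(\psi(D), U_i)$ combine with the shifted basis vectors of $F_{D, U_i, X}$ to yield exactly the HW-transformed operators. Once this local matching is established, the identification $\Psi(x_D) = x_{\psi(D)}$ in $\mathcal{C}(\mathcal{D}')$ follows, which is the commutativity claimed in the diagram.
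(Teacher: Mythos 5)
The paper does not prove this statement; it cites \cite[Section~4.7]{rimanyi2020bow}. Your overall strategy — use the (twisted) $\mathbb T$-equivariance of $\Psi$ to deduce that $\Psi(x_D)=x_{D^\ast}$ for a unique tie diagram $D^\ast$, then identify $D^\ast=\psi(D)$ by comparing the explicit matrix data — is the right one and is essentially what the cited reference does. However, as written the proposal has two problems. First, the localization step is incorrect: it is not true that $V(D,U)=V(\psi(D),U)$ for $U\ne U_i$. Although the set of ties incident to such a $U$ is unchanged by $\psi$, the integer $d_{D,U,X}$ at the middle black line of the local configuration can change, because the red line $V_j$ moves from one side of that black line to the other. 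For instance, if $U\triangleright V_j$ and $(V_j,U)\in D$, then $V_j$ is not counted in $d_{D,U,X_k}$ before the move (since $V_j\triangleright X_k$) but is counted after it; symmetrically for ties $(U',V_j)$ with $U'\triangleleft U_i$. This is exactly how the total covering number of the middle line changes from $d_k$ to $\tilde d_k=d_{k-1}+d_{k+1}+1-d_k$, and it means the butterfly diagrams, and hence the blocks of $C_{V_j},D_{V_j}$ (and of the operators attached to $U_i$) indexed by blue lines $U\ne U_i$, do change. A correct argument must compare all blocks against the HW formula, not only the $U_i$-block.

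Second, even granting the reduction, the proposal explicitly defers the central verification — that the toggled butterfly combinatorics reproduces the operators prescribed by the Nakajima--Takayama isomorphism up to gauge — calling it "the main obstacle." Since that matching \emph{is} the content of the proposition, what you have is a correct proof plan with one false intermediate claim, not a proof.
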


Consequently,  \textit{the tie diagram $\psi(D)$ is obtained from $D$ via a Hanany--Witten transition.}



\subsection{Separated brane diagrams}\label{subsection:SeparatedBraneDiagrams}

In some proofs appearing later, we will use a reduction to certain nice brain diagrams which we introduce now. 

\begin{definition}
For a given brane diagram $\mathcal D$ the \textit{separation degree of $\mathcal D$} is defined as
\[
\operatorname{sdeg}(\mathcal D):=|\{(U,V)\in b(\mathcal D)\times r(\mathcal D)| U\triangleleft V\}|.
\]
We call $\mathcal D$ \textit{separated} if $\operatorname{sdeg}(\mathcal D)=0$, i.e. all red lines are in $\mathcal D$ to the left of all blue lines.
\end{definition}


We can deduce now that each bow variety is isomorphic to a bow variety corresponding to a separated brane diagram:

\begin{prop}[Reduction argument]\label{prop:separatedBowVarieties} 
There exists a separated brane diagram $\tilde{\mathcal D}$ such that $\HW$, i.e. they are Hanany--Witten equivalent. 
\end{prop}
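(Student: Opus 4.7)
The plan is to induct on the separation degree $\operatorname{sdeg}(\mathcal D)$. The base case $\operatorname{sdeg}(\mathcal D) = 0$ is immediate: by definition $\mathcal D$ is already separated, so I take $\tilde{\mathcal D} = \mathcal D$.

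For the inductive step, assume $\operatorname{sdeg}(\mathcal D) > 0$. The key combinatorial observation is that $\mathcal D$ must then contain a local configuration of the form $d_{k-1}\textcolor{blue}{\backslash}d_k\textcolor{red}{\slash}d_{k+1}$, i.e., a blue line adjacent to (and immediately to the left of) a red line. To see this, let $U$ be the rightmost blue line which has some red line to its right, and let $V$ be the leftmost red line lying to the right of $U$. Any colored line strictly between $U$ and $V$ would have to be either blue (contradicting the choice of $U$, since $V$ would still be a red line to its right) or red (contradicting the minimality of $V$). Hence $U$ and $V$ are consecutive colored lines, separated only by a single black line.

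I can then apply the Hanany--Witten transition at this pair to obtain a new brane diagram $\mathcal D'$ with $\mathcal D \stackrel{\textup{HW}}{\leadsto} \mathcal D'$. A direct count shows $\operatorname{sdeg}(\mathcal D') = \operatorname{sdeg}(\mathcal D) - 1$: the pair $(U,V)$ contributes to $\operatorname{sdeg}(\mathcal D)$ but not to $\operatorname{sdeg}(\mathcal D')$ (since now $V \triangleleft U$), and every other pair of colored lines keeps its relative position, because the transition is local and does not move colored lines outside the swapped region. Iterating this construction $\operatorname{sdeg}(\mathcal D)$ times produces a separated brane diagram.

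The only subtle point is verifying that each intermediate diagram remains admissible, so that the standing Assumption is not violated along the way. The label change at the swapped position is $\tilde d_k = d_{k-1}+d_{k+1}+1-d_k$; admissibility of $\mathcal D$ at the original local configuration $d_{k-1}\textcolor{blue}{\backslash}d_k\textcolor{red}{\slash}d_{k+1}$ yields $\tilde d_k \geq 0$, and the new admissibility inequality at the transformed position reduces to $d_k \geq 0$, which holds. The admissibility conditions at the two neighboring positions are either vacuous (if both flanking colored lines are now of the same color) or follow from a short direct check; alternatively, when $\mathcal C(\mathcal D)\ne \emptyset$, one may simply combine the Hanany--Witten isomorphism from Proposition~\ref{prop:HWTransition} with Corollary~\ref{cor:EmptyBowVarieties} to conclude admissibility of $\mathcal D'$ for free.
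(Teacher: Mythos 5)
Your overall strategy is the same as the paper's: locate an adjacent pair $U\triangleleft V$ with $U$ blue and $V$ red, perform the Hanany--Witten transition there, observe that $\operatorname{sdeg}$ drops by exactly $1$, and iterate. Your justification of the existence of the adjacent pair and of the count $\operatorname{sdeg}(\mathcal D')=\operatorname{sdeg}(\mathcal D)-1$ is correct and in fact more detailed than the paper, which asserts both without argument. The computations $\tilde d_k\geq 0$ and the admissibility at the transformed position are also correct.

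The gap is in the sentence claiming that the admissibility conditions at the two neighboring positions ``follow from a short direct check.'' They do not: admissibility is \emph{not} preserved by Hanany--Witten transitions. Take $\mathcal D=0\textcolor{red}{\slash}0\textcolor{blue}{\backslash}2\textcolor{blue}{\backslash}3\textcolor{red}{\slash}0$, which is admissible (the two relevant configurations give $0\le 0+2+1$ and $3\le 2+0+1$). The transition at the unique $\textcolor{blue}{\backslash}3\textcolor{red}{\slash}$ spot produces $0\textcolor{red}{\slash}0\textcolor{blue}{\backslash}2\textcolor{red}{\slash}0\textcolor{blue}{\backslash}0$, in which the newly created configuration $0\textcolor{blue}{\backslash}2\textcolor{red}{\slash}0$ violates admissibility ($2>0+0+1$); the next step of the iteration would require the label $0+0+1-2=-1$ and is therefore impossible. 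So the greedy procedure can genuinely get stuck, and your parenthetical ``alternatively'' is not an alternative but the actual argument: one must invoke Proposition~\ref{prop:HWTransition} together with Corollary~\ref{cor:EmptyBowVarieties} to guarantee that each intermediate diagram is admissible, and this requires $\mathcal C(\mathcal D)\neq\emptyset$ (in the example above $\mathcal C(\mathcal D)=\emptyset$, precisely by this reasoning). Note that the paper's own one-line proof (``now just repeat this argument'') has exactly the same gap; the statement should be read either under the hypothesis $\mathcal C(\mathcal D)\neq\emptyset$ or as applied only in situations (such as the proof of the Generic Cocharacter Theorem) where the empty case is vacuous. You should therefore promote your fallback to the main argument and record the non-emptiness hypothesis explicitly.
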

\begin{proof} 
Suppose $\operatorname{sdeg}(\mathcal D)>0$. Then, there exist $U\in b(\mathcal D),V\in r(\mathcal D)$ such that $U$ is directly to the left of $V$. Since $\mathcal D$ is admissible, we can apply a Hanany--Witten transition reducing the separatedness degree by $1$. Now just repeat this argument.
\end{proof}
For $\mathcal D$ separated the operators defining points of $\mathcal C(\mathcal D)$ satisfy the following nilpotency conditions, recall the definition of the moment map $m$ from \eqref{m}:

\begin{prop}[Nilpotency]\label{prop:nilpotency}
Suppose $\mathcal D$ is separated and let \[((A_U,B_U^-,B_U^+,a_U,b_U)_{U},(C_V,D_V)_{V})\in m^{-1}(0).\] Then, the following holds:
\begin{enumerate}[label=(\roman*)]
\item\label{item:nilpotencyNS5} We have $(C_{V_j}D_{V_j})^{M-j}=0$ and $(D_{V_j}C_{V_j})^{M-j+1}=0$ for $j=1,\ldots,M-1$.
\item\label{item:nilpotencyD5} We have $(B_{U_1}^-)^{M}=0$.
\end{enumerate}
\end{prop}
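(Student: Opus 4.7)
The proof plan is to use the extreme left boundary of the separated diagram as a starting point and propagate nilpotency to the right via the moment map equations.

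Since $\mathcal D$ is separated, the diagram has the shape $X_1 \textcolor{red}{\slash} X_2 \textcolor{red}{\slash} \cdots \textcolor{red}{\slash} X_{M+1} \textcolor{blue}{\backslash} X_{M+2} \cdots \textcolor{blue}{\backslash} X_{M+N+1}$, with $d_{X_1} = 0$. Because the red lines $V_j$ are numbered right to left, the red line between $X_j$ and $X_{j+1}$ is $V_{M+1-j}$; in particular $V_M^- = X_1$. Setting $k = M+1-j$, the moment map equation in \eqref{eq:MomentMapEquations} at each black line $X_j$ with $2\le j\le M$ (two red neighbors) becomes
\[
D_{V_{k+1}}C_{V_{k+1}} \;=\; C_{V_k}D_{V_k} \quad\text{in }\operatorname{End}(W_{X_{M+1-k}}),\qquad k=1,\ldots,M-1.
\]
The equation at $X_{M+1}$ (red-blue) reads $D_{V_1}C_{V_1} + B_{U_1}^- = 0$, giving $B_{U_1}^- = -D_{V_1}C_{V_1}$, which will reduce (ii) to (i).

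For (i), I plan a downward induction on $k$ with the base case given by the boundary. Since $W_{X_1} = 0$, the map $C_{V_M}\colon W_{X_2}\to W_{X_1}$ and $D_{V_M}\colon W_{X_1}\to W_{X_2}$ both vanish, so $D_{V_M}C_{V_M}=0$ and $C_{V_M}D_{V_M}=0$. Combining the elementary identity
\[
(D_{V_k} C_{V_k})^{e+1} \;=\; D_{V_k}\,(C_{V_k}D_{V_k})^{e}\,C_{V_k}
\]
with the moment relation $C_{V_k} D_{V_k} = D_{V_{k+1}} C_{V_{k+1}}$ gives the induction step: if $(D_{V_{k+1}}C_{V_{k+1}})^{e}=0$ then $(D_{V_k}C_{V_k})^{e+1}=0$. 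Starting from $(D_{V_M}C_{V_M})^{1}=0$ and walking down $k=M,M-1,\ldots,1$, this yields $(D_{V_j}C_{V_j})^{M-j+1}=0$ for all $j$. The first claim in (i) then follows since $(C_{V_j}D_{V_j})^{M-j} = (D_{V_{j+1}}C_{V_{j+1}})^{M-j}=0$ by the $k=j$ instance of the moment relation and the bound just established for $j+1$.

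Part (ii) is then immediate: from $B_{U_1}^- = -D_{V_1}C_{V_1}$ and the case $j=1$ of the bound in (i), namely $(D_{V_1}C_{V_1})^{M}=0$, one gets $(B_{U_1}^-)^M = (-1)^M (D_{V_1}C_{V_1})^M = 0$.

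No serious obstacle is expected; the statement is essentially a consequence of the boundary vanishing $W_{X_1}=0$, propagated rightwards by the moment map relations among consecutive red lines. The only mild care needed is bookkeeping with the right-to-left numbering convention of the red lines and confirming that the only moment map equations used—at the black lines strictly between red lines and at the transition point $X_{M+1}$—are exactly those listed in \eqref{eq:MomentMapEquations} for the separated configuration.
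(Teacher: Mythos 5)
Your proposal is correct and follows essentially the same route as the paper: a downward induction starting from the vanishing $C_{V_M}=D_{V_M}=0$ (forced by $W_{X_1}=0$), propagated via the moment map identities $C_{V_k}D_{V_k}=D_{V_{k+1}}C_{V_{k+1}}$ between consecutive red lines, with (ii) reduced to the $j=1$ case of (i) through $B_{U_1}^-=-D_{V_1}C_{V_1}$. Your index bookkeeping is in fact cleaner than the displayed identities in the paper's proof.
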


\begin{proof} We prove~\ref{item:nilpotencyNS5} by induction on $j$.
By the moment map equation, we have the equalities
\begin{align*}
(C_{V_j}D_{V_j})^{M-j} &= C_{V_{j}}(C_{V_{j+1}}D_{V_{j+1}})^{M-j-1}D_{V_{j}}, \\
(D_{V_j}C_{V_j})^{M-j+1} &= D_{V_{j}}(D_{V_{j+1}}C_{V_{j+1}})^{M-j}D_{V_{j}}.
\end{align*}
Thus,~\ref{item:nilpotencyNS5} follows directly from $C_{V_M}=0,D_{V_M}=0$ via induction.
%
The assertion~\ref{item:nilpotencyD5} follows from~\ref{item:nilpotencyNS5} since $B_{U_1}^-=-C_{V_1}D_{V_1}$.
\end{proof}

\subsection{The Generic Cocharacter Theorem}
To formulate the Cocharacter Theorem, let 
\[
\sigma: \mathbb C^\ast\rightarrow \mathbb A,\quad t\mapsto (\sigma_U(t))_{U},
\]
be a cocharacter. We call $\sigma$ \textit{generic} if $\sigma_U\ne\sigma_{U^\prime}$ for all $U,U'\in b(\mathcal D)$.
In addition, we set
\[
\mathcal C(\mathcal D)^\sigma :=\{x\in\mathcal C(\mathcal D)\mid \sigma(t).x=x\textup{ for all }t\in\mathbb C^\ast\}.
\]
The Generic Cocharacter Theorem states as follows:

\begin{theorem}[Generic Cocharacter Theorem]\label{thm:fixedPointsGeneric1ParameterTorus} Let $\sigma$ be generic. Then, $\mathcal C(\mathcal D)^\sigma = \mathcal C(\mathcal D)^{\mathbb T}$.
\end{theorem}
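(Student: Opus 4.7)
The containment $\mathcal C(\mathcal D)^{\mathbb T} \subseteq \mathcal C(\mathcal D)^\sigma$ is immediate from $\sigma(\mathbb C^\ast) \subseteq \mathbb A \subseteq \mathbb T$. For the reverse, the plan is first to reduce to the case that $\mathcal D$ is separated. By Proposition~\ref{prop:separatedBowVarieties}, $\mathcal D$ is Hanany--Witten equivalent to some separated $\tilde{\mathcal D}$. The Hanany--Witten isomorphism $\Psi$ of Proposition~\ref{prop:HWTransition} is $\mathbb T$-equivariant up to the twist $\rho_i$, and since $\rho_i$ restricts to the identity on the subgroup $\mathbb A \times \{1\} \subseteq \mathbb T$, it sends generic cocharacters of $\mathbb A$ to generic cocharacters. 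Thus $\Psi$ identifies $\mathcal C(\mathcal D)^\sigma$ with $\mathcal C(\tilde{\mathcal D})^\sigma$ and $\mathcal C(\mathcal D)^{\mathbb T}$ with $\mathcal C(\tilde{\mathcal D})^{\mathbb T}$, so one may assume $\mathcal D$ is separated.

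Given $x \in \mathcal C(\mathcal D)^\sigma$, pick a lift $\tilde x = ((A_U,B_U^+,B_U^-,a_U,b_U)_U,(C_V,D_V)_V) \in m^{-1}(0)^{\mathrm s}$. Freeness of the $\mathcal G$-action on $m^{-1}(0)^{\mathrm s}$ (\cite[Lemma~2.10]{nakajima2017cherkis}) yields a unique algebraic cocharacter $g:\mathbb C^\ast\to\mathcal G$ with $\sigma(t).\tilde x = g(t).\tilde x$ for all $t$. The cocharacter $g$ equips each $W_X$ with a $\mathbb Z$-grading $W_X = \bigoplus_k W_X^k$ by weight spaces. Writing $\sigma_U(t) = t^{n_U}$, the equality $\sigma(t).\tilde x = g(t).\tilde x$ translates, component by component, into: $A_U$, $B_U^\pm$, $C_V$, $D_V$ preserve the $g$-grading, $a_U(\mathbb C) \subseteq W_{U^-}^{-n_U}$, and $b_U$ vanishes on $W_{U^+}^k$ for $k \ne -n_U$.

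The plan is now to show that $\tilde x$ lies in the $\mathcal G$-orbit of one of the standard representatives $y_D$ from Subsection~\ref{subsection:ExplicitConstructionTorusFixedPoints}, since each $y_D$ is manifestly $\mathbb T$-fixed. In the separated case, Proposition~\ref{prop:nilpotency} makes $B_U^\pm$, $C_V D_V$ and $D_V C_V$ nilpotent, and these operators respect the $g$-grading. Proceeding through the brane diagram from left to right, I would use the injectivity and surjectivity assertions of Proposition~\ref{prop:ZeroLocusMomentMapInjectivityAndSurjectivity} together with the moment map equations~\eqref{eq:MomentMapEquations} to control the dimensions of the weight spaces at each black line: at a blue line $U$ the vector $a_U(1) \in W_{U^-}^{-n_U}$ seeds a new Jordan-type chain in $g$-weight $-n_U$, which is propagated through the diagram by the nilpotent operators. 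Since $\sigma$ is generic, the integers $n_U$ are pairwise distinct, so every such chain is canonically attached to a unique blue line $U$, and the data recording where each chain terminates assembles into a tie data $D$ in the sense of Definition~\ref{definition:TieDiagrams}. Identifying $W_X$ with $F_{D,X}$ as graded vector spaces then matches $\tilde x$ with $y_D$ up to $\mathcal G$-change of basis, giving $x = x_D \in \mathcal C(\mathcal D)^{\mathbb T}$.

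The main obstacle will be to carry out this inductive reconstruction rigorously: one must canonically split each nilpotent operator into Jordan chains that are coherent across the whole diagram and consistent with the moment map relations, and then verify that the combinatorial data satisfies the covering condition $|\{\textup{ties covering }X\}| = d_X$ from Definition~\ref{definition:TieDiagrams}. The genericity hypothesis $n_U \ne n_{U'}$ is essential precisely here: without it, two chains originating at different blue lines could share the same $g$-weight and the chain-to-blue-line assignment would cease to be canonical. Once the chains are identified with the butterfly vertices $V(D,U)$ of Definition~\ref{defi:ButterflyVertices}, comparison with the explicit formulas in Subsection~\ref{subsection:ExplicitConstructionTorusFixedPoints} concludes the proof.
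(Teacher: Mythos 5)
Your reduction to the separated case and your setup---lifting $x$ to $\tilde x$, extracting the cocharacter $g:\mathbb C^\ast\to\mathcal G$, decomposing $W=\bigoplus_X W_X$ into $g$-weight spaces, and observing that $A_U,B_U^{\pm},C_V,D_V$ preserve the grading while $a_U$ lands in, and $b_U$ is supported on, the weight-$\sigma_U$ piece---is exactly how the paper begins (its Lemma~\ref{lemma:abcdinvariance} and parts (i), (ii) of Proposition~\ref{prop:weightab}). But the heart of the theorem is the reconstruction that you defer to ``the main obstacle,'' and as written there is a genuine gap there. Two specific points are missing. First, you assert that every chain ``is canonically attached to a unique blue line,'' but this presupposes that every nonzero $g$-weight occurring in $W$ equals some $\sigma_U$; a priori there could be weight spaces invisible to all the $a_U,b_U$. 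The paper rules these out by showing that $\bigoplus_{\nu\ne\tau}W_\nu$ satisfies the hypotheses of the $\chi$-semistability criterion (Proposition~\ref{prop:nakajimaSemistabilityCriterion}) and hence equals $W$. Second, and more seriously, ``splitting the nilpotent operators into Jordan chains'' is not canonical and does not by itself show that the chain seeded by $a_{U_i}(1)$ spans $W_{\sigma_{U_i}}$. The paper's mechanism is again stability: it shows $W_{\sigma_{U_i},U_i^-}$ is spanned by the vectors $(B_{U_i}^-)^ka_{U_i}(1)$ by checking that this span plus the remaining weight spaces satisfies~\ref{item:S2}; it proves that $A_U$ restricts to an isomorphism on every weight space of weight $\tau\ne\sigma_U$ (part (iii) of Proposition~\ref{prop:weightab}, again via~\ref{item:S1} and~\ref{item:S2}); and it handles the red segment by verifying that the $C_V,D_V$-span $E$ of the chain satisfies the semistability criterion, which yields the dichotomy $\dim W_{\sigma_{U_i},V^+}-\dim W_{\sigma_{U_i},V^-}\in\{0,1\}$ that actually defines the tie data and guarantees the covering condition of Definition~\ref{definition:TieDiagrams}.

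In short, your outline coincides with the paper's strategy, but the argument needed to close the acknowledged gap is not Jordan-theoretic; it consists of repeated applications of the subspace conditions~\ref{item:S1},~\ref{item:S2} and of Proposition~\ref{prop:nakajimaSemistabilityCriterion} to explicitly constructed invariant subspaces. Proposition~\ref{prop:ZeroLocusMomentMapInjectivityAndSurjectivity}, which you cite instead, is itself a consequence of these conditions but is too weak on its own to give the required generation statements.
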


\begin{remark}
An analogue of Theorem~\ref{thm:fixedPointsGeneric1ParameterTorus}  in the affine setting is discussed in  \cite[Section 4]{nakajima2021geometric} in case of balanced brane diagrams. For simplicity we restrict ourselves to generic cocharacters. More general cocharacters can be dealt with as in  \cite[Theorem 4.14.]{nakajima2021geometric}. 

\end{remark}

\begin{remark}
By slightly adapting the statement of \cite[Theorem~3.1]{botta2023mirror} and its proof it is in fact possible to obtain the Generic Cocharacter Theorem. The main step hereby is a decomposition,  with respect to the action of subtori of $\mathbb A$, of the fixed point set of $\mathcal C(\mathcal D)$ into a product of smaller bow varieties. This product then has to be realised as a set of isolated points by carefully following the underlying combinatorics. Our argument packages these two steps into the action of a one dimensional torus via a generic cocharacter.
\end{remark}

The upcoming four subsections are devoted to the proof of Theorem~\ref{thm:fixedPointsGeneric1ParameterTorus}. The proof is based on a diagrammatic study of the weight spaces of the fiber of the full tautological bundle at torus fixed points. Before we go into the details, we prove a simple consequence of the Generic Cocharacter Theorem about tangent weights of bow varieties:

\begin{cor}[Tangent weights]\label{cor:tangentweights} Let $p\in\mathcal C(\mathcal D)^{\mathbb T}$ and $\tau$ be a $\mathbb T$-weight of $T_p\mathcal C(\mathcal D)$. Then, there exist $i,j\in \{1,\ldots, N\}$ with $i\ne j$ and $m\in\mathbb Z$ such that $\tau=t_i-t_j+mh$.
\end{cor}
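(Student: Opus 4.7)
The plan is to exhibit $T_p\mathcal{C}(\mathcal{D})$ as a subquotient of an explicit $\mathbb{T}$-representation whose weights all have the claimed form, and then to exclude the residual case $i=j$ via the Generic Cocharacter Theorem (Theorem~\ref{thm:fixedPointsGeneric1ParameterTorus}).

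First I would realize $T_p\mathcal{C}(\mathcal{D})$ via the tangent complex of the Marsden--Weinstein reduction. Picking any lift $y\in m^{-1}(0)^{\mathrm s}$ of $p$ and using that $\mathcal G$ acts freely on $m^{-1}(0)^{\mathrm s}$, one has a short exact sequence of $\mathbb T$-equivariant vector spaces
\[
0\longrightarrow \operatorname{Lie}(\mathcal G)\xrightarrow{d\text{-action}} T_y\widetilde{\mathcal M}(\mathcal D)\xrightarrow{\;dm_y\;}\operatorname{Lie}(\mathcal G)^\ast\otimes\mathbb C_h\longrightarrow 0,
\]
with $T_p\mathcal C(\mathcal D)$ the middle cohomology; the twist by $\mathbb C_h$ reflects that $\omega_{\mathcal C(\mathcal D)}$ is scaled by the $\mathbb C_h^\ast$-factor. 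Since $p$ is $\mathbb T$-fixed but $y$ typically is not, there is a unique cocharacter $\sigma_p\colon\mathbb T\to\mathcal G$ with $t.y=\sigma_p(t).y$; the $\mathbb T$-action on $T_y\widetilde{\mathcal M}(\mathcal D)$ that makes the sequence $\mathbb T$-equivariant with target $T_p\mathcal C(\mathcal D)$ is the lifted action $\tilde t\cdot v=\sigma_p(t)^{-1}(t.v)$. From Subsection~\ref{subsection:ExplicitConstructionTorusFixedPoints} one reads off that $\sigma_p$ acts on the subspace $F_{D,U,X}\subset W_X$ by a character of the form $t_U^{\pm 1}\cdot h^{j}$, with $j$ determined by the butterfly index of each basis vector $e_{U,i,j}$.

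Next I would read off the $\mathbb T$-weights on each summand. Every constituent of $T_y\widetilde{\mathcal M}(\mathcal D)$ is one of $\operatorname{Hom}(W_A,W_B)$, $\operatorname{End}(W_A)$, $\operatorname{Hom}(\mathbb C_{a_U},W_{U^-})$ or $\operatorname{Hom}(W_{U^+},\mathbb C_{b_U})$. Decomposing each $W_X$ via the butterfly splitting, on a piece $\operatorname{Hom}(F_{D,U',X},F_{D,U'',Y})$ the twisted $\mathbb T$-action produces a weight of the form $t_{U''}-t_{U'}+mh$ with $m\in\mathbb Z$ arising from the $\mathbb C_h^\ast$-scaling and the $h$-components of $\sigma_p$. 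The delicate case is $a_U$ and $b_U$: here the direct $\mathbb A$-action contributes $\mp t_U$, which is cancelled by the matching twist of $\sigma_p^{-1}$ acting on $W_{U^\mp}$; the surviving weight on $\operatorname{Hom}(\mathbb C_{a_U},F_{D,U',U^-})$ is $t_{U'}-t_U+mh$, and analogously for $b_U$. Exactly the same analysis applies to $\operatorname{Lie}(\mathcal G)=\bigoplus_X\operatorname{End}(W_X)$ and to its dual twisted by $\mathbb C_h$. Since $T_p\mathcal C(\mathcal D)$ is a subquotient of $T_y\widetilde{\mathcal M}(\mathcal D)$ as a $\mathbb T$-representation, every weight of $T_p\mathcal C(\mathcal D)$ has the form $t_i-t_j+mh$ with $i,j\in\{1,\ldots,N\}$ and $m\in\mathbb Z$.

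To exclude $i=j$, suppose $\tau=mh$ were such a weight and let $V_\tau\ne 0$ be the corresponding weight space. Then $V_\tau\subset (T_p\mathcal C(\mathcal D))^{\mathbb A}=T_p(\mathcal C(\mathcal D)^{\mathbb A})$, using smoothness of $\mathcal C(\mathcal D)$ and of torus-fixed loci inside smooth varieties. Pick a generic cocharacter $\sigma\colon\mathbb C^\ast\to\mathbb A$; since it factors through $\mathbb A$, one has $V_\tau\subset T_p(\mathcal C(\mathcal D)^\sigma)$. But Theorem~\ref{thm:fixedPointsGeneric1ParameterTorus} yields $\mathcal C(\mathcal D)^\sigma=\mathcal C(\mathcal D)^{\mathbb T}$, which is finite, so $T_p(\mathcal C(\mathcal D)^\sigma)=0$. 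This contradicts $V_\tau\ne 0$, forcing $i\ne j$. The main obstacle is the weight bookkeeping: one must fix a consistent sign convention for $\sigma_p$ and verify that the direct $\mathbb A$-action on the $a_U,b_U$ factors is exactly absorbed by the $\sigma_p^{-1}$-twist, so that no exotic weights like $\pm(t_U+t_{U'})$ appear. This compensation is built into the defining equation $t.y=\sigma_p(t).y$, but tracking it cleanly through the tangent complex is where the real care is needed.
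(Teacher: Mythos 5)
Your argument is correct, and its second half --- ruling out weights of the form $mh$ by combining the Generic Cocharacter Theorem with the equivariant slice theorem at the isolated fixed point $p$ --- is exactly the argument the paper gives. The difference is in the first half: the paper simply cites \cite[Section~3.2]{rimanyi2020bow} for the statement that every $\mathbb T$-weight of $T_p\mathcal C(\mathcal D)$ has the form $t_i-t_j+mh$ (with $i=j$ still allowed at that stage), whereas you derive it from scratch by presenting $T_p\mathcal C(\mathcal D)$ as the middle cohomology of the three-term complex $\operatorname{Lie}(\mathcal G)\to T_y\widetilde{\mathcal M}(\mathcal D)\to\operatorname{Lie}(\mathcal G)^\ast\otimes\mathbb C_h$ at a stable lift $y$ of $p$, twisting the $\mathbb T$-action by the compensating cocharacter $\sigma_p\colon\mathbb T\to\mathcal G$, and reading off the weights from the decomposition $W_X=\bigoplus_U F_{D,U,X}$ of Subsection~\ref{subsection:ExplicitConstructionTorusFixedPoints}, on which $\sigma_p(t,h)$ acts by $t_Uh^{j}$. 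This makes the corollary self-contained (it is essentially the computation carried out in the cited reference), and your identification of the $a_U,b_U$ summands as the delicate point is right: the direct weight $\mp t_U$ there combines with the $t_{U'}h^{j'}$ coming from the twist to give $t_{U'}-t_U+mh$, so no weights of the form $\pm(t_U+t_{U'})+mh$ can occur. Two cosmetic corrections: what you write down is a three-term complex, not a short exact sequence (a short exact sequence has vanishing middle cohomology); exactness holds at the outer terms by freeness of the $\mathcal G$-action and smoothness of $m^{-1}(0)^{\mathrm s}$, and $T_p\mathcal C(\mathcal D)$ is the middle cohomology. Also, with the paper's conventions $\sigma_p$ acts on $F_{D,U,X}$ by $t_Uh^{j}$ with a fixed sign, so the hedge $t_U^{\pm1}$ is unnecessary once a convention is chosen.
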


\begin{proof} According to~\cite[Section~3.2]{rimanyi2020bow}, all $\mathbb T$-weights of $T_p\mathcal C(\mathcal D)$ are of the form $t_i-t_j+mh$, with $i,j\in \{1,\ldots, N\}$ and $m\in\mathbb Z$. By Theorem~\ref{thm:fixedPointsGeneric1ParameterTorus}, $p$ is an isolated $\mathbb A$-fixed point. Since $\mathcal C(\mathcal D)$ is smooth, the equivariant slice theorem, see e.g. \cite[Theorem~I.1.2]{audin2004torus}, implies that no $\mathbb A$-weight of $T_p\mathcal C(\mathcal D)$ is trivial. Thus, no $\mathbb T$-weight of $T_p\mathcal C(\mathcal D)$ is of the form $mh$ for $m\in\mathbb Z$ which proves the corollary.
\end{proof}

\subsection{Weight spaces}
Let $\sigma$ be a generic cocharacter of $\mathbb A$. By Proposition~\ref{prop:separatedBowVarieties}, it suffices to prove Theorem~\ref{thm:fixedPointsGeneric1ParameterTorus} in the case where $\mathcal D$ is separated. As we already saw in Proposition~\ref{prop:nilpotency}, the operators describing points of bow varieties associated to separated brane diagrams satisfy useful nilpotency conditions. Hence,  we assume from now on until the end of this section that $\mathcal D$ is separated.

The cocharacter $\sigma$ induces a $\mathbb C^\ast$-action on the full tautological bundle $\xi$ on $\mathcal C(\mathcal D)$, see Definition~\ref{definition:TautologicalBundles}. Let \[p=[(A_U,B_U^-,B_U^+,a_U,b_U)_{U},(C_V,D_V)_V]\in\mathcal C(\mathcal D)^\sigma,\] and consider the fiber $\xi_p$ which we identify with $W:=\bigoplus_{X\in h(\mathcal D)} W_X $. This induces
\begin{equation} \label{eq:torusactionD3}
\rho:\mathbb C^\ast \rightarrow \mathcal G,\quad t\mapsto (\rho_X(t))_X
\end{equation}
and thus a $\mathbb C^\ast$-action on $W$ which satisfies the following \textit{action identity} in $\widetilde{\mathcal M}(\mathcal D)$ for all $t\in\mathbb C^\ast$:
\begin{equation*}
\rho(t).((A_U,B_{U}^-,B_{U}^+,a_U,b_U)_{U},(C_V,D_V)_{V})
= ((A_U,B_{U}^-,B_{U}^+,a_U\sigma_U(t)^{-1},\sigma_U(t)b_U)_{U},(C_V,D_V)_{V}).
\end{equation*}
Given a character $\tau:\mathbb C^\ast\rightarrow\mathbb C^\ast$ and  a black line $X$ in $\mathcal D$, let $W_\tau$ and  $W_{\tau,X}$ be the corresponding weight space of $W$ and  $W_{X}$ respectively. Then the finite-dimensionality of $W$ implies
\[
 W=\bigoplus_{\tau} W_\tau =\bigoplus_{\tau}\bigoplus_{X\in h(\mathcal D)} W_{\tau,X}
\]
and these weight spaces satisfy the following invariance property:
\begin{lemma}[Invariance property]\label{lemma:abcdinvariance} Let $ W_\tau\subset  W$ be a $\mathbb C^\ast$-weight space. Then, $ W_\tau$ is invariant under all operators $A_U,B_U^-,B_U^+,C_V,D_V$.
\end{lemma}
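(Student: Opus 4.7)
The plan is to promote the $\sigma$-fixedness of $p$ from the GIT quotient $\mathcal C(\mathcal D)$ up to the stable prequotient $m^{-1}(0)^{\mathrm s}$, producing an algebraic cocharacter $\rho\colon\mathbb C^\ast\to\mathcal G$ whose component-wise action on the fiber $W=\bigoplus_{X}W_X$ is precisely the one used to define the weight decomposition $W=\bigoplus_\tau W_\tau$. Invariance of $W_\tau$ under $A_U, B_U^{\pm}, C_V, D_V$ will then follow from the $\mathbb C^\ast$-equivariance of these operators.

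First I would fix a stable representative $y=((A_U,B_U^-,B_U^+,a_U,b_U)_U,(C_V,D_V)_V)$ of $p$. Since $\sigma(t).p=p$, the orbit $\mathcal G.y$ is $\sigma(\mathbb C^\ast)$-stable, and freeness of the $\mathcal G$-action on $m^{-1}(0)^{\mathrm s}$ (\cite[Lemma~2.10]{nakajima2017cherkis}) produces a \emph{unique} $\rho(t)\in\mathcal G$ with $\rho(t).y=\sigma(t).y$. Uniqueness together with the Zariski-local triviality of the principal $\mathcal G$-bundle $m^{-1}(0)^{\mathrm s}\to\mathcal C(\mathcal D)$ forces $\rho$ to be an algebraic group homomorphism; this is precisely the cocharacter appearing in~\eqref{eq:torusactionD3}.

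Next I would write $\rho(t)=(\rho_X(t))_{X}$ and expand the base change $\mathcal G$-action on each triangle part and each red-line factor of $\widetilde{\mathcal M}(\mathcal D)$. Since the $A$-, $B^{\pm}$-, $C$- and $D$-components on the right-hand side of the action identity are left unchanged (only $a_U$ and $b_U$ are rescaled), matching components coordinate by coordinate yields the intertwining identities
\[\rho_{U^-}(t)A_U=A_U\rho_{U^+}(t),\quad [\rho_{U^{\pm}}(t),B_U^{\pm}]=0,\quad\rho_{V^-}(t)C_V=C_V\rho_{V^+}(t),\quad\rho_{V^+}(t)D_V=D_V\rho_{V^-}(t).\]
Each of these says that the corresponding operator is $\mathbb C^\ast$-equivariant and therefore sends the weight-$\tau$ subspace of its source into the weight-$\tau$ subspace of its target. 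Since $W_\tau=\bigoplus_X W_{\tau,X}$, it is stable under all five families of operators.

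The only real obstacle is the first step: verifying that the pointwise lift $t\mapsto \rho(t)$ is genuinely algebraic and multiplicative. Once this is settled everything else is coordinate-by-coordinate bookkeeping. I also note that $a_U$ and $b_U$ are deliberately omitted from the claim: the analogous matching for these components picks up a factor of $\sigma_U(t)^{\mp 1}$, so $a_U$ and $b_U$ shift weights by $\pm\sigma_U$ rather than preserving them, which is exactly what produces the nontrivial $t_i$-contributions in the equivariant character calculations that follow.
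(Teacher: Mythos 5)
Your proposal is correct and follows essentially the same route as the paper: the paper also derives the invariance directly from the action identity after~\eqref{eq:torusactionD3}, via the intertwining relation $\rho_{U^-}(t)A_U(w)=A_U\rho_{U^+}(t)(w)=\tau(t)A_U(w)$ for $w\in W_{\tau,U^+}$ (and analogously for the other operators). The construction and algebraicity of the lift $\rho$, which you treat as the main obstacle, is part of the paper's setup preceding the lemma rather than of the lemma's proof itself.
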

\begin{proof} We only show that $ W_\tau$ is $A_U$-invariant, since the proof for the other operators is analogous. Using the action identity formulated after~\eqref{eq:torusactionD3}, we get
$\rho_{U^-}(t)A_U(w)=A_U\rho_{U^+}(t)(w)=\tau(t)A_U(w)$, for $t\in \mathbb C^\ast,w\in  W_{\tau,U^+}$. Hence, $W_\tau$ is $A_U$-invariant.
\end{proof}

We now study the nontrivial weight spaces of $W$ and provide a diagrammatic description of the actions of the operators $A_U,B_U^-,B_U^+,C_V,D_V$ on them.

\begin{prop}[Weight spaces]\label{prop:weightab} Given $U\in b(\mathcal D)$, the following holds:
\begin{enumerate}[label=(\roman*)] 
\item\label{item:aweight} We have  $\operatorname{im}(a_U)\subset W_{\sigma_U,U^{-}}$.
\item\label{item:bweight} We have $\bigoplus_{\tau\ne\sigma_U}{W}_{\tau,U^+}\subset \operatorname{ker}(b_U)$.
\item\label{item:weightisos} The operator $A_U$ induces a $\mathbb C$-linear isomorphism 
$ W_{\tau,U^+}\xrightarrow\sim  W_{\tau,U^-}$ for all $\tau\ne \sigma_U$.
\end{enumerate}
\end{prop}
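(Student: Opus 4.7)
My plan is to extract everything from the action identity together with the moment map equations for the triangle part of each blue line $U$ and the subspace conditions \ref{item:S1}, \ref{item:S2} guaranteed by Proposition~\ref{prop:S1S2SemistabilityConditions}.

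First I would unpack the action identity on each of the five operators $(A_U,B_U^\pm,a_U,b_U)$. Matching the $\mathcal G$-twist by $\rho(t)$ with the effect of $\sigma(t)$, three facts drop out immediately. The intertwining of $A_U$, $B_U^\pm$, $C_V$, $D_V$ with $\rho$ is Lemma~\ref{lemma:abcdinvariance}, and in particular $A_U$ preserves the weight decomposition, sending $W_{\tau,U^+}\to W_{\tau,U^-}$. The equation read off the fourth slot yields $\rho_{U^-}(t)(a_U(1))=\sigma_U(t)^{-1}a_U(1)$, which under the convention that $W_{\tau}$ is the weight-$\tau$ space for the induced $\mathbb C^\ast$-action on $\xi_p$ amounts to $a_U(1)\in W_{\sigma_U,U^-}$, giving \ref{item:aweight}. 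The fifth slot gives $b_U\circ\rho_{U^+}(t)=\sigma_U(t)^{-1}b_U$; evaluating on $w\in W_{\tau,U^+}$ and using $\rho_{U^+}(t)w=\tau(t)^{-1}w$, one finds $\tau(t)^{-1}b_U(w)=\sigma_U(t)^{-1}b_U(w)$ for all $t$, so if $\tau\ne\sigma_U$ we pick $t$ with $\tau(t)\ne\sigma_U(t)$ to conclude $b_U(w)=0$, which is \ref{item:bweight}.

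For \ref{item:weightisos}, fix $\tau\ne\sigma_U$. Since $x\in m^{-1}(0)^{\mathrm s}$, the triangle-part component at $U$ is $\theta$-semistable, so by Proposition~\ref{prop:S1S2SemistabilityConditions} it satisfies \ref{item:S1} and \ref{item:S2}. For injectivity, put $S:=\ker(A_U)\cap W_{\tau,U^+}\subseteq W_{U^+}$. Then $A_U(S)=0$ by definition and $b_U(S)=0$ by \ref{item:bweight}. Applying the moment map identity $B_U^-A_U-A_UB_U^++a_Ub_U=0$ to $w\in S$ gives $A_U(B_U^+w)=B_U^-A_U(w)-a_U(b_U(w))=0$, and since $B_U^+$ preserves $W_{\tau,U^+}$ by Lemma~\ref{lemma:abcdinvariance}, we get $B_U^+w\in S$. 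Hence $S$ meets the hypotheses of \ref{item:S1} and must vanish.

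For surjectivity, set
\[
T:=A_U(W_{\tau,U^+})\;\oplus\;\bigoplus_{\tau'\ne \tau}W_{\tau',U^-}\;\subseteq\; W_{U^-}.
\]
By Lemma~\ref{lemma:abcdinvariance}, $B_U^-$ preserves each $W_{\tau',U^-}$; and for $w\in W_{\tau,U^+}$ the moment map identity together with $b_U(w)=0$ yields $B_U^-(A_Uw)=A_U(B_U^+w)\in A_U(W_{\tau,U^+})$, so $T$ is $B_U^-$-invariant. Clearly $\mathrm{Im}(A_U)=\bigoplus_{\tau'}A_U(W_{\tau',U^+})\subseteq T$, and by \ref{item:aweight} we have $\mathrm{Im}(a_U)\subseteq W_{\sigma_U,U^-}\subseteq T$ because $\sigma_U\ne\tau$. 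Thus $T$ satisfies \ref{item:S2}, forcing $T=W_{U^-}$; intersecting with $W_{\tau,U^-}$ leaves $A_U(W_{\tau,U^+})=W_{\tau,U^-}$, completing the proof. The only delicate point is bookkeeping the sign/inverse conventions relating the induced $\mathbb C^\ast$-action on the fiber $\xi_p$ to the cocycle $\rho$, so that the weight labels in \ref{item:aweight} and \ref{item:bweight} come out as stated; once that is fixed, the remainder is a routine combination of the moment map equation with the standard subspace criteria \ref{item:S1} and \ref{item:S2}.
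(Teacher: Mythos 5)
Your proposal is correct and follows essentially the same route as the paper: parts (i) and (ii) are read off from the action identity, and for (iii) the moment map equation combined with \ref{item:S1} gives injectivity while \ref{item:S2} applied to $A_U(W_{\tau,U^+})\oplus\bigoplus_{\tau'\ne\tau}W_{\tau',U^-}$ gives surjectivity. Your write-up is in fact a bit more careful with the $U^+$/$U^-$ bookkeeping than the paper's own.
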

\begin{proof}
By the action identity formulated after~\eqref{eq:torusactionD3}, 
$\rho(t)_{U^+}a_U( \sigma_U(t)^{-1}1)=a_U(1)$ and
\[
\sigma_U(t) b_U(\rho(t)^{-1}_{U^-}w)=b_U(w), \quad \textup{for all $w\in W_{U^+}$, $t\in\mathbb C^\ast$}
\]
which implies \ref{item:aweight},\ref{item:bweight}. 
 By \ref{item:aweight} (or \ref{item:bweight}), we now know that $a_Ub_U$ vanishes on $W_{\tau,U^-}$. Hence, \eqref{eq:nonMomentMap} gives
\begin{equation}\label{eq:weightisoproofmoment}
B_U^-A_U(w)=A_UB_U^+(w),\quad\textup{for all }w\in {W}_{\tau,U^+}.
\end{equation}
In particular, $\operatorname{ker}(A_{U\mid{W}_{\tau,U^-}})$ is $B_U^{+}$-invariant and thus, $\operatorname{ker}(A_{U\mid W_{\tau,U^-}})=0$ by~\ref{item:S1}.
Next, we show that $A_{U\mid W_{\tau, U^+}}$ surjects onto $ W_{\tau, U^-}$. By~\eqref{eq:weightisoproofmoment}, $\operatorname{im}(A_{U\mid W_{\tau, U^-}})$ is stable under the $B_U^-$-action. By~Lemma~\ref{lemma:abcdinvariance} and~\ref{item:aweight} the subspace
\[
\operatorname{im}(A_{U\mid W_{\tau, U^-}}) \oplus \bigoplus_{\nu\ne\tau}  W_{\nu,U^+} \subset  W_{U^+}
\]
satisfies~\ref{item:S2} and thus equals $W_{U^+}$ which proves~\ref{item:weightisos}.
%
\end{proof}

\begin{cor} We have
 $ W = \bigoplus_{U\in b(\mathcal D)} W_{\sigma_U}$.
\end{cor}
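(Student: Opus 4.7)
The plan is to leverage the $\chi$-stability of the point $p$ together with the structural results from Proposition~\ref{prop:weightab}. Since $W = \bigoplus_\tau W_\tau$ decomposes into $\mathbb{C}^*$-weight spaces, it suffices to show that $W_\tau = 0$ whenever $\tau \ne \sigma_U$ for every $U \in b(\mathcal{D})$. The heuristic is that for such a $\tau$ the operators behave so nicely on $W_\tau$ (invariance, plus $A_U$ being an isomorphism on each $W_{\tau, U^\pm}$) that the complementary subspace $W/W_\tau$ becomes a candidate destabilizing subspace, which by stability is forced to equal $W$.

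Concretely, I would fix such a $\tau$ and set $T_X := \bigoplus_{\nu \ne \tau} W_{\nu, X} \subset W_X$ for each $X \in h(\mathcal{D})$, so that $W_X/T_X \cong W_{\tau, X}$. The next task is to check that the graded subspace $T = \bigoplus_X T_X$ satisfies the hypotheses of Proposition~\ref{prop:nakajimaSemistabilityCriterion}: first, by Proposition~\ref{prop:weightab}(i), $\operatorname{Im}(a_U) \subset W_{\sigma_U, U^-} \subset T_{U^-}$ because $\sigma_U \ne \tau$; second, by Lemma~\ref{lemma:abcdinvariance} each weight space is $A_U$-invariant, hence $A_U(T_{U^+}) \subset T_{U^-}$; third, by Proposition~\ref{prop:weightab}(iii) the induced map on quotients is the isomorphism $A_U \colon W_{\tau, U^+} \xrightarrow{\sim} W_{\tau, U^-}$.

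Applying $\chi$-stability — which is the same as $\chi$-semistability by Corollary~\ref{cor:SemistabilityEqualsStability} — one concludes that the strict inequality $\sum_{X \in M_{\mathcal{D}}} \operatorname{codim}(T_X) < 0$ must hold unless $T = W$. But $\operatorname{codim}(T_X) = \dim W_{\tau, X} \ge 0$, so the sum is non-negative, and the only way out is $T = W$, i.e., $W_\tau = 0$. Summing over all weights then gives the corollary. The main obstacle is just the correct matching of the three stability hypotheses with what Proposition~\ref{prop:weightab} produces; once that dictionary is in place, the positivity of codimensions closes the argument immediately and no use of separatedness or the nilpotency results of Proposition~\ref{prop:nilpotency} is actually needed.
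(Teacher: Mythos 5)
Your proof is correct and is essentially the paper's own argument: the paper likewise takes $T=\bigoplus_{\nu\ne\tau}W_\nu$, verifies via Proposition~\ref{prop:weightab} and Lemma~\ref{lemma:abcdinvariance} that it satisfies the hypotheses of Proposition~\ref{prop:nakajimaSemistabilityCriterion}, and concludes $T=W$. You merely spell out the verification of the three conditions and the final non-negativity-of-codimensions step, which the paper leaves implicit.
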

\begin{proof} We have to show $ W_{\tau}=0$ for each $\tau$ with  $\tau\ne\sigma_U$ for all $U\in b(\mathcal D)$.
But by Proposition~\ref{prop:weightab} and Lemma~\ref{lemma:abcdinvariance}, the direct sum $\bigoplus_{\nu\ne \tau}{W}_\nu \subset {W}$ satifies the conditions of Proposition~\ref{prop:nakajimaSemistabilityCriterion} and hence equals $W$. Thus, its complement is zero and $ W_{\tau}=0$.
\end{proof}

\subsection{Bases and diagrammatics for the blue part}

Let $U_i$ be a blue line in $b(\mathcal D)$. Next, we employ Proposition~\ref{prop:weightab} and the stability condition~\ref{item:S2} to determine bases of the spaces $W_{\sigma_{U_i},X_j}$ for $j=M+1,\ldots,M+N+1$ and describe the restrictions of the operators $A_U,B_U^-,B_U^+$ with respect to these bases for all $U\in b(\mathcal D)$.


\begin{cor}\label{cor:weightspaceoperatorsD5}  The following holds:
\begin{enumerate}[label=(\roman*)]
\item\label{item:D5vanishing} We have $ W_{\sigma_{U_i},X_{M+i+1+j}}=0$ for $j\geq 1$.
\item\label{item:D5generators} The $\mathbb C$-vector space $ W_{\sigma_{U_i},U_i^-}$ is generated by $\{(B_{U_i}^-)^ia_U(1)|i\geq0 \}$.
\item\label{item:D5isos} The operator $A_{U_j}$ induces an isomorphism of vector spaces $ W_{\sigma_{U_i},X_j+1}\xrightarrow\sim  W_{\sigma_{U_i},X_j} $ for $M+1\le j \le M+i-1$.
\end{enumerate}
\end{cor}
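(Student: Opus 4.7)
My plan is to derive (i) and (iii) directly from Proposition~\ref{prop:weightab}\ref{item:weightisos} using the genericity assumption $\sigma_{U_k}\neq\sigma_{U_i}$ for $k\neq i$, and to prove (ii) by applying the subspace condition~\ref{item:S2} for the triangle part of $U_i$ to a suitable enlargement of the proposed spanning set. Throughout I work with the weight-space decomposition $W_X=\bigoplus_\tau W_{\tau,X}$ and the weight-preserving invariance recorded in Lemma~\ref{lemma:abcdinvariance}.

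Since $\mathcal D$ is assumed separated, in the stated range $M+1\leq j\leq M+i-1$ of (iii) the blue line between $X_j$ and $X_{j+1}$ is $U_{j-M}$ with $1\leq j-M\leq i-1$. As $\sigma_{U_{j-M}}\neq\sigma_{U_i}$, Proposition~\ref{prop:weightab}\ref{item:weightisos} immediately yields the desired isomorphism. For (i), I would run a downward induction on the black line index, starting from $W_{\sigma_{U_i},X_{M+N+1}}=0$ (which holds because the last horizontal line of any brane diagram carries label $0$). For each $k=N,N-1,\ldots,i+1$ in turn, Proposition~\ref{prop:weightab}\ref{item:weightisos} applied to $U_k$ produces an isomorphism $W_{\sigma_{U_i},X_{M+k+1}}\xrightarrow{\sim}W_{\sigma_{U_i},X_{M+k}}$, so vanishing of the source propagates to the target. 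This establishes (i) on the nontrivial range $1\leq j\leq N-i$ (and is vacuous otherwise). Pushing the induction one further step (using $k=i+1$ when $i<N$, or the label $0$ directly when $i=N$) also yields the bonus vanishing $W_{\sigma_{U_i},U_i^+}=0$ that I will need for (ii).

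For (ii), set $V_1=W_{U_i^-}$, $V_2=W_{U_i^+}$, and let $T\subset V_1$ be the $\mathbb{C}$-span of $\{(B_{U_i}^-)^k a_{U_i}(1)\mid k\geq 0\}$. The inclusion $T\subset W_{\sigma_{U_i},V_1}$ is immediate from Proposition~\ref{prop:weightab}\ref{item:aweight} together with Lemma~\ref{lemma:abcdinvariance}. For the reverse inclusion I consider
\[
\tilde T := T\ +\ \bigoplus_{\tau\neq\sigma_{U_i}} W_{\tau,V_1}\ \subset\ V_1,
\]
and verify that $\tilde T$ satisfies the hypothesis of condition~\ref{item:S2} for the triangle part of $U_i$: $B_{U_i}^-$-invariance is Lemma~\ref{lemma:abcdinvariance}; the inclusion $\operatorname{im}(a_{U_i})\subset T\subset\tilde T$ is built in; and $\operatorname{im}(A_{U_i})\subset\tilde T$ follows by splitting $V_2$ into weight spaces, since the components $W_{\tau,V_2}$ with $\tau\neq\sigma_{U_i}$ are sent into the second summand of $\tilde T$ by Lemma~\ref{lemma:abcdinvariance}, while the component $W_{\sigma_{U_i},V_2}$ is zero by the bonus from (i). Condition~\ref{item:S2} then forces $\tilde T=V_1$, and intersecting with $W_{\sigma_{U_i},V_1}$---which meets the other summand of $\tilde T$ trivially---yields $T=W_{\sigma_{U_i},V_1}$.

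The substantive point is (ii): the key observation is that one must first upgrade (i) by one step to get $W_{\sigma_{U_i},U_i^+}=0$, without which the subspace condition~\ref{item:S2} could not be applied cleanly to $\tilde T$. The other two assertions are direct consequences of Proposition~\ref{prop:weightab}.
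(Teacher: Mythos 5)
Your proposal is correct and follows essentially the same route as the paper: (i) and (iii) via the chain of isomorphisms from Proposition~\ref{prop:weightab}\ref{item:weightisos} terminating in $W_{X_{M+N+1}}=0$, and (ii) by enlarging the span of $\{(B_{U_i}^-)^k a_{U_i}(1)\}$ by the complementary weight spaces and invoking~\ref{item:S2}, using the vanishing $W_{\sigma_{U_i},U_i^+}=0$ exactly as the paper does. Your write-up is in fact slightly more careful than the paper's, which applies~\ref{item:S2} to a subspace written inside the full $W$ rather than inside $W_{U_i^-}$.
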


\begin{proof} According to Proposition~\ref{prop:weightab}.\ref{item:weightisos}, the subspaces ${W}_{\sigma_{U_i},X_{M+i+1+j}}$, are mutually isomorphic for $j\geq 1$.
Hence, \ref{item:D5vanishing} follows from $W_{X_{M+N+1}}=0$. For \ref{item:D5generators}, let $E:=\operatorname{span}_{\mathbb C}((B_{U_i}^-)^ia_U(1)|i\geq0 )$. Since $ W_{\sigma_{U_i},{U_i}^+}=0$, the subspace
$
E\oplus \bigoplus_{\tau\ne \sigma_{U_i}}{W}_{\tau}\subset  W
$
equals $W$ by~\ref{item:S2}, which implies $E= W_{\sigma_{U_i},U_i^-}$.
Statement \ref{item:D5isos} is immediate from Proposition~\ref{prop:weightab}.\ref{item:weightisos}.
\end{proof}

Using Corollary~\ref{cor:weightspaceoperatorsD5} we give a basis for each  $W_{\sigma_{U_i},X_j}$ where $M+1\le j \le M+i$ as follows: Let $r=\operatorname{dim}(W_{\sigma_{U_i},X_{M+i}})$ and we set $y_{M+i}:=a_{U_i}(1)\in W_{\sigma_{U_i},X_{M+i}}$. In addition, we define recursively
$y_{M+i-k}:=A_{U_{i-k}}\dots A_{U_{i-1}}y_{M+i}\in W_{\sigma_{U_i},X_{M+i-k}}$  for $1\le k <i$ and we set $y_{M+l,k}:=(-B_{U_{l}}^-)^k y_{M+l}$ for $l=1,\ldots,i,k\geq0$.
\begin{cor}\label{cor:D5Basis} Let $l\in\{l=1,\ldots,i\}$. Then, 
$(y_{M+1,0},\ldots,y_{M+1,r-1})$ is a basis of $W_{\sigma_{U_i},X_{M+l}}$.
\end{cor}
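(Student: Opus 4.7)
The statement should read that $(y_{M+l,0},\dots,y_{M+l,r-1})$ is a basis of $W_{\sigma_{U_i},X_{M+l}}$, and I would prove it by a descending argument: first establish the case $l=i$ directly, then propagate the basis from $W_{\sigma_{U_i},X_{M+i}}$ down to the smaller black lines along the isomorphisms of Corollary~\ref{cor:weightspaceoperatorsD5}\ref{item:D5isos}.

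For the base case $l=i$, the spanning statement is exactly Corollary~\ref{cor:weightspaceoperatorsD5}\ref{item:D5generators}, so it remains to argue that the first $r$ powers $\{(-B_{U_i}^-)^k a_{U_i}(1)\}_{0\le k<r}$ are linearly independent. This is the standard cyclicity argument: if $a_{U_i}(1), -B_{U_i}^- a_{U_i}(1),\dots,(-B_{U_i}^-)^{j} a_{U_i}(1)$ were linearly dependent for some $j<r$, then applying $B_{U_i}^-$ repeatedly would force $\{(B_{U_i}^-)^k a_{U_i}(1)\mid k\geq 0\}$ to lie in a subspace of dimension $\leq j<r$, contradicting Corollary~\ref{cor:weightspaceoperatorsD5}\ref{item:D5generators} and $\dim W_{\sigma_{U_i},X_{M+i}}=r$. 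Hence $(y_{M+i,0},\dots,y_{M+i,r-1})$ is a basis of $W_{\sigma_{U_i},X_{M+i}}$.

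For the inductive step, fix $l<i$ and write $\Phi_l:=A_{U_l}\cdots A_{U_{i-1}}$, which by Corollary~\ref{cor:weightspaceoperatorsD5}\ref{item:D5isos} is an isomorphism $W_{\sigma_{U_i},X_{M+i}}\xrightarrow{\sim}W_{\sigma_{U_i},X_{M+l}}$; in particular $y_{M+l}=\Phi_l\,y_{M+i}$. The main point is a commutation identity: for each $l\le j\le i-1$ and $w\in W_{\sigma_{U_i},X_{M+j+1}}$, genericity of $\sigma$ gives $\sigma_{U_i}\neq \sigma_{U_j}$, so the proof of Proposition~\ref{prop:weightab}\ref{item:weightisos}, cf.~\eqref{eq:weightisoproofmoment}, yields $B_{U_j}^- A_{U_j}(w)=A_{U_j} B_{U_j}^+(w)$; and the moment map equation \eqref{eq:MomentMapEquations} at $X_{M+j+1}$ (both neighbours are blue since $\mathcal D$ is separated) identifies $B_{U_j}^+=B_{U_{j+1}}^-$. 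Iterating gives $B_{U_l}^-\,\Phi_l=\Phi_l\, B_{U_i}^-$ on $W_{\sigma_{U_i},X_{M+i}}$, hence
\[
y_{M+l,k}=(-B_{U_l}^-)^k\,\Phi_l\, y_{M+i}=\Phi_l\,(-B_{U_i}^-)^k y_{M+i}=\Phi_l(y_{M+i,k}).
\]
Since $\Phi_l$ is a linear isomorphism and $(y_{M+i,0},\dots,y_{M+i,r-1})$ is a basis by the base case, the images $(y_{M+l,0},\dots,y_{M+l,r-1})$ form a basis of $W_{\sigma_{U_i},X_{M+l}}$, as desired.

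The only step where something could go wrong is the commutation identity, so that is the piece I would write out carefully; everything else is either a direct invocation of Corollary~\ref{cor:weightspaceoperatorsD5} or the cyclic-vector lemma. Once the commutation is in place, the whole corollary is essentially a transport-of-structure statement along the maps $A_{U_j}$.
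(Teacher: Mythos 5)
Your proof is correct and follows essentially the same route as the paper: the key commutation identity $B_{U_{j}}^{-}A_{U_{j}}=A_{U_{j}}B_{U_{j}}^{+}$ on the weight space (from the triangle relation plus the vanishing of $b_{U_j}$ there) together with the moment map identification $B_{U_j}^{+}=B_{U_{j+1}}^{-}$ is exactly what the paper records as its equation~\eqref{eq:nonMomentmapOnWeightSpace}, after which everything reduces to Corollary~\ref{cor:weightspaceoperatorsD5}. You are also right that the statement contains a typo and should read $(y_{M+l,0},\ldots,y_{M+l,r-1})$; your explicit cyclic-vector argument for the base case is a detail the paper leaves implicit.
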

\begin{proof} By \eqref{eq:nonMomentMap} and Proposition~\ref{prop:weightab}.\ref{item:bweight}, we have
\begin{equation}\label{eq:nonMomentmapOnWeightSpace}
A_{U_{j-1}} B_{U_{j}}^-w = B_{j-1}^-A_{U_{j-1}}w \quad\textup{for $j=2,\ldots,i$ and $w\in W_{\sigma_{U_i},U_{j}^-}$.}
\end{equation}
Thus, the corollary follows from Corollary~\ref{cor:weightspaceoperatorsD5}.\ref{item:D5generators} and Corollary~\ref{cor:weightspaceoperatorsD5}.\ref{item:D5isos}.
\end{proof}

We denote the basis $(y_{M+l,0},\ldots,y_{M+l,r-1})$ of $W_{\sigma_{U_i},X_{M+l}}$ by $\mathfrak B_{M+l}$ for $l=1,\ldots,i$.
Our previous considerations lead to the following diagrammatic description of the operators $A_U,B_U^-,B_U^+$ with respect to this choice of bases:

\begin{cor}[Blue operators]\label{cor:D5part} The restrictions of the operators $(A_U,-B_U^-,-B_U^+)_U$ and $a_{U_i},b_{U_i}$ to ${ W}_{\sigma_{U_i}}$ with respect to the bases $\mathfrak B_{M+1},\ldots,\mathfrak B_{M+i}$ are illustrated by the diagram in Figure~\ref{fig:D5part} where each column contains $r$ dots.
\end{cor}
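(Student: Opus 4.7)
The plan is to verify the claimed diagrammatic description by computing, for each of the operators $A_{U_j},-B_{U_j}^-,-B_{U_j}^+,a_{U_i},b_{U_i}$, its action on the distinguished bases $\mathfrak B_{M+l}$, and then match these actions to the arrows in Figure~\ref{fig:D5part}. The dictionary built into the definitions already suggests the answer: the horizontal $A$-arrows should encode $A_{U_j}(y_{M+j+1,k})=y_{M+j,k}$, the vertical $B$-loops should encode a shift $y_{M+l,k}\mapsto y_{M+l,k+1}$ of the second index, the arrow from $a_{U_i}$ should land on $y_{M+i,0}$, and $b_{U_i}$ should be zero.

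For the horizontal arrows, the case $k=0$ is the very definition $y_{M+j}=A_{U_j}y_{M+j+1}$. For general $k$, I would iterate the commutation identity \eqref{eq:nonMomentmapOnWeightSpace}, namely $A_{U_j}B_{U_{j+1}}^-=B_{U_j}^-A_{U_j}$ on $W_{\sigma_{U_i},U_{j+1}^-}$, against the definition $y_{M+j+1,k}=(-B_{U_{j+1}}^-)^ky_{M+j+1}$. For the $-B_{U_j}^-$ loops on column $\mathfrak B_{M+j}$, the shift relation $-B_{U_j}^-(y_{M+j,k})=y_{M+j,k+1}$ is immediate from the definition of the basis vectors, while the $-B_{U_j}^+$ loop on column $\mathfrak B_{M+j+1}$ is recovered via the moment map relation \eqref{eq:MomentMapEquations} at the black line $X_{M+j+1}$ whose two neighbours are both blue; this gives $B_{U_j}^+=B_{U_{j+1}}^-$ as endomorphisms of $W_{X_{M+j+1}}$, so the $-B_{U_j}^+$ loop coincides with the $-B_{U_{j+1}}^-$ loop already described.

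The central technical point, and the one I expect to be the main bookkeeping obstacle, is the boundary of the columns: the sequence must terminate at the $r$-th basis vector, i.e.\ $y_{M+l,r}=0$ for every $l=1,\ldots,i$. I would first settle this for $l=i$ by a dimension count: $W_{\sigma_{U_i},X_{M+i}}$ is spanned by $\{(-B_{U_i}^-)^k y_{M+i}\mid k\geq 0\}$ by Corollary~\ref{cor:weightspaceoperatorsD5}.\ref{item:D5generators}, this generating set is a chain under a nilpotent operator by Proposition~\ref{prop:nilpotency}.\ref{item:nilpotencyD5}, and the space has dimension $r$ by Corollary~\ref{cor:D5Basis}, forcing $y_{M+i,r}=0$. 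I would then transport this vanishing to all smaller $l$ by iterating the same commutation identity used for the horizontal arrows: unwinding $y_{M+l}=A_{U_l}\cdots A_{U_{i-1}}y_{M+i}$ gives $(-B_{U_l}^-)^r y_{M+l}=A_{U_l}\cdots A_{U_{i-1}}(-B_{U_i}^-)^r y_{M+i}=0$.

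Finally, the defining equality $y_{M+i,0}=a_{U_i}(1)$ records the arrow attached to $a_{U_i}$, and the claim that $b_{U_i}$ vanishes on the whole of $W_{\sigma_{U_i}}$ follows from two facts: $b_{U_i}$ is only nonzero on $W_{U_i^+}=W_{X_{M+i+1}}$, and on the weight-$\sigma_{U_i}$ summand of this space it is still zero because $W_{\sigma_{U_i},X_{M+i+1}}=0$ by Corollary~\ref{cor:weightspaceoperatorsD5}.\ref{item:D5vanishing} (so Proposition~\ref{prop:weightab}.\ref{item:bweight} is not even needed here). Putting the five computations together produces exactly the arrows of Figure~\ref{fig:D5part}, with each column of length $r$.
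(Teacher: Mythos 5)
Your proposal is correct and follows essentially the same route as the paper: the shift action of $-B_{U_l}^-$ is read off from the definition of $y_{M+l,k}$, termination of each column comes from nilpotency (Proposition~\ref{prop:nilpotency}) together with the dimension count of Corollary~\ref{cor:D5Basis}, the horizontal arrows come from iterating~\eqref{eq:nonMomentmapOnWeightSpace}, $B_{U_j}^+$ is identified with $B_{U_{j+1}}^-$ via the moment map equation, and $a_{U_i},b_{U_i}$ are handled exactly as in the paper using $W_{\sigma_{U_i},X_{M+i+1}}=0$. You merely spell out a few steps the paper leaves implicit (propagating $y_{M+l,r}=0$ to smaller $l$, and noting that Proposition~\ref{prop:weightab}.\ref{item:bweight} is not strictly needed for the restricted statement), which is fine.
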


\begin{proof} 
By Corollary~\ref{cor:weightspaceoperatorsD5}.\ref{item:D5isos}, the dimension of the vector spaces $W_{\sigma_{U_i},X_j}$ match with the diagram. Proposition~\ref{prop:nilpotency} gives that the operators $B_U^-$ are nilpotent. Thus, by Corollary~\ref{cor:D5Basis}, the operators $-B_U^-$ (and equivalently $-B_U^+$) act on the chosen basis as in the diagram. It follows from~\eqref{eq:nonMomentmapOnWeightSpace} that also the operators $A_U$ act as illustrated in the diagram. By definition, we have $a_{U_i} (1)=y_{M+i,0}$ and since $W_{\sigma_{U_i},X_{M+i+1}}=0$ we also have $b_{U_i}=0$ by Proposition~\ref{prop:weightab}.\ref{item:bweight}.
\end{proof}

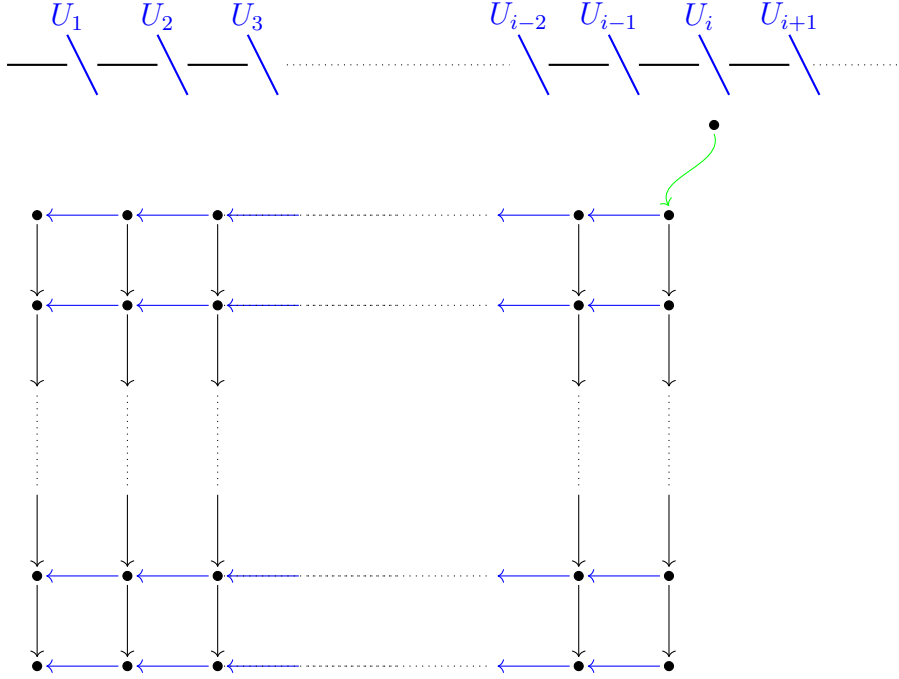
\begin{figure}
\centering
\begin{tikzpicture}
[scale=.4]
\draw[thick] (-1.5,1)--(.5,1);
\draw [thick,blue] (1.5,0) --(0.5,2); 
\node[blue] at (.5,2.6) {$U_1$};
\draw[thick] (1.5,1)--(3.5,1);
\draw [thick,blue](4.5,0) --(3.5,2);  
\node[blue] at (3.5,2.6) {$U_2$};
\draw [thick](4.5,1)--(6.5,1);
\draw [thick,blue](7.5,0) --(6.5,2);  
\node[blue] at (6.5,2.6) {$U_3$};

\draw [thick,blue](16.5,0) --(15.5,2);  
\node[blue] at (15.5,2.6) {$U_{i-2}$};
\draw [thick,blue](19.5,0) --(18.5,2);
\draw[thick] (16.5,1)--(18.5,1);  
\draw[thick] (19.5,1)--(21.5,1); 
\draw[thick] (22.5,1)--(24.5,1); 
\node[blue] at (18.5,2.6) {$U_{i-1}$};
\draw [thick,blue](22.5,0) --(21.5,2);  
\node[blue] at (21.5,2.6) {$U_{i}$};
\draw [thick,blue](25.5,0) --(24.5,2);  
\node[blue] at (24.5,2.6) {$U_{i+1}$};

\draw[dotted] (25.3,1) -- (28.3,1);

\draw[fill] (-0.5,-4) circle [radius=.15];
\draw[fill] (-0.5,-7) circle [radius=.15];
\draw[fill] (-0.5,-16) circle [radius=.15];
\draw[fill] (-0.5,-19) circle [radius=.15];

\draw[dotted] (-0.5,-10) -- (-0.5,-13);

\draw [->] (-0.5,-4.3) -- (-0.5,-6.7);
\draw [->] (-0.5,-7.3) -- (-0.5,-9.7);
\draw [->] (-0.5,-13.3) -- (-0.5,-15.7);
\draw [->] (-0.5,-16.3) -- (-0.5,-18.7);

\draw[fill] (5.5,-4) circle [radius=.15];
\draw[fill] (5.5,-7) circle [radius=.15];
\draw[fill] (5.5,-16) circle [radius=.15];
\draw[fill] (5.5,-19) circle [radius=.15];

\draw[dotted] (5.5,-10) -- (5.5,-13);

\draw [->] (5.5,-4.3) -- (5.5,-6.7);
\draw [->] (5.5,-7.3) -- (5.5,-9.7);
\draw [->] (5.5,-13.3) -- (5.5,-15.7);
\draw [->] (5.5,-16.3) -- (5.5,-18.7);

\draw[fill] (2.5,-4) circle [radius=.15];
\draw[fill] (2.5,-7) circle [radius=.15];
\draw[fill] (2.5,-16) circle [radius=.15];
\draw[fill] (2.5,-19) circle [radius=.15];

\draw[dotted] (2.5,-10) -- (2.5,-13);

\draw [->] (2.5,-4.3) -- (2.5,-6.7);
\draw [->] (2.5,-7.3) -- (2.5,-9.7);
\draw [->] (2.5,-13.3) -- (2.5,-15.7);
\draw [->] (2.5,-16.3) -- (2.5,-18.7);

\draw [blue, ->] (2.2, -4) -- (-0.2,-4);
\draw [blue,->] (2.2, -7) -- (-0.2,-7);
\draw [blue,->] (2.2, -16) -- (-0.2,-16);
\draw [blue,->] (2.2, -19) -- (-0.2,-19);

\draw [blue,->] (5.2, -4) -- (2.8,-4);
\draw [blue,->] (5.2, -7) -- (2.8,-7);
\draw [blue,->] (5.2, -16) -- (2.8,-16);
\draw [blue,->] (5.2, -19) -- (2.8,-19);

\draw [blue,->] (8.2, -4) -- (5.8,-4);
\draw [blue,->] (8.2, -7) -- (5.8,-7);
\draw [blue,->] (8.2, -16) -- (5.8,-16);
\draw [blue,->] (8.2, -19) -- (5.8,-19);

\draw [blue,->] (20.2, -4) -- (17.8,-4);
\draw [blue,->] (20.2, -7) -- (17.8,-7);
\draw [blue,->] (20.2, -16) -- (17.8,-16);
\draw [blue,->] (20.2, -19) -- (17.8,-19);

\draw[dotted] (7.8,1) -- (15.2,1);
\draw[dotted] (8.5,-4) -- (14.5,-4);
\draw[dotted] (8.5,-7) -- (14.5,-7);
\draw[dotted] (8.5,-16) -- (14.5,-16);
\draw[dotted] (8.5,-19) -- (14.5,-19);

\draw[dotted] (5.5,-4) -- (11.5,-4);
\draw[dotted] (5.5,-7) -- (11.5,-7);
\draw[dotted] (5.5,-16) -- (11.5,-16);
\draw[dotted] (5.5,-19) -- (11.5,-19);

\draw [blue,->] (17.2, -4) -- (14.8,-4);
\draw [blue,->] (17.2, -7) -- (14.8,-7);
\draw [blue,->] (17.2, -16) -- (14.8,-16);
\draw [blue,->] (17.2, -19) -- (14.8,-19);

\draw[fill] (17.5,-4) circle [radius=.15];
\draw[fill] (17.5,-7) circle [radius=.15];
\draw[fill] (17.5,-16) circle [radius=.15];
\draw[fill] (17.5,-19) circle [radius=.15];

\draw[dotted] (17.5,-10) -- (17.5,-13);

\draw [->] (17.5,-4.3) -- (17.5,-6.7);
\draw [->] (17.5,-7.3) -- (17.5,-9.7);
\draw [->] (17.5,-13.3) -- (17.5,-15.7);
\draw [->] (17.5,-16.3) -- (17.5,-18.7);

\draw[fill] (20.5,-4) circle [radius=.15];
\draw[fill] (20.5,-7) circle [radius=.15];
\draw[fill] (20.5,-16) circle [radius=.15];
\draw[fill] (20.5,-19) circle [radius=.15];

\draw[dotted] (20.5,-10) -- (20.5,-13);

\draw [->] (20.5,-4.3) -- (20.5,-6.7);
\draw [->] (20.5,-7.3) -- (20.5,-9.7);
\draw [->] (20.5,-13.3) -- (20.5,-15.7);
\draw [->] (20.5,-16.3) -- (20.5,-18.7);

\draw[fill] (22,-1) circle [radius=.15];
\draw[->, green] (22,-1.3) to [out=-70,in=120] (20.5,-3.7);
\end{tikzpicture}
\caption{Diagrammatic description of the restrictions of the operators $A_U,-B_U^-,-B_U^+$ for $U\in b(\mathcal D)$ and $a_{U_i},b_{U_i}$ to $W_{\sigma_{U_i}}$.}\label{fig:D5part}
\end{figure}

\subsection{Bases and diagrammatics for the red part}

Similar to the previous subsection, we now continue with characterizing bases for the weight spaces $W_{\sigma_{U_i},X_{j}}$ for $1\le j \le M$ and give diagrammatic descriptions of the restriction of the operators $C_V,D_V$ with respect to these particular bases for all red lines $V$.

At first, we set up some notation. Set \[z_{M+1}:=y_{M+1} \in {W}_{\sigma_{U_i},X_{M+1}} \]
and define $z_{M+1-j}\in W_{\sigma_{U_i},X_{M+1-j}}$ recursively as
$z_{M+1-j}=C_{V_{j}} z_{M+2-j}$,
for $j=1,\ldots,M$. Let $z_{l,k}:=(D_{V_{M+2-l}}C_{V_{M+2-l}})^k z_{l}$, for $l=2,\ldots, M+1,k\geq 0$. We set $ E_{l}:=\operatorname{span}_{\mathbb C}(z_{l,k}|k\geq 0)$ and
\[
E := \bigoplus_{l=2}^M E_{l}   \oplus \bigoplus_{l=1}^{M+N+1} W_{\sigma_{U_i},X_{M+1+l} } \subset W_{\sigma_{U_i}}.
\]
Note that by the moment map equation, we have $z_{M+1,k}=y_{M+1,k}$ for all $k\geq0$.

\begin{prop}\label{prop:weightspaceNS5} We have $E =W_{\sigma_{U_i}} $.
\end{prop}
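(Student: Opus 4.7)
\emph{Proof plan.}  The plan is to enlarge $E$ to a graded subspace of $W$ and force it to be all of $W$ by Nakajima's stability criterion. Set
\[
T := E \oplus \bigoplus_{\tau \ne \sigma_{U_i}} W_\tau \subset W, \qquad T_X := T \cap W_X.
\]
Since $T$ contains every weight space away from $\sigma_{U_i}$, one has $W_X/T_X \cong W_{\sigma_{U_i},X}/(E\cap W_{\sigma_{U_i},X})$, so each $\operatorname{codim}(T_X)\ge 0$, with equality precisely when $W_{\sigma_{U_i},X}\subset E$. In particular the desired identity $E=W_{\sigma_{U_i}}$ is equivalent to $T=W$. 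Since Corollary~\ref{cor:SemistabilityEqualsStability} makes a representative of $p$ stable, Proposition~\ref{prop:nakajimaSemistabilityCriterion}(ii) will force $\sum_{X\in M_{\mathcal D}}\operatorname{codim}(T_X)<0$ unless $T=W$; as this sum is manifestly non-negative, it will follow that $T=W$ as soon as I check that $T$ satisfies the hypotheses of the criterion.

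The verification of the hypothesis $\operatorname{Im}(a_U)+\operatorname{Im}(A_U)\subset T_{U^-}$ and of the induced isomorphism $W_{U^+}/T_{U^+}\xrightarrow{\sim} W_{U^-}/T_{U^-}$ for every blue $U$ splits by weight. On the summands with $\tau\ne\sigma_{U_i}$ the conditions are automatic: Proposition~\ref{prop:weightab}(i) places $\operatorname{Im}(a_U)$ in $W_{\sigma_U,U^-}\subset T$, Lemma~\ref{lemma:abcdinvariance} shows that $A_U$ preserves the weight decomposition, and since every $W_{\tau,U^\pm}$ with $\tau\ne\sigma_{U_i}$ is entirely inside $T$, both quotients vanish on that portion. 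On the summand $\sigma_{U_i}$, the key input that I will extract from Corollary~\ref{cor:weightspaceoperatorsD5}(i) combined with the chain of isomorphisms of Proposition~\ref{prop:weightab}(iii) is that $W_{\sigma_{U_i},X_{M+l}}=0$ for all $l\ge i+1$ (in particular $W_{\sigma_{U_i},U_i^+}=0$), whereas for $1\le l\le i$ the space $W_{\sigma_{U_i},X_{M+l}}$ is contained in the blue-range summand of $E$ by Corollary~\ref{cor:D5part}. Consequently $W_{U^+}/T_{U^+}$ and $W_{U^-}/T_{U^-}$ both vanish on the $\sigma_{U_i}$-part for every blue $U$, so the isomorphism condition collapses to $0\to 0$. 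The inclusion $\operatorname{Im}(a_U)+\operatorname{Im}(A_U)\subset T_{U^-}$ then follows: $a_{U_i}(1)=y_{M+i,0}\in W_{\sigma_{U_i},X_{M+i}}\subset E$, and the image of any $A_{U_j}$ on the $\sigma_{U_i}$-weight lands in a space $W_{\sigma_{U_i},X_{M+j}}$ that is either zero or inside $E$.

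With the hypotheses verified, stability of $p$ through Proposition~\ref{prop:nakajimaSemistabilityCriterion}(ii) gives $\sum_{X\in M_{\mathcal D}}\operatorname{codim}(T_X)<0$ unless $T=W$; non-negativity of the left hand side leaves no option but $T=W$, i.e.\ $E=W_{\sigma_{U_i}}$. The one step requiring genuine care is the bookkeeping that identifies which $W_{\sigma_{U_i},X_{M+l}}$ are already absorbed into $E$ and which vanish outright; this is exactly where the isomorphism chain from Proposition~\ref{prop:weightab}(iii) together with the boundary condition $W_{X_{M+N+1}}=0$ is needed. Past that point the argument is purely the non-negativity versus the strict stability inequality.
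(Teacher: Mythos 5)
Your proof is correct and takes essentially the same route as the paper: both pass to the graded subspace $T=E\oplus\bigoplus_{\tau\ne\sigma_{U_i}}W_\tau$ and apply the stability criterion of Proposition~\ref{prop:nakajimaSemistabilityCriterion} to force $T=W$, hence $E=W_{\sigma_{U_i}}$. The only difference is that you spell out the weight-by-weight verification of the criterion's hypotheses, which the paper compresses into the citation of Lemma~\ref{lemma:NS5invariance}.
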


At first, we investigate how the operators $C_V,D_V$ act on the elements $z_{k,l}$:

\begin{lemma}\label{lemma:NS5invariance} With the above notation, the following holds:
\begin{enumerate}[label=(\roman*)]
	\item\label{item:NS5actionC} We have $C_{V_{M+1-l}}z_{l+1,k}=z_{l,k}$ for $k\geq 0,l=1,\ldots,M$.
	\item\label{item:NS5actionD} We have $D_{V_{M+1-l}}z_{l,k}= z_{l+1,k+1}$ for $k\geq 0,l=1,\ldots,M$.
\end{enumerate}
Moreover, $E$ is invariant under all $A_U,B_{U}^-,B_{U}^+,C_V,D_V$.
\end{lemma}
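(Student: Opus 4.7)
The plan is to establish parts (i) and (ii) by a direct manipulation using the moment map equations at each black line lying between two red lines, and then read off the invariance of $E$ from these formulas together with Lemma~\ref{lemma:abcdinvariance}.

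Since $\mathcal{D}$ is separated, every black line $X_l$ with $2 \le l \le M$ sits between two red lines, namely $X_l^- = V_{M+2-l}$ and $X_l^+ = V_{M+1-l}$. The relevant case of the moment map equation~\eqref{eq:MomentMapEquations} reads
\[
D_{V_{M+2-l}}\,C_{V_{M+2-l}} \;=\; C_{V_{M+1-l}}\,D_{V_{M+1-l}}, \qquad 2 \le l \le M.
\]
For part (i) I would expand $z_{l+1,k} = (D_{V_{M+1-l}} C_{V_{M+1-l}})^k z_{l+1}$, pull the leading $C_{V_{M+1-l}}$ through the bracket, and then apply the identity above together with the recursion $z_l = C_{V_{M+1-l}} z_{l+1}$:
\[
C_{V_{M+1-l}} z_{l+1,k} \;=\; (C_{V_{M+1-l}} D_{V_{M+1-l}})^k C_{V_{M+1-l}} z_{l+1} \;=\; (D_{V_{M+2-l}} C_{V_{M+2-l}})^k z_l \;=\; z_{l,k}.
\]
The boundary case $l=1$ is vacuous since $W_{X_1} = 0$ by the convention $d_{X_1} = 0$ imposed on brane diagrams. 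Part (ii) is completely analogous:
\[
D_{V_{M+1-l}} z_{l,k} \;=\; D_{V_{M+1-l}} (C_{V_{M+1-l}} D_{V_{M+1-l}})^k z_l \;=\; (D_{V_{M+1-l}} C_{V_{M+1-l}})^{k+1} z_{l+1} \;=\; z_{l+1,k+1},
\]
once again using the moment map identity and the recursion for $z_l$.

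For the invariance of $E$, Lemma~\ref{lemma:abcdinvariance} already tells us that the weight space $W_{\sigma_{U_i}}$ is preserved by every operator, so it suffices to check that each operator respects the particular direct-sum decomposition defining $E$. Parts (i) and (ii) show directly that each $C_V$ and $D_V$ maps $E_l$ into $E_{l-1}$ and $E_{l+1}$ respectively; the boundaries are handled by $W_{X_1} = 0$ on the left and by the identity $E_{M+1} = W_{\sigma_{U_i},X_{M+1}}$ on the right, which follows from $z_{M+1,k} = y_{M+1,k}$ together with Corollary~\ref{cor:D5Basis}. The blue operators $A_U$, $B_U^{\pm}$ only act on the black lines $X_{M+1},\ldots,X_{M+N+1}$ on the blue side of $\mathcal{D}$, and preserve the weight space $W_{\sigma_{U_i}}$ by Lemma~\ref{lemma:abcdinvariance}, hence they preserve the blue tail of $E$.

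The proof is essentially bookkeeping, so I do not expect any serious obstacle. The only genuinely delicate point is the treatment of the leftmost and rightmost red black lines, but both are handled at once by the vanishing $W_{X_1} = 0$ and by Corollary~\ref{cor:D5Basis}, which together absorb all boundary behaviour.
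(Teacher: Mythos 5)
Your proof is correct and follows the same route as the paper's (which simply declares (i) and (ii) "immediate from the moment map equations" and deduces the invariance of $E$ from Lemma~\ref{lemma:abcdinvariance} together with (i) and (ii)); you have merely written out the index bookkeeping, the identity $D_{V_{M+2-l}}C_{V_{M+2-l}}=C_{V_{M+1-l}}D_{V_{M+1-l}}$ from \eqref{eq:MomentMapEquations}, and the boundary cases explicitly. Your observation that the right-hand boundary of the invariance argument rests on $E_{M+1}=W_{\sigma_{U_i},X_{M+1}}$ via $z_{M+1,k}=y_{M+1,k}$ and Corollary~\ref{cor:D5Basis} is a point the paper leaves implicit, and it is handled correctly.
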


\begin{proof} The assertions \ref{item:NS5actionC} and \ref{item:NS5actionD} are immediate from the moment map equations. The invariance of $E$ under all $A_U,B_{U}^-,B_{U}^+$ follows directly from Lemma~\ref{lemma:abcdinvariance}. Furthermore, \ref{item:NS5actionC} and \ref{item:NS5actionD} imply that $ E$ is invariant under all $C_V,D_V$.
\end{proof}

The proof of Proposition~\ref{prop:weightspaceNS5} follows now from the stability criterion for bow varieties:

\begin{proof}[Proof of Proposition~\ref{prop:weightspaceNS5}] By Lemma~\ref{lemma:NS5invariance}, the subspace
$
 E' =  E\oplus \bigoplus_{\tau\ne\sigma_{U_i}} W_{\tau} \subset W.
$
satisfies the conditions of Proposition~\ref{prop:nakajimaSemistabilityCriterion} and hence equals $W$ which implies $ E =W_{\sigma_{U_i}}$.
\end{proof}

The proof of Proposition~\ref{prop:weightspaceNS5} leads to the following useful observation:

\begin{cor}\label{cor:NS5dimdrop} For given $V\in r(\mathcal D)$ the following holds:
\begin{enumerate}[label=(\roman*)]
\item\label{item:NS5onto} The operator $C_V$ induces a surjection $W_{\sigma_{U_i},V^+}\rightarrow  W_{\sigma_{U_i},V^-}$.
\item\label{item:NS5dimdrop} We have either $\operatorname{dim}(W_{\sigma_{U_i},V^+})= \operatorname{dim}( W_{\sigma_{U_i},V^-})$ or $\operatorname{dim}( W_{\sigma_{U_i},V^+})= \operatorname{dim}( W_{\sigma_{U_i},V^-})+1$.
\end{enumerate}
\end{cor}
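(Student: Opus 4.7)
The plan is to leverage the explicit spanning sets for the weight spaces $W_{\sigma_{U_i}, X_l}$ that were constructed just before Proposition~\ref{prop:weightspaceNS5} together with the formulas in Lemma~\ref{lemma:NS5invariance}. Write $V=V_j$ and set $l=M+1-j$, so that $V^-=X_l$ and $V^+=X_{l+1}$. By Proposition~\ref{prop:weightspaceNS5} (combined with the direct sum decomposition of $E$ into the $E_l$'s and the blue-side weight spaces), the space $W_{\sigma_{U_i}, X_l}$ is spanned by $\{z_{l,k}\mid k\geq 0\}$ when $l\geq 2$, and is zero when $l=1$ (because $d_{X_1}=0$). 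Likewise $W_{\sigma_{U_i}, X_{l+1}}$ is spanned by $\{z_{l+1,k}\mid k\geq 0\}$; in the boundary case $l+1=M+1$ this uses the identification $z_{M+1,k}=y_{M+1,k}$ recorded after the definition of $E$ and the basis in Corollary~\ref{cor:D5Basis}.

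For \ref{item:NS5onto}, I would simply observe that Lemma~\ref{lemma:NS5invariance}\ref{item:NS5actionC} gives $C_V(z_{l+1,k})=z_{l,k}$ for every $k\geq 0$, so $C_V$ maps a spanning set of $W_{\sigma_{U_i}, V^+}$ onto a spanning set of $W_{\sigma_{U_i}, V^-}$; in the $l=1$ case the target is zero and surjectivity is trivial. This immediately yields $\dim W_{\sigma_{U_i}, V^+}\geq \dim W_{\sigma_{U_i}, V^-}$, which is one of the two inequalities needed for \ref{item:NS5dimdrop}.

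For the upper bound in \ref{item:NS5dimdrop}, I would use Lemma~\ref{lemma:NS5invariance}\ref{item:NS5actionD}: $D_V(z_{l,k})=z_{l+1,k+1}$ for every $k\geq 0$. Consequently every spanning vector $z_{l+1,k}$ with $k\geq 1$ lies in $\operatorname{image}(D_V)$, so
\[
W_{\sigma_{U_i}, V^+}\;=\;\mathbb{C}\,z_{l+1,0}\;+\;\operatorname{image}\!\bigl(D_V\!\mid_{W_{\sigma_{U_i}, V^-}}\bigr),
\]
which forces $\dim W_{\sigma_{U_i}, V^+}\leq 1+\dim W_{\sigma_{U_i}, V^-}$. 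Combining the two bounds gives exactly the dichotomy claimed.

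I do not expect a real obstacle here; the argument is essentially bookkeeping on the spanning sets produced by the proof of Proposition~\ref{prop:weightspaceNS5}. The one point that requires some care is the uniform treatment of the boundary cases $l=1$ (where $V^-=X_1$ has trivial weight space) and $l+1=M+1$ (where one passes from the ``red'' spanning family $z_{l+1,k}$ to the ``blue'' spanning family $y_{M+1,k}$); both are handled by the moment-map identity $D_{V_1}C_{V_1}=-B^-_{U_1}$ used to justify $z_{M+1,k}=y_{M+1,k}$.
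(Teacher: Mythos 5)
Your argument is correct and is essentially the paper's own proof: part \ref{item:NS5onto} follows from $C_V(z_{l+1,k})=z_{l,k}$ mapping a spanning set onto a spanning set, and part \ref{item:NS5dimdrop} from the observation that $D_V$ hits every $z_{l+1,k}$ with $k\geq 1$, so the image of $D_V$ has codimension at most one in $W_{\sigma_{U_i},V^+}$. Your handling of the boundary cases $l=1$ and $l+1=M+1$ is a welcome bit of extra care that the paper leaves implicit.
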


\begin{proof} According to Lemma~\ref{lemma:NS5invariance}.\ref{item:NS5actionD} and Proposition~\ref{prop:weightspaceNS5}, the image of $C_V$ contains a generating system of $W_{\sigma_{U_i},V^-}$ which gives~\ref{item:NS5onto}.
For~\ref{item:NS5dimdrop}, write $V=V_{l}$ where $l=1,\ldots,M$. By Lemma~\ref{lemma:NS5invariance}.\ref{item:NS5actionC}, $C_{V_{l}}$ surjects onto the span of $\{z_{M+1-l,k}|k\geq 1\}$  which is a subspace of $W_{\sigma_{U_i},V^+}$ of codimension $1$ by Proposition~\ref{prop:nilpotency}. Combining this with \ref{item:NS5onto}, we obtain the following inequalities $
\operatorname{dim}(W_{\sigma_{U_i},V^+})-1 \le \operatorname{dim}(W_{\sigma_{U_i},V^-}) \le\operatorname{dim}(W_{\sigma_{U_i},X_{V^+}})$.
Thus, we conclude \ref{item:NS5dimdrop}.
\end{proof}

Now, by Corollary~\ref{cor:NS5dimdrop}, there exist $k_0:=1\le k_1 < \ldots < k_r \le k_{r+1}:=M$ such that
$\operatorname{dim}(W_{\sigma_{U_i},V_{k_j}^+ })=\operatorname{dim}(W_{\sigma_{U_i},V_{k_j}^-})+1$ for $j=1,\ldots,r$
and $\operatorname{dim}(W_{\sigma_{U_i},V_l^+})=\operatorname{dim}(W_{\sigma_{U_i},V_l^-})$ in case $k_j<l<k_{j+1}$ with  $j=0,\ldots,r$.
The following corollary is immediate from Proposition~\ref{prop:weightspaceNS5}:

\begin{cor}[Combinatorial bases]\label{cor:redBasis} Let $X_l\in h(\mathcal D)$ with $V_{k_{j+1}}\triangleleft X_l \triangleleft V_{k_j}$. Then, the vector space $W_{\sigma_{U_i},X}$ has basis $(z_{l,0},\ldots,z_{l,r-j-1})$. We denote this basis by $\mathfrak B_{U_i,l}$.
\end{cor}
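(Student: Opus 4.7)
The plan is to turn the spanning statement of Proposition~\ref{prop:weightspaceNS5} into a basis statement by combining a dimension count coming from Corollary~\ref{cor:NS5dimdrop} with the nilpotency provided by Proposition~\ref{prop:nilpotency}. The conclusion will then follow from a standard cyclic module argument.

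First I would pin down the dimension of $W_{\sigma_{U_i},X_l}$. By Corollary~\ref{cor:D5Basis}, $\dim W_{\sigma_{U_i},X_{M+1}}=r$. Moving leftward across $V_1,V_2,\ldots,V_M$, Corollary~\ref{cor:NS5dimdrop} says the dimension either stays the same or drops by exactly one, and by the very definition of $k_1<\cdots<k_r$ these drops occur precisely at $V_{k_1},\ldots,V_{k_r}$. Hence for $V_{k_{j+1}}\triangleleft X_l\triangleleft V_{k_j}$, exactly $j$ drops have been crossed and $\dim W_{\sigma_{U_i},X_l}=r-j$.

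Next I would view $W_{\sigma_{U_i},X_l}$ as a cyclic module under the endomorphism
\[
T_l := D_{V_{M+2-l}}C_{V_{M+2-l}}\in\operatorname{End}(W_{\sigma_{U_i},X_l}),
\]
noting that $T_l(z_{l,k})=z_{l,k+1}$ by Lemma~\ref{lemma:NS5invariance}, and that $T_l$ is nilpotent by Proposition~\ref{prop:nilpotency}.\ref{item:nilpotencyNS5} (with the boundary case $l=2$ falling out trivially from $W_{X_1}=0$, since then $C_{V_M}=0$). Proposition~\ref{prop:weightspaceNS5} ensures that $\{z_{l,k}\}_{k\ge 0}$ spans $W_{\sigma_{U_i},X_l}$.

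Letting $N$ be the smallest integer with $z_{l,N}=0$, the vectors $z_{l,0},\ldots,z_{l,N-1}$ already span. A standard triangular argument, applying $T_l^{N-1-k_0}$ to a minimal nontrivial relation, forces linear independence and in particular $N=\dim W_{\sigma_{U_i},X_l}=r-j$, giving the claimed basis. The only potential obstacle I foresee is bookkeeping: keeping straight the right-to-left indexing of red lines (the red line between $X_{l-1}$ and $X_l$ is $V_{M+2-l}$) and properly handling the leftmost case $l=2$ where Proposition~\ref{prop:nilpotency} does not directly apply to $V_M$.
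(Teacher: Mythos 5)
Your proof is correct and follows the route the paper intends: the paper declares the corollary ``immediate from Proposition~\ref{prop:weightspaceNS5}'', and you supply exactly the missing ingredients — spanning from Proposition~\ref{prop:weightspaceNS5}, the dimension count $\dim W_{\sigma_{U_i},X_l}=r-j$ from Corollary~\ref{cor:NS5dimdrop} together with the definition of $k_1<\cdots<k_r$, and linear independence of the nonzero $z_{l,k}$ via the standard cyclic-nilpotent-operator argument, with the boundary case $l=2$ handled correctly since $C_{V_M}=0$.
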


With the above notation, we now give a diagrammatic description of the operators $C_V,D_V$:

\begin{cor}[Red operators]\label{cor:NS5part} The operators $C_{V_l},D_{V_l}$, where $l=k_j\ldots,k_{j+1}-1$, with respect to the bases $\mathfrak B_{k_j},\ldots \mathfrak B_{k_{j+1}}$ are illustrated by the diagram in Figure~\ref{fig:NS5part}.
\end{cor}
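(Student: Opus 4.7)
The plan is to read Figure~\ref{fig:NS5part} directly off Lemma~\ref{lemma:NS5invariance} applied to the basis from Corollary~\ref{cor:redBasis}, with the dimension count in Corollary~\ref{cor:NS5dimdrop} controlling the boundary. Concretely, I would fix the block index $j$ and label the columns of the diagram by the bases $\mathfrak B_{U_i,X}=(z_{X,0},\ldots,z_{X,r-j-1})$ for black lines $X$ inside $V_{k_{j+1}}\triangleleft X\triangleleft V_{k_j}$, together with the adjacent column $\mathfrak B_{U_i,V_{k_j}^+}$ of height $r-j+1$ on the right, which belongs to the previous block. Lemma~\ref{lemma:NS5invariance}(i) gives $C_{V_l}(z_{V_l^+,k})=z_{V_l^-,k}$, which reads off as a horizontal leftward arrow between the $k$-th entries of adjacent columns, and Lemma~\ref{lemma:NS5invariance}(ii) gives $D_{V_l}(z_{V_l^-,k})=z_{V_l^+,k+1}$, which reads off as a one-step diagonal arrow going right and up. In the interior of the block, both source and target columns have the common size $r-j$, so these identifications reproduce the pattern of arrows drawn exactly.

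The main obstacle is the boundary $l=k_j$, where the source column of $C_{V_{k_j}}$ is one vector longer than the target and one has to identify $C_{V_{k_j}}(z_{V_{k_j}^+,r-j})$. My strategy here is a rank argument: by Corollary~\ref{cor:NS5dimdrop}(i), $C_{V_{k_j}}$ is surjective with one-dimensional kernel; by Lemma~\ref{lemma:NS5invariance}(i), the first $r-j$ source basis vectors are sent bijectively onto the target basis of $\mathfrak B_{U_i,V_{k_j}^-}$ and hence span a complement to the kernel, so the kernel must be generated by the extra top vector $z_{V_{k_j}^+,r-j}$ (up to absorbing a linear combination of the lower basis vectors into it, which does not affect the diagrammatic description). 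Dually, Lemma~\ref{lemma:NS5invariance}(ii) at $k=r-j-1$ yields $D_{V_{k_j}}(z_{V_{k_j}^-,r-j-1})=z_{V_{k_j}^+,r-j}$, supplying the unique diagonal arrow landing in the extra top entry of the longer column. Gathering these pointwise identities with the routine interior arrows reproduces the full diagram in Figure~\ref{fig:NS5part}, which completes the proof.
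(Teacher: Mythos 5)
Your treatment of the interior arrows is essentially the paper's: Lemma~\ref{lemma:NS5invariance} applied to the bases of Corollary~\ref{cor:redBasis} gives $C_{V}(z_{V^+,k})=z_{V^-,k}$ and $D_{V}(z_{V^-,k})=z_{V^+,k+1}$. But there is a genuine gap at the top index, and it concerns both operators, not only $C_{V_{k_j}}$. The identities of Lemma~\ref{lemma:NS5invariance} hold for \emph{all} $k\geq0$; in particular $D_{V_l}(z_{V_l^-,r-j-1})=z_{V_l^+,r-j}$, and the vector $z_{V_l^+,r-j}$ is not a member of the basis of the target column. To conclude that this arrow is simply absent from the diagram you must know that $z_{l,k}=0$ whenever $k\geq\operatorname{dim}(W_{\sigma_{U_i},X_l})$. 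This vanishing is precisely what the paper extracts from the nilpotency of $D_VC_V$ (Proposition~\ref{prop:nilpotency}) combined with Corollary~\ref{cor:redBasis}: $z_{l,k}=(D_{V_{M+2-l}}C_{V_{M+2-l}})^kz_l$ is the orbit of a cyclic vector under a nilpotent operator whose span has dimension $d=\operatorname{dim}(W_{\sigma_{U_i},X_l})$, so $z_{l,d}=0$ and hence $z_{l,k}=0$ for all $k\geq d$. You never invoke nilpotency, and nothing else in your argument supplies this fact.

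The same omission makes your boundary argument for $C_{V_{k_j}}$ inconclusive as stated. The rank count from Corollary~\ref{cor:NS5dimdrop} only shows that the kernel is one-dimensional and spanned by $z_{V_{k_j}^+,r-j}+w$ for some $w$ in the span of the lower basis vectors; ``absorbing'' $w$ is a change of basis, whereas the corollary asserts the matrix form with respect to the \emph{specific} bases of Corollary~\ref{cor:redBasis} (and this matters downstream, in the proof of Theorem~\ref{thm:fixedPointsGeneric1ParameterTorus}, where these matrices are matched literally against the combinatorial operators $C_{D,U_i,V}$). The direct route closes the gap in one line and is what the paper does: Lemma~\ref{lemma:NS5invariance}(i) at $k=r-j$ gives $C_{V_{k_j}}(z_{V_{k_j}^+,r-j})=z_{V_{k_j}^-,r-j}$, which vanishes by the nilpotency argument above since the target space has dimension $r-j$. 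With that added, your proof is correct and the surjectivity input from Corollary~\ref{cor:NS5dimdrop} becomes unnecessary.
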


\begin{proof} By Proposition~\ref{prop:nilpotency}, we have $z_{l,k}=0$ for $k\geq \operatorname{dim}(W_{\sigma_{U_i},X_l})$. Hence, Lemma~\ref{lemma:NS5invariance} gives that the stated operators act on the given bases exactly as illustrated in the diagram.
\end{proof}

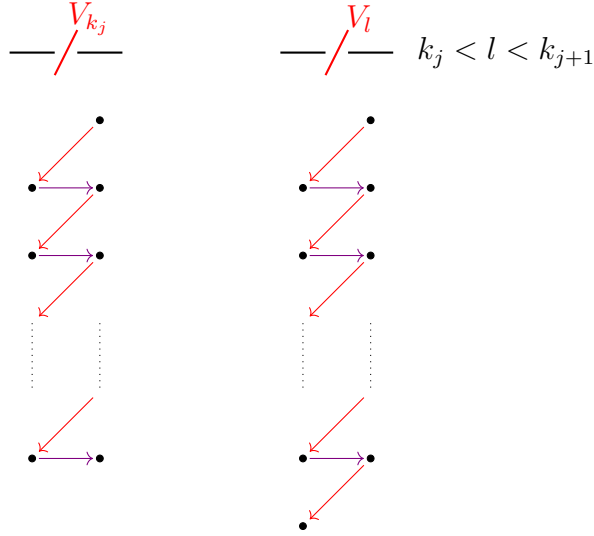
\begin{figure}
\centering
\begin{tikzpicture}
[scale=.3]
\draw[thick] (0,0)--(2,0);
\draw[thick] (3,0)--(5,0);
\draw [thick,red] (2,-1) --(3,1); 
\node[red] at (3.5,1.5) {$V_{k_{j}}$};
\draw[fill] (4,-3) circle [radius=.15];
\draw[fill] (4,-6) circle [radius=.15];
\draw[fill] (4,-9) circle [radius=.15];
\draw[fill] (4,-18) circle [radius=.15];
\draw[fill] (1,-6) circle [radius=.15];
\draw[fill] (1,-9) circle [radius=.15];
\draw[fill] (1,-18) circle [radius=.15];

\draw[dotted] (1,-12) -- (1,-15);
\draw[dotted] (4,-12) -- (4,-15);

\draw [red, ->] (3.7, -3.3) -- (1.3,-5.7);
\draw [red, ->] (3.7, -6.3) -- (1.3,-8.7);
\draw [red, ->] (3.7, -9.3) -- (1.3,-11.7);
\draw [red, ->] (3.7, -15.3) -- (1.3,-17.7);
\draw [violet, ->] (1.3,-6) -- (3.7,-6);
\draw [violet, ->] (1.3,-9) -- (3.7,-9);
\draw [violet, ->] (1.3,-18) -- (3.7,-18);

\draw[thick] (12,0)--(14,0);
\draw[thick] (15,0)--(17,0);
\draw [thick,red] (14,-1) --(15,1); 
\node[red] at (15.5,1.5) {$V_{l}$};
\node at (22,0) {$k_j<l<k_{j+1}$};
\draw[fill] (16,-3) circle [radius=.15];
\draw[fill] (16,-6) circle [radius=.15];
\draw[fill] (16,-9) circle [radius=.15];
\draw[fill] (16,-18) circle [radius=.15];
\draw[fill] (13,-6) circle [radius=.15];
\draw[fill] (13,-9) circle [radius=.15];
\draw[fill] (13,-18) circle [radius=.15];
\draw[fill] (13,-21) circle [radius=.15];

\draw[dotted] (13,-12) -- (13,-15);
\draw[dotted] (16,-12) -- (16,-15);

\draw [red, ->] (15.7, -3.3) -- (13.3,-5.7);
\draw [red, ->] (15.7, -6.3) -- (13.3,-8.7);
\draw [red, ->] (15.7, -9.3) -- (13.3,-11.7);
\draw [red, ->] (15.7, -15.3) -- (13.3,-17.7);
\draw [red, ->] (15.7, -18.3) -- (13.3,-20.7);
\draw [violet, ->] (13.3,-6) -- (15.7,-6);
\draw [violet, ->] (13.3,-9) -- (15.7,-9);
\draw [violet, ->] (13.3,-18) -- (15.7,-18);
\end{tikzpicture}
\caption{Diagrammatic description of $C_{V_l},D_{V_l}$ in case $l=k_j\ldots,k_{j+1}-1$.}\label{fig:NS5part}
\end{figure}

\subsection{Proof of the Generic Cocharacter Theorem}
The proof of the Generic Cocharacter Theorem is essentially a consequence of the diagrammatic description of the operators $A_U,B_U^+,B_U^-,C_V,D_V$ from Corollary~\ref{cor:D5part} and Corollary~\ref{cor:NS5part}:

\begin{proof}[Proof of Theorem~\ref{thm:fixedPointsGeneric1ParameterTorus}]
For a given $p\in\mathcal C(\mathcal D)^\sigma$ we define a tie diagram $D$ via
\[
(V,U)\in D \Leftrightarrow \operatorname{dim}(W_{\sigma_{U},V^+}) = \operatorname{dim}(W_{\sigma_{U},V^-})+1.
\]
Given $U_i\in b(\mathcal D)$, let $r,k_0,\ldots,k_{r+1}$ be defined as in the previous subsection. We have $d_{D,U_i,X_l}=0$ for $l>M+i$ and $d_{D,U_i,X_l}=r$ for $M+1\le l\le M+i$. Moreover,
$d_{D,U_i,X_l} = r-j$ for $j=0,\ldots,r$ and $V_{k_j+1}\triangleleft X_l \triangleleft V_{k_{j}}$. Thus, $d_{D,U_i,X_l}=\operatorname{dim}(W_{\sigma_{U_i},X_l})$ for all $X_l\in h(\mathcal D)$ which implies that $D$ is indeed a tie diagram of $\mathcal D$.
The corresponding column bottom indices are given by $c_{D,U_i,X_l}=0$ for $M+1\le l\le M+i$ and $c_{D,U_i,X_l}=l-M-1+j,$ for $V_{k_{j+1}}\triangleleft X_l\triangleleft V_{k_j}$, $j=0,\ldots,r$. 
Therefore, the vector spaces $F_{D,U_i,X_l}$ have bases $( e_{U_i,l-M-i,0},\ldots, e_{U_i,l-M-i,r-1})$ for $M+1\le l\le M+i$ and $(\tilde e_{U_i,X_l,0},\ldots,\tilde e_{U_i,X_l,r-1-j})$ for $V_{k_{j+1}}\triangleleft X_l\triangleleft V_{k_j}$, $j=0,\ldots,r$, where we set $\tilde e_{U_i,X_l,k}:=e_{U_i,l-M-i,l-M-2+r-k}$.
Consequently,  we can define isomorphisms of vector spaces 
$\phi_{U_i,X_l}:W_{\sigma_{U_i},X_l}\rightarrow F_{D,U,X_l}$
via
\[
\phi_{U_i,X_l}(y_{l,k})= e_{U_i,l-M-i,r-k-1},\quad M+1\le l\le M+i, k=0,\ldots,r-1
\]
and
\[
\phi_{U_i,X_l}(z_{l,k})= \tilde e_{U_i,X_l,k}\quad V_{k_{j+1}}\triangleleft X_l\triangleleft V_{k_j}, k=0,\ldots,j-1.
\]
For the other $X_l$, we have $W_{\sigma_{U_i},X_l}=0$, so we set $\phi_{U_i,X_l}=0$ for $X_l\triangleleft V_{k_M}$ and $X_l\triangleright U_i$. By Corollary~\ref{cor:D5part}, 
\begin{gather*}
\phi_{U_i,U^-}A_{U}(w_+)= A_{D,U_i,U}\phi_{U_i,U^+}(w_+),\quad \phi_{U_i,U^-}B_{U}^-(w_-)=B_{D,U_i,U}^-\phi_{U_i,U^-}(w_-),\\ 
\phi_{U_i,U^+}B_{U}^+(w_+)=B_{D,U_i,U}^+\phi_{U_i,U^+}(w_+),
\end{gather*}
for all $U\in b(\mathcal D)$, $w_-\in W_{\sigma_{U_i},U^-}, w_+\in W_{\sigma_{U_i},U^+}$. In addition,
$\phi_{U_i,U_i^-}a_{U_i}=a_{D,U_i}$ and $b_{U_i}=b_{D,U_i}=0$.
Likewise, Corollary~\ref{cor:NS5part} gives
\[
\phi_{U_i,V^-}C_{V}(v_+)= C_{D,U_i,V}\phi_{U_i,V^+}(v_+),\quad \phi_{U_i,V^+}D_{V}(v_-)= D_{D,U_i,V}\phi_{U_i,V^-}(v_-),
\]
for all $V\in r(\mathcal D), v_-\in W_{\sigma_{U_i},V^-}, v_+\in W_{\sigma_{U_i},V^+}$.
Thus, we proved that $p$ equals the $\mathbb T$-fixed point $x_D$ and hence $\mathcal C(\mathcal D)^\sigma=\mathcal C(\mathcal D)^{\mathbb T}$.
\end{proof}

%
%

\section{Attracting cells}
\label{section:attraction}

	The theory of attracting cells lays the foundation for the theory of stable envelopes.  Maulik and Okounkov consider in~\cite[Section~3.2]{maulik2019quantum} attracting cells for smooth and quasi-projective varieties $X$ with an algebraic torus action such that $X$ admits a proper equivariant morphism to an affine variety. As we discussed in Section~\ref{section:bowVarieties}, bow varieties are smooth and quasi-projective. Moreover, by their construction as GIT quotients, bow varieties also admit an equivariant proper morphism to their respective categorical quotients which is affine. 
	
	Therefore, bow varieties fit into the Maulik--Okounkov setup and all results in~\cite[Section~3.2]{maulik2019quantum} apply. The Generic Cocharacter Theorem \ref{thm:fixedPointsGeneric1ParameterTorus} implies that we are in the preferable situation of finitely many isolated torus fixed points which moreover can be described combinatorially, see Section~\ref{section:torusFixedPoints}. We now describe their attracting cells in more details. 
	
	The theory of attracting cells requires non-empty torus fixed locus of generic one-parameter subgroups. Thus, we restrict our attention to bow varieties $\mathcal C(\mathcal D)$ with $\mathcal C(\mathcal D)^\sigma\ne \emptyset$ where $\sigma$ is a generic cocharater of $\mathbb A$. By the Generic Cocharcter Theorem this condition is equivalent to the following combinatoric assumption:
	\begin{assumption} From now on we assume that the brane diagram $\mathcal D$ can be extended to a tie diagram.
	\end{assumption}
	

\subsection{Affine structure}\label{subsection:affinestr}
	Given a generic cocharacter $\sigma:\mathbb C^\ast \rightarrow \mathbb A$ and $p\in \mathcal C(\mathcal D)^{\mathbb T}$, we obtain by Corollary~\ref{cor:tangentweights} a splitting
	$T_p\mathcal C(\mathcal D)=T_p\mathcal C(\mathcal D)_\sigma^+\oplus T_p\mathcal C(\mathcal D)_\sigma^-$ in the subspace of strictly positive respectively strictly negative weights corresponding to $\sigma$. As the symplectic form on $\mathcal C(\mathcal D)$ is $\mathbb A$-invariant, the $\mathbb C$-vector spaces $T_p\mathcal C(\mathcal D)_\sigma^+$ and $
	T_p\mathcal C(\mathcal D)_\sigma^-$ have the same dimension.	
	\begin{definition}
	The \textit{attracting cell of $p$ with respect to $\sigma$} is defined as
	\[
	\operatorname{Attr}_{\sigma}(p):= \{ z\in\mathcal C(\mathcal D)\mid \lim_{t\to0}\sigma(t).z=p \}.
	\]
	\end{definition}
	By definition, $\operatorname{Attr}_{\sigma}(p)$ is just a $\mathbb T$-invariant subset of $\mathcal C(\mathcal D)$. The following proposition shows that it actually carries the structure of a locally closed affine subvariety.
	
	\begin{prop}\label{prop:affinestructure} The attracting cell $\operatorname{Attr}_{\sigma}(p)$ is a locally closed $\mathbb T$-invariant subvariety of $\mathcal C(\mathcal D)$ which is $\mathbb T$-equivariantly isomorphic to the affine space $T_p\mathcal C(\mathcal D)_\sigma^+$.
	\end{prop}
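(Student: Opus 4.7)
The plan is to apply the Bia{\l}ynicki-Birula decomposition in the Maulik-Okounkov form from \cite[Section 3.2]{maulik2019quantum}, which gives exactly this statement for smooth quasi-projective varieties admitting a proper equivariant map to an affine variety, in the presence of a smooth isolated fixed point. The $\mathbb T$-invariance of $\operatorname{Attr}_{\sigma}(p)$ is immediate: since $\sigma(\mathbb C^\ast)\subset\mathbb A\subset\mathbb T$ and $\mathbb T$ is abelian, $\sigma$ commutes with the $\mathbb T$-action, so $\lim_{t\to 0}\sigma(t).(s.z)=s.p=p$ for any $s\in\mathbb T$ and $z\in\operatorname{Attr}_{\sigma}(p)$.

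The heart of the proof is the construction of the isomorphism $\operatorname{Attr}_{\sigma}(p)\cong T_p\mathcal C(\mathcal D)_\sigma^+$, which I would carry out in two steps. First, I invoke the equivariant slice theorem at $p$: since $p$ is smooth and isolated in $\mathcal C(\mathcal D)^{\mathbb T}$ (by Theorem~\ref{thm:fixedPointsGeneric1ParameterTorus}), there exists a $\mathbb T$-equivariant étale morphism from a $\mathbb T$-invariant open neighborhood of $p$ in $\mathcal C(\mathcal D)$ to a $\mathbb T$-invariant open neighborhood of $0$ in the tangent representation $T_p\mathcal C(\mathcal D)$, sending $p$ to $0$ and inducing the identity on tangent spaces. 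On the linear $\mathbb C^\ast$-representation $T_p\mathcal C(\mathcal D)$, the $\sigma$-attracting cell of $0$ is manifestly the linear subspace $T_p\mathcal C(\mathcal D)_\sigma^+$. Hence étale-locally near $p$, the attracting cell $\operatorname{Attr}_{\sigma}(p)$ is identified with $T_p\mathcal C(\mathcal D)_\sigma^+$.

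The main obstacle is the second step, globalizing this étale-local identification to all of $\operatorname{Attr}_{\sigma}(p)$. The standard $\sigma$-flow argument handles this: for any $z\in\operatorname{Attr}_{\sigma}(p)$, the trajectory $\sigma(t).z$ lies in the chosen neighborhood of $p$ for all $|t|$ sufficiently small, so the local étale map can be applied to $\sigma(t).z$; rescaling back by $\sigma(t)^{-1}$, which is well defined on $T_p\mathcal C(\mathcal D)_\sigma^+$ because $\sigma$ acts there with strictly positive weights and therefore extends regularly through $t=0$, produces a globally defined $\mathbb T$-equivariant morphism $\Phi:\operatorname{Attr}_{\sigma}(p)\to T_p\mathcal C(\mathcal D)_\sigma^+$, independent of the auxiliary choice of $t$. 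The inverse is constructed symmetrically: given $v\in T_p\mathcal C(\mathcal D)_\sigma^+$, apply $\sigma(t)$ to push $v$ into the small linear neighborhood of $0$, invert the étale map to land near $p$ in $\mathcal C(\mathcal D)$, and rescale by $\sigma(t)^{-1}$. That $\Phi$ is a genuine isomorphism (rather than merely an étale cover) follows because the étale map induces the identity on tangent spaces at $p$ and the positive $\sigma$-weights keep the flow within the linear model. Finally, the locally closed structure of $\operatorname{Attr}_{\sigma}(p)$ drops out of the above affine charts together with the closedness of the $\sigma$-existence locus, which uses the proper $\mathbb T$-equivariant map from $\mathcal C(\mathcal D)$ to its affine categorical quotient to reduce the existence question to an affine $\mathbb C^\ast$-variety, where the locus of points whose $\sigma$-limit exists is cut out by the vanishing of functions of negative $\sigma$-weight.
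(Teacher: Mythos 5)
Your argument is correct in outline but follows a genuinely different route from the paper. The paper compactifies: by Sumihiro's theorem and equivariant resolution it embeds $\mathcal C(\mathcal D)$ as an open dense $\mathbb T$-invariant subvariety of a \emph{smooth projective} $X$, applies the classical Bia{\l}ynicki-Birula theorem (plus Konarski's equivariance refinement) to $X$, and then shows that nothing is lost upon restricting back, i.e.\ $\operatorname{Attr}_\sigma(p)=X^+_{p,\sigma}$; the only nonstandard input is Lemma~\ref{lemma:openorigin}, which says an open $\mathbb C^\ast$-invariant subvariety of a positively weighted linear representation containing the origin is everything. You instead linearize \'etale-locally at $p$ via the slice theorem and globalize with the $\sigma$-flow, using properness over the affine quotient for the closedness statements; this is the Drinfeld--Gaitsgory-style proof of the BB decomposition and it buys independence from projective compactifications, at the cost of more delicate gluing. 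Two points in your write-up need repair. First, the assertion that $\sigma(t)^{-1}$ ``extends regularly through $t=0$'' on $T_p\mathcal C(\mathcal D)^+_\sigma$ is backwards: on the positive weight space it is $\sigma(t)$ that extends over $t=0$ (contracting to the origin), while $\sigma(t)^{-1}$ blows up; what you actually need, and what is true, is that the locally defined maps $z\mapsto\sigma(t_0)^{-1}.\phi(\sigma(t_0).z)$ for varying small $t_0$ agree by equivariance of $\phi$ and hence glue to a regular morphism, and that $\phi(\sigma(t_0).z)$ indeed lands in $T_p\mathcal C(\mathcal D)^+_\sigma$ because its forward $\sigma$-trajectory in the linear model tends to $0$ and $T_p\mathcal C(\mathcal D)$ has no nonpositive $\sigma$-weights (Corollary~\ref{cor:tangentweights}). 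Second, ``$\Phi$ is an isomorphism because the \'etale map induces the identity on tangent spaces'' is not by itself a proof; you should spell out that injectivity follows from the analytic local injectivity of $\phi$ near $p$ applied to $\sigma(t).z=\sigma(t).z'$ for small $t$, and surjectivity from lifting $\sigma(t).v$ through the local analytic inverse of $\phi$ and flowing back. With those details supplied your proof is complete and is a legitimate alternative to the one in the text.
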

	\begin{proof} 
	Since $\mathcal C(\mathcal D)$ is smooth and quasi-projective, there exists by \cite[Theorem~1]{sumihiro1974equivariant} a projective variety $X$ with $\mathbb T$-action such that there is an open dense $\mathbb T$-equivariant embedding $\mathcal C(\mathcal D)\hookrightarrow X$.
	Thanks to the equivariant Hironaka theorem, see e.g.~\cite[Theorem 1.0.3]{wlodarczyk2005simple}, we can assume that $X$ is smooth. The classical Bialynicki-Birula theorem \cite[Theorem~4.3]{bialynicki1973some} gives that
	\[
		X_{p,\sigma}^+:=\{ x\in X\mid \lim_{t\to0}\sigma(t).x=p \}	
	\]
	is a locally closed $\mathbb T$-invariant subvariety of $X$ which is isomorphic as variety to $(T_pX)_\sigma^+$. By~\cite[Remark]{konarski1996bb}, this isomorphism can be chosen to be $\mathbb T$-equivariant. Since the $\mathbb T$-equivariant embedding of $\mathcal C(\mathcal D)$ in $X$ is open dense, we have $(T_pX)_\sigma^+=(T_p\mathcal C(\mathcal D))_\sigma^+$ and $\operatorname{Attr}_{\sigma}(p)=X_{p,\sigma}^+ \cap \mathcal C(\mathcal D)$ is a locally closed subvariety of $\mathcal C(\mathcal D)$. To conclude that $\operatorname{Attr}_{\sigma}(p)$
	is $\mathbb T$-equivariantly isomorphic to $T_p\mathcal C(\mathcal D)_\sigma^+$, we show that $\operatorname{Attr}_{\sigma}(p)=X_{p,\sigma}^+ $. For this, note that $\operatorname{Attr}_{\sigma}(p)$ is an open, $\mathbb T$-invariant subvariety of $X_{p,\sigma}^+$ containing $p$. Hence, Lemma~\ref{lemma:openorigin} below implies $\operatorname{Attr}_{\sigma}(p)=X_{p,\sigma}^+$ which completes the proof.
	\end{proof}
	
	\begin{lemma}\label{lemma:openorigin} Let $W$ be a finite dimensional vector space with a linear $\mathbb C^{\ast}$-action such that all $\mathbb C^\ast$-weights of $W$ are strictly positive\footnote{note that they are integers}. Let $U\subset W$ be an open $\mathbb C$-invariant subvariety containing the origin. Then, $U=W$.
	\end{lemma}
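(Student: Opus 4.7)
The plan is to analyze the closed complement $Z := W \setminus U$ and show it must be empty. By assumption $Z$ is closed in $W$, $\mathbb C^\ast$-invariant (since $U$ is), and does not contain the origin.

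The key observation is that positivity of weights forces every orbit closure to sweep in the origin. More precisely, decompose any $w \in W$ into weight components $w = \sum_i w_i$ with weights $n_i > 0$. Then $t.w = \sum_i t^{n_i} w_i$, so the morphism $\mathbb C^\ast \to W$, $t \mapsto t.w$, extends to a morphism $\mathbb C \to W$ sending $0$ to $0 \in W$. In particular $0 \in \overline{\mathbb C^\ast.w}$, the closure taken in $W$.

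Now I would argue by contradiction: suppose $Z \ne \emptyset$ and pick any $w \in Z$. Since $Z$ is $\mathbb C^\ast$-invariant we have $\mathbb C^\ast.w \subset Z$, and since $Z$ is closed we get $\overline{\mathbb C^\ast.w} \subset Z$. By the previous paragraph this forces $0 \in Z$, contradicting the hypothesis that $0 \in U$. Hence $Z = \emptyset$ and $U = W$. The only step that even resembles an obstacle is making sure the limit statement is set-theoretic (which is automatic from the explicit polynomial form $t.w = \sum t^{n_i} w_i$), so the argument is essentially a one-line contradiction once the extension-to-$\mathbb C$ is written down.
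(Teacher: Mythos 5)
Your proof is correct and is essentially the paper's argument run in the contrapositive: the paper shows directly that $U\cap\overline{\mathbb C^\ast.w}$ is a non-empty open invariant subset of the orbit closure and hence contains $w$, while you pass to the closed invariant complement $Z=W\setminus U$ and derive the contradiction $0\in Z$; both rest on the same key fact that positivity of the weights forces $0\in\overline{\mathbb C^\ast.w}$ for every $w$. No gap.
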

	\begin{proof} Let $w\in W$ and $\overline{\mathbb C^\ast.w}$ be the Zariski closure of the $\mathbb C^\ast$-orbit of $\mathbb C^\ast.w$ in $W$. As all $\mathbb C^\ast$-weights of $W$ are strictly positive, $\overline{\mathbb C^\ast.w}$ contains the origin. So $U\cap \overline{\mathbb C^\ast.w}$ is a non-empty, open, $\mathbb C^{\ast}$-invariant subvariety of $\overline{\mathbb C^\ast.w}$ which implies $w\in U$.
	\end{proof}
	
	\subsection{Attracting cells in a concrete example} \label{subsection:ExampleAttractingCells}
	Let $\mathcal D$ be the brane diagram
	\[
	\begin{tikzpicture}
	[scale=.5]
	\draw[thick] (0,0)--(2,0);
	\draw[thick] (3,0)--(5,0);
	\draw[thick] (6,0)--(8,0);
	\draw[thick] (9,0)--(11,0);
	\draw[thick] (12,0)--(14,0);
	\draw[thick] (15,0)--(17,0);
	\draw[thick] (18,0)--(20,0);

	\draw [thick,red] (2,-1) --(3,1);
	\draw [thick,red] (8,-1) --(9,1);
	\draw [thick,red] (17,-1) --(18,1);
	\draw [thick,blue] (6,-1) --(5,1);
	\draw [thick,blue] (12,-1) --(11,1);
	\draw [thick,blue] (15,-1) --(14,1);

	\node at (1,0.5){$0$};
	\node at (4,0.5){$1$};
	\node at (7,0.5){$1$};
	\node at (10,0.5){$2$};
	\node at (13,0.5){$2$};
	\node at (16,0.5){$2$};
	\node at (19,0.5){$0$};
	\end{tikzpicture}
	\]
	We encode the elements of $\widetilde{\mathcal M}(\mathcal D)$, of $m^{-1}(0)$ and of the bow variety $\mathcal C(\mathcal D)$ again as tuples of endomorphisms with the notation given by the following diagram:
	\[
	\begin{tikzcd}
\mathbb C && \mathbb C \arrow[ll,"A_1",swap]\arrow[ld,"b_1"] \arrow[in=60,out=120,"B_1^+",loop] \arrow[rr, in=210,out=330,"D_2",swap] && \mathbb C^2 \arrow[ll, in=30,out=150,"C_2" ,swap] \arrow[in=60,out=120,"B_2^-",loop] &&\arrow[ll,"A_2",swap] \mathbb C^2 \arrow[in=60,out=120,"B_3^-",loop]\arrow[ld,"b_2"]&&\arrow[ll,"A_3",swap] \mathbb C^2\arrow[ld,"b_3"] \\
		&\mathbb C\arrow[lu,"a_1"] &&&& \mathbb C\arrow[lu,"a_2"] && \mathbb C \arrow[lu,"a_3"]
	\end{tikzcd}
	\]
	Here, we dropped the operators $C_1,C_3,D_1,D_3,B_1^-$ and $B_3^+$ from the picture as they always vanish. We also identified $B_{2}^{+}$ and $B_{3}^{-}$ according to the moment map equation. Moreover, note that $A_1,A_2,A_3$ are isomorphisms by Proposition~\ref{prop:takayamaS1S2implications}.
	
	One can easily check that $\mathcal D$ can be extended to exactly five different tie diagrams:
	\[
\begin{tikzpicture}
[scale=.25]
\draw[thick] (0,0)--(2,0);
\draw[thick] (3,0)--(5,0);
\draw[thick] (6,0)--(8,0);
\draw[thick] (9,0)--(11,0);
\draw[thick] (12,0)--(14,0);
\draw[thick] (15,0)--(17,0);
\draw[thick] (18,0)--(20,0);

\draw [thick,red] (2,-1) --(3,1);
\draw [thick,red] (8,-1) --(9,1);
\draw [thick,red] (17,-1) --(18,1);
\draw [thick,blue] (6,-1) --(5,1);
\draw [thick,blue] (12,-1) --(11,1);
\draw [thick,blue] (15,-1) --(14,1);

\draw[dashed] (3,1) to[out=45,in=135] (5,1);
\draw[dashed] (9,1) to[out=45,in=135] (11,1);
\draw[dashed] (6,-1) to[out=315,in=225] (17,-1);
\draw[dashed] (12,-1) to[out=315,in=225]  (17,-1);

\node at (10,-5){$D_1$};

\draw[thick] (25,0)--(27,0);
\draw[thick] (28,0)--(30,0);
\draw[thick] (31,0)--(33,0);
\draw[thick] (34,0)--(36,0);
\draw[thick] (37,0)--(39,0);
\draw[thick] (40,0)--(42,0);
\draw[thick] (43,0)--(45,0);

\draw [thick,red] (27,-1) --(28,1);
\draw [thick,red] (33,-1) --(34,1);
\draw [thick,red] (42,-1) --(43,1);
\draw [thick,blue] (31,-1) --(30,1);
\draw [thick,blue] (37,-1) --(36,1);
\draw [thick,blue] (40,-1) --(39,1);

\draw[dashed] (28,1) to[out=45,in=135] (30,1);
\draw[dashed] (34,1) to[out=45,in=135] (39,1);
\draw[dashed] (31,-1) to[out=315,in=225] (42,-1);
\draw[dashed] (40,-1) to[out=315,in=225]  (42,-1);

\node at (35,-5){$D_2$};

\draw[thick] (12.5,-8)--(14.5,-8);
\draw[thick] (15.5,-8)--(17.5,-8);
\draw[thick] (18.5,-8)--(20.5,-8);
\draw[thick] (21.5,-8)--(23.5,-8);
\draw[thick] (24.5,-8)--(26.5,-8);
\draw[thick] (27.5,-8)--(29.5,-8);
\draw[thick] (30.5,-8)--(32.5,-8);

\draw [thick,red] (14.5,-9) --(15.5,-7);
\draw [thick,red] (20.5,-9) --(21.5,-7);
\draw [thick,red] (29.5,-9) --(30.5,-7);
\draw [thick,blue] (18.5,-9) --(17.5,-7);
\draw [thick,blue] (24.5,-9) --(23.5,-7);
\draw [thick,blue] (27.5,-9) --(26.5,-7);

\draw[dashed] (15.5,-7) to[out=45,in=135] (17.5,-7);
\draw[dashed] (21.5,-7) to[out=45,in=135] (23.5,-7);
\draw[dashed] (21.5,-7) to[out=45,in=135] (26.5,-7);
\draw[dashed] (18.5,-9) to[out=315,in=225] (20.5,-9);
\draw[dashed] (24.5,-9) to[out=315,in=225]  (29.5,-9);
\draw[dashed] (27.5,-9) to[out=315,in=225]  (29.5,-9);

\node at (22.5,-12){$D_3$};

\draw[thick] (0,-16)--(2,-16);
\draw[thick] (3,-16)--(5,-16);
\draw[thick] (6,-16)--(8,-16);
\draw[thick] (9,-16)--(11,-16);
\draw[thick] (12,-16)--(14,-16);
\draw[thick] (15,-16)--(17,-16);
\draw[thick] (18,-16)--(20,-16);

\draw [thick,red] (2,-17) --(3,-15);
\draw [thick,red] (8,-17) --(9,-15);
\draw [thick,red] (17,-17) --(18,-15);
\draw [thick,blue] (6,-17) --(5,-15);
\draw [thick,blue] (12,-17) --(11,-15);
\draw [thick,blue] (15,-17) --(14,-15);

\draw[dashed] (3,-15) to[out=45,in=135] (11,-15);
\draw[dashed] (9,-15) to[out=45,in=135] (14,-15);
\draw[dashed] (12,-17) to[out=315,in=225] (17,-17);
\draw[dashed] (15,-17) to[out=315,in=225]  (17,-17);

\node at (10,-20){$D_4$};

\draw[thick] (25,-16)--(27,-16);
\draw[thick] (28,-16)--(30,-16);
\draw[thick] (31,-16)--(33,-16);
\draw[thick] (34,-16)--(36,-16);
\draw[thick] (37,-16)--(39,-16);
\draw[thick] (40,-16)--(42,-16);
\draw[thick] (43,-16)--(45,-16);

\draw [thick,red] (27,-17) --(28,-15);
\draw [thick,red] (33,-17) --(34,-15);
\draw [thick,red] (42,-17) --(43,-15);
\draw [thick,blue] (31,-17) --(30,-15);
\draw [thick,blue] (37,-17) --(36,-15);
\draw [thick,blue] (40,-17) --(39,-15);

\draw[dashed] (28,-15) to[out=45,in=135] (39,-15);
\draw[dashed] (34,-15) to[out=45,in=135] (36,-15);
\draw[dashed] (37,-17) to[out=315,in=225] (42,-17);
\draw[dashed] (40,-17) to[out=315,in=225]  (42,-17);

\node at (35,-20){$D_5$};

\end{tikzpicture}
	\]
	Consequently, the fixed point locus is $\mathcal C(\mathcal D)^{\mathbb T}=\{x_{D_1},x_{D_2},x_{D_3},x_{D_4},x_{D_5}\}$, see Section~\ref{section:torusFixedPoints}. In order to determine the attracting cells of these $\mathbb T$-fixed points, we first describe a covering of $\mathcal C(\mathcal D)$ by open affine $\mathbb T$-invariant subvarieties. For this, note that by Proposition~\ref{prop:nakajimaSemistabilityCriterion} a point
	\[
	x=(A_1,A_2,A_3,B_1^+,B_2^-,B_3^-,C_2,D_2,a_1,a_2,a_3)\in m^{-1}(0) 
	\]
	is semistable if and only if the following equalities hold
	\[
	\mathrm{Im}(a_1)+\mathrm{Im}(A_1C_2a_2)+\mathrm{Im}(A_1C_2a_3)=\mathbb C,\quad  \mathrm{Im}(a_2)+\mathrm{Im}(A_2a_3)+\mathrm{Im}(D_2A_1^{-1}a_1)=\mathbb C^2.
	\]
	It follows that $x\in m^{-1}(0)^{\mathrm{s}}$ if and only if one of the following five conditions is satisfied:
	\begin{enumerate}[label=(cov-\,\arabic*), wide=20pt]
		\item $a_1\ne0$ and $\operatorname{det}( a_2 \; D_2 ) \ne 0$,
		\item $a_1\ne0$ and $\operatorname{det}( A_2a_3 \; D_2 ) \ne 0$,
		\item $a_1\ne0$ and $\operatorname{det}( a_2 \; A_2a_3 ) \ne 0$,
		\item $C_2a_2\ne0$ and $\operatorname{det}( a_2 \; A_2a_3 ) \ne 0$,
		\item $C_2A_2a_3\ne0$ and $\operatorname{det}( a_2 \; A_2a_3 ) \ne 0$.
	\end{enumerate}
	We show now that there is a covering of $\mathcal C(\mathcal D)$
	by open affine $\mathbb T$-invariant subvarieties
	\begin{equation}\label{eq:affcov} 
	\mathcal C(\mathcal D)=\bigcup_{i=1}^5W_i.
	\end{equation}
	
To see this consider the decomposition of $m^{-1}(0)^{\mathrm{s}}$ by open affine $\mathbb T$-invariant and $\mathcal G$-invariant subvarieties
	\[
	m^{-1}(0)^{\mathrm{s}}=\widetilde W_1 \cup \widetilde W_2 \cup  \widetilde W_3 \cup  \widetilde W_4 \cup \widetilde W_5, 
	\]
	where $\widetilde W_i = \{x\in m^{-1}(0)\mid \text{such that (cov-i)  holds}\}$ for $i=1,\ldots,5$. 
	Setting $W_i:=\widetilde W_i/\mathcal G\subset \mathcal C(\mathcal D)$,  provides a covering \eqref{eq:affcov} by open affine $\mathbb T$-invariant subvarieties.  Note that $x_{D_i}\in W_i$ for each $i$. It remains to show that the $W_i$ are isomorphic to affine spaces:
	\begin{claim} The parametrization from Figure~\ref{fig:TInvariantNeighborhoods} gives, for any $i=1,\ldots,5$,  an isomorphism of varieties $\eta_i:\mathbb C^4\xrightarrow\sim W_i$ with $\eta_i(0)=x_{D_i}$. In particular, \eqref{eq:affcov} is a covering by affine spaces.
	\end{claim}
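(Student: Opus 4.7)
The approach is to verify, case by case for $i=1,\ldots,5$, that the map $\eta_i$ depicted in Figure~\ref{fig:TInvariantNeighborhoods} is an isomorphism by constructing its inverse via a gauge-fixing argument. The key observation is that the non-vanishing conditions (cov-$i$) defining $\widetilde W_i$, together with the invertibility of $A_1, A_2, A_3$ on all of $m^{-1}(0)^{\mathrm s}$ (Proposition~\ref{prop:takayamaS1S2implications}), provide canonical normalizations for the $\mathcal G$-action on $\widetilde W_i$. For instance, in case $i=1$ the condition $a_1\neq 0$ fixes the $\mathrm{GL}(W_{X_2})$-factor by setting $a_1=1$; the invertibility of the matrix $(a_2\mid D_2)$ fixes $\mathrm{GL}(W_{X_4})$ by demanding $a_2=e_1$ and $D_2=e_2$; and the invertibility of $A_1,A_2,A_3$ fixes the remaining factors on $W_{X_3},W_{X_5},W_{X_6}$ by setting $A_1=1$, $A_2=A_3=I$. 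Analogous normalizations apply in the other four cases, with the witnesses $a_1,a_2,D_2,A_2a_3,C_2a_2,C_2A_2a_3$ playing interchanged roles.

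After the gauge-fixing step the moment map equations~\eqref{eq:MomentMapEquations}, together with the triangle-part constraint $\mu=0$ from~\eqref{eq:nonMomentMap}, express every remaining operator as a polynomial function of exactly four scalars. A dimension count corroborates this, since $\dim\mathcal C(\mathcal D)=\dim\widetilde{\mathcal M}(\mathcal D)-2\dim\mathcal G=32-28=4$. These four scalars are the coordinates of $\mathbb C^4$ appearing in Figure~\ref{fig:TInvariantNeighborhoods}. The formulas manifestly define a regular map $\mathbb C^4\to \widetilde W_i$ which, composed with the quotient $\pi:m^{-1}(0)^{\mathrm s}\to \mathcal C(\mathcal D)$, yields the morphism $\eta_i:\mathbb C^4\to W_i$.

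To conclude that $\eta_i$ is an isomorphism, I would argue as follows. Injectivity is immediate from the fact that the gauge-fixing uses up the full $\mathcal G$-action with trivial residual isotropy (compatibly with Corollary~\ref{cor:SemistabilityEqualsStability}). Surjectivity amounts to showing that every point of $\widetilde W_i$ can be brought into the normalized form by a unique element of $\mathcal G$, which is just the reversal of the normalization procedure. Since $\pi$ is a principal $\mathcal G$-bundle in the Zariski topology, the induced $\eta_i$ is a regular bijection whose inverse is also regular, hence an isomorphism. The identity $\eta_i(0)=x_{D_i}$ is then checked by comparing the normalized formulas at the origin with the butterfly-diagram construction of $x_{D_i}$ from Subsection~\ref{subsection:ExplicitConstructionTorusFixedPoints}: the gauge representative obtained by setting all four parameters to zero agrees, after a unique $\mathcal G$-transformation, with the tuple $(A_{D_i,U},B_{D_i,U}^\pm,a_{D_i,U},b_{D_i,U},C_{D_i,V},D_{D_i,V})$ attached to the tie data $D_i$.

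The main obstacle is the case-by-case combinatorial verification, especially in cases (cov-4) and (cov-5) where the semistability witness is a composed operator such as $C_2a_2$ or $C_2A_2a_3$. In those cases the gauge-fixing does not act on a single $\mathrm{GL}$-factor but must be propagated through several factors in the correct order, and one has to check that after this the residual freedom is captured by precisely four polynomial parameters matching Figure~\ref{fig:TInvariantNeighborhoods}. A secondary technical point is verifying that the image of the polynomial parametrization actually lies in $m^{-1}(0)^{\mathrm s}$ rather than just in $m^{-1}(0)$; this follows by reading off the non-vanishing condition (cov-$i$) directly from the explicit formulas in the figure.
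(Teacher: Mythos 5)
Your proposal follows essentially the same route as the paper's proof: gauge-fix a representative in $\widetilde W_i$ using the non-vanishing conditions of (cov-$i$) together with the invertibility of the $A_U$, let the moment map equations determine all remaining operators from four scalars, establish surjectivity by reversing the normalization, and deduce injectivity from the triviality of the residual gauge freedom. The only cosmetic difference is the last step, where the paper concludes that the bijective morphism $\eta_i$ is an isomorphism from the smoothness of $\mathbb C^4$ and $W_i$, whereas you use the (equally valid) regular-section argument via the principal $\mathcal G$-bundle structure of $m^{-1}(0)^{\mathrm s}\rightarrow\mathcal C(\mathcal D)$.
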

	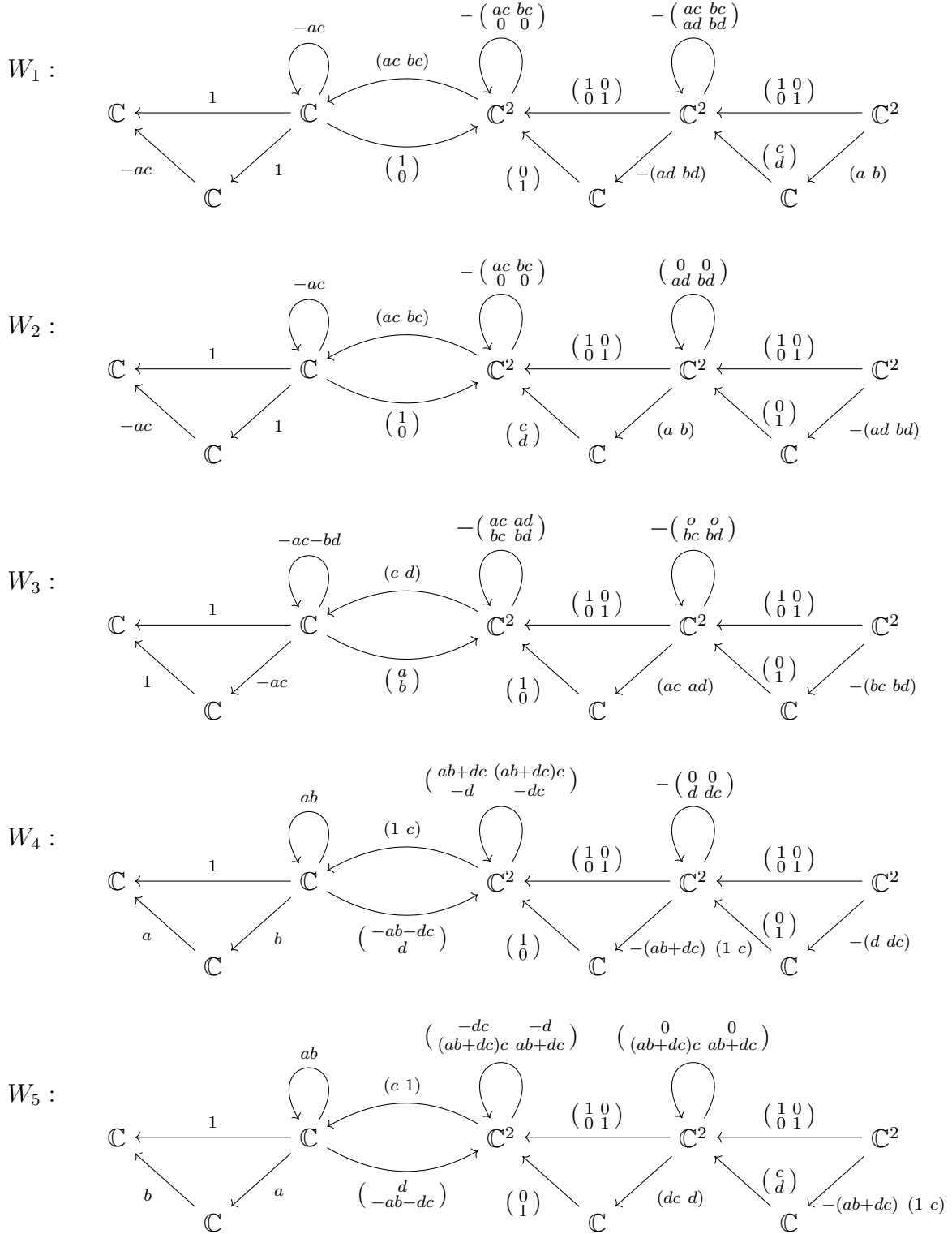
\begin{figure}
	\centering
	\begin{tikzpicture}[scale=.7]
	\node (W1) at (0,2) {$W_1:$};
	\node (W1Q1) at (2,1) {$\mathbb C$};
	\node (W1Q2) at (6.5,1) {$\mathbb C$};
	\node (W1Q3) at (11,1) {$\mathbb C^2$};
	\node (W1Q4) at (15.5,1) {$\mathbb C^2$};
	\node (W1Q5) at (20,1) {$\mathbb C^2$};
	\node (W1F1) at (4.25,-1) {$\mathbb C$};
	\node (W1F2) at (13.25,-1) {$\mathbb C$};
	\node (W1F3) at (17.75,-1) {$\mathbb C$};
	
	\draw[->] (W1Q2) to[in=120, out=60,looseness=10] node[midway,above] {$\begin{smallmatrix} -ac
	\end{smallmatrix}$} (W1Q2);
	\draw[->] (W1Q3) to[in=120, out=60,looseness=10] node[midway,above] {$\begin{smallmatrix}-\end{smallmatrix}(\begin{smallmatrix} ac & bc \\ 0& 0 \end{smallmatrix})$} (W1Q3);
	\draw[->] (W1Q4) to[in=120, out=60,looseness=10] node[midway,above] {$\begin{smallmatrix}-\end{smallmatrix}(\begin{smallmatrix} ac & bc \\ ad & bd \end{smallmatrix})$} (W1Q4);
	\draw[->] (W1Q2) to[in=210, out=330] node[midway,below] {$(\begin{smallmatrix} 1 \\ 0 \end{smallmatrix})$} (W1Q3);
	\draw[->] (W1Q3) to[in=30, out=150] node[midway,above] {$\begin{smallmatrix} (ac & bc) \end{smallmatrix}$}(W1Q2);
	\draw[->] (W1Q2) to[in=0,out=180] node[midway,above] {$\begin{smallmatrix} 1
	\end{smallmatrix}$} (W1Q1);
	\draw[->] (W1Q4) to[in=0,out=180] node[midway,above] {$(\begin{smallmatrix} 1 & 0 \\ 0 & 1 \end{smallmatrix})$} (W1Q3);
	\draw[->] (W1Q5) to[in=0,out=180] node[midway,above] {$(\begin{smallmatrix} 1 & 0 \\ 0 & 1 \end{smallmatrix})$} (W1Q4);
	
	\draw[->] (W1Q2) to node[midway, below right]{$\begin{smallmatrix} 1
	\end{smallmatrix}$} (W1F1);
	\draw[->] (W1F1) to node[midway, below left]{$\begin{smallmatrix} -ac
	\end{smallmatrix}$} (W1Q1);
	\draw[->] (W1Q4) to node[midway, below right=0 and -0.35]{$\begin{smallmatrix} -(ad & bd) \end{smallmatrix}$} (W1F2);
	\draw[->] (W1F2) to node[midway, below left]{$(\begin{smallmatrix} 0 \\ 1 \end{smallmatrix})$} (W1Q3);
	\draw[->] (W1Q5) to node[midway, below right]{$\begin{smallmatrix}  (a & b) \end{smallmatrix}$} (W1F3);
	\draw[->] (W1F3) to node[midway, right=0.15]{$(\begin{smallmatrix} c \\ d \end{smallmatrix})$} (W1Q4);
	
	\node (W2) at (0,-4) {$W_2:$};
	\node (W2Q1) at (2,-5) {$\mathbb C$};
	\node (W2Q2) at (6.5,-5) {$\mathbb C$};
	\node (W2Q3) at (11,-5) {$\mathbb C^2$};
	\node (W2Q4) at (15.5,-5) {$\mathbb C^2$};
	\node (W2Q5) at (20,-5) {$\mathbb C^2$};
	\node (W2F1) at (4.25,-7) {$\mathbb C$};
	\node (W2F2) at (13.25,-7) {$\mathbb C$};
	\node (W2F3) at (17.75,-7) {$\mathbb C$};
	
	\draw[->] (W2Q2) to[in=120, out=60,looseness=10] node[midway,above] {$\begin{smallmatrix}-ac\end{smallmatrix}$} (W2Q2);
	\draw[->] (W2Q3) to[in=120, out=60,looseness=10] node[midway,above] {$ \begin{smallmatrix}-\end{smallmatrix}(\begin{smallmatrix} ac & bc \\ 0& 0 \end{smallmatrix})$} (W2Q3);
	\draw[->] (W2Q4) to[in=120, out=60,looseness=10] node[midway,above] {$(\begin{smallmatrix} 0 & 0 \\ ad & bd \end{smallmatrix})$} (W2Q4);
	\draw[->] (W2Q2) to[in=210, out=330] node[midway,below] {$(\begin{smallmatrix} 1 \\ 0 \end{smallmatrix})$} (W2Q3);
	\draw[->] (W2Q3) to[in=30, out=150] node[midway,above] {$\begin{smallmatrix}( ac & bc) \end{smallmatrix}$}(W2Q2);
	\draw[->] (W2Q2) to[in=0,out=180] node[midway,above] {$\begin{smallmatrix}1\end{smallmatrix}$} (W2Q1);
	\draw[->] (W2Q4) to[in=0,out=180] node[midway,above] {$(\begin{smallmatrix} 1 & 0 \\ 0 & 1 \end{smallmatrix})$} (W2Q3);
	\draw[->] (W2Q5) to[in=0,out=180] node[midway,above] {$(\begin{smallmatrix} 1 & 0 \\ 0 & 1 \end{smallmatrix})$} (W2Q4);
	
	\draw[->] (W2Q2) to node[midway, below right]{$\begin{smallmatrix}1\end{smallmatrix}$} (W2F1);
	\draw[->] (W2F1) to node[midway, below left]{$ \begin{smallmatrix}-ac\end{smallmatrix}$} (W2Q1);
	\draw[->] (W2Q4) to node[midway,below right]{$\begin{smallmatrix} (a & b) \end{smallmatrix}$} (W2F2);
	\draw[->] (W2F2) to node[midway, below left]{$(\begin{smallmatrix} c \\ d \end{smallmatrix})$} (W2Q3);
	\draw[->] (W2Q5) to node[midway, below right]{$\begin{smallmatrix}  -(ad & bd) \end{smallmatrix}$} (W2F3);
	\draw[->] (W2F3) to node[midway, right=0.15]{$(\begin{smallmatrix} 0 \\ 1 \end{smallmatrix})$} (W2Q4);
	
	\node (W3) at (0,-10) {$W_3:$};
	\node (W3Q1) at (2,-11) {$\mathbb C$};
	\node (W3Q2) at (6.5,-11) {$\mathbb C$};
	\node (W3Q3) at (11,-11) {$\mathbb C^2$};
	\node (W3Q4) at (15.5,-11) {$\mathbb C^2$};
	\node (W3Q5) at (20,-11) {$\mathbb C^2$};
	\node (W3F1) at (4.25,-13) {$\mathbb C$};
	\node (W3F2) at (13.25,-13) {$\mathbb C$};
	\node (W3F3) at (17.75,-13) {$\mathbb C$};
	
	\draw[->] (W3Q2) to[in=120, out=60,looseness=10] node[midway,above] {$\begin{smallmatrix}-ac-bd\end{smallmatrix}$} (W3Q2);
	\draw[->] (W3Q3) to[in=120, out=60,looseness=10] node[midway,above] {$-(\begin{smallmatrix} ac & ad \\ bc & bd \end{smallmatrix})$} (W3Q3);
	\draw[->] (W3Q4) to[in=120, out=60,looseness=10] node[midway,above] {$-(\begin{smallmatrix} o & o \\ bc & bd \end{smallmatrix})$} (W3Q4);
	\draw[->] (W3Q2) to[in=210, out=330] node[midway,below] {$(\begin{smallmatrix} a \\ b \end{smallmatrix})$} (W3Q3);
	\draw[->] (W3Q3) to[in=30, out=150] node[midway,above] {$\begin{smallmatrix} (c & d) \end{smallmatrix}$}(W3Q2);
	\draw[->] (W3Q2) to[in=0,out=180] node[midway,above] {$\begin{smallmatrix}1\end{smallmatrix}$} (W3Q1);
	\draw[->] (W3Q4) to[in=0,out=180] node[midway,above] {$(\begin{smallmatrix} 1 & 0 \\ 0 & 1 \end{smallmatrix})$} (W3Q3);
	\draw[->] (W3Q5) to[in=0,out=180] node[midway,above] {$(\begin{smallmatrix} 1 & 0 \\ 0 & 1 \end{smallmatrix})$} (W3Q4);
	
	\draw[->] (W3Q2) to node[midway, below right=0 and -0.3]{$\begin{smallmatrix}-ac\end{smallmatrix}$} (W3F1);
	\draw[->] (W3F1) to node[midway, below left]{$\begin{smallmatrix}1\end{smallmatrix}$} (W3Q1);
	\draw[->] (W3Q4) to node[midway,below right]{$\begin{smallmatrix} (ac & ad) \end{smallmatrix}$} (W3F2);
	\draw[->] (W3F2) to node[midway, below left]{$(\begin{smallmatrix} 1 \\ 0 \end{smallmatrix})$} (W3Q3);
	\draw[->] (W3Q5) to node[midway, below right]{$\begin{smallmatrix}  -(bc & bd) \end{smallmatrix}$} (W3F3);
	\draw[->] (W3F3) to node[midway, right=0.15]{$(\begin{smallmatrix} 0 \\ 1 \end{smallmatrix})$} (W3Q4);
	
	\node (W4) at (0,-16) {$W_4:$};
	\node (W4Q1) at (2,-17) {$\mathbb C$};
	\node (W4Q2) at (6.5,-17) {$\mathbb C$};
	\node (W4Q3) at (11,-17) {$\mathbb C^2$};
	\node (W4Q4) at (15.5,-17) {$\mathbb C^2$};
	\node (W4Q5) at (20,-17) {$\mathbb C^2$};
	\node (W4F1) at (4.25,-19) {$\mathbb C$};
	\node (W4F2) at (13.25,-19) {$\mathbb C$};
	\node (W4F3) at (17.75,-19) {$\mathbb C$};
	
	\draw[->] (W4Q2) to[in=120, out=60,looseness=10] node[midway,above] {$\begin{smallmatrix} ab \end{smallmatrix}$} (W4Q2);
	\draw[->] (W4Q3) to[in=120, out=60,looseness=10] node[midway,above] {$(\begin{smallmatrix} ab+dc & (ab+dc)c \\ -d& -dc \end{smallmatrix})$} (W4Q3);
	\draw[->] (W4Q4) to[in=120, out=60,looseness=10] node[midway,above] {$ \begin{smallmatrix}-\end{smallmatrix}(\begin{smallmatrix} 0 & 0 \\ d & dc \end{smallmatrix})$} (W4Q4);
	\draw[->] (W4Q2) to[in=210, out=330] node[midway,below] {$(\begin{smallmatrix} -ab-dc \\ d \end{smallmatrix})$} (W4Q3);
	\draw[->] (W4Q3) to[in=30, out=150] node[midway,above] {$\begin{smallmatrix} (1 & c) \end{smallmatrix}$}(W4Q2);
	\draw[->] (W4Q2) to[in=0,out=180] node[midway,above] {$\begin{smallmatrix}1\end{smallmatrix}$} (W4Q1);
	\draw[->] (W4Q4) to[in=0,out=180] node[midway,above] {$(\begin{smallmatrix} 1 & 0 \\ 0 & 1 \end{smallmatrix})$} (W4Q3);
	\draw[->] (W4Q5) to[in=0,out=180] node[midway,above] {$(\begin{smallmatrix} 1 & 0 \\ 0 & 1 \end{smallmatrix})$} (W4Q4);
	
	\draw[->] (W4Q2) to node[midway, below right]{$\begin{smallmatrix}b\end{smallmatrix}$} (W4F1);
	\draw[->] (W4F1) to node[midway, below left]{$\begin{smallmatrix}a\end{smallmatrix}$} (W4Q1);
	\draw[->] (W4Q4) to node[midway,below right = 0.1 and -0.45]{$\begin{smallmatrix}-(ab+dc)\end{smallmatrix}\begin{smallmatrix}( 1 & c )\end{smallmatrix}$} (W4F2);
	\draw[->] (W4F2) to node[midway, below left]{$(\begin{smallmatrix} 1 \\ 0 \end{smallmatrix})$} (W4Q3);
	\draw[->] (W4Q5) to node[midway, below right]{$\begin{smallmatrix} -( d & dc) \end{smallmatrix}$} (W4F3);
	\draw[->] (W4F3) to node[midway, right=0.15]{$(\begin{smallmatrix} 0 \\ 1 \end{smallmatrix})$} (W4Q4);
	
	\node (W5) at (0,-22) {$W_5:$};
	\node (W5Q1) at (2,-23) {$\mathbb C$};
	\node (W5Q2) at (6.5,-23) {$\mathbb C$};
	\node (W5Q3) at (11,-23) {$\mathbb C^2$};
	\node (W5Q4) at (15.5,-23) {$\mathbb C^2$};
	\node (W5Q5) at (20,-23) {$\mathbb C^2$};
	\node (W5F1) at (4.25,-25) {$\mathbb C$};
	\node (W5F2) at (13.25,-25) {$\mathbb C$};
	\node (W5F3) at (17.75,-25) {$\mathbb C$};
	
	\draw[->] (W5Q2) to[in=120, out=60,looseness=10] node[midway,above] {$\begin{smallmatrix}ab\end{smallmatrix}$} (W5Q2);
	\draw[->] (W5Q3) to[in=120, out=60,looseness=10] node[midway,above] {$(\begin{smallmatrix} -dc & -d \\ (ab+dc)c& ab+dc\end{smallmatrix})$} (W5Q3);
	\draw[->] (W5Q4) to[in=120, out=60,looseness=10] node[midway,above] {$(\begin{smallmatrix} 0 & 0 \\ (ab+dc)c & ab+dc \end{smallmatrix})$} (W5Q4);
	\draw[->] (W5Q2) to[in=210, out=330] node[midway,below] {$(\begin{smallmatrix} d \\ -ab-dc \end{smallmatrix})$} (W5Q3);
	\draw[->] (W5Q3) to[in=30, out=150] node[midway,above] {$\begin{smallmatrix} (c & 1) \end{smallmatrix}$}(W5Q2);
	\draw[->] (W5Q2) to[in=0,out=180] node[midway,above] {$\begin{smallmatrix}1\end{smallmatrix}$} (W5Q1);
	\draw[->] (W5Q4) to[in=0,out=180] node[midway,above] {$(\begin{smallmatrix} 1 & 0 \\ 0 & 1 \end{smallmatrix})$} (W5Q3);
	\draw[->] (W5Q5) to[in=0,out=180] node[midway,above] {$(\begin{smallmatrix} 1 & 0 \\ 0 & 1 \end{smallmatrix})$} (W5Q4);
	
	\draw[->] (W5Q2) to node[midway, below right]{$\begin{smallmatrix}a\end{smallmatrix}$} (W5F1);
	\draw[->] (W5F1) to node[midway, below left]{$\begin{smallmatrix}b\end{smallmatrix}$} (W5Q1);
	\draw[->] (W5Q4) to node[midway,below right]{$\begin{smallmatrix} (dc & d) \end{smallmatrix}$} (W5F2);
	\draw[->] (W5F2) to node[midway, below left]{$(\begin{smallmatrix} 0 \\ 1 \end{smallmatrix})$} (W5Q3);
	\draw[->] (W5Q5) to node[midway, below right= 0.1 and -0.45]{$\begin{smallmatrix}-(ab+dc)\end{smallmatrix}\begin{smallmatrix} ( 1 & c) \end{smallmatrix}$} (W5F3);
	\draw[->] (W5F3) to node[midway, right=0.15]{$(\begin{smallmatrix} c \\ d \end{smallmatrix})$} (W5Q4);
	\end{tikzpicture}
	\caption{Parametrizations of the $\mathbb T$-invariant affine open neighborhoods $W_1,\ldots,W_5$ of the $\mathbb T$-fixed points $x_{D_1},\ldots,x_{D_5}$. Here, $a,b,c,d\in \mathbb C$.}\label{fig:TInvariantNeighborhoods}
	\end{figure}
	
	\begin{proof}
	We only prove the case $i=1$ since the other cases can be proved analogously. Let $\tilde\eta_1:\mathbb C^4\rightarrow m^{-1}(0)^{\mathrm{s}}$ be the morphism of varieties which maps a point $(a,b,c,d)\in\mathbb C^4$ to the class displayed in the diagram:
	\[
	\begin{tikzpicture}[scale=.7]
	\node (W1Q1) at (2,1) {$\mathbb C$};
	\node (W1Q2) at (6.5,1) {$\mathbb C$};
	\node (W1Q3) at (11,1) {$\mathbb C^2$};
	\node (W1Q4) at (15.5,1) {$\mathbb C^2$};
	\node (W1Q5) at (20,1) {$\mathbb C^2$};
	\node (W1F1) at (4.25,-1) {$\mathbb C$};
	\node (W1F2) at (13.25,-1) {$\mathbb C$};
	\node (W1F3) at (17.75,-1) {$\mathbb C$};
	
	\draw[->] (W1Q2) to[in=120, out=60,looseness=10] node[midway,above] {$\begin{smallmatrix} -ac
	\end{smallmatrix}$} (W1Q2);
	\draw[->] (W1Q3) to[in=120, out=60,looseness=10] node[midway,above] {$\begin{smallmatrix}-\end{smallmatrix}(\begin{smallmatrix} ac & bc \\ 0& 0 \end{smallmatrix})$} (W1Q3);
	\draw[->] (W1Q4) to[in=120, out=60,looseness=10] node[midway,above] {$\begin{smallmatrix}-\end{smallmatrix}(\begin{smallmatrix} ac & bc \\ ad & bd \end{smallmatrix})$} (W1Q4);
	\draw[->] (W1Q2) to[in=210, out=330] node[midway,below] {$(\begin{smallmatrix} 1 \\ 0 \end{smallmatrix})$} (W1Q3);
	\draw[->] (W1Q3) to[in=30, out=150] node[midway,above] {$\begin{smallmatrix} (ac & bc) \end{smallmatrix}$}(W1Q2);
	\draw[->] (W1Q2) to[in=0,out=180] node[midway,above] {$\begin{smallmatrix} 1
	\end{smallmatrix}$} (W1Q1);
	\draw[->] (W1Q4) to[in=0,out=180] node[midway,above] {$(\begin{smallmatrix} 1 & 0 \\ 0 & 1 \end{smallmatrix})$} (W1Q3);
	\draw[->] (W1Q5) to[in=0,out=180] node[midway,above] {$(\begin{smallmatrix} 1 & 0 \\ 0 & 1 \end{smallmatrix})$} (W1Q4);
	
	\draw[->] (W1Q2) to node[midway, below right]{$\begin{smallmatrix} 1
	\end{smallmatrix}$} (W1F1);
	\draw[->] (W1F1) to node[midway, below left]{$\begin{smallmatrix} -ac
	\end{smallmatrix}$} (W1Q1);
	\draw[->] (W1Q4) to node[midway, below right=0 and -0.35]{$\begin{smallmatrix} -(ad & bd) \end{smallmatrix}$} (W1F2);
	\draw[->] (W1F2) to node[midway, below left]{$(\begin{smallmatrix} 0 \\ 1 \end{smallmatrix})$} (W1Q3);
	\draw[->] (W1Q5) to node[midway, below right]{$\begin{smallmatrix}  (a & b) \end{smallmatrix}$} (W1F3);
	\draw[->] (W1F3) to node[midway, right=0.15]{$(\begin{smallmatrix} c \\ d \end{smallmatrix})$} (W1Q4);
	\end{tikzpicture}
	\]
	Let $\eta_1:\mathbb C^4\rightarrow \mathcal C(\mathcal D)$ be the induced morphism. Clearly, $\mathrm{Im}(\eta_i)\subset W_i$. Conversely, given 
	\[
	x=[A_1,A_2,A_3,B_1^+,B_2^-,B_3^-,C_2,D_2,a_1,a_2,a_3,b_1,b_2,b_3] \in W_1,
	\]
	we may assume by the defining conditions of $W_1$ that 
	\[
	A_1=1, \quad A_2=A_3=\begin{pmatrix} 1 & 0 \\ 0 & 1 \end{pmatrix},\quad D_2=\begin{pmatrix} 1\\0 \end{pmatrix},\quad a_2=\begin{pmatrix} 0\\1 \end{pmatrix}.
	\]
	Let $b_3=(a\;b)$ and $a_3=(\begin{smallmatrix} c \\ d\end{smallmatrix})$. Then,~\eqref{eq:nonMomentMap} implies 
	$
	B_3^-= -(\begin{smallmatrix}
	ac & bc \\  ad & db
	\end{smallmatrix})
	$. Moreover, let $b_2=(x\;y)$ and $D_2=(\begin{smallmatrix} z \\ w\end{smallmatrix})$. By~\eqref{eq:nonMomentMap} and the moment map equation, we deduce
	\[
	-\begin{pmatrix}
	ac & bc \\ ad+x & db +y
	\end{pmatrix}
	= B_2^- = \begin{pmatrix}
	-z & -w \\ 0 & 0
	\end{pmatrix}.
	\]
	Hence, $x=-ad$, $y=-bd$, $z=ac$ and $w=bc$. Finally, ~\eqref{eq:nonMomentMap} and the moment map equation also gives $b_1=B_1^+=-z=-ac$ which proves that $x\in \mathrm{Im}(\eta_1)$ and thus, $\mathrm{Im}(\eta_i)=W_i$.
	
	To show that $\eta_1$ is an isomorphism, it suffices to show that $\eta_1$ is injective since $\mathbb C^4$ and $W_1$ are both smooth. Assume 
	$\eta_1(a,b,c,d)=\eta_1(a',b',c',d')$, so there exists $g=(g_1,g_2,g_3,g_4,g_5)\in \mathcal G$ such that $g.\tilde\eta_1(a,b,c,d)=\tilde\eta_1(a',b',c',d')$. This directly implies $g_1=g_2=1$ and $g_3=g_4=g_5$. In addition, the conditions $g_3(\begin{smallmatrix} 1 \\ 0 \end{smallmatrix})g_2^{-1}=(\begin{smallmatrix} 1 \\ 0 \end{smallmatrix})$ and $g_3(\begin{smallmatrix} 0 \\ 1 \end{smallmatrix})=(\begin{smallmatrix} 1 \\ 0 \end{smallmatrix})$ imply that $g_3$ is the identity matrix. Hence, $\tilde\eta_1(a,b,c,d)=\tilde\eta_1(a',b',c',d')$ which gives $(a,b,c,d)=(a',b',c',d')$. Thus, $\eta_1$ is injective and therefore an isomorphism. 
	\end{proof}
	\begin{center}
	\begin{table}
\begin{tabular}{ | c | c | c | c | c |}
 \hline 
 \diagbox{Open affine}{Coordinate axis} & $a$ & $b$ & $c$ & $d$ \\
 \hline
 \hline
 $W_1$ & $t_3-t_1+h$ & $t_3-t_2+h$ & $t_1-t_3$ & $t_2-t_3$ \\
 \hline 
 $W_2$ & $t_2-t_1+h$ & $t_2-t_3+h$ & $t_1-t_2$ & $t_3-t_2$ \\
 \hline
 $W_3$ & $t_2-t_1$ & $t_3-t_1$ & $t_1-t_2+h$ & $t_2-t_3+h$ \\
 \hline
 $W_4$ & $t_2-t_1-h$ & $t_1-t_2+2h$ & $t_2-t_3$ & $t_2-t_3+h$\\
 \hline
 $W_5$ & $t_3-t_1-h$ & $t_1-t_3+2h$ & $t_3-t_2 $ & $t_2-t_3+h$ \\
 \hline  
\end{tabular}
\caption{$\mathbb T$-weight space decomposition of $W_1,\ldots,W_5$.}
    \label{table:WeightSpaceDecompositionWi} 
\end{table}
\end{center}
The affine covering \eqref{eq:affcov} allows us now to get a hand on the attracting cells. One can directly check that via $\eta_i$ we obtain a linear $\mathbb T$-action on $\mathbb C^4$ and the standard basis vectors are weight vectors. The respective weights are recorded in  Table~\ref{table:WeightSpaceDecompositionWi}.
	For a given generic cocharacter $\sigma:\mathbb C^\ast\rightarrow \mathbb A$ the Ax--Grothendieck theorem provides
	\[
	W_{i,\sigma}^+= W_i\cap \operatorname{Attr}_{\sigma}(x_{D_i}) = \operatorname{Attr}_{\sigma}(x_{D_i}),\quad i=1,\ldots,5.
	\]
	Therefore, we can easily read off the attracting cells from Table~\ref{table:WeightSpaceDecompositionWi}. If we take for instance the cocharacter $\sigma_0(t)=(t,t^2,t^3)$ then the corresponding attracting cells are given by
	\begin{gather*}
	\operatorname{Attr}_{\sigma_0}(x_{D_1})= \mathbb C_{t_3-t_1+h}\oplus \mathbb C_{t_3-t_2+h},\quad 
	\operatorname{Attr}_{\sigma_0}(x_{D_2})= \mathbb C_{t_2-t_1+h}\oplus \mathbb C_{t_3-t_2},\\
	\operatorname{Attr}_{\sigma_0}(x_{D_3})= \mathbb C_{t_2-t_1}\oplus \mathbb C_{t_3-t_1},\quad
	\operatorname{Attr}_{\sigma_0}(x_{D_4})= \mathbb C_{t_2-t_1-h}\oplus \mathbb C_{t_3-t_2+h},\\
	\operatorname{Attr}_{\sigma_0}(x_{D_5})= \mathbb C_{t_3-t_1-h}\oplus \mathbb C_{t_3-t_2}.
	\end{gather*}

We leave it to the reader to consider other choices of cocharacters. We return now to the general framework and will see that the attracting cells are in fact constant along certain chambers inside the space of cocharacters.	
	\subsection{Independence of chamber}
	Let $\Lambda$ be the cocharacter lattice of $\mathbb A$ and consider the vector space $\Lambda_{\mathbb R}:= \Lambda\otimes_{\mathbb Z}\mathbb R$. For $1\le i,j\le N$ with $i\ne j$ we define the following hyperplanes:
	\[
	H_{i,j}:= \{(t_1,\ldots,t_N)\mid t_i=t_j\} \subset \Lambda_{\mathbb R}.
	\]
	The connected components of 
	\[
	\Lambda_{\mathbb R} \setminus\Big( \bigcup_{\substack{1\le i,j\le N\\ i\ne j}} H_{i,j}\Big)
	\]
	are called \textit{chambers}. There is a (well-known from Lie theory) one-to-one correspondence
	\[
	\begin{tikzcd}
	\{\textup{Chambers}\}\arrow[r,leftrightarrow, "1:1"]& S_N,
	\end{tikzcd}
	\]
	 where we assign to a permutation $\pi\in\mathfrak S_n$ the connected component
	\[
	\mathfrak C_{\pi}:= \{(t_1,\ldots, t_N)\mid t_{\pi(1)}> t_{\pi(2)}>\ldots>t_{\pi(N)}\}.
	\]
	\begin{remark} This correspondence  allows to connect the chambers with the combinatorics of the symmetric group. Note moreover, the parallel to the more Lie theoretic description of attracting cells, a.k.a. Schubert cells, of Grassmannians, see \cite{gorbounov2020yang}. For readers new to the subject it might be helpful to keep in the following this analogous framework in mind.
	\end{remark}
	
	We have the following independence result for attracting cells:
	
	\begin{prop}[Chambers invariance]\label{lemma:INDEPENDENCE} Let $\mathfrak C$ be a chamber and
	 $\sigma,\tau\in \mathfrak a\cap \mathfrak C$. Then 
	 \begin{equation}\label{eq:IndependenceTangentSpaces} T_p\mathcal C(\mathcal D)^+_{\sigma}=T_p\mathcal C(\mathcal D)^+_{\tau},\quad T_p\mathcal C(\mathcal D)^-_{\sigma}=T_p\mathcal C(\mathcal D)^-_{\tau}
	 \end{equation}
	 hold. Moreover, attracting cells are constant along chamber, i.e. for all $p\in \mathcal C(\mathcal D)^{\mathbb T}$ we have 
	 \begin{equation}\label{eq:IndependenceAttractingCells}
	  \operatorname{Attr}_{\sigma}(p)=\operatorname{Attr}_{\tau}(p).
	 \end{equation} 
	\end{prop}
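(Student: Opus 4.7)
The plan is to leverage Corollary~\ref{cor:tangentweights} (which constrains the shape of the $\mathbb T$-weights at a fixed point) together with the $\mathbb T$-equivariant affine structure on attracting cells from Proposition~\ref{prop:affinestructure}. The main point is that being in a fixed chamber determines the sign of the $\sigma$-pairing with every tangent weight.

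\textbf{Step 1: Tangent space equality.} By Corollary~\ref{cor:tangentweights}, every $\mathbb T$-weight $\nu$ of $T_p\mathcal C(\mathcal D)$ has the form $\nu=t_i-t_j+mh$ for some $i\neq j$ in $\{1,\dots,N\}$ and some $m\in\mathbb Z$. For an $\mathbb A$-cocharacter $\sigma=(\sigma_1,\ldots,\sigma_N)$, the pairing with $\nu$ is $\langle\sigma,\nu\rangle=\sigma_i-\sigma_j$, independent of $m$ and $h$. If $\sigma,\tau\in\mathfrak{a}\cap\mathfrak C$ lie in the same chamber $\mathfrak C$, then for every pair $i\neq j$ the quantities $\sigma_i-\sigma_j$ and $\tau_i-\tau_j$ are both nonzero and of the same sign (since $\mathfrak C$ lies on a fixed side of each wall $H_{i,j}$). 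Hence a weight space of $T_p\mathcal C(\mathcal D)$ is $\sigma$-positive if and only if it is $\tau$-positive, which gives \eqref{eq:IndependenceTangentSpaces}.

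\textbf{Step 2: Attracting cell equality.} By Proposition~\ref{prop:affinestructure}, there is a $\mathbb T$-equivariant isomorphism $\varphi_\sigma:\operatorname{Attr}_\sigma(p)\xrightarrow{\sim}T_p\mathcal C(\mathcal D)^+_\sigma$ sending $p$ to the origin. Since the isomorphism is $\mathbb T$-equivariant and $\tau\in\Lambda$ gives a cocharacter of $\mathbb A\subset\mathbb T$, the $\tau$-action on $\operatorname{Attr}_\sigma(p)$ corresponds under $\varphi_\sigma$ to the linear $\tau$-action on $T_p\mathcal C(\mathcal D)^+_\sigma$. By Step~1, $T_p\mathcal C(\mathcal D)^+_\sigma=T_p\mathcal C(\mathcal D)^+_\tau$, so every $\tau$-weight appearing on this vector space is strictly positive. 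Consequently $\lim_{t\to 0}\tau(t).v=0$ for all $v\in T_p\mathcal C(\mathcal D)^+_\sigma$, which translates to $\lim_{t\to 0}\tau(t).z=p$ for all $z\in\operatorname{Attr}_\sigma(p)$. This yields $\operatorname{Attr}_\sigma(p)\subset\operatorname{Attr}_\tau(p)$, and by the symmetric argument with the roles of $\sigma$ and $\tau$ exchanged we obtain the reverse inclusion, proving \eqref{eq:IndependenceAttractingCells}.

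There is no serious obstacle here: once the tangent-weight description of Corollary~\ref{cor:tangentweights} and the $\mathbb T$-equivariant linearization of Proposition~\ref{prop:affinestructure} are in place, the chamber condition translates directly into the sign condition on weights, and the equality of attracting cells follows formally. The only mild care needed is to check that since $\sigma$ and $\tau$ both factor through $\mathbb A\subset\mathbb T$, the linearization produced for $\sigma$ is automatically compatible with the $\tau$-action (which it is, because the isomorphism is fully $\mathbb T$-equivariant).
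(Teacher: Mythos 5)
Your proof is correct and follows essentially the same route as the paper: Corollary~\ref{cor:tangentweights} reduces the sign of each tangent weight to the chamber condition, and the attracting-cell equality is deduced from the $\mathbb T$-equivariant linearization of Proposition~\ref{prop:affinestructure}. The paper states the second step only as "follows directly"; your Step~2 supplies exactly the intended argument.
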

	\begin{proof} Let $\pi \in \mathfrak S_n$ such that $\mathfrak C_{\pi}=\mathfrak C$ and fix a $\mathbb T$-fixed point $p$. Recall from Corollary~\ref{cor:tangentweights} that the $\mathbb A$-weights of $T_p(\mathcal C(\mathcal D))$ are of the form $t_i-t_j$ where $i\ne j$. It follows
	\[
	T_p\mathcal C(\mathcal D)^+_{\sigma} = \bigoplus_{\substack{1\le i,j\le n \\ \pi(i)>\pi(j) }} T_p(\mathcal C(\mathcal D))_{t_i-t_j} = T_p\mathcal C(\mathcal D)^+_{\tau}
	\]
	and
	\[
	T_p\mathcal C(\mathcal D)^-_{\sigma} = \bigoplus_{\substack{1\le i,j\le n \\ \pi(i)<\pi(j) }} T_p(\mathcal C(\mathcal D))_{t_i-t_j} = T_p\mathcal C(\mathcal D)^-_{\tau}.
	\] 
	Thus, we proved~\eqref{eq:IndependenceTangentSpaces}. The equality~\eqref{eq:IndependenceAttractingCells} follows directly from~\eqref{eq:IndependenceTangentSpaces} and Proposition~\ref{prop:affinestructure}. 
	\end{proof}
	
	By Proposition~\ref{lemma:INDEPENDENCE}, the $T_p\mathcal C(\mathcal D)^+_{\sigma},T_p\mathcal C(\mathcal D)^-_{\sigma}$ and $\operatorname{Attr}_{\sigma}(p)$ only depend on the chamber $\mathfrak C$ containing $\sigma$. Thus, we also denote them respectively by  $T_p\mathcal C(\mathcal D)^+_{\mathfrak C},T_p\mathcal C(\mathcal D)^-_{\mathfrak C}$ and $\operatorname{Attr}_{\mathfrak C}(p)$.
	
	\begin{remark} In~\cite{maulik2019quantum}, Maulik and Okounkov defined chambers in a slightly different way. They defined them as connected components of the complement of the union of all hyperplanes orthogonal to the $\mathbb A$-tangent weights of $\mathbb A$-fixed points. Corollary~\ref{cor:tangentweights} implies that the chambers defined in this subsections refine the chambers in the sense of~\cite{maulik2019quantum}. The inclusion may be strict as for instance the bow variety $\mathcal C(0\textcolor{red}{\slash}1\textcolor{red}{\slash}3\textcolor{blue}{\backslash}1\textcolor{blue}{\backslash}0)$ is just a single point. Hence, there exists only a single chamber in the sense of Maulik and Okounkov whereas the chambers defined in this subsection are in one-to-one correspondence with the elements of the symmetric group on two letters. 
	\end{remark}
	
	\subsection{Partial order by attraction}
	
	
	Given a chamber $\mathfrak C$, we define a preorder $\preceq_{\mathfrak C}$ on $\mathcal C(\mathcal D)^{\mathbb T}$ as the transitive closure of the relation
	\begin{equation}\label{eq:DefinitionRelationPartialOrderByAttraction}
	p\in \overline{\operatorname{Attr}_{\mathfrak C}(q)} \Rightarrow p\preceq_{\mathfrak C} q,
	\end{equation}
	where $\overline{\operatorname{Attr}_{\mathfrak C}(q)}$ denotes the Zariski closure of $\operatorname{Attr}_{\mathfrak C}(q)$.
	
	As we usually work with a fixed choice of chamber, we denote $\preceq_{\mathfrak C}$ also just by $\preceq$.
	
	\begin{lemma}[Fixed point ordering]\label{lemma:partialOrdering} The preorder $\preceq$ is a partial order on 
	$\mathcal C(\mathcal D)^{\mathbb T}$.
	\end{lemma}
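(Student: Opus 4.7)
The plan is to establish antisymmetry of the preorder $\preceq_{\mathfrak C}$ by exhibiting an integer-valued height function $h : \mathcal C(\mathcal D)^{\mathbb T} \to \mathbb Z$ that is strictly monotone along the generating relation \eqref{eq:DefinitionRelationPartialOrderByAttraction}, i.e.\ whenever $p \in \overline{\operatorname{Attr}_{\mathfrak C}(q)}$ with $p \neq q$ we will force $h(p) < h(q)$. Once such an $h$ is in hand, antisymmetry follows immediately: a chain of direct relations witnessing both $p \preceq q$ and $q \preceq p$ would give a loop along which $h$ is weakly monotone and returns to its initial value, forcing every step in the chain to be trivial.

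To construct $h$, I proceed exactly as in the proof of Proposition~\ref{prop:affinestructure}: by Sumihiro's theorem together with equivariant Hironaka, embed $\mathcal C(\mathcal D)$ as an open $\mathbb T$-invariant subvariety of a smooth projective $\mathbb T$-variety $X$. After possibly replacing by a positive tensor power, fix a $\mathbb T$-equivariant ample line bundle $L$ on $X$ (this is possible since on a smooth projective variety every ample line bundle admits an equivariant linearization after passage to a power). For a generic $\sigma \in \mathfrak a \cap \mathfrak C$ and any $p \in X^{\mathbb T}$, define $h(p) \in \mathbb Z$ to be the weight of the $\mathbb C^\ast$-action via $\sigma$ on the one-dimensional fibre $L_p$. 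The Białynicki-Birula cell $X^+_{q,\sigma} := \{x \in X \mid \lim_{t \to 0}\sigma(t).x = q\}$ satisfies $X^+_{q,\sigma} \cap \mathcal C(\mathcal D) = \operatorname{Attr}_{\mathfrak C}(q)$, so closures computed in $\mathcal C(\mathcal D)$ inject into closures computed in $X$. It therefore suffices to establish strict monotonicity of $h$ relative to the closure order on BB cells in $X$.

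The core technical step is: for distinct $p, q \in X^{\mathbb T}$ with $p \in \overline{X^+_{q,\sigma}}$, one has $h(p) < h(q)$. To see this, find an irreducible $\sigma$-invariant projective curve $C \subset \overline{X^+_{q,\sigma}}$ passing through both $p$ and $q$, obtained by repeatedly cutting $\overline{X^+_{q,\sigma}}$ with generic $\sigma$-invariant hyperplane sections. The normalisation $\tilde C \cong \mathbb P^1$ carries a non-trivial $\mathbb C^\ast$-action whose attracting fixed point is $q$ (as the entire cell is attracted to $q$) and whose repelling fixed point is $p$. A standard weight computation for $L|_{\tilde C}$ then gives $\deg(L|_C) = n\bigl(h(q) - h(p)\bigr)$ for some positive integer $n$, and ampleness of $L$ forces $\deg(L|_C) > 0$, hence $h(p) < h(q)$.

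The main obstacle is the last step: carefully producing the connecting curve $C$ inside the possibly singular closure $\overline{X^+_{q,\sigma}}$ and performing the weight computation. A cleaner alternative, which I would favour if writing this out in detail, is to equip $X$ with a $\mathbb T$-invariant Kähler metric compatible with $L$, so that $h$ is realised as the values at critical points of the associated real Hamiltonian (moment map) for $\sigma$. Classical Morse theory then gives strict monotonicity of $h$ along $\sigma$-gradient trajectories automatically, bypassing the need to produce explicit connecting curves.
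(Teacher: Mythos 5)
Your overall strategy --- an integer height on fixed points, strictly decreasing along the generating relation, realized as the $\sigma$-weight of the fibre of an equivariant ample line bundle --- is sound, and it is essentially the mechanism the paper uses in Appendix~\ref{section:Appendix} (there $h$ is, up to sign, the weight $a_p$ of the weight vector $v_p$ with $\iota_\sigma(p)=[v_p]$ under an equivariant embedding $\mathcal C(\mathcal D)\hookrightarrow\mathbb P(V)$). It is \emph{not} how the paper proves this particular lemma: there, antisymmetry is deduced more softly from Bia{\l}ynicki-Birula's theorem that the cells of a smooth projective $\mathbb C^\ast$-variety can be ordered so that every partial union $Y_i=\bigsqcup_{j\le i}X_j^+$ is Zariski closed, together with the observation that $Y_i\cap\mathcal C(\mathcal D)$ is closed and downward-closed for $\preceq$. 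Both routes are legitimate in principle; yours would even yield the stronger quantitative statement that $\preceq$ is refined by a linear order coming from $h$.

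The genuine gap is in your proof of the key inequality $h(p)<h(q)$. Route A presupposes an irreducible $\sigma$-invariant curve $C\subset\overline{X^+_{q,\sigma}}$ through both $p$ and $q$; such a $C$ is forced to be the closure of a single one-dimensional $\sigma$-orbit with source $q$ and sink $p$, i.e.\ it exists only if $\operatorname{Attr}_{\sigma}(q)\cap\operatorname{Attr}_{\sigma^{-1}}(p)\neq\emptyset$. But $p\in\overline{\operatorname{Attr}_{\sigma}(q)}$ in general only guarantees a \emph{chain} of such curves through intermediate fixed points --- this is precisely the content of Lemma~\ref{lemma:OppositeAttractionOrdering} and the remark following it, whose proof is the hard analytic part of the appendix --- and generic $\sigma$-invariant hyperplane sections cannot manufacture a direct curve: the linear system of invariant hyperplanes through two prescribed points is far too small for Bertini-type irreducibility, and a section through $p$ may land entirely in the boundary $\overline{\operatorname{Attr}_{\sigma}(q)}\setminus\operatorname{Attr}_{\sigma}(q)$. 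Route B has the same defect in Morse-theoretic clothing: the gradient trajectories of the moment map are exactly the real-positive parts of $\sigma$-orbits, so strict monotonicity along trajectories again only applies to a direct connecting orbit; continuity gives merely the weak inequality $h(p)\le h(q)$ on the closure, and strictness for \emph{distinct} critical points at the same level requires a broken-trajectory/compactness argument you do not supply. The clean repair, available inside your own setup, needs neither curves nor Morse theory: in $\mathbb P(V)$ the closure of the Bia{\l}ynicki-Birula cell of $[v_q]$ is the linear subspace $\mathbb P\bigl(\mathbb C v_q\oplus\bigoplus_{a>a_q}V_a\bigr)$, so $v_p$ lies in this graded subspace; being a weight vector with $v_p\notin\mathbb C v_q$, it must satisfy $a_p>a_q$ strictly. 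Substituting this for your curve/Morse step makes the height-function argument complete.
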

	\begin{proof} Evidently, the preorder $\preceq$ is reflexive and transitive. Hence, it is left to show that $\preceq$ is antisymmetric.
	Let $p,q\in \mathcal C(\mathcal D)^{\mathbb T}$ with $p\preceq q$ and $q\preceq p$. For the sake of contradiction, assume $p\ne q$. Let $\sigma\in\mathfrak C$ and as in Subsection~\ref{subsection:affinestr}, we fix a smooth $\sigma$-equivariant compactification $\mathcal C(\mathcal D)\hookrightarrow X$. Let $Z_1,\ldots,Z_r$ be the $\mathbb C^\ast$-fixed components of $X$ and $X_1^+,\ldots,X_r^+$ the corresponding attracting cells. By \cite{bialynickibirula1974fixed}, we can order the fixed points components in such a way such that the subsets
	\[
	Y_i:=\bigsqcup_{1\le j \le i}X_j^+ \subset X
	\]
	are Zariski closed. Let $i,j$ such that $Z_i=\{p\}$ and $Z_j=\{q\}$. Without loss of generality, we may assume $i<j$. In particular, we have $q\notin Y_i$. Note that $Y_i\cap \mathcal C(\mathcal D)$ is Zariski closed in $\mathcal C(\mathcal D)$ and stable under attraction. Therefore, $p'\preceq p$ yields $p'\in Y_i$. Thus, we must have $p\in Y_i$ which yields a contradiction. Thus, $p=q$ and hence, $\preceq$ is antisymmetric. 
	\end{proof}
	
	\subsection{Partial order by attraction in a concrete example}\label{subsection:ExamplePartialOrder}
	
	We continue in the setting of Subsection~\ref{subsection:ExampleAttractingCells} and compute the partial order corresponding to the cocharacter $\sigma_0$. For this, we first compute all intersections
	 $\overline{\operatorname{Attr}_{\sigma_0}(x_{D_i})}\cap W_j$. 
	\begin{claim} Given $i,j\in\{1,\ldots,5\}$, the intersection $\overline{\operatorname{Attr}_{\sigma_0}(x_{D_i})}\cap W_j$ is a $\mathbb T$-invariant linear subspace of $W_j$ whose weight space decomposition is recorded in Table~\ref{table:IntersectionAttractionOpenAffine}.
	\end{claim}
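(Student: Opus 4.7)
The plan is to compute all $25$ intersections $\overline{\operatorname{Attr}_{\sigma_0}(x_{D_i})}\cap W_j$ directly, using the affine parametrizations $\eta_i\colon\mathbb C^4\xrightarrow\sim W_i$ from Figure~\ref{fig:TInvariantNeighborhoods} and the weights from Table~\ref{table:WeightSpaceDecompositionWi}. First I would observe that $\operatorname{Attr}_{\sigma_0}(x_{D_i})\subset W_i$: for any $z$ with $\lim_{t\to0}\sigma_0(t).z=x_{D_i}\in W_i$, openness of $W_i$ forces $\sigma_0(t).z\in W_i$ for $t$ near $0$, whence $z\in W_i$ by $\mathbb T$-invariance. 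Since the $\mathbb T$-action on $\mathbb C^4$ induced via $\eta_i$ is linear with the weights recorded in Table~\ref{table:WeightSpaceDecompositionWi}, $\operatorname{Attr}_{\sigma_0}(x_{D_i})$ is precisely the coordinate subspace spanned by the standard basis vectors whose weights are positive on $\sigma_0(t)=(t,t^2,t^3)$.

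For $i=j$ the assertion is then immediate: the attracting cell is itself a coordinate subspace in $W_i\cong\mathbb C^4$, hence closed in $W_i$ and equal to its own intersection with $W_i$.

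For $i\ne j$ I would proceed case by case. A point of $\operatorname{Attr}_{\sigma_0}(x_{D_i})$ is represented by a tuple $\tilde\eta_i(a,b,c,d)\in m^{-1}(0)^{\mathrm s}$ with only the positive-weight coordinates possibly nonzero. To decide which such points also lie in $W_j$, and to compute their $\eta_j$-coordinates, I would check the stability condition distinguishing $W_j$, then solve explicitly for an element $g\in\mathcal G$ that brings $\tilde\eta_i(a,b,c,d)$ into the normal form prescribed by $\tilde\eta_j$, and finally read off the resulting $(a',b',c',d')$-coordinates. This yields a rational parametrization of $\operatorname{Attr}_{\sigma_0}(x_{D_i})\cap W_j$ inside $W_j$; taking the Zariski closure gives a linear subspace, which can be compared against Table~\ref{table:IntersectionAttractionOpenAffine}.

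The $\mathbb T$-invariance of each intersection is automatic (attracting cells are $\mathbb T$-invariant, and both closure and intersection with a $\mathbb T$-invariant open preserve this property), and once the intersection is identified as a coordinate subspace the weight-space decomposition can be read off directly from Table~\ref{table:WeightSpaceDecompositionWi}. The main obstacle is bookkeeping: the gauge-fixing step must be performed for each of the $20$ off-diagonal pairs, and one must also take care to recognize when the intersection is empty, which happens whenever the stability condition distinguishing $W_j$ is incompatible with the positive-weight coordinates of $\tilde\eta_i$. A systematic approach, organizing the cases according to which of the four coordinates of $\tilde\eta_i$ can be nonzero, is essential to avoid sign errors and to handle all pairs uniformly.
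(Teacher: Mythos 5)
Your proposal is correct and follows essentially the same route as the paper: the paper likewise parametrizes $\operatorname{Attr}_{\sigma_0}(x_{D_i})$ as the positive-weight coordinate subspace via $\eta_i$, determines when such a point satisfies the stability condition defining $W_j$, exhibits the explicit gauge element of $\mathcal G$ carrying $\tilde\eta_i(a,b,c,d)$ to the normal form $\tilde\eta_j(a',b',c',d')$, and takes the closure to read off the weight decomposition from Table~\ref{table:WeightSpaceDecompositionWi}. The only difference is that the paper carries out just the representative case $i=2$, $j=1$ and leaves the remaining bookkeeping to the reader, exactly as you anticipate.
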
 
	 \begin{proof} We only prove the case $i=2$ and $j=1$ since all other cases can be proved similarly. From Subsection~\ref{subsection:ExampleAttractingCells} we know 
$\operatorname{Attr}_{\sigma_0}(x_{D_i})=\{\eta_2(a,0,0,d)\mid a,d\in\mathbb C\}$.
	 We have that $\eta_2(a,0,0,d)\in W_1$ if and only if $d\ne 0$. A direct computation shows
	 \[
	 (1,1,(\begin{smallmatrix} 1 & 0 \\ 0 & d^{-1} \end{smallmatrix}),(\begin{smallmatrix} 1 & 0 \\ 0 & d^{-1} \end{smallmatrix}),(\begin{smallmatrix} 1 & 0 \\ 0 & d^{-1} \end{smallmatrix})).\tilde \eta_2(a,0,0,d) = \tilde \eta_1(-a,0,0,d^{-1}),
	 \]
	 which implies $\overline{\operatorname{Attr}_{\sigma_0}(x_{D_2})}\cap W_1= \{\eta_1(a,0,0,d)\mid a,d\in\mathbb C\}$.
	 According to Table~\ref{table:WeightSpaceDecompositionWi}, we have 
	  $\{\eta_1(a,0,0,d)\mid a,d\in\mathbb C\}=\mathbb C_{t_3-t_1+h}\oplus\mathbb C_{t_2-t_3}$.
	 \end{proof}
	Our computations directly imply an isomorphism of partially ordered sets
	\[
	(\{1,2,3,4,5\},\le)\xrightarrow{\phantom{x}\sim\phantom{x}} (\mathcal C(\mathcal D)^{\mathbb T},\preceq),\quad i\mapsto x_{D_i},
	\]
	where $\le$ is the usual ordering on $\{1,2,3,4,5\}$.
	\begin{center}
	\begin{table}
\begin{tabular}{ | c | c | c | c | c | c |}
 \hline 
 \diagbox{$i$}{$j$} & $1$ & $2$ & $3$ & $4$ & $5$ \\
 \hline
 \hline
 $1$ & 
 \tiny $\mathbb C_{t_3-t_1+h}\oplus\mathbb C_{t_3-t_2+h}$ & 
 $\emptyset$ & $\emptyset$ & $\emptyset$ & $\emptyset$ \\
 \hline 
 $2$ & 
 \tiny $\mathbb C_{t_3-t_1+h}\oplus\mathbb C_{t_2-t_3}$ & 
 \tiny $\mathbb C_{t_2-t_1+h}\oplus\mathbb C_{t_3-t_2}$ & 
 $\emptyset$ & $\emptyset$ & $\emptyset$ \\
 \hline
 $3$ & 
 \tiny $\mathbb C_{t_3-t_1+h}\oplus\mathbb C_{t_2-t_3}$ & 
 \tiny $\mathbb C_{t_1-t_2}\oplus\mathbb C_{t_3-t_2}$ & 
 \tiny $\mathbb C_{t_2-t_1}\oplus\mathbb C_{t_3-t_1}$ & 
 $\emptyset$ & $\emptyset$ \\
 \hline
 $4$ & 
 \tiny $\mathbb C_{t_3-t_2+h}\oplus\mathbb C_{t_1-t_3}$ & 
 $\emptyset$ & 
 \tiny $\mathbb C_{t_3-t_1}\oplus\mathbb C_{t_1-t_2+h}$ & 
 \tiny $\mathbb C_{t_2-t_1-h}\oplus\mathbb C_{t_3-t_2+h}$&
 $\emptyset$\\
 \hline
 $5$ & $\emptyset$ &
 \tiny $\mathbb C_{t_2-t_3+h}\oplus\mathbb C_{t_1-t_2}$ &
 \tiny $\mathbb C_{t_1-t_2+h}\oplus\mathbb C_{t_2-t_3+h} $ &
 \tiny $\mathbb C_{t_2-t_1-h}\oplus\mathbb C_{t_2-t_3}$ &
 \tiny $\mathbb C_{t_3-t_1-h}\oplus\mathbb C_{t_3-t_2}$ \\
 \hline  
\end{tabular}
\caption{Intersections $\overline{\operatorname{Attr}_{\sigma_0}(x_{D_i})}\cap W_j$ as subspaces of $W_j$.}
    \label{table:IntersectionAttractionOpenAffine} 
\end{table}
\end{center}
	\subsection{Full attraction cells}	
	In the proof of Lemma~\ref{lemma:partialOrdering}, we used that for a smooth projective variety with a one-parameter torus action, we can order the attracting cells such that the successive unions of the attracting cells are all Zariski closed. Motivated by this general result, we prove an analogous statement for bow varieties.

Let $p\in \mathcal C(\mathcal D)^{\mathbb T}$. The \textit{full attracting cell of $p$ with respect to the chamber $\mathfrak C$} is defined as 
	\[
	\operatorname{Attr}^f_{\mathfrak C}(p) := \bigsqcup_{q \preceq p}\operatorname{Attr}_{\mathfrak C}(q). 
	\]
	The main result of this subsection is the following result.
	\begin{prop}\label{prop:fullattractionunion} The full attracting cell
	$
	\operatorname{Attr}^f_{\mathfrak C}(p)
	$ is Zariski closed in $\mathcal C(\mathcal D)$.
	\end{prop}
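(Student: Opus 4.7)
The plan is to identify $\operatorname{Attr}^f_{\mathfrak C}(p)$ with the Zariski closure $\overline{\operatorname{Attr}_{\mathfrak C}(p)}^{\mathcal C(\mathcal D)}$, from which closedness is immediate. I fix a generic $\sigma\in\mathfrak C$ and take the smooth projective $\mathbb T$-equivariant compactification $\mathcal C(\mathcal D)\hookrightarrow X$ constructed in the proof of Proposition~\ref{prop:affinestructure}; recall from that proof that the Bialynicki--Birula attracting cell $X_p^+$ of $p$ in $X$ is already entirely contained in $\mathcal C(\mathcal D)$ and coincides with $\operatorname{Attr}_{\mathfrak C}(p)$. Since $\mathcal C(\mathcal D)$ is open in $X$, this yields
\[
\overline{\operatorname{Attr}_{\mathfrak C}(p)}^{\mathcal C(\mathcal D)}\;=\;\overline{X_p^+}^X\cap\mathcal C(\mathcal D).
\]

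Next I would use the classical Bialynicki--Birula fact that the closure $\overline{X_p^+}^X$ itself decomposes as a disjoint union $\bigsqcup_{F}X_F^+$ of attracting cells, indexed by the $\sigma$-fixed components $F$ with $F\subseteq\overline{X_p^+}^X$; moreover the relation $F\preceq_X F'\Leftrightarrow F\subseteq\overline{X_{F'}^+}$, equivalently $X_F^+\subseteq\overline{X_{F'}^+}$, is a transitive partial order on fixed components of $X$. To determine which of these cells contribute after intersecting with $\mathcal C(\mathcal D)$, I would invoke the Maulik--Okounkov setup recalled at the beginning of Section~\ref{section:attraction}: because $\mathcal C(\mathcal D)$ admits a proper $\mathbb T$-equivariant map to the affine categorical quotient, the $t\to 0$ limit of every $\sigma$-orbit starting in $\mathcal C(\mathcal D)$ lies in $\mathcal C(\mathcal D)$. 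Consequently $X_F^+\cap\mathcal C(\mathcal D)=\emptyset$ unless $F=\{q\}$ is a $\mathbb T$-fixed point of $\mathcal C(\mathcal D)$, in which case $X_{\{q\}}^+\cap\mathcal C(\mathcal D)=\operatorname{Attr}_{\mathfrak C}(q)$.

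It remains to match the restriction of $\preceq_X$ to $\mathcal C(\mathcal D)^{\mathbb T}$ with the partial order $\preceq$. For $q,p\in\mathcal C(\mathcal D)^{\mathbb T}$, the condition $\{q\}\subseteq\overline{X_p^+}^X$ intersected with $\mathcal C(\mathcal D)$ reads $q\in\overline{\operatorname{Attr}_{\mathfrak C}(p)}^{\mathcal C(\mathcal D)}$, which is exactly the direct relation \eqref{eq:DefinitionRelationPartialOrderByAttraction} underlying $\preceq$. Since $\preceq_X$ is already transitive and $\preceq$ is by definition the transitive closure of its direct generating relation, the two orders coincide on $\mathcal C(\mathcal D)^{\mathbb T}$. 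Assembling the pieces gives
\[
\overline{\operatorname{Attr}_{\mathfrak C}(p)}^{\mathcal C(\mathcal D)}\;=\;\bigsqcup_{q\preceq p}\operatorname{Attr}_{\mathfrak C}(q)\;=\;\operatorname{Attr}^f_{\mathfrak C}(p),
\]
proving the proposition.

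The main obstacle is ruling out the escape phenomenon: that some BB cell $X_F^+$ whose fixed component $F$ sits outside $\mathcal C(\mathcal D)$ might nevertheless meet $\mathcal C(\mathcal D)$. This is exactly the input provided by the Maulik--Okounkov framework, and is where the quasi-projective (as opposed to projective) nature of $\mathcal C(\mathcal D)$ must be compensated through properness of the map to the affine quotient.
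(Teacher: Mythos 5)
Your argument rests on the identity $\operatorname{Attr}^f_{\mathfrak C}(p)=\overline{\operatorname{Attr}_{\mathfrak C}(p)}$, and this is false in general. The order $\preceq$ is the \emph{transitive closure} of the relation $q\in\overline{\operatorname{Attr}_{\mathfrak C}(p)}$, so $\operatorname{Attr}^f_{\mathfrak C}(p)$ typically contains cells $\operatorname{Attr}_{\mathfrak C}(q)$ whose fixed point $q$ does not even lie in $\overline{\operatorname{Attr}_{\mathfrak C}(p)}$. The paper's own running example exhibits this: by Table~\ref{table:IntersectionAttractionOpenAffine} one has $\overline{\operatorname{Attr}_{\sigma_0}(x_{D_4})}\cap W_2=\emptyset$, so $x_{D_2}\notin\overline{\operatorname{Attr}_{\sigma_0}(x_{D_4})}$, yet $x_{D_2}\preceq x_{D_4}$ (via $x_{D_2}\in\overline{\operatorname{Attr}_{\sigma_0}(x_{D_3})}$ and $x_{D_3}\in\overline{\operatorname{Attr}_{\sigma_0}(x_{D_4})}$), so $\operatorname{Attr}_{\sigma_0}(x_{D_2})\subset\operatorname{Attr}^f_{\sigma_0}(x_{D_4})$ but $\operatorname{Attr}_{\sigma_0}(x_{D_2})\not\subset\overline{\operatorname{Attr}_{\sigma_0}(x_{D_4})}$. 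The source of the error is the ``classical Bialynicki--Birula fact'' you invoke: for a general smooth projective $\mathbb C^\ast$-variety the closure of a BB cell is \emph{not} a union of BB cells, and the relation $F\subseteq\overline{X_{F'}^+}$ is \emph{not} transitive. What Bialynicki--Birula's theory gives (and what the paper uses in Lemma~\ref{lemma:partialOrdering}) is only filtrability: the cells can be ordered so that the successive unions are closed. So both the decomposition of $\overline{X_p^+}^X$ into cells and the identification of $\preceq_X$ with $\preceq$ collapse.

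You do correctly isolate the genuinely nontrivial input, namely that no $\sigma$-orbit starting in $\overline{\operatorname{Attr}_{\mathfrak C}(q)}$ escapes $\mathcal C(\mathcal D)$ as $t\to 0$, and that this follows from properness over the affine quotient (Proposition~\ref{prop:properaffine} plus the valuative criterion, Lemma~\ref{lemma:limitexistence}). The repair is to aim for the weaker, correct identity
\begin{equation*}
\operatorname{Attr}^f_{\mathfrak C}(p)\;=\;\bigcup_{q\preceq p}\overline{\operatorname{Attr}_{\mathfrak C}(q)},
\end{equation*}
a finite union of closed sets. The inclusion $\subset$ is trivial; for $\supset$, take $x\in\overline{\operatorname{Attr}_{\mathfrak C}(q)}$ with $q\preceq p$, use $\overline{\operatorname{Attr}_{\mathfrak C}(q)}\subset\pi^{-1}(V^{\geq 0})$ and the escape argument to produce $q':=\lim_{t\to0}\sigma(t).x\in\mathcal C(\mathcal D)^{\mathbb T}$; since $\overline{\operatorname{Attr}_{\mathfrak C}(q)}$ is $\sigma$-invariant and closed, $q'\in\overline{\operatorname{Attr}_{\mathfrak C}(q)}$, hence $q'\preceq q\preceq p$ and $x\in\operatorname{Attr}_{\mathfrak C}(q')\subset\operatorname{Attr}^f_{\mathfrak C}(p)$. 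This is exactly the paper's route; no compactification or cell-closure decomposition is needed.
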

	
	At first we state an immediate consequence of the valuative criterion for properness:
	
	\begin{lemma}\label{lemma:limitexistence} Let $X,X'$ be algebraic varieties with $\mathbb C^\ast$-actions $\rho,\rho'$ and let $f:X\rightarrow X'$ be a proper $\mathbb C^\ast$-equivariant morphism. Then, we have
	\[
		\{x\in X\mid \lim_{t\to0}\rho(t).x\textit{ exists in }X\} = f^{-1}(\{x'\in X'\mid \lim_{t\to0}\rho'(t).x'\textit{ exists in }X'\}).
	\]
	\end{lemma}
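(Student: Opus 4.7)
The inclusion ``$\subset$'' is straightforward functoriality and $\mathbb C^\ast$-equivariance. If the morphism $g : \mathbb C^\ast \to X$, $t \mapsto \rho(t).x$, extends to a morphism $\tilde g : \mathbb C \to X$, then $f\circ \tilde g : \mathbb C \to X'$ is a morphism whose restriction to $\mathbb C^\ast$ equals $t\mapsto \rho'(t).f(x)$ by equivariance of $f$. So $\lim_{t\to 0}\rho'(t).f(x)$ exists in $X'$.

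For the nontrivial inclusion ``$\supset$'', fix $x\in X$ such that the morphism $t\mapsto \rho'(t).f(x)$ extends to a morphism $\tilde g' : \mathbb C \to X'$, and consider the discrete valuation ring $R:=\mathcal O_{\mathbb A^1,0}=\mathbb C[t]_{(t)}$ with fraction field $K=\mathbb C(t)$. The morphism $g:\mathbb C^\ast\to X,\ t\mapsto \rho(t).x$, together with the natural map $\mathrm{Spec}(K)\to \mathbb C^\ast$, yields a $K$-point of $X$; likewise $\tilde g'$, composed with $\mathrm{Spec}(R)\to \mathbb C$, yields an $R$-point of $X'$. These data fit into the standard square of the valuative criterion, since $\tilde g'$ extends $f\circ g$. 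The properness of $f$ then provides a unique morphism $\phi:\mathrm{Spec}(R)\to X$ lifting the $R$-point of $X'$ and restricting to $g$ on $\mathrm{Spec}(K)$.

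It remains to upgrade $\phi$, which lives only on the local scheme $\mathrm{Spec}(R)$, to a morphism from an actual open neighborhood of $0\in \mathbb C$; this is the main technical step. Choose an affine open $U=\mathrm{Spec}(A)\subset X$ containing the image of the closed point of $\mathrm{Spec}(R)$ under $\phi$. Since $A$ is finitely generated over $\mathbb C$, the $\mathbb C$-algebra map $A\to R$ underlying $\phi$ sends the (finitely many) generators into $\mathbb C[t]_q$ for some $q\in \mathbb C[t]$ with $q(0)\neq 0$, so it factors through $A\to \mathbb C[t]_q\hookrightarrow R$. This produces a morphism $\phi_V : V\to U\subset X$ with $V:=D(q)\subset \mathbb C$ an open neighborhood of $0$. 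On $V\cap \mathbb C^\ast$ both $\phi_V$ and $g$ agree with the $K$-point obtained by taking the generic point; since $X$ is separated and $V\cap \mathbb C^\ast$ is irreducible and reduced, they coincide on the whole overlap, so $\phi_V$ and $g$ glue to a morphism $\tilde g : \mathbb C\to X$ extending $g$. This shows $\lim_{t\to 0}\rho(t).x$ exists in $X$ and completes the proof. The only subtlety worth flagging is precisely this spreading-out step: the valuative criterion alone delivers only an $R$-point, and one must use finite generation plus separatedness to turn it into a morphism defined on an open of $\mathbb A^1$ and to glue it compatibly with $g$.
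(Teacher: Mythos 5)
Your proof is correct and follows exactly the route the paper intends: the paper offers no written proof, merely calling the lemma ``an immediate consequence of the valuative criterion for properness,'' and your argument is precisely that criterion applied to $R=\mathbb C[t]_{(t)}$, supplemented with the (correctly executed) spreading-out and gluing step needed to turn the resulting $R$-point into an honest extension $\mathbb C\to X$.
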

	The construction of $\mathcal C(\mathcal D)$ as GIT quotient implies the next statement.
	\begin{prop}\label{prop:properaffine} There exists a proper $\mathbb T$-equivariant morphism $\mathcal C(\mathcal D)\rightarrow V$, where $V$ is a finite dimensional $\mathbb T$-representation.
	\end{prop}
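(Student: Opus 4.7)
The plan is to chain together two standard facts about GIT quotients and reductive group representations. By construction $\mathcal C(\mathcal D)=m^{-1}(0)/\!\!/_{\!\chi}\mathcal G=\mathrm{Proj}(A_\chi)$, where $A$ denotes the coordinate ring of $m^{-1}(0)$. The inclusion $A^{\mathcal G}\hookrightarrow A_{\chi}$ of the degree-zero part into the full semi-invariant algebra induces the canonical projective morphism
\[
\pi:\mathcal C(\mathcal D)\longrightarrow m^{-1}(0)/\!\!/\mathcal G=\mathrm{Spec}(A^{\mathcal G}),
\]
noted already after the definition of GIT quotients. Projective morphisms are proper, so the first factor in the decomposition is proper.

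Next I would verify that $\pi$ is $\mathbb T$-equivariant. The $\mathbb T$-action on $\widetilde{\mathcal M}(\mathcal D)$ preserves $m^{-1}(0)$: the obvious action fixes $m$ pointwise (the formulas in~\eqref{eq:MomentMapEquations} involve no $a_U$ or $b_U$), while the scaling action multiplies every summand of \eqref{eq:MomentMapEquations} uniformly by $h$, so $m^{-1}(0)$ is $\mathbb T$-stable. Since the $\mathbb T$- and $\mathcal G$-actions on $\widetilde{\mathcal M}(\mathcal D)$ commute, $\mathbb T$ acts on both $A_\chi$ and $A^{\mathcal G}$, making $\pi$ equivariant with respect to the induced $\mathbb T$-action on the affine categorical quotient.

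It then suffices to embed the affine variety $m^{-1}(0)/\!\!/\mathcal G$ $\mathbb T$-equivariantly as a closed subvariety of a finite-dimensional $\mathbb T$-representation $V$, for the composition $\mathcal C(\mathcal D)\xrightarrow{\pi}m^{-1}(0)/\!\!/\mathcal G\hookrightarrow V$ will then be a proper $\mathbb T$-equivariant morphism. For this I would invoke the standard fact that any affine variety with an action of a reductive group $H$ admits an $H$-equivariant closed embedding into a finite dimensional $H$-module: choose a finite generating set $f_1,\ldots,f_k$ of $A^{\mathcal G}$ as a $\mathbb C$-algebra, replace it by its $\mathbb T$-orbit, and observe that because $\mathbb T$ is a torus each $f_i$ lies in a finite dimensional $\mathbb T$-invariant subspace (decompose into weight components). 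The resulting finite dimensional $\mathbb T$-submodule $V^*\subset A^{\mathcal G}$ generates $A^{\mathcal G}$ as an algebra, so the induced surjection $\mathrm{Sym}(V^*)\twoheadrightarrow A^{\mathcal G}$ gives a $\mathbb T$-equivariant closed embedding $m^{-1}(0)/\!\!/\mathcal G\hookrightarrow V$.

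No real obstacle is expected; the only small points to check are the equivariance of $m$ with respect to both factors of $\mathbb T$ and, when constructing $V^*$, that the $\mathbb T$-weight decomposition of each generator is finite, which is automatic because each generator already lives in a finitely generated $\mathbb T$-stable subalgebra inherited from the finite dimensional building blocks $\mathbb M_U$ and $\mathbb M_V$ assembling $\widetilde{\mathcal M}(\mathcal D)$.
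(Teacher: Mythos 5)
Your proposal is correct and follows essentially the same route as the paper: the projective (hence proper) $\mathbb T$-equivariant morphism $\mathcal C(\mathcal D)\rightarrow m^{-1}(0)/\!\!/\mathcal G$ composed with a $\mathbb T$-equivariant closed immersion of the affine categorical quotient into a finite dimensional $\mathbb T$-module. You merely spell out the two standard facts (equivariance of the Proj-to-Spec map and the equivariant embedding of an affine $\mathbb T$-variety into a representation) that the paper cites without detail.
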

	\begin{proof} Since the categorical quotient $m^{-1}(0)/\!\!/\mathcal G$ is an affine $\mathbb T$-variety, there exists a finite dimensional $\mathbb C$-vector space $V$ with a linear $\mathbb T$-action such that there is a $\mathbb T$-equivariant closed immersion $m^{-1}(0)/\!\!/\mathcal G\hookrightarrow V$. The inclusion $m^{-1}(0)^{\mathrm{ss}}\subset m^{-1}(0)$ induces a projective $\mathbb T$-equivariant morphism $\mathcal C(\mathcal D)\rightarrow m^{-1}(0)/\!\!/\mathcal G$. Thus, the composition
$\mathcal C(\mathcal D)\rightarrow m^{-1}(0)/\!\!/\mathcal G\hookrightarrow V$ is a proper $\mathbb T$-equivariant morphism.
	\end{proof}

	
	\begin{proof}[Proof of Proposition~\ref{prop:fullattractionunion}]
	To prove that $\operatorname{Attr}_{\mathfrak C}^f$ is closed, we show
	\begin{equation}\label{eq:fullAttr}
		\operatorname{Attr}_{\mathfrak C}^f(p) = \bigcup_{q\preceq p} \overline{\operatorname{Attr}_{\mathfrak C}(q)}.
	\end{equation}
	The inclusion $\subset$ is clear. For the converse, fix $q\in\mathcal C(\mathcal D)$ with $q\preceq p$ and let $x\in \overline{\operatorname{Attr}_{\mathfrak C}(q)}$. In addition, we fix $\sigma\in\mathfrak C$. According to Proposition~\ref{prop:properaffine}, there exists a finite dimensional $\mathbb C$-vector space $V$ with a linear $\mathbb C^\ast$-action such that there is a proper $\sigma$-equivariant morphism $\pi:\mathcal C(\mathcal D)\rightarrow V$. Since $\pi(\operatorname{Attr}_{\mathfrak C}(q))\subset V^{\geq 0}$, we have
	$\overline{\operatorname{Attr}_{\mathfrak C}(q)}\subset \pi^{-1}(V^{\geq0})$. Hence, by Lemma~\ref{lemma:limitexistence}, the limit $\lim_{t\to0}\sigma(t).x$ exists in $\mathcal C(\mathcal D)$. Let $q':=\lim_{t\to0}\sigma(t).x$. As $\operatorname{Attr}_{\mathfrak C}(q)$ is $\sigma$-invariant, $\overline{\operatorname{Attr}_{\mathfrak C}(q)}$ is also $\sigma$-invariant and hence the $\mathbb C^\ast$-orbit of $x$ is contained in $\overline{\operatorname{Attr}_{\mathfrak C}(q)}$. Therefore, also the limit point $q'$ is contained in $\overline{\operatorname{Attr}_{\mathfrak C}(q)}$ which gives $q'\preceq q$. It follows that $x\in \operatorname{Attr}_{\mathfrak C}^f(p)$ which completes the proof.
	\end{proof}
	
	\subsection{Properness of intersections between opposite cells} In this subsection we define opposite attracting cells, in analogy to opposite Schubert cells.
	\begin{definition}
	The \textit{opposite chamber} of $\mathfrak C$ is defined as
	\[
		\mathfrak C^{\operatorname{op}}:= \{a\in \mathfrak a_{\mathbb R} \mid -a \in \mathfrak C\}.
	\]
	\end{definition}
	Note that $\sigma\in\mathfrak C$ if and only if $\sigma^{-1}\in\mathfrak C^{\operatorname{op}}$.
	The partial order $\preceq_{\mathfrak C^{\operatorname{op}}}$ is in fact opposite to the partial order of  $\preceq_{\mathfrak C}$. More precisely we have the following result which we prove in Appendix~\ref{section:Appendix}  using an analytic limit argument.
	 
	 \begin{prop}[Opposite order]\label{prop:OppositePartialOrdering} Let $p,q\in \mathcal C(\mathcal D)^{\mathbb T}$. Then, $p\preceq_{\mathfrak C} q$ if and only if $q\preceq_{\mathfrak C^{\operatorname{op}}} p$.
	 \end{prop}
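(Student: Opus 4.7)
Since $(\mathfrak C^{\operatorname{op}})^{\operatorname{op}} = \mathfrak C$, it suffices to prove the forward implication $p \preceq_{\mathfrak C} q \Rightarrow q \preceq_{\mathfrak C^{\operatorname{op}}} p$. As $\preceq_{\mathfrak C}$ is the transitive closure of the relation \eqref{eq:DefinitionRelationPartialOrderByAttraction} and $\preceq_{\mathfrak C^{\operatorname{op}}}$ is transitive, it is enough to show that $p \in \overline{\operatorname{Attr}_{\mathfrak C}(q)}$ implies $q \preceq_{\mathfrak C^{\operatorname{op}}} p$. The case $p=q$ is trivial, so assume $p \neq q$.

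\textbf{Analytic rescaling.} Fix $\sigma \in \mathfrak C$ and choose $(x_n) \subset \operatorname{Attr}_{\mathfrak C}(q)$ with $x_n \to p$. By smoothness of $\mathcal C(\mathcal D)$ and isolation of $p$ in $\mathcal C(\mathcal D)^{\mathbb T}$, an equivariant holomorphic linearization provides a $\mathbb T$-equivariant biholomorphism $\Phi$ from a $\mathbb T$-stable open neighborhood $U$ of $p$ onto an open subset of $T_p\mathcal C(\mathcal D) = T^+ \oplus T^-$ (decomposition into $\sigma$-weight spaces). Shrink to a compact analytic ball $B \subset U$ centered at $p$ and containing no further $\mathbb T$-fixed point. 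Since $\sigma(t).x_n \to q \notin B$ as $t \to 0$, pick $t_n > 0$ of minimal modulus with $z_n := \sigma(t_n).x_n \in \partial B$. A compactness argument forces $t_n \to 0$, for otherwise a subsequential limit would yield $\sigma(t_\ast).p = p \in \operatorname{int}(B)$ lying on $\partial B$. Passing to a convergent subsequence, $z_n \to z_\ast \in \partial B$. In the local coordinates $\Phi(x_n) = u_n^+ + u_n^-$, the escape condition at $t_n$ pins down $\sigma(t_n).u_n^-$ to bounded modulus while $\sigma(t_n).u_n^+ \to 0$, so $\Phi(z_\ast) \in T^-$. By Proposition~\ref{prop:affinestructure}, this places $z_\ast \in \operatorname{Attr}_{\mathfrak C^{\operatorname{op}}}(p)$. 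At the same time $z_n \in \operatorname{Attr}_{\mathfrak C}(q)$ (the cell is $\mathbb T$-invariant), so $z_\ast \in \overline{\operatorname{Attr}_{\mathfrak C}(q)}$.

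\textbf{The intermediate fixed point and induction.} By Proposition~\ref{prop:fullattractionunion}, $\overline{\operatorname{Attr}_{\mathfrak C}(q)}$ decomposes as a disjoint union of attracting cells for fixed points $r \preceq_{\mathfrak C} q$, so $z_\ast \in \operatorname{Attr}_{\mathfrak C}(q')$ for a unique $q' \preceq_{\mathfrak C} q$; equivalently $\lim_{t\to 0}\sigma(t).z_\ast = q'$. Since $\operatorname{Attr}_{\mathfrak C^{\operatorname{op}}}(p)$ is $\sigma$-invariant, the $\sigma$-orbit closure of $z_\ast$ sits inside $\overline{\operatorname{Attr}_{\mathfrak C^{\operatorname{op}}}(p)}$ and contains $q'$, giving $q' \preceq_{\mathfrak C^{\operatorname{op}}} p$. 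Analogously, $\overline{\operatorname{Attr}_{\mathfrak C}(q')}$ is $\sigma$-closed and contains $z_\ast$, hence contains $p = \lim_{t\to\infty} \sigma(t).z_\ast$, so $p \preceq_{\mathfrak C} q'$. Moreover $q' \neq p$: if both $\sigma$-limits of $z_\ast$ were $p$, local linearization at $p$ would force the $\mathbb C^\ast$-orbit of $z_\ast$ to be trivial, contradicting $z_\ast \in \partial B$. Therefore $p \prec_{\mathfrak C} q' \preceq_{\mathfrak C} q$. Now induct on $N(p,q) := \#\{r \in \mathcal C(\mathcal D)^{\mathbb T} : p \preceq_{\mathfrak C} r \preceq_{\mathfrak C} q\}$: the interval $[q',q]$ is strictly smaller than $[p,q]$ (it excludes $p$), so the induction hypothesis applied to $(q',q)$ yields $q \preceq_{\mathfrak C^{\operatorname{op}}} q'$, and transitivity with $q' \preceq_{\mathfrak C^{\operatorname{op}}} p$ gives $q \preceq_{\mathfrak C^{\operatorname{op}}} p$ as required.

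\textbf{Main obstacle.} The delicate point is that the rescaled limit $z_\ast$ need not flow to $q$ itself but only to some $q' \preceq_{\mathfrak C} q$; without further input one would be stuck. The key observation making the induction work is that $z_\ast$ lies in a single $\sigma$-orbit whose closure contains both $p$ and $q'$, which forces the additional containment $p \preceq_{\mathfrak C} q'$, thereby strictly shrinking the order interval. An alternative, conceptually cleaner route would be to embed $\mathcal C(\mathcal D)$ into a smooth $\mathbb T$-equivariant projective compactification (via Sumihiro and equivariant Hironaka, as in the proof of Proposition~\ref{prop:affinestructure}), invoke the classical Bialynicki--Birula duality stating that the attracting and repelling cell-closure orders on a smooth projective $\mathbb C^\ast$-variety are opposite, and transport the statement back to $\mathcal C(\mathcal D)$ using openness of the embedding together with the identity $X^-_p \cap \mathcal C(\mathcal D) = \operatorname{Attr}_{\mathfrak C^{\operatorname{op}}}(p)$.
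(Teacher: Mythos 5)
Your proof is correct in substance and follows the same underlying strategy as the paper's Appendix argument: an analytic rescaling that produces, from $p\in\overline{\operatorname{Attr}_{\mathfrak C}(q)}$, a point $z_\ast$ lying simultaneously in $\overline{\operatorname{Attr}_{\mathfrak C}(q)}$ and in $\operatorname{Attr}_{\mathfrak C^{\operatorname{op}}}(p)$, whose $\sigma$-limit is an intermediate fixed point $q'$, followed by an induction. The execution differs in two interesting ways. First, the paper rescales inside a global linearized projective embedding $\iota_\sigma:\mathcal C(\mathcal D)\hookrightarrow\mathbb P(V)$ and must then invoke the Properness Theorem~\ref{thm:slopesupport} to argue that the limit fixed point actually lies back in $\iota_\sigma(\mathcal C(\mathcal D))$; you rescale locally in a holomorphic linearization at $p$, so your limit $z_\ast$ is taken inside a compact ball in $\mathcal C(\mathcal D)$ and is automatically a point of the bow variety — you then only need Proposition~\ref{prop:fullattractionunion} to know that $z_\ast$ flows to some $q'\preceq_{\mathfrak C}q$. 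Second, the paper inducts on the integer weight $a_p$ of the fixed point in $\mathbb P(V)$, which decreases strictly for free, whereas you induct on the cardinality of the order interval $[p,q]$ and therefore have to supply the extra arguments that $p\preceq_{\mathfrak C}q'$ and $q'\neq p$; you do address both, the first via the $\sigma$-orbit closure of $z_\ast$ and the second via the linearization.

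Three repairable wrinkles. (i) The choice of $t_n$ of ``minimal modulus'' with $\sigma(t_n).x_n\in\partial B$ does not guarantee that the whole annulus $\{|t_n|\le|t|\le1\}$ maps into $\overline B$ (the orbit could exit and re-enter at larger moduli), and you need that containment to apply the equivariance of the chart $\Phi$ at $t_n$ by analytic continuation from $S^1$; take instead the first-exit radius $\rho_n=\inf\{r:\sigma(t).x_n\in\operatorname{int}(B)\text{ for all }|t|\in[r,1]\}$. (ii) Your opening reduction passes to the generating relation $p\in\overline{\operatorname{Attr}_{\mathfrak C}(q)}$, but the inductive hypothesis is later applied to the pair $(q',q)$ for which you only know $q'\preceq_{\mathfrak C}q$; the induction must therefore be set up on the full statement $P(p,q):\,p\preceq_{\mathfrak C}q\Rightarrow q\preceq_{\mathfrak C^{\operatorname{op}}}p$ with the generating-relation case handled inside the inductive step. (iii) The closing ``cleaner'' alternative via Bia{\l}ynicki-Birula duality on a compactification is not as immediate as stated: the chain of intermediate fixed points witnessing $q\preceq^X_{\operatorname{op}}p$ in the compactification may pass through fixed components of $X\setminus\mathcal C(\mathcal D)$, so the relation does not transport back to $\mathcal C(\mathcal D)$ without precisely the kind of properness input the paper's argument is designed to supply.
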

	
	In general, the closure of the attracting cells to $\mathfrak C$ or to $\mathfrak C^{\operatorname{op}}$ need not be proper. However, their intersection is always proper: 

	\begin{theorem}[Properness]\label{thm:slopesupport} Let $p,q\in \mathcal C(\mathcal D)^{\mathbb T}$, then $\overline{\operatorname{Attr}_{\mathfrak C}(p)} \cap \overline{\operatorname{Attr}_{{\mathfrak C}^{\operatorname{op}}}(q)}$ is proper over $\mathbb C$.
	
	\end{theorem}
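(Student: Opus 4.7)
The plan is to reduce properness of $Z:=\overline{\operatorname{Attr}_{\mathfrak C}(p)}\cap\overline{\operatorname{Attr}_{\mathfrak C^{\operatorname{op}}}(q)}$ to the existence, provided by Proposition~\ref{prop:properaffine}, of a proper $\mathbb T$-equivariant morphism $\pi:\mathcal C(\mathcal D)\to V$ into a finite-dimensional $\mathbb T$-representation. Fix once and for all a generic cocharacter $\sigma\in\mathfrak C$, so that $\sigma^{-1}\in\mathfrak C^{\operatorname{op}}$. The heart of the argument is that every $x\in Z$ has a $\sigma$-orbit whose limits at both $0$ and $\infty$ exist, which is an extremely restrictive condition once pushed into the linear world of $V$.

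First I would verify that for each $x\in Z$, both limits $\lim_{t\to 0}\sigma(t).x$ and $\lim_{t\to\infty}\sigma(t).x$ exist in $\mathcal C(\mathcal D)$. The existence of the limit at $0$ follows from $\overline{\operatorname{Attr}_{\mathfrak C}(p)}\subseteq\operatorname{Attr}^f_{\mathfrak C}(p)$ (equation~\eqref{eq:fullAttr} in the proof of Proposition~\ref{prop:fullattractionunion}), since by definition every point of a full attracting cell flows to a $\mathbb T$-fixed point under $\sigma$. Applying exactly the same reasoning to $\sigma^{-1}\in\mathfrak C^{\operatorname{op}}$ and to $\overline{\operatorname{Attr}_{\mathfrak C^{\operatorname{op}}}(q)}\subseteq\operatorname{Attr}^f_{\mathfrak C^{\operatorname{op}}}(q)$ shows that $\lim_{s\to 0}\sigma^{-1}(s).x$ exists, hence so does $\lim_{t\to\infty}\sigma(t).x$.

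Next I would push this through $\pi$. Because $\pi$ is $\sigma$-equivariant and $V$ is a finite-dimensional linear $\mathbb C^\ast$-representation, decompose $\pi(x)=\sum_{n\in\mathbb Z} v_n$ into its $\sigma$-weight components; then $\sigma(t).\pi(x)=\sum_n t^n v_n$. Existence of the limit at $0$ forces $v_n=0$ for all $n<0$, and existence at $\infty$ forces $v_n=0$ for all $n>0$, so $\pi(x)=v_0\in V^{\sigma}$. In particular $\pi(x)$ coincides with its own limit, which is $\pi(y)$ for the $\mathbb T$-fixed point $y:=\lim_{t\to 0}\sigma(t).x\in\mathcal C(\mathcal D)^{\mathbb T}$.

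Finally I would close the argument using finiteness of $\mathcal C(\mathcal D)^{\mathbb T}$, guaranteed by the Classification Theorem~\ref{thm:torusFixedPointsOfBowVarieties} together with the Generic Cocharacter Theorem~\ref{thm:fixedPointsGeneric1ParameterTorus}. The previous step gives $\pi(Z)\subseteq \pi(\mathcal C(\mathcal D)^{\mathbb T})$, which is a finite subset of $V$. Therefore $Z\subseteq \pi^{-1}(\pi(Z))$ is contained in a finite union of fibers of the proper morphism $\pi$; each such fiber is proper over $\mathbb C$, and a finite union of proper $\mathbb C$-schemes is proper. Since $Z$ is closed in $\mathcal C(\mathcal D)$, and hence in this preimage, $Z$ inherits properness over $\mathbb C$.

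The only step requiring genuine care is the two-sided limit argument, i.e.\ correctly translating $\lim_{t\to\infty}$ for $\sigma$ into $\lim_{s\to 0}$ for $\sigma^{-1}\in\mathfrak C^{\operatorname{op}}$ so that Proposition~\ref{prop:fullattractionunion} applies on \emph{both} sides; once this is in place, the linear-algebra observation on $V$ and the finiteness of fixed points do all the remaining work without further input.
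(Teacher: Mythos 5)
Your proof is correct and follows essentially the same route as the paper: both arguments push the intersection through the proper $\mathbb T$-equivariant morphism $\pi:\mathcal C(\mathcal D)\to V$ of Proposition~\ref{prop:properaffine} and use the signs of the $\sigma$-weights of $\pi(x)$ (equivalently, the containments of the two attracting-cell closures in $\pi^{-1}(V^{\geq 0})$ and $\pi^{-1}(V^{\le 0})$) to force the image into the $\sigma$-fixed part of $V$. The only divergence is the concluding step: the paper invokes Lemma~\ref{lemma:projectionAttr} to confine the intersection to the single fiber $\pi^{-1}(\pi(p))$, whereas you confine it to the finite union of fibers over $\pi(\mathcal C(\mathcal D)^{\mathbb T})$ using the finiteness of the fixed-point set — an equally valid way to deduce properness from the closedness of the intersection.
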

	
	We immediately conclude, using \eqref{eq:fullAttr}, the analogous result for full attracting cells:
	
	\begin{cor}\label{cor:slopesupport} Let $p,q\in \mathcal C(\mathcal D)^{\mathbb T}$.
	 Then, $\operatorname{Attr}_{\mathfrak C}^f(p) \cap \operatorname{Attr}_{{\mathfrak C}^{\operatorname{op}}}^f(q)$ is proper over $\mathbb C$.
	\end{cor}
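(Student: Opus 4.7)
The plan is to deduce the corollary directly from Theorem~\ref{thm:slopesupport} together with the identity \eqref{eq:fullAttr} established in the proof of Proposition~\ref{prop:fullattractionunion}. That identity tells us that
\[
\operatorname{Attr}_{\mathfrak C}^f(p) = \bigcup_{p' \preceq_{\mathfrak C} p} \overline{\operatorname{Attr}_{\mathfrak C}(p')}, \qquad
\operatorname{Attr}_{\mathfrak C^{\operatorname{op}}}^f(q) = \bigcup_{q' \preceq_{\mathfrak C^{\operatorname{op}}} q} \overline{\operatorname{Attr}_{\mathfrak C^{\operatorname{op}}}(q')},
\]
so distributing the intersection yields
\[
\operatorname{Attr}_{\mathfrak C}^f(p)\cap \operatorname{Attr}_{\mathfrak C^{\operatorname{op}}}^f(q) = \bigcup_{\substack{p'\preceq_{\mathfrak C} p \\ q'\preceq_{\mathfrak C^{\operatorname{op}}} q}} \overline{\operatorname{Attr}_{\mathfrak C}(p')}\cap \overline{\operatorname{Attr}_{\mathfrak C^{\operatorname{op}}}(q')}.
\]

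First I would observe that $\mathcal C(\mathcal D)^{\mathbb T}$ is finite (by the Classification Theorem of $\mathbb T$-fixed points together with the Generic Cocharacter Theorem~\ref{thm:fixedPointsGeneric1ParameterTorus}), so the above union is a \emph{finite} union indexed by pairs $(p',q')$ of torus fixed points. Then I would invoke Theorem~\ref{thm:slopesupport} to conclude that each individual piece $\overline{\operatorname{Attr}_{\mathfrak C}(p')}\cap \overline{\operatorname{Attr}_{\mathfrak C^{\operatorname{op}}}(q')}$ is proper over $\operatorname{Spec}\mathbb C$.

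Finally I would use the standard fact that a finite union of closed subvarieties of a separated scheme, each proper over the base, is again proper over the base: one checks either the valuative criterion directly on each irreducible component, or argues that the inclusion of the union into $\mathcal C(\mathcal D)$ is closed (each piece is closed) and that properness is local on the target and stable under finite unions of closed subsets. Since each intersection $\overline{\operatorname{Attr}_{\mathfrak C}(p')}\cap \overline{\operatorname{Attr}_{\mathfrak C^{\operatorname{op}}}(q')}$ is closed in $\mathcal C(\mathcal D)$, their finite union is a closed subvariety of $\mathcal C(\mathcal D)$ which is proper over $\mathbb C$, yielding the claim.

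There is really no genuine obstacle here once Theorem~\ref{thm:slopesupport} is available; the only thing to be slightly careful about is that the equality \eqref{eq:fullAttr} (taking closures) is what makes the intersection match up with the already-closed sets appearing in Theorem~\ref{thm:slopesupport}, and that the finiteness of $\mathcal C(\mathcal D)^{\mathbb T}$ keeps the resulting union finite so that properness is preserved.
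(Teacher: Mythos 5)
Your proposal is correct and is exactly the argument the paper intends when it says the corollary follows "immediately" from Theorem~\ref{thm:slopesupport} using \eqref{eq:fullAttr}: write each full attracting cell as a finite union of closures of attracting cells, distribute the intersection, and use that a finite union of closed proper subvarieties is proper. You have simply spelled out the details the paper leaves implicit.
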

	
	For the proof, we set up some notation: as in the proof of Proposition~\ref{prop:fullattractionunion}, we pick a cocharacter $\sigma\in \mathfrak C$ and a proper 
	$\sigma$-equivariant morphism $\pi:\mathcal C(\mathcal D)\rightarrow V$ to a finite dimensional $\mathbb C$-vector space $V$ with a linear $\mathbb C^\ast$-action.
	Let $\operatorname{pr}_0:V\rightarrow V^0$ be the linear projection according to the direct sum decomposition $V=V^-\oplus V^0\oplus V^+$ and set $\bar\pi:=\operatorname{pr}_0\circ \pi$. Note that we have $\bar\pi(p)=\pi(p)$ for all $p\in \mathcal C(\mathcal D)^{\mathbb T}$. We first establish a technical tool:
	
	\begin{lemma}\label{lemma:projectionAttr} Let $p\in\mathcal C(\mathcal D)^{\mathbb T}$, $v:=\pi(p)\in V^0$ and $\mathfrak C'\in\{\mathfrak C,\mathfrak C^{\mathrm{op}}\}$. If $q\in \mathcal C(\mathcal D)^{\mathbb T}\cap \overline{\operatorname{Attr}_{\mathfrak C'}(p)}$ then $\overline{\operatorname{Attr}_{\mathfrak C'}(q)}\subset \bar\pi^{-1}(v)$.
	 \end{lemma}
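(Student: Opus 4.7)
The plan is to exploit that $\pi$ is in fact $\mathbb T$-equivariant (Proposition~\ref{prop:properaffine}), so $\pi$ is $\sigma'$-equivariant for every cocharacter $\sigma'\in\mathbb A$. In particular, I may fix some $\sigma'\in\mathfrak C'$ (e.g.\ $\sigma'=\sigma$ if $\mathfrak C'=\mathfrak C$ and $\sigma'=\sigma^{-1}$ otherwise), while keeping the weight decomposition $V=V^-\oplus V^0\oplus V^+$ that defines $\bar\pi$ with respect to the original $\sigma$.

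The key observation I would establish first is the following: \emph{for any $y\in\mathcal C(\mathcal D)$ such that the limit $r:=\lim_{t\to 0}\sigma'(t).y$ exists in $\mathcal C(\mathcal D)$ and is $\mathbb T$-fixed, one has $\bar\pi(y)=\pi(r)$.} By $\sigma'$-equivariance, $\lim_{t\to 0}\sigma'(t).\pi(y)=\pi(r)\in V^0$. Decomposing $\pi(y)=v^-+v^0+v^+$ with respect to $\sigma$, the action of $\sigma'$ either coincides with or is the inverse of that of $\sigma$; in either case, convergence as $t\to 0$ forces the component in $V^{\mp}$ to vanish (depending on the sign), and the surviving limit is exactly $v^0$. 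Hence $\operatorname{pr}_0(\pi(y))=v^0=\pi(r)$, which is the claimed identity $\bar\pi(y)=\pi(r)$.

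Now I would apply this observation twice. First, to any $y\in\operatorname{Attr}_{\mathfrak C'}(p)$ (with $r=p$): this yields $\bar\pi(y)=\pi(p)=v$, i.e.\ $\operatorname{Attr}_{\mathfrak C'}(p)\subset\bar\pi^{-1}(v)$. Since $\bar\pi^{-1}(v)$ is Zariski closed, I get $\overline{\operatorname{Attr}_{\mathfrak C'}(p)}\subset\bar\pi^{-1}(v)$, and so the assumption $q\in\overline{\operatorname{Attr}_{\mathfrak C'}(p)}$ combined with $\mathbb T$-fixedness of $q$ gives $\pi(q)=\bar\pi(q)=v$. Second, applying the observation to any $y\in\operatorname{Attr}_{\mathfrak C'}(q)$ (with $r=q$) yields $\bar\pi(y)=\pi(q)=v$, hence $\operatorname{Attr}_{\mathfrak C'}(q)\subset\bar\pi^{-1}(v)$, and the closedness of $\bar\pi^{-1}(v)$ gives the desired inclusion $\overline{\operatorname{Attr}_{\mathfrak C'}(q)}\subset\bar\pi^{-1}(v)$.

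The main (minor) obstacle is the bookkeeping in the case $\mathfrak C'=\mathfrak C^{\mathrm{op}}$, where one has to be careful that $\bar\pi$ is defined via the $\sigma$-weight decomposition of $V$ while the attracting cells being tracked are those for $\sigma^{-1}$; the argument above handles both signs uniformly because in either direction only the $V^0$ part of $\pi(y)$ can contribute to a finite limit. Everything else reduces to the $\mathbb T$-equivariance of $\pi$ and elementary closedness considerations.
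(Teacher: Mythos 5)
Your proposal is correct and follows essentially the same route as the paper: both rest on the $\sigma'$-equivariance of $\pi$, the observation that only the $V^0$-component of $\pi(y)$ survives the limit $t\to 0$ (so $\bar\pi$ is constant on attracting cells), and the Zariski-closedness of $\bar\pi^{-1}(v)$, applied first to $p$ and then to $q$. The only difference is presentational: you spell out the sign bookkeeping for $\mathfrak C^{\mathrm{op}}$ explicitly, where the paper simply declares that case analogous.
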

	\begin{proof} 
	We only prove the case $\mathfrak C'=\mathfrak C$ as the proof for $\mathfrak C'=\mathfrak C^{\mathrm{op}}$ is analogous. Since $\bar\pi$ is $\sigma$-equivariant, we have $\operatorname{Attr}_{\mathfrak C}(p)\subset \bar\pi^{-1}(v)$ and hence $\overline{\operatorname{Attr}_{\mathfrak C}(p)}\subset \bar\pi^{-1}(v)$. Thus, $\pi(q)=v$. Using again that $\bar\pi$ is $\sigma$-equivriant, we conclude $\overline{\operatorname{Attr}_{\mathfrak C}(q)}\subset \bar\pi^{-1}(v)$.
\end{proof}
	
	
	\begin{proof}[Proof of Theorem~\ref{thm:slopesupport}] 
	As above, we set $v:=\pi(p)$. If $\pi(q)\ne v$ then Lemma~\ref{lemma:projectionAttr} implies \[ \overline{\operatorname{Attr}_{\mathfrak C}(p)} \cap \overline{\operatorname{Attr}_{{\mathfrak C}^{\operatorname{op}}}(q)} \subset \bar\pi^{-1}(v) \cap \bar\pi^{-1}(\pi(q))=\emptyset. \]
	So let us assume $\pi(q)=v$. Since
	$\overline{\operatorname{Attr}_{\mathfrak C}(p)} \subset \pi^{-1}(V^{\geq 0})$ and $\overline{\operatorname{Attr}_{\mathfrak C^{\operatorname{op}}}(q)} \subset \pi^{-1}(V^{\le 0})$ we conclude $\overline{\operatorname{Attr}_{\mathfrak C}(p)} \cap \overline{\operatorname{Attr}_{{\mathfrak C}^{\operatorname{op}}}(q)}\subset \pi^{-1}(V^0)$. Applying Lemma~\ref{lemma:projectionAttr} gives
	\[
		\overline{\operatorname{Attr}_{\mathfrak C}(p)} \cap \overline{\operatorname{Attr}_{{\mathfrak C}^{\operatorname{op}}}(q)}\subset \pi^{-1}(v).
	\]
	Since $\pi$ is proper, we know that $\pi^{-1}(v)$ is proper over $\mathbb C$. As $\overline{\operatorname{Attr}_{\mathfrak C}(p)} \cap \overline{\operatorname{Attr}_{{\mathfrak C}^{\operatorname{op}}}(q)}$ is a closed subvariety of $\pi^{-1}(v)$, it is also proper over $\mathbb C$.
	\end{proof}
	
	\begin{example} Continuing in the setting of Subsection~\ref{subsection:ExampleAttractingCells}, we have that the intersection $\overline{\operatorname{Attr}_{\sigma_0}(x_{D_2})}\cap \overline{\operatorname{Attr}_{\sigma_0^{-1}}(x_{D_1})}$ is isomorphic to the complex projective line $\mathbb P^1=\mathbb C\cup\{\infty\}$; an isomorphism is given by $\infty\mapsto x_{D_2}$, $x\mapsto \eta_1(0,0,0,x)$ for $x\in\mathbb C$.
	\end{example}
	
	\section{Stable envelopes}\label{section:stableEnvelopes}
	Stable envelopes were introduced in~\cite[Section~3.3.4]{maulik2019quantum} to define families of as equivariant cohomology classes satisfying certain stability conditions similar to the stability conditions of Schubert classes in equivariant Schubert calculus, see e.g.~\cite[Lemma~1]{knutson2003puzzles},  \cite[Lemma~3.8]{gorbounov2020yang}. As a consequence, one obtains families of stable envelope bases (similar to the family of equivariant Schubert bases  attached to different choices of Borels). These bases have remarkable characterisations and properties as described in~\cite[Section~3 and Section~4]{maulik2019quantum}. We describe now some of these in our context of bow varieties. 
	
Maulik and Okounkov consider in their setup smooth algebraic varieties $X$ admitting a holomorphic symplectic $\omega$ satisfying the following conditions:
	\begin{enumerate}[label=(Torus-\,\arabic*), wide=20pt]
		\item\label{item:MOAssumption1} There exists a pair of tori $A\subset T$ acting algebraically on $X$ such that $\omega$ is fixed by $A$ and scaled by $T$.
		\item\label{item:MOAssumption2} There exists an affine variety $X_0$ with algebraic $T$-action and an $T$-equivariant proper morphism $X\rightarrow X_0$.
	\end{enumerate}
	From our discussion in Subsection~\ref{subsection:torusAction}, respectively thanks to  Proposition~\ref{prop:properaffine}, bow varieties satisfy the conditions~\ref{item:MOAssumption1} and ~\ref{item:MOAssumption2}, and thus stable envelopes exist  by \cite{maulik2019quantum}. In this and the upcoming section, we give an exposition of the definition and existence of stable envelopes specially adapted to the framework of bow varieties and with the focus on proofs which allow explicit calculations.
	
	Before we go into detail, we briefly recall a version of the localization theorem in torus equivariant cohomology.

	\subsection{Localization in torus equivariant cohomology} Let $T=(\mathbb C^\ast)^r$ be a torus of rank $r$ and $X$ be a manifold. The localization theorem characterizes the $T$-equivariant cohomology of $X$ in terms of its $T$-fixed locus $X^T$. It provides in particular an interesting connection between local and global data. For more details on this subject see e.g.~\cite{hsiang1975cohomology},~\cite{tomdieck1987transformation},~\cite{allday1993cohomological},~\cite{goresky1998equivariant},  ~\cite{anderson2012introduction} and \cite{anderson2023equivariant}.
	
	 To formulate the localization theorem, we set up some notation:the $T$-equivariant cohomology of a single point is given by the polynomial ring $H_T^\ast(\mathrm{pt})=\mathbb Q[t_1,\ldots,t_r]$ where $t_1,\ldots,t_r$ are the equivariant parameters corresponding to the components of $T$. Let $S\subset \mathbb Q[t_1,\ldots,t_r]$ be the multiplicative subset generated by all linear polynomials $a_1t_1+\ldots+a_rt_r$ with $a_1,\ldots,a_r\in\mathbb Z$ and set $H_T^\ast(X)_{\mathrm{loc}}:=S^{-1}H_T^\ast(X)$.
	 
	
	\begin{theorem}[Localization] Suppose $X$ admits an $T$-equivariant open embedding into a smooth compact $T$-manifold. Then, the restriction $i^\ast:H_T^\ast(X)\rightarrow H_T^\ast(X^T)$ induces an isomorphism of the localized rings
	\[
	H_T^\ast(X)_{\mathrm{loc}}\xrightarrow\sim H_T^\ast(X^T)_{\mathrm{loc}}.
	\]
	\end{theorem}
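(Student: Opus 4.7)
The plan is to analyze the long exact sequence of $T$-equivariant cohomology associated to the inclusion of the open complement $U := X \setminus X^T$, namely
\[
\cdots \to H_T^k(X, U) \to H_T^k(X) \xrightarrow{j^*} H_T^k(U) \to H_T^{k+1}(X, U) \to \cdots,
\]
and to show that after localizing at $S$, the cohomology $H_T^*(U)$ vanishes while $H_T^*(X, U)$ becomes canonically identified with $H_T^*(X^T)$ (up to a degree shift). The composition of these identifications with the natural connecting map into $H_T^*(X)$ recovers the restriction $i^*$ up to multiplication by an invertible equivariant Euler class, which gives the stated isomorphism.

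The vanishing $H_T^*(U)_{\mathrm{loc}} = 0$ is the heart of the argument. I would invoke the key lemma: if $Y$ is a $T$-manifold with empty fixed locus and only finitely many orbit types, then $H_T^*(Y)_{\mathrm{loc}} = 0$. The hypothesis that $X$ embeds equivariantly into a smooth compact $T$-manifold guarantees such finiteness for $U$. The proof of the lemma is by induction on the number of orbit types via a $T$-invariant Mayer--Vietoris covering: the local model near an orbit with stabilizer $H \subsetneq T$ is of the form $T \times_H V$, and $H_T^*(T \times_H V) \cong H_H^*(V)$ is a module over $H_T^*(\mathrm{pt})$ on which every linear form in the kernel of $H_T^*(\mathrm{pt}) \to H_H^*(\mathrm{pt})$ acts as zero. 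Since $H$ is a proper subtorus of $T$, this kernel contains a nonzero element of $S$, so the localization kills the module.

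For the identification of $H_T^*(X, U)_{\mathrm{loc}}$ with $H_T^*(X^T)_{\mathrm{loc}}$, I would use that $X^T$ is a smooth closed submanifold (the fixed locus of a torus action on a smooth manifold is smooth) whose $T$-equivariant normal bundle $N$ has no trivial characters. The open embedding into the compact ambient manifold supplies a $T$-equivariant tubular neighborhood of $X^T$, yielding the equivariant Thom isomorphism $H_T^*(X, U) \cong H_T^{*-2d}(X^T)$ with $d = \operatorname{rk} N$. Tracing through, the composition
\[
H_T^*(X^T) \xrightarrow{\mathrm{Thom}} H_T^{*+2d}(X, U) \to H_T^{*+2d}(X) \xrightarrow{i^*} H_T^{*+2d}(X^T)
\]
equals multiplication by the equivariant Euler class $e_T(N)$, a product of nontrivial characters of $T$ and hence invertible in $H_T^*(X^T)_{\mathrm{loc}}$. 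The main technical obstacle I expect is controlling the Mayer--Vietoris spectral sequence in the vanishing step: ensuring that every stratum contributes trivially to the localized cohomology requires careful bookkeeping of the orbit-type stratification, and this is precisely where the compactness-via-embedding hypothesis is indispensable, since it secures both the finiteness of orbit types and the existence of the equivariant tubular neighborhood used in the Thom step.
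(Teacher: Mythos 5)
The paper does not prove this statement: it is recalled as a standard background result, with the proof deferred to the cited references (Hsiang, tom Dieck, Allday--Puppe, Goresky--Kottwitz--MacPherson, Anderson--Fulton). So there is no in-paper argument to compare against; your proposal has to stand on its own, and it does. What you describe is precisely the classical Atiyah--Bott/Quillen-style proof that those references give: localize the long exact sequence of the pair $(X,U)$ with $U=X\setminus X^T$, kill $H_T^\ast(U)_{\mathrm{loc}}$ by induction on orbit types (finiteness of which is exactly what the equivariant embedding into a compact $T$-manifold buys, via Mostow--Palais), and identify $H_T^\ast(X,U)_{\mathrm{loc}}$ with $H_T^\ast(X^T)_{\mathrm{loc}}$ through the equivariant Thom isomorphism, checking that the discrepancy with $i^\ast$ is multiplication by $e_T(N)$, which is invertible after inverting the nontrivial characters. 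Three small points you should tidy up. First, the stabilizer $H$ of a point of $U$ need not be a subtorus, only a closed proper subgroup; the argument goes through by passing to the identity component $H^0$, which is a proper subtorus, and using that with $\mathbb Q$-coefficients $H_H^\ast(\mathrm{pt})\cong H_{H^0}^\ast(\mathrm{pt})^{H/H^0}$. Second, $X^T$ may have connected components of different codimensions, so the Thom shift $2d$ is only locally constant and the normal bundle should be oriented component by component (canonically so here, since the nontrivial weight subbundles carry complex structures). Third, invertibility of $e_T(N)$ uses that its nonequivariant correction terms are nilpotent, which requires $X^T$ to have bounded cohomological dimension; this again follows from the compact ambient manifold, and is vacuous in the paper's application where $X^T$ is finite.
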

	
	Note that if $X^T$ is finite, then $H_T^\ast(X^T)_{\mathrm{loc}}$ is just a product of $|X^T|$-many copies of $H_T^\ast(\mathrm{pt})_{\mathrm{loc}}$.
	
	\subsection{Stable envelopes}
	
	We return to the setup where $X$ is a bow variety $\mathcal C(\mathcal D)$ and $T$ is either $\mathbb A$ or $\mathbb T$. We denote the equivariant parameters by $H_{\mathbb A}^\ast(\mathrm{pt})=\mathbb Q[t_1,\ldots,t_N]$ and $H_{\mathbb T}^\ast(\mathrm{pt})=\mathbb Q[t_1,\ldots,t_N,h]$ according to the components of $\mathbb A$ and $\mathbb T$.
	\begin{definition}
	Let $d$ be the dimension of $\mathcal C(\mathcal D)$ as complex variety. Stable envelopes are maps, depending on a choice of a chamber $\mathfrak C$, 
	\[
	\mathcal C(\mathcal D)^{\mathbb T} \xrightarrow{\phantom{x}\mathrm{Stab}_{\mathfrak C}\phantom{x}}  H_{\mathbb T}^d(\mathcal C(\mathcal D)),
	\]
	 which are uniquely characterized by the properties ~\ref{item:StableEnvelopesNormalization}- \ref{item:StableEnvelopesSmallnes} from Theorem~\ref{thm:existenceStableEnvelopes}, called the \textit{normalization}, \textit{support} and \textit{smallness} condition, respectively. 
	\end{definition}
	
	\begin{theorem}[Stable envelopes]\label{thm:existenceStableEnvelopes} 
	Fix a chamber $\mathfrak C$. Then there exist a unique family  $(\mathrm{Stab}_{\mathfrak C}(p))_{p\in\mathcal C(\mathcal D)^{\mathbb T}}$ of elements in $H_{\mathbb T}^d(\mathcal C(\mathcal D))$ satisfying the following conditions:
	\begin{enumerate}[label=(Stab-\,\arabic*), wide=20pt]
		\item\label{item:StableEnvelopesNormalization} We have $\iota_p^\ast(\mathrm{Stab}_{\mathfrak C}(p))=e_{\mathbb T}(T_p\mathcal C(\mathcal D)_{\mathfrak C}^-$ for all $p\in\mathcal C(\mathcal D)^{\mathbb T}$.
		\item\label{item:StableEnvelopesSupport}  We have that $\mathrm{Stab}_{\mathfrak C}(p)$ is supported on $\operatorname{Attr}_{\mathfrak C}^f(p)$ for all $p\in\mathcal C(\mathcal D)^{\mathbb T}$.
		\item\label{item:StableEnvelopesSmallnes} Let $p,q\in\mathcal C(\mathcal D)^{\mathbb T}$ with $q\prec p$. Then, $\iota_q^\ast(\mathrm{Stab}_{\mathfrak C}(p))$ is divisible by $h$.
	\end{enumerate}
	\end{theorem}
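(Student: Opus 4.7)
The plan is to prove uniqueness and existence separately, both by induction on the partial order $\preceq_{\mathfrak{C}}$ on the finite set $\mathcal{C}(\mathcal{D})^{\mathbb{T}}$.

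For uniqueness, I would take the difference $\gamma = s - s'$ of two hypothetical stable envelopes for the same $p$. By the support axiom \ref{item:StableEnvelopesSupport}, $\iota_q^*\gamma = 0$ for $q \not\preceq p$; by normalization \ref{item:StableEnvelopesNormalization}, $\iota_p^*\gamma = 0$; by smallness \ref{item:StableEnvelopesSmallnes}, $h$ divides $\iota_q^*\gamma$ for $q \prec p$. The goal is to conclude that $\iota_q^*\gamma = 0$ for every $q$, whence injectivity of equivariant localization (which applies because the $\mathbb{T}$-fixed locus is finite and isolated by Theorem \ref{thm:fixedPointsGeneric1ParameterTorus}) forces $\gamma = 0$. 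For this I would argue by downward induction on $\preceq_{\mathfrak{C}}$ within $\{q : q \preceq p\}$, combining the support of $\gamma$ inside $\operatorname{Attr}^f_{\mathfrak{C}}(p)$, the degree bound coming from $\gamma \in H^d_{\mathbb{T}}$ with $d = \dim_{\mathbb{C}} \mathcal{C}(\mathcal{D})$, and the explicit form of tangent weights at fixed points given by Corollary \ref{cor:tangentweights}. Near a fixed point $q$ the restriction $\iota_q^*\gamma$ factors through the $\mathbb{T}$-Euler class of the normal bundle of the strata through $q$, and this divisibility pattern, combined with the $h$-divisibility from smallness and the total degree constraint, leaves no room for a non-zero restriction.

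For existence, I would construct $\operatorname{Stab}_{\mathfrak{C}}(p)$ recursively, following the strategy outlined in the introduction: as a $\mathbb{Z}$-linear combination of the equivariant classes $[\overline{\operatorname{Attr}_{\mathfrak{C}}(q)}]^{\mathbb{T}}$ for $q \preceq p$. Assuming $\operatorname{Stab}_{\mathfrak{C}}(q)$ has been constructed for all $q \prec p$, I would first rescale $[\overline{\operatorname{Attr}_{\mathfrak{C}}(p)}]^{\mathbb{T}}$ to a class $\alpha_p$ with $\iota_p^*\alpha_p = e_{\mathbb{T}}(T_p\mathcal{C}(\mathcal{D})^-_{\mathfrak{C}})$; its support lies in $\operatorname{Attr}^f_{\mathfrak{C}}(p)$ by Proposition \ref{prop:fullattractionunion}. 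I would then correct $\alpha_p$ by subtracting an integer combination of the previously constructed $\operatorname{Stab}_{\mathfrak{C}}(q)$, $q \prec p$, processing them in increasing order of $\preceq_{\mathfrak{C}}$, so that the smallness axiom is enforced at each $q$. Since every $\operatorname{Stab}_{\mathfrak{C}}(q)$ already satisfies the smallness axiom at fixed points below $q$, each new correction does not disturb the $h$-divisibility already achieved at smaller fixed points, and the support remains inside $\operatorname{Attr}^f_{\mathfrak{C}}(p)$ throughout because every correction term is supported in $\operatorname{Attr}^f_{\mathfrak{C}}(q) \subset \operatorname{Attr}^f_{\mathfrak{C}}(p)$.

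The main obstacle is the structural lemma that makes the correction step succeed and produces \emph{integer} coefficients: for $q \prec p$, the restriction $\iota_q^*[\overline{\operatorname{Attr}_{\mathfrak{C}}(p)}]^{\mathbb{T}}$ equals, modulo $h$, an integer multiple of $e_{\mathbb{T}}(T_q\mathcal{C}(\mathcal{D})^-_{\mathfrak{C}})$. As emphasized in the introduction, this is the equivariant-multiplicity statement for Lagrangian cycles, established via Fulton's deformation to the normal cone: one degenerates $\overline{\operatorname{Attr}_{\mathfrak{C}}(p)}$ to a cycle in $T_q \mathcal{C}(\mathcal{D})$, verifies that the limit is supported on the Lagrangian subspace $T_q\mathcal{C}(\mathcal{D})^-_{\mathfrak{C}}$, and extracts the integer multiplicity. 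Setting up this degeneration rigorously and identifying the limit cycle is the technical heart of the argument; once that is in hand, both the recursive existence construction and the bookkeeping in the uniqueness proof proceed mechanically, yielding the three axioms and, along the way, the integrality of the coefficients in the expansion of $\operatorname{Stab}_{\mathfrak{C}}(p)$ in the basis $\{[\overline{\operatorname{Attr}_{\mathfrak{C}}(q)}]^{\mathbb{T}}\}_{q \preceq p}$.
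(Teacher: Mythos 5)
Your overall architecture coincides with the paper's: uniqueness by peeling off strata of the closed filtration by full attracting cells, using that $e_{\mathbb T}(T_q\mathcal C(\mathcal D)^-_{\mathfrak C})$ is homogeneous of full degree $d$ and not divisible by $h$ (the paper packages your ``factors through the Euler class of the normal bundle of the stratum'' step as the equivariant excess intersection formula); existence by recursively correcting $[\overline{\operatorname{Attr}_{\mathfrak C}(p)}]^{\mathbb T}$ using the integrality of Lagrangian equivariant multiplicities. Your variant of the correction step (subtracting previously constructed $\mathrm{Stab}_{\mathfrak C}(q)$ in increasing order, rather than the cycles $[\overline{\operatorname{Attr}_{\mathfrak C}(q)}]^{\mathbb T}$ in decreasing order as in the paper) is equivalent and works for exactly the reason you give. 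A small point: no rescaling of $[\overline{\operatorname{Attr}_{\mathfrak C}(p)}]^{\mathbb T}$ is needed, since the attracting cell is smooth at $p$ with tangent space $T_p\mathcal C(\mathcal D)^+_{\mathfrak C}$, so its restriction at $p$ is already the correct Euler class.

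The one place your sketch would fail as written is the verification step inside the key lemma. The deformation to the normal cone produces the tangent cone $C_q\overline{\operatorname{Attr}_{\mathfrak C}(p)}\subset T_q\mathcal C(\mathcal D)$, and this cone is \emph{not} in general supported on $T_q\mathcal C(\mathcal D)^-_{\mathfrak C}$ (nor on $T_q\mathcal C(\mathcal D)^+_{\mathfrak C}$); all one can show at this stage is that it is a $\mathbb T$-invariant, conical, \emph{Lagrangian} subvariety (Proposition~\ref{prop:AnalyticTangentConeLagrangian}). To extract the integer a second degeneration is required: the paper introduces an auxiliary torus $T'$ scaling the symplectic coordinates and flatly degenerates the Lagrangian cone, one coordinate at a time, into a union with multiplicities of Lagrangian \emph{coordinate} subspaces $\mathrm{span}_{\mathbb C}(v_1,\dots,v_n)$ with $v_i\in\{e_i,e_i'\}$ (Lemmas~\ref{lemma:flatDeformations} and~\ref{lemma:InvariantConicalLagrangian}). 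The point is then that every such subspace, no matter which half of each symplectic pair it contains, has equivariant class $\pm\prod_i\chi_i=\pm e_{\mathbb A}(T_q\mathcal C(\mathcal D)^-_{\mathfrak C})$, and summing with the multiplicities gives the integer of Corollary~\ref{cor:equivariantMultiplicitiesAttractingCells}. If you instead try to prove that the limit cycle literally lies in $T_q\mathcal C(\mathcal D)^-_{\mathfrak C}$, the argument breaks down.
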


	The normalization and support condition directly imply that stable envelopes provide a basis of the localized equivariant cohomology ring: 
	
	\begin{cor}[Stable basis]\label{cor:StableEnvelopesBasis} For a fixed chamber $\mathfrak C$, the $\mathbb T$-equivariant cohomology classes $(\mathrm{Stab}_{\mathfrak C}(p))_{p\in\mathcal C(\mathcal D)^{\mathbb T}}$ form a $H^\ast_{\mathbb T}(\operatorname{pt})_{\mathrm{loc}}$-basis of $H_{\mathbb T}^\ast(\mathcal C(\mathcal D))_{\mathrm{loc}}$.
	\end{cor}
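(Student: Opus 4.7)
The plan is to combine the localization theorem with the normalization and support axioms, and to exhibit the stable envelopes as a triangular change of basis away from the delta classes at fixed points. By the Generic Cocharacter Theorem~\ref{thm:fixedPointsGeneric1ParameterTorus} the fixed locus $\mathcal C(\mathcal D)^{\mathbb T}$ is finite, and (as in the proof of Proposition~\ref{prop:affinestructure}) $\mathcal C(\mathcal D)$ admits a $\mathbb T$-equivariant open embedding into a smooth projective $\mathbb T$-variety. The localization theorem therefore applies and yields an isomorphism of $H_{\mathbb T}^\ast(\mathrm{pt})_{\mathrm{loc}}$-modules
\[
\iota^\ast\colon H_{\mathbb T}^\ast(\mathcal C(\mathcal D))_{\mathrm{loc}}\xrightarrow{\ \sim\ }\bigoplus_{p\in\mathcal C(\mathcal D)^{\mathbb T}} H_{\mathbb T}^\ast(\mathrm{pt})_{\mathrm{loc}},
\]
where the right-hand side is free of rank $|\mathcal C(\mathcal D)^{\mathbb T}|$. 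It suffices to show that the images $\iota^\ast(\mathrm{Stab}_{\mathfrak C}(p))$ form a basis.

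Next I would fix a linear extension $p_1,\ldots,p_n$ of the partial order $\preceq$ from Lemma~\ref{lemma:partialOrdering}, so that $p_i\preceq p_j$ implies $i\le j$, and form the matrix
\[
M_{ij}:=\iota_{p_i}^\ast\bigl(\mathrm{Stab}_{\mathfrak C}(p_j)\bigr)\in H_{\mathbb T}^\ast(\mathrm{pt})_{\mathrm{loc}}.
\]
By the support axiom \ref{item:StableEnvelopesSupport}, the class $\mathrm{Stab}_{\mathfrak C}(p_j)$ is supported on the closed subvariety $\operatorname{Attr}_{\mathfrak C}^f(p_j)$ (closed by Proposition~\ref{prop:fullattractionunion}). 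By Proposition~\ref{prop:affinestructure} each attracting cell $\operatorname{Attr}_{\mathfrak C}(q)$ is $\mathbb T$-equivariantly isomorphic to an affine space with strictly positive weights, whence has a unique $\mathbb T$-fixed point, namely $q$. Consequently the $\mathbb T$-fixed points lying in $\operatorname{Attr}_{\mathfrak C}^f(p_j)=\bigsqcup_{q\preceq p_j}\operatorname{Attr}_{\mathfrak C}(q)$ are exactly those $q$ with $q\preceq p_j$. Since a cohomology class supported on a closed subvariety $Z$ restricts to zero at any point of $\mathcal C(\mathcal D)\setminus Z$ (using Poincaré duality on the smooth variety $\mathcal C(\mathcal D)$), we conclude that $M_{ij}=0$ whenever $p_i\not\preceq p_j$, and in particular $M$ is upper triangular.

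It then remains to identify the diagonal. By the normalization axiom \ref{item:StableEnvelopesNormalization},
\[
M_{ii}=e_{\mathbb T}\bigl(T_{p_i}\mathcal C(\mathcal D)_{\mathfrak C}^-\bigr),
\]
a product of the strictly $\mathfrak C$-negative $\mathbb T$-tangent weights at $p_i$. By Corollary~\ref{cor:tangentweights} each such weight is of the form $t_a-t_b+mh$ with $a\ne b$, hence a non-zero linear polynomial, hence a unit in $H_{\mathbb T}^\ast(\mathrm{pt})_{\mathrm{loc}}$. Thus $M$ is invertible, so the images $\iota^\ast(\mathrm{Stab}_{\mathfrak C}(p))$ form an $H_{\mathbb T}^\ast(\mathrm{pt})_{\mathrm{loc}}$-basis of the fixed-point cohomology, and by the localization isomorphism so do the classes $\mathrm{Stab}_{\mathfrak C}(p)$ themselves. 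The only genuine input beyond triangular linear algebra is the compatibility of ``support'' with restriction to $\mathbb T$-fixed points together with the identification of fixed points inside a full attracting cell; once these are in place the argument is immediate.
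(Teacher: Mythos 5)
Your proof is correct and is exactly the standard triangularity argument that the paper has in mind when it asserts the corollary "directly" follows from the normalization and support conditions (the paper gives no further detail). The upper-triangular transition matrix with unit diagonal entries $e_{\mathbb T}(T_{p}\mathcal C(\mathcal D)_{\mathfrak C}^-)$, combined with localization over the finite fixed locus, is precisely the intended reasoning, and all the inputs you cite (closedness of full attracting cells, identification of fixed points inside them, invertibility of the tangent weights in the localized ring via Corollary~\ref{cor:tangentweights}) are correctly deployed.
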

	
	We refer to $(\mathrm{Stab}_{\mathfrak C}(p))_{p\in\mathcal C(\mathcal D)^{\mathbb T}}$ as \textit{stable basis corresponding to $\mathfrak C$} and to the individual $\mathbb T$-equivariant cohomology classes $\mathrm{Stab}_{\mathfrak C}(p)$ as \textit{stable basis elements}.

	\begin{remark} The stable envelopes map ${\mathrm{Stab}_{\mathfrak C}}$ provides a map
	\[
	\{\textup{Chambers}\} \rightarrow \{\textup{Bases of $H_{\mathbb T}^\ast(\mathcal C(\mathcal D))_{\mathrm{loc}}$}\}.
	\]
	It is a central result of Maulik and Okounkov that the base change matrices with respect to adjacent chambers give solutions to Yang--Baxter equations,   providing an interesting connection to the theory of integrable systems; see ~\cite[Section~4]{maulik2019quantum} for more details.
	\end{remark}
	
	\begin{remark} In the case of Nakajima quiver varieties, the definition of stable envelopes in~\cite{maulik2019quantum} also includes a choice of signs in the normalization axiom, that corresponds to a choice of polarization of the involved Nakajima quiver variety. Polarizations can be defined in the setting of bow varieties too, see \cite[Section~4.4.1]{shou2021bow} and one could work with the more general definition. For simplicity, we however choose here all signs to be $1$. 
	\end{remark}
	
	\subsection{Proof of uniqueness}
	
	We now prove the uniqueness statement of Theorem~\ref{thm:existenceStableEnvelopes}. We use the following auxiliary statement.
	\begin{lemma}\label{lemma:UniquenessPreparation}
Assume  $(\gamma_p)_{p\in\mathcal C(\mathcal D)^{\mathbb T}}$ in $H_T^\ast(\mathcal C(\mathcal D))$ is a family of equivariant cohomology classes of homogeneous degree $\operatorname{dim}(\mathcal C(\mathcal D))$ satisfying the following two conditions:
	\begin{enumerate}[label=(\alph*)]
		\item\label{item:UniquenessProofCondition1} For all $p\in \mathcal C(\mathcal D)^{\mathbb T}$, the class $\gamma_p$ is supported on $\operatorname{Attr}^f_{\mathfrak C}(p)$.
		\item\label{item:UniquenessProofCondition2} If $p,q\in \mathcal C(\mathcal D)^{\mathbb T}$ with $q\preceq p$, then $\iota_q^{\ast}(\gamma_p)$ is divisible by $h$.
	\end{enumerate}
	Then, $\gamma_p=0$ for all $p\in\mathcal C(\mathcal D)^{\mathbb T}$.
	\end{lemma}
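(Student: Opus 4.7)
The strategy is to prove the following strengthening by induction on $|S|$: if $\gamma\in H_{\mathbb T}^d(\mathcal C(\mathcal D))$ (with $d$ the complex dimension of $\mathcal C(\mathcal D)$) is supported on $\bigsqcup_{q\in S}\operatorname{Attr}_{\mathfrak C}(q)$ for some downward closed subset $S\subseteq \mathcal C(\mathcal D)^{\mathbb T}$ (meaning $q\in S$ and $q'\preceq q$ imply $q'\in S$), and $\iota_q^{\ast}(\gamma)$ is divisible by $h$ for every $q\in S$, then $\gamma=0$. Applying this to each $\gamma_p$ with the downward closed set $S=\{q\mid q\preceq p\}$ yields the lemma; by Proposition~\ref{prop:fullattractionunion} combined with equation~\eqref{eq:fullAttr}, the corresponding union $\bigsqcup_{q\in S}\operatorname{Attr}_{\mathfrak C}(q)=\operatorname{Attr}^f_{\mathfrak C}(p)$ is indeed closed in $\mathcal C(\mathcal D)$.

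The base case $|S|=0$ is trivial. For the inductive step, choose a maximal element $p$ of $S$. Since $S':=S\setminus\{p\}$ is again downward closed, the set $Z:=\bigsqcup_{q\in S'}\operatorname{Attr}_{\mathfrak C}(q)=\bigcup_{q\in S'}\operatorname{Attr}^f_{\mathfrak C}(q)$ is closed in $\mathcal C(\mathcal D)$, so the open complement $U:=\mathcal C(\mathcal D)\setminus Z$ contains $p$, and $\operatorname{Attr}_{\mathfrak C}(p)$ is closed in $U$. By hypothesis, the restriction $\gamma|_U$ is supported on $\operatorname{Attr}_{\mathfrak C}(p)$, hence corresponds, via Poincar\'e duality on the smooth variety $U$, to a class pushed forward from $\overline H_d^{\mathbb T}(\operatorname{Attr}_{\mathfrak C}(p))$.

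By Proposition~\ref{prop:affinestructure}, $\operatorname{Attr}_{\mathfrak C}(p)$ is $\mathbb T$-equivariantly isomorphic to the complex affine space $T_p\mathcal C(\mathcal D)_{\mathfrak C}^+$ of complex dimension $d/2$; the top equivariant Borel--Moore homology $\overline H_d^{\mathbb T}(\operatorname{Attr}_{\mathfrak C}(p))$ is therefore one-dimensional over $\mathbb Q$, generated by the fundamental class $[\operatorname{Attr}_{\mathfrak C}(p)]^{\mathbb T}$. A degree count forces $\gamma|_U=P\cdot [\operatorname{Attr}_{\mathfrak C}(p)]^{\mathbb T}$ for some $P\in\mathbb Q$, and the self-intersection formula applied to the closed smooth inclusion $\operatorname{Attr}_{\mathfrak C}(p)\hookrightarrow U$ at the point $p$ yields
\[
\iota_p^{\ast}(\gamma)=\iota_p^{\ast}(\gamma|_U)=P\cdot e_{\mathbb T}(T_p\mathcal C(\mathcal D)_{\mathfrak C}^-).
\]
By Corollary~\ref{cor:tangentweights}, each $\mathbb T$-weight of $T_p\mathcal C(\mathcal D)$ has the shape $t_i-t_j+mh$ with $i\neq j$; reducing modulo $h$ produces a nonzero product of the linear forms $t_i-t_j$, so $h$ does not divide $e_{\mathbb T}(T_p\mathcal C(\mathcal D)_{\mathfrak C}^-)$. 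Combined with the hypothesis that $\iota_p^{\ast}(\gamma)$ is divisible by $h$ and with the fact that $P$ is a rational constant, this forces $P=0$, so $\gamma|_U=0$.

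Consequently $\gamma$ is supported on $Z$, now controlled by the strictly smaller downward closed set $S'$, and the induction closes. The main technical point is the Borel--Moore bookkeeping used in the previous two paragraphs: one must invoke the open--closed long exact sequence to conclude that $\gamma|_U$ lifts to $\overline H_d^{\mathbb T}(\operatorname{Attr}_{\mathfrak C}(p))$, compute the equivariant Borel--Moore homology of an affine space in top degree, and apply the self-intersection formula to identify $\iota_p^\ast$ of the pushed-forward fundamental class with the Euler class of the opposite Lagrangian tangent subspace. Once these foundational facts are in place, the conclusion follows by a direct degree-and-divisibility calculation.
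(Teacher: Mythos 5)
Your proof is correct and follows essentially the same route as the paper: both arguments peel off one attracting cell at a time from a closed union of cells (you via a maximal element of a downward-closed set, the paper via a total order refining $\preceq$), lift the class to the top equivariant Borel--Moore homology of the affine cell $\operatorname{Attr}_{\mathfrak C}(p)$, identify the fixed-point restriction with a rational multiple of $e_{\mathbb T}(T_p\mathcal C(\mathcal D)_{\mathfrak C}^-)$ via the self-/excess-intersection formula, and conclude the coefficient vanishes because that Euler class is not divisible by $h$ (Corollary~\ref{cor:tangentweights}) while the restriction is. The only cosmetic difference is that the paper deduces the vanishing of the lifted class from the ring isomorphism $f^\ast$ rather than from the one-dimensionality of $\overline H_d^{\mathbb T}$ of an affine space; these are equivalent.
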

	Assuming Lemma~\ref{lemma:UniquenessPreparation}, we directly obtain a proof of the uniqueness of stable envelopes:
	\begin{proof}[Proof of Theorem~\ref{thm:existenceStableEnvelopes} (Uniqueness)]
	If $(\mathrm{Stab}_{\mathfrak C}(p))_{p\in\mathcal C(\mathcal D)^{\mathbb T}}$ and $(\mathrm{Stab}_{\mathfrak C}'(p))_{p\in\mathcal C(\mathcal D)^{\mathbb T}}$ satisfy the conditions of Theorem~\ref{thm:existenceStableEnvelopes} then the family 
	$(\mathrm{Stab}_{\mathfrak C}(p)-\mathrm{Stab}_{\mathfrak C}'(p))_{p\in\mathcal C(\mathcal D)^{\mathbb T}}$ satisfies the conditions of Lemma~\ref{lemma:UniquenessPreparation}. Hence, $\mathrm{Stab}_{\mathfrak C}(p)=\mathrm{Stab}_{\mathfrak C}'(p)$ for all $p\in\mathcal C(\mathcal D)^{\mathbb T}$.
	\end{proof}
	
	\begin{proof}[Proof of Lemma~\ref{lemma:UniquenessPreparation}]
	Fix $p\in\mathcal C(\mathcal D)^{\mathbb T}$ and let $\le$ be a total order refining $\preceq$. Let $n=|\mathcal C(\mathcal D)^{\mathbb T}|$ and denote the $\mathbb T$-fixed points of $\mathcal C(\mathcal D)$ by $q_1,\ldots,q_n$ where $q_i\le q_j$ if and only if $i\le j$. Let $i_0\in\{1,\ldots,n\}$ such that $p=q_{i_0}$. Furthermore, we set 
	\[
	A_i:=\bigsqcup_{j=1}^i\operatorname{Attr}_{\mathfrak C}(q_j) \quad\textup{for }i=1,\ldots,n, 
	\]
	and $A_0:=\emptyset$. According to Proposition~\ref{prop:fullattractionunion}, each $A_{q_i}$ is a closed $\mathbb T$-invariant subvariety of $\mathcal C(\mathcal D)$ and contains $\operatorname{Attr}_{\mathfrak C}(q_i)$ as open subvariety. To prove $\gamma_p=0$, we show that 
	\begin{center}
	{\it if $\gamma_p$ is supported on $A_i$ for some   $i\in\{1,\ldots,n\}$, then, $\gamma_p$ is also supported on $A_{i-1}$.}
	\end{center}
	  This implies $\gamma_p=0$, since $\gamma_p$ is supported on $A_{0}$ and thus has empty support by induction.
	
	So let us prove the above statement. 
	Let $\kappa: A_i\hookrightarrow \mathcal C(\mathcal D)$ be the inclusion.
	Since $\gamma_p$ is supported on $A_i$ there exists $\alpha\in \overline H_\ast^{\mathbb T} (A_i)$ such that $\kappa_\ast(\alpha)$ is Poinca\'e dual to $\gamma_p$. Let $f:\{q_i\}\hookrightarrow \operatorname{Attr}_{\mathfrak C}(q_i)$ and $g:\operatorname{Attr}_{\mathfrak C}(q_i)\hookrightarrow A_i$ denote the inclusions and let $\alpha'\in H_{\mathbb T}^\ast(\operatorname{Attr}_{\mathfrak C}(q_i))$ be the Poincar\'e dual of $g^\ast(\alpha)$.
	Then, the equivariant excess intersection fomula gives
	\begin{equation}\label{eq:ExcessIntersection}
		e_{\mathbb T}(T_{q_i}\mathcal C(\mathcal D)_{\mathfrak C}^-) f^\ast(\alpha') = \iota_{q_i}^\ast(\gamma_p).
	\end{equation}
	According to Corollary~\ref{cor:tangentweights}, $e_{\mathbb T}(T_{q_i}\mathcal C(\mathcal D)^-_{\mathfrak C})$ is homogeneous of degree $d$ and not divisible by $h$. Thus, by condition~\ref{item:UniquenessProofCondition2} and~\eqref{eq:ExcessIntersection}, we infer $f^\ast(\alpha')=0$. Since $f^\ast$ is an isomorphism of rings, we conclude $\alpha'=0$ which is equivalent to $\alpha$ being supported on $A_{i-1}$. 
	 Hence, $\gamma_p$ is supported on $A_{i-1}$ as well.
	\end{proof}
%
%
	
	\section{Existence of stable envelopes}\label{section:existence}
	In this section, we discuss the existence of stable envelopes. We will see that they can be constructed using an iterative procedure based on general properties of equivariant multiplicities of lagrangian subvarieties.
		
	We first recall basic facts about equivariant multiplicities and then consider in more detail the special case of equivariant multiplicities of lagrangian subvarieties before we finally prove the existence of stable envelopes and the statements from Subsection~\ref{subsection:EquivariantMultiplicitiesLagrangian}. In Subsection~\ref{subsection:ExampleConstructionStableEnvelopes}, we finally consider an example of the construction of stable envelopes.
	
	\subsection{Equivariant multiplicities}\label{subsection:EquivariantMultiplicities}
	In this subsection, we recall the notion of equivariant multiplicities. For details on equivariant intersection theory, see e.g.\cite{brion1997equivariant}, \cite{edidin1998equivariant} and \cite{anderson2023equivariant}. We fix a torus $T=(\mathbb C^\ast)^r$  acting algebraically on a smooth algebraic variety $X$. 
	
	Let $p\in X^T$ be an isolated $T$-fixed point. Let $\iota_p:\{p\}\hookrightarrow X$ be the inclusion and $\iota_p^\ast:H_T^\ast(X)\rightarrow H_T^\ast(\{p\})$ be the induced map on equivariant cohomology rings. The projection $\pi:T_pX\rightarrow \{p\}$ yields an isomorphism of rings $\pi^\ast:H_T^\ast(\{p\})\xrightarrow\sim H_T^\ast(T_pX)$. The inverse $\pi^\ast$ is called equivariant Gysin isomorphism and we denote it with $s^\ast$.
	
	Let $Y$ be a $T$-invariant subvariety of $X$ containing $p$ with structure sheaf $\mathcal O_Y$.
	and let $[Y]^T\in H_T^\ast(X)$ denote the equivariant cohomology class corresponding to $Y$. The \textit{equivariant multiplicity} of $Y$ at $p$ is defined as $\iota_p^\ast([Y]^T)$. Equivariant multiplicities can be characterized in terms of tangent cones as follows (see e.g. \cite[Remark~17.4.2]{anderson2023equivariant}): Let $\mathcal I\subset \mathcal O_Y$ be the ideal sheaf corresponding to $p$. Then, the \textit{tangent cone} of $Y$ at $p$ is defined as
	\[
	C_pY:=\mathrm{Spec}\Big(\bigoplus_{n=1}^\infty \mathcal{I}^n/\mathcal{I}^{n+1}\Big).
	\] 
	Note that $C_pY$ may be a non-reduced scheme. 
	
	By construction, $C_pY$ admits an algebraic $T$-action and we have a $T$-equivariant closed immersion $C_pY\hookrightarrow T_pX$. Let $Z_1,\ldots,Z_n$ be the irreducible components of $C_pY$ and let $m_i$ be the geometric multiplicity of $Z_i$ in $C_pY$, see e.g. \cite[Section~1.5]{fulton1984introduction} for a definition of geometric multiplicities. The irreducible components $Z_1,\ldots,Z_n$ are again $T$-invariant subvarieties of $T_pX$ and we have
	\begin{equation}\label{eq:EquivariantMultiplicities}
	\iota_p^\ast([Y]^T)=\sum_{i=1}^n m_i s^\ast([Z_i]^T).
	\end{equation}
	If $Y$ is regular at $p$, we have $C_pY=T_pY$ and $\iota_p^\ast([Y]^T)=e_T(N_p(Y/X))$ where 
	$N_p(Y/X)$ is the fiber of the normal bundle corresponding to the closed immersion $Y\hookrightarrow X$ at the point~$p$.
	
	\begin{example}
	Consider the bow variety $\mathcal C(0\textcolor{red}{\slash} 1 \textcolor{blue}{\backslash} 1\textcolor{red}{\slash} 2  \textcolor{blue}{\backslash}2 \textcolor{blue}{\backslash} 2\textcolor{red}{\slash} 0)$ that was already studied in Subsection~\ref{subsection:ExampleAttractingCells} and Subsection~\ref{subsection:ExamplePartialOrder}. In virtue of~\eqref{eq:EquivariantMultiplicities}, we can simply read of all $\mathbb T$-equivariant multiplicities $\iota_{x_{D_j}}^\ast (\overline{\operatorname{Attr}_{\sigma_0}(x_{D_i})})$ from Table~\ref{table:WeightSpaceDecompositionWi} and Table~\ref{table:IntersectionAttractionOpenAffine}. The respective $\mathbb T$-equivariant multiplicities are recorded in Table~\ref{table:EquivariantMultiplicitiesAttractingCells} .
	
	\begin{center}
	\begin{table}
\begin{tabular}{ | c | c | c | c | c | c |}
 \hline 
 \diagbox{$i$}{$j$} & $1$ & $2$ & $3$ & $4$ & $5$ \\
 \hline
 \hline
 $1$ & 
 \makecell{\hspace{-1em}\tiny$(t_1-t_3)$ \\ \hspace{1em}\tiny $\cdot(t_2-t_3)$ } & 
 $0$ & $0$ & $0$ & $0$ \\
 \hline 
 $2$ & 
  \makecell{\hspace{-1em}\tiny$(t_1-t_3)$ \\ \hspace{2em}\tiny $\cdot(t_3-t_2+h)$ }  & 
  \makecell{\hspace{-1em}\tiny$(t_1-t_2)$ \\ \hspace{2em}\tiny $\cdot(t_2-t_3+h)$ } &
 $0$ & $0$ & $0$ \\
 \hline
 $3$ &
 \makecell{\hspace{-1em}\tiny$(t_3-t_1+h)$ \\ \hspace{1em}\tiny $\cdot(t_3-t_2+h)$ } & 
 \makecell{\hspace{-1em}\tiny$(t_2-t_1+h)$ \\ \hspace{1em}\tiny $\cdot(t_2-t_3+h)$ } &
 \makecell{\hspace{-1em}\tiny$(t_1-t_2+h)$ \\ \hspace{1em}\tiny $\cdot(t_2-t_3+h)$ } & 
 $0$ & $0$ \\
 \hline
 $4$ & 
 \makecell{\hspace{-1em}\tiny$(t_2-t_3)$ \\ \hspace{2em}\tiny $\cdot(t_3-t_1+h)$ } & 
 $0$ & 
 \makecell{\hspace{-1em}\tiny$(t_2-t_1)$ \\ \hspace{2em}\tiny $\cdot(t_2-t_3+h)$ } &
 \makecell{\hspace{-1em}\tiny$(t_2-t_3)$ \\ \hspace{2em}\tiny $\cdot (t_1-t_2+2h)$ } &
 $0$\\
 \hline
 $5$ & $0$ &
 \makecell{\hspace{-1em}\tiny$(t_3-t_2)$ \\ \hspace{2em}\tiny $\cdot(t_2-t_1+h)$ } &
 \makecell{\hspace{-1em}\tiny$(t_2-t_1)$ \\ \hspace{1em}\tiny $\cdot(t_3-t_3)$ } &
 \makecell{\hspace{-1em}\tiny$(t_1-t_2+2h)$ \\ \hspace{1em}\tiny $\cdot(t_3-t_2+h)$ } &
 \makecell{\hspace{-1em}\tiny$(t_1-t_3+2h)$ \\ \hspace{1em}\tiny $\cdot(t_2-t_3+h)$ } \\
 \hline  
\end{tabular}
\caption{Equivariant multiplicities $\iota_{x_{D_j}}^\ast (\overline{\operatorname{Attr}_{\sigma_0}(x_{D_i})})$}
    \label{table:EquivariantMultiplicitiesAttractingCells} 
\end{table}
\end{center}
	\end{example}
	
	\subsection{Equivariant multiplicities of lagrangian subvarieties}\label{subsection:EquivariantMultiplicitiesLagrangian}
	In this Subsection we formulate the main ingredient in the proof of the existence of stable envelopes. We call this the Langrangian Multiplicity Result since it characterizes the equivariant multiplicity of lagrangian subvarieties at $p$ in terms of the $T$-weights. Thus, from now on we need additionally
	\begin{assumption}
	 We assume from now on until Subsection~\ref{subsection:flatDeformationsLagrangianVarieties} that $X$ is symplectic of dimension $2n$ with symplectic form $\omega$ and $\omega$ is invariant under the $T$-action.
	\end{assumption}
	Recall that a subvariety $L\subset X$ is called \textit{isotropic} if the restriction of $\omega$ to the smooth locus of $L$ vanishes. We call $L$ \textit{lagrangian} if $\operatorname{dim}(L)=n$ and moreover $L$ is isotropic.
	
	Given an isolated $T$-fixed point $p\in X$, we can choose a decomposition into $T$-invariant subspaces $T_pX=V\oplus W$ such that $\omega_p$ induces an isomorphism $V\cong W^\ast$.

	\begin{prop}[Langrangian Multiplicities]\label{prop:equivariantMultiplicitiesLagrangian}
	Let $\chi_1,\ldots,\chi_n$ be the $T$-characters corresponding to the $T$-action on $V$. We view $\chi_1,\ldots,\chi_n$ as homogeneous elements in $H_T^\ast(\{p\})$.
	Given a $T$-invariant lagrangian subvariety $L\subset X$, we can find  $a_{p,L}\in\mathbb Z$  such  that the following equality holds:
	\[
	\iota_p^\ast ([L]^T)=a_{p,L} \prod_{i=1}^n \chi_i.
	\] 
	\end{prop}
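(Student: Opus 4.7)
By the tangent cone decomposition \eqref{eq:EquivariantMultiplicities}, we have
\[
\iota_p^\ast([L]^T) \;=\; \sum_i m_i \, s^\ast([Z_i]^T),
\]
where $Z_1,\ldots,Z_k$ are the irreducible components of the tangent cone $C_p L \subset T_p X$ with geometric multiplicities $m_i\in\mathbb Z_{\geq 0}$. Each $Z_i$ is a $T$-invariant irreducible conical subvariety of $T_p X$ of dimension $n$, so it suffices to prove two claims: (i) every such $Z_i$ is Lagrangian in $(T_p X, \omega_p)$; and (ii) for any $T$-invariant irreducible conical Lagrangian subvariety $Z \subset T_p X$ the class $s^\ast([Z]^T)$ is an integer multiple of $\prod_{i=1}^n \chi_i$.

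For (i) I would invoke Fulton's deformation to the normal cone. It produces a $T \times \mathbb G_m$-equivariant flat family $\mathcal{L} \hookrightarrow \mathcal{X} \to \mathbb A^1$ whose fiber over $t \neq 0$ is $L \hookrightarrow X$ and whose central fiber is $C_p L \hookrightarrow T_p X$. The symplectic form $\omega$ extends (after a suitable $\mathbb G_m$-rescaling) to a $T$-invariant relative symplectic form on $\mathcal X$ restricting to $\omega_p$ on the central fiber $T_p X$. Since isotropy is a closed condition along a flat family, the pullback of the extended form vanishes on the smooth locus of $C_p L$, hence on the smooth locus of every $Z_i$; combined with the dimension equality $\dim Z_i = n = \frac{1}{2}\dim T_p X$, this shows that each $Z_i$ is Lagrangian. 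This is precisely the content announced for Subsection~\ref{subsection:flatDeformationsLagrangianVarieties}.

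For (ii) I would perform a further flat Gröbner-style degeneration of $Z$ using a generic one-parameter subgroup $\lambda$ of a maximal torus of $\operatorname{Sp}(T_p X, \omega_p)$ containing $T$. Because $\lambda$ preserves $\omega_p$, the Lagrangian property survives the flat limit by the same argument as in (i), and $T$-invariance is preserved by construction. Thus $Z$ degenerates to an effective cycle supported on $T$-invariant \emph{linear} Lagrangian subspaces $V'_1,\ldots,V'_r \subset T_p X$, reducing the problem to the computation of $s^\ast([V']^T) = e_T(T_p X/V')$ for a single $T$-invariant Lagrangian subspace $V'$. Writing its weight decomposition $V' = \bigoplus_\alpha V'_\alpha$ and using that $\omega_p$ pairs $(T_p X)_\alpha$ non-degenerately with $(T_p X)_{-\alpha}$, the Lagrangian condition forces $\dim V'_\alpha + \dim V'_{-\alpha} = \dim(T_p X)_\alpha$ for every weight $\alpha$. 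A direct bookkeeping of the Euler class then yields $e_T(T_p X/V') = \pm \prod_{i=1}^n \chi_i$, so $s^\ast([Z]^T)$ is the desired integer multiple of $\prod_{i=1}^n \chi_i$.

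The main obstacle is step (i): one must control the symplectic form carefully under the deformation to the normal cone in order to transfer isotropy from the generic fiber $L$ to every component of the central fiber $C_p L$. This is a technical but standard application of Fulton's construction and is exactly what Subsection~\ref{subsection:flatDeformationsLagrangianVarieties} should set up. Once this is in hand, step (ii) reduces to the elementary weight-pairing computation indicated above, and assembling the pieces produces the desired integer coefficient $a_{p,L}$.
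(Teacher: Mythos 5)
Your overall strategy coincides with the paper's: decompose $\iota_p^\ast([L]^T)$ via the tangent cone using \eqref{eq:EquivariantMultiplicities}, use Fulton's deformation to the normal cone to show $C_pL_{\mathrm{red}}$ is Lagrangian in $T_pX$ (this is Proposition~\ref{prop:AnalyticTangentConeLagrangian}; your phrase ``isotropy is a closed condition along a flat family'' hides the genuine technical point there, namely that the form on the total space restricts to $t^{-1}\omega$ on the fibers over $t\ne 0$ and that one needs Whitney regularity to transfer vanishing to the smooth locus of the central fiber), then degenerate further to linear Lagrangians and compute Euler classes. Your weight bookkeeping in the last step is correct: for a $T$-invariant linear Lagrangian $V'$ one indeed gets $e_T(T_pX/V')=\pm\prod_i\chi_i$.

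The genuine gap is the sentence ``Thus $Z$ degenerates to an effective cycle supported on $T$-invariant \emph{linear} Lagrangian subspaces.'' This is precisely the content of Lemmas~\ref{lemma:flatDeformations} and~\ref{lemma:InvariantConicalLagrangian} and it does not follow from what you wrote. First, a single generic one-parameter subgroup $\lambda$ of the maximal torus of $\operatorname{Sp}(T_pX,\omega_p)$ has weights of the form $(a_1,\dots,a_n,-a_1,\dots,-a_n)$, so for example the ideal $(x_1y_1-x_2y_2)$ is $\lambda$-homogeneous for \emph{every} such $\lambda$; the initial degeneration is therefore not a monomial ideal, and the limit cycle is a priori only $\lambda$-invariant, not invariant under the full maximal torus. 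The paper fixes this by iterating over a full flag of subtori (and in fact uses the larger torus $T'$ containing the two symplectic-form-scaling factors, observing that scaling $\omega$ still preserves Lagrangianity). Second, even granting invariance under the full maximal torus plus the conical condition, irreducible invariant conical subvarieties need not be linear in general (cones over quadrics such as $\{ac=bd\}$ occur as orbit closures); linearity is forced only by combining invariance with the Lagrangian condition. Concretely, at a smooth point $z\in Z$ the tangent space contains both the Euler vector $z$ and the infinitesimal generators $(0,\dots,z_i,\dots;0,\dots,-z_i',\dots)$ of the torus action, whose symplectic pairing is $2z_iz_i'$; isotropy thus forces $z_iz_i'=0$ for all $i$, so $Z\subset V(x_1y_1,\dots,x_ny_n)$, which is the union of the $2^n$ coordinate Lagrangians, and irreducibility pins $Z$ down to one of them. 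This (or the paper's Lemma~\ref{lemma:InvariantConicalLagrangian}) is the missing step; without it your reduction to the Euler-class computation is not justified.
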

	We prove Proposition~\ref{prop:equivariantMultiplicitiesLagrangian} in Subsection~\ref{subsection:flatDeformationsLagrangianVarieties}. First, we apply this result to the setting of bow varieties. For this, set $L_p:=\overline{\operatorname{Attr}_{\mathfrak C}(p)}$ for $p\in\mathcal C(\mathcal D)^{\mathbb T}$.
	\begin{cor}\label{cor:equivariantMultiplicitiesAttractingCells} Let $p,q\in\mathcal C(\mathcal D)^{\mathbb T}$ and suppose $p\in L_q$. Then,
	\[\iota_p^\ast ([L_q]^{ \mathbb A})= a_{p,q} e_{\mathbb A}(T_p(\mathcal C(\mathcal D))^-_{\mathfrak C}),
	\]
	where $a_{p,q}$ is an integer depending on $p$ and $q$.
	\end{cor}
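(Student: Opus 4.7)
The plan is to obtain the corollary as a direct application of Proposition~\ref{prop:equivariantMultiplicitiesLagrangian} with $T=\mathbb{A}$, $X=\mathcal{C}(\mathcal{D})$, the isolated $\mathbb{A}$-fixed point $p$ (isolation follows from the Generic Cocharacter Theorem~\ref{thm:fixedPointsGeneric1ParameterTorus}), $L=L_q$, and the splitting $T_p\mathcal{C}(\mathcal{D})=T_p\mathcal{C}(\mathcal{D})^-_{\mathfrak{C}}\oplus T_p\mathcal{C}(\mathcal{D})^+_{\mathfrak{C}}$. Two hypotheses need verification: that $L_q$ is an $\mathbb{A}$-invariant lagrangian, and that $\omega_p$ pairs the two summands above perfectly; once they are in place, the integer $a_{p,q}$ is simply the integer $a_{p,L_q}$ produced by Proposition~\ref{prop:equivariantMultiplicitiesLagrangian}.

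For the lagrangian condition, $\mathbb{T}$-invariance (hence $\mathbb{A}$-invariance) of $L_q$ is immediate, and Proposition~\ref{prop:affinestructure} shows that $\operatorname{Attr}_{\mathfrak{C}}(q)\cong T_q\mathcal{C}(\mathcal{D})^+_{\mathfrak{C}}$ is smooth of dimension $n:=\tfrac{1}{2}\dim\mathcal{C}(\mathcal{D})$ (using that $T_q^+$ and $T_q^-$ have equal dimension, as recorded in Subsection~\ref{subsection:affinestr}). To check isotropy I would fix $\sigma\in\mathfrak{C}$, pull $\omega$ back through the $\sigma$-equivariant isomorphism of Proposition~\ref{prop:affinestructure} to an algebraic $\sigma$-invariant $2$-form $\tilde\omega$ on the affine space $T_q\mathcal{C}(\mathcal{D})^+_{\mathfrak{C}}$, and note that its value at $0$ equals $\omega_q|_{T_q^+}$, which vanishes because $\omega_q$ is $\mathbb{A}$-invariant and by Corollary~\ref{cor:tangentweights} no $\mathbb{A}$-weight of $T_q\mathcal{C}(\mathcal{D})$ is zero (so weight spaces pair nontrivially only with their negatives). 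Since $\sigma$ acts on $T_q\mathcal{C}(\mathcal{D})^+_{\mathfrak{C}}$ with strictly positive weights, every nonzero monomial in a polynomial $2$-form has strictly positive $\sigma$-weight, so $\sigma$-invariance forces $\tilde\omega\equiv 0$. Passing to the Zariski closure preserves isotropy on the smooth locus, so $L_q$ is lagrangian.

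For the splitting condition, the same weight argument gives that $\omega_p$, being $\mathbb{A}$-invariant and nondegenerate while admitting no zero $\mathbb{A}$-weight on $T_p\mathcal{C}(\mathcal{D})$, must realise $T_p\mathcal{C}(\mathcal{D})^\pm_{\mathfrak{C}}$ as a pair of isotropic subspaces in perfect duality, i.e.\ $T_p\mathcal{C}(\mathcal{D})^-_{\mathfrak{C}}\cong(T_p\mathcal{C}(\mathcal{D})^+_{\mathfrak{C}})^\ast$ as $\mathbb{A}$-representations. Applied with $V=T_p\mathcal{C}(\mathcal{D})^-_{\mathfrak{C}}$ and $W=T_p\mathcal{C}(\mathcal{D})^+_{\mathfrak{C}}$, Proposition~\ref{prop:equivariantMultiplicitiesLagrangian} produces $\iota_p^\ast([L_q]^{\mathbb{A}})=a_{p,L_q}\prod_i\chi_i$, and since the $\chi_i$ are exactly the $\mathbb{A}$-weights on $T_p\mathcal{C}(\mathcal{D})^-_{\mathfrak{C}}$, the product equals $e_{\mathbb{A}}(T_p\mathcal{C}(\mathcal{D})^-_{\mathfrak{C}})$. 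The only substantive step is the isotropy argument for the attracting cell; the rest is bookkeeping that organizes the hypotheses of the general lagrangian multiplicity result around the tangent-weight constraints supplied by Corollary~\ref{cor:tangentweights}.
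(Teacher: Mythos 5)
Your proposal is correct and follows essentially the same route as the paper: reduce to Proposition~\ref{prop:equivariantMultiplicitiesLagrangian} via the decomposition $T_p\mathcal C(\mathcal D)=T_p\mathcal C(\mathcal D)^-_{\mathfrak C}\oplus T_p\mathcal C(\mathcal D)^+_{\mathfrak C}$ after checking that $L_q$ is an $\mathbb A$-invariant lagrangian. The only (harmless) difference is in how isotropy of $\operatorname{Attr}_{\mathfrak C}(q)$ is verified — you kill the pulled-back $2$-form on $T_q\mathcal C(\mathcal D)^+_{\mathfrak C}$ by a positive-weight argument, whereas the paper propagates isotropy from $T_qL_q=T_q\mathcal C(\mathcal D)^+_{\mathfrak C}$ to all of $\operatorname{Attr}_{\mathfrak C}(q)$ using $\mathbb A$-invariance of $\omega_{\mathcal C(\mathcal D)}$; both rest on the equivariant affine structure from Proposition~\ref{prop:affinestructure} and on Corollary~\ref{cor:tangentweights}.
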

	\begin{proof} The tangent space $T_qL_q=T_q(\mathcal C(\mathcal D))^+$ is an isotropic subspace of $T_q(\mathcal C(\mathcal D))$. As the form $\omega_{\mathcal C(\mathcal D)}$ is $\mathbb A$-invariant, we conclude that $T_xL_q$ is an isotropic subspace of $T_x(\mathcal C(\mathcal D))$ for all $x\in \operatorname{Attr}_{\mathfrak C}(q)$. Hence, $L_q$ is a lagrangian subvariety of $\mathcal C(\mathcal D)$ as the restriction of $\omega_{\mathcal C(\mathcal D)}$ to an open dense smooth subvariety vanishes. Applying Proposition~\ref{prop:equivariantMultiplicitiesLagrangian} according to the decomposition $T_p\mathcal C(\mathcal D)=T_p\mathcal C(\mathcal D)^-_{\mathfrak C}\oplus T_p\mathcal C(\mathcal D)^+_{\mathfrak C}$ then finishes the proof.
	\end{proof}
	\subsection{Proof of the existence of stable envelopes}\label{subsection:ProofExistenceStableEnvelopes}
	
	
	We now use Corollary~\ref{cor:equivariantMultiplicitiesAttractingCells} to give a direct construction of stable envelopes:
	
	\begin{proof}[Proof of Theorem~\ref{thm:existenceStableEnvelopes}]
	Let $\preceq'$ be a total order on $\mathcal C(\mathcal D)^{\mathbb T}$ refining $\preceq$. Let $s$
 be the cardinality of 	$\mathcal C(\mathcal D)^{\mathbb T}$ and denote the elements of $\mathcal C(\mathcal D)^{\mathbb T}$ by $p_1,\ldots,p_s$ where the labeling is compatible with our choice of total ordering, i.e. we have $p_i\preceq'p_j$ if and only if $i\le j$. For each $i\in\{1,\ldots,s\}$, we construct a family of cohomology classes $\gamma_{i,1},\ldots,\gamma_{i,i}\in H_{\mathbb T}^\ast(\mathcal C(\mathcal D))$ such that each $\gamma_{i,j}$ satisfies the following three properties
 \begin{enumerate}[label=(\alph*)]
 	\item\label{item:ExistenceProofNormalization} $\iota_{p_i}(\gamma_{i,j})=e_{\mathbb T}(T_{p_i}\mathcal C(\mathcal D)^-_{\mathfrak C})$,
 	\item\label{item:ExistenceProofSumOfLagrangians} there exist $a_{i,j,1},\ldots,a_{i,j,i-j}\in\mathbb Z$ such that
 	$\gamma_{i,j}= [L_{p_i}]^{\mathbb T} + \sum_{l=1}^{i-j} a_{i,j,l}[L_{p_{i-l}}]^{\mathbb T}$,
 	\item\label{item:ExistenceProofSmallness} we have that $\iota_{p_l}^\ast(\gamma_{i,j})$ is divisible by $h$ for $l=i-1,i-2,\ldots,j$.
 \end{enumerate}
 	We set $\gamma_{i,i}:= [L_{p_i}]^{\mathbb T}$. Then $\gamma_{i,i}$ clearly satisfies the properties~\ref{item:ExistenceProofNormalization}-\ref{item:ExistenceProofSmallness}. Suppose $\gamma_{i,i},\ldots,\gamma_{i,j}$ have been constructed, then we define $\gamma_{i,j-1}$ as follows: 
 	since $\gamma_{i,j}$ satisfies~\ref{item:ExistenceProofSumOfLagrangians}, we know by Corollary~\ref{cor:equivariantMultiplicitiesAttractingCells} that there exists $a\in\mathbb Z$ such that
 	\[
 	\iota_{p_{j-1}}^\ast(\gamma_{i,j})\equiv ae_{\mathbb T}(T_{p_{j-1}}\mathcal C(\mathcal D)^-_{\mathfrak C}) \;\mathrm{mod}\;(h). 
 	\]
 	Set $\gamma_{i,j-1}:=\gamma_{i,j}-a[L_{p_{j-1}}]^{\mathbb T}$. By construction, $\gamma_{i,j-1}$ satisfies~\ref{item:ExistenceProofSumOfLagrangians} and $\iota_{p_{j-1}}^\ast(\gamma_{i,j-1})$ is divisible by $h$. Hence, properties~\ref{item:ExistenceProofNormalization} and~\ref{item:ExistenceProofSmallness}  follow from $p_i,p_{i-1},\ldots,p_j\notin L_{p_{j-1}}$. Thus, $\gamma_{i,j-1}$ satisfies all the desired properties.
 	
 	Now, set $\mathrm{Stab}(p_i):=\gamma_{i,1}$ for $i=1,\ldots,s$. Then, the normalization condition follows immediately from~\ref{item:ExistenceProofNormalization}, the support condition from~\ref{item:ExistenceProofSumOfLagrangians} and the smallness condition from~\ref{item:ExistenceProofSmallness}. This completes the proof of Theorem~\ref{thm:existenceStableEnvelopes}.
	\end{proof}
	
	The following two subsections are devoted to the proof of Proposition~\ref{prop:equivariantMultiplicitiesLagrangian}. We begin with general observations on tangent cones of lagrangian subvarieties.
	
	\subsection{Tangent cones of lagrangian subvarieties}\label{subsection:tangentCones}
	
	In this subsection, we pass to the complex analytic setting. At first, we recall some basic facts that are similar to the algebraic set up. Let $Y$ be a complex analytic space. We denote the smooth locus of $Y$ by $Y_{\mathrm{sm}}$. If $Y$ is reduced, then $Y_{\mathrm{sm}}$ is an open dense complex subspace of $Y$. Moreover, we call the reduction of $Y$ by $Y_{\mathrm{red}}$. Given a closed analytic subspace $Z\subset Y$ with ideal sheaf $\mathcal I$, the \textit{normal cone} is defined as
	\[
	C_{Z/Y}:=\mathrm{Specan}\Big(\bigoplus_{i\geq0}\mathcal I^i/\mathcal I^{i+1}\Big).
	\]
	Here, $\mathrm{Specan}$ denotes the analytic spectrum, see e.g.~\cite[Section~II.3]{grauert1994several} for a definition.
	
	 In the special case where $Z=\{p\}$ is a point on $Y$, the normal cone $C_{Z/Y}$ is also called \textit{tangent cone} and denoted by $C_pY$. If $Y$ is a closed complex subspace of a complex manifold $X$, we have a canonical closed embedding $C_pY\hookrightarrow T_pX$.

	\begin{prop}\label{prop:AnalyticTangentConeLagrangian} Let $X$ be a holomorphic symplectic manifold with symplectic form $\omega$ and let $L\subset X$ be an analytic lagrangian subvariety and $p\in L$. Then, $C_pL_{\mathrm{red}}$ is a lagrangian subvariety of $T_pX$.
	\end{prop}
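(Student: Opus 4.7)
The plan is to argue via a deformation-to-the-tangent-cone construction, showing that isotropy of the generic (rescaled) fiber passes to the central fiber by a limiting argument. First, a standard dimension count: since $\dim C_pL = \dim L = n$ and reduction preserves dimension, $C_pL_{\mathrm{red}}$ has dimension $n = \tfrac{1}{2}\dim T_pX$. Thus it suffices to show that $C_pL_{\mathrm{red}}$ is isotropic with respect to the constant symplectic form $\omega_p$ on $T_pX$, i.e.\ that $\omega_p$ vanishes on $T_v(C_pL_{\mathrm{red}})$ for every smooth point $v$.

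To set up the deformation, I would choose holomorphic coordinates $z_1,\dots,z_{2n}$ centered at $p$, identifying a neighborhood of $p$ in $X$ with a neighborhood $U$ of $0$ in $T_pX\cong\mathbb{C}^{2n}$. Let $\mu:\mathbb{C}^{\ast}\times U\to U$ denote the dilation $(t,z)\mapsto tz$, and define
\[
\mathcal{L}:=\overline{\{(t,z)\in \mathbb{C}^{\ast}\times U \mid tz\in L\}}\subset \mathbb{C}\times U,
\]
with projection $\pi:\mathcal{L}\to\mathbb{C}$. By the classical analytic description of the tangent cone (as the limit of rescaled copies of $L$, cf.\ Whitney), the fiber $\pi^{-1}(t)$ is $t^{-1}L$ for $t\neq 0$ and, set-theoretically, $\pi^{-1}(0)=C_pL_{\mathrm{red}}$.

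Next, fix a smooth point $v\in (C_pL_{\mathrm{red}})_{\mathrm{sm}}$. Near $(0,v)$ the central fiber is smooth of dimension $n$, hence by flatness of $\pi$ (or directly from the explicit local equations of $\mathcal{L}$) the total space $\mathcal{L}$ is smooth of dimension $n+1$ near $(0,v)$, and the differential $d\pi_{(0,v)}$ is surjective with kernel $T_v(C_pL_{\mathrm{red}})$. Consequently we can approximate $(0,v)$ by smooth points $(t_k,z_k)\in \mathcal{L}$ with $t_k\neq 0$, and the tangent planes $T_{(t_k,z_k)}\mathcal{L}$ converge to $T_{(0,v)}\mathcal{L}$ in the Grassmannian; restricting to the fibers yields tangent $n$-planes $T_{z_k}\mathcal{L}_{t_k}\to T_v(C_pL_{\mathrm{red}})$.

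Finally, transfer isotropy to the limit. For $t\neq 0$ and $w_1,w_2\in T_z\mathcal{L}_t=T_z(t^{-1}L)$, the vectors $tw_1,tw_2$ lie in $T_{tz}L$, so using that $L$ is lagrangian,
\[
0=\omega_{tz}(tw_1,tw_2)=t^2\,\omega_{tz}(w_1,w_2).
\]
Dividing by $t^{2}$ gives $\omega_{tz}(w_1,w_2)=0$. Applying this to the approximating sequences $(t_k,z_k)\to(0,v)$ with tangent planes converging to $T_v(C_pL_{\mathrm{red}})$, and using continuity of $\omega$ (so that $\omega_{t_kz_k}\to\omega_p$), one obtains $\omega_p(w_1,w_2)=0$ for all $w_1,w_2\in T_v(C_pL_{\mathrm{red}})$. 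The main technical hurdle will be step two: justifying smoothness of $\mathcal{L}$ at $(0,v)$ and the continuity of tangent planes to the fibers. The ambient space $\mathcal{L}$ can in principle be reducible or non-reduced at $t=0$, so one must work on the irreducible component of $\mathcal{L}$ containing $(0,v)$, and use that a smooth point of the central fiber together with flatness forces local smoothness of the total space, whence the Grassmannian-continuity of tangent planes is automatic from the implicit function theorem.
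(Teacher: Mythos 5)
Your overall strategy is the same as the paper's: deform $L$ to its tangent cone via rescaling and pass isotropy to the limit. The dimension count, the rescaling computation $\omega_{tz}(tw_1,tw_2)=t^2\omega_{tz}(w_1,w_2)$, and the continuity argument at the end are all fine. The gap is exactly the step you flag as the ``main technical hurdle,'' and the fix you propose for it does not work. Flatness of $\pi$ together with smoothness of the fiber at a point forces smoothness of the total space and submersivity of $\pi$ only when the \emph{scheme-theoretic} fiber is smooth there. The scheme-theoretic central fiber of your family $\mathcal L$ is $C_pL$ with its natural (possibly non-reduced) structure, and you are working at a smooth point $v$ of $C_pL_{\mathrm{red}}$; if $C_pL$ is generically non-reduced along the component through $v$, the scheme-theoretic fiber is nowhere smooth along that component and the criterion simply does not apply. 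This situation occurs for genuine lagrangian subvarieties: take $X=\mathbb C^2$ with $\omega=dx\wedge dy$ and $L=V(y^2-x^3)$ (any curve in a symplectic surface is lagrangian). Then $C_0L=V(y^2)$, the total space is $\mathcal L=V(y^2-tx^3)$, and at $(0,x_0,0)$ with $x_0\ne0$ one computes $T_{(0,x_0,0)}\mathcal L=\{dt=0\}$, so $d\pi$ vanishes identically on $T_{(0,v)}\mathcal L$ rather than being surjective with kernel $T_v(C_0L_{\mathrm{red}})$. In higher dimensions $\mathcal L$ need not even be smooth at such points. So the claimed Grassmannian-convergence of the fiber tangent planes to $T_v(C_pL_{\mathrm{red}})$ is not justified by the implicit function theorem.

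The conclusion you need — that for $v$ in a dense open subset of $(C_pL_{\mathrm{red}})_{\mathrm{sm}}$ the limits of tangent planes $T_{z_k}\mathcal L_{t_k}$ contain (hence, by the dimension count, equal) $T_v(C_pL_{\mathrm{red}})$ — is true, but it is a genuine theorem about analytic sets: it is Whitney's generic regularity result (condition (a) for the pair consisting of the smooth locus of the total space and the generic smooth stratum of the reduced central fiber), which is precisely what the paper invokes via \cite[Lemma~19.3]{whitney1965tangents}. Once you replace your step two by an appeal to that lemma (and note that isotropy on a dense open subset of the smooth locus suffices, since vanishing of $\omega$ is a closed condition), the rest of your argument goes through and coincides with the paper's proof. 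In the example above one can check directly that the fiber tangent lines do converge to the $x$-axis, consistent with Whitney regularity, even though $\pi$ is not submersive — which shows the statement you want is salvageable but needs the stronger input.
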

	Here, $C_pL_{\mathrm{red}}$ denotes the reduction of $C_pL$. Before we prove Proposition~\ref{prop:AnalyticTangentConeLagrangian}, we recall some basic facts about the deformation to the normal cone construction which was introduced by Fulton in~\cite[Chapter~5]{fulton1984introduction}. Given a closed embedding $Z\hookrightarrow Y$ as above, we denote by $\mathrm{Bl}_{Z}(Y)$ the blow up of $Y$ along $Z$. For an introduction to the theory of blow ups in the analytic framework see e.g.~\cite[Chapter~4]{fischer1976complex},~\cite[Chapter~VII]{grauert1994several}.
	 The \textit{deformation to the normal cone} with respect to the closed embedding $Z\hookrightarrow X$ is defined as 
	\[
	M_{Z/X}^0:=\mathrm{Bl}_{Z\times\{0\}}(Y\times\mathbb C)\setminus \mathrm{Bl}_{Z}(Y).
	\]
	Let $\pi:M_{Z/Y}^0\rightarrow\mathbb C$ be the projection. Then, $\pi^{-1}(\mathbb C\setminus\{0\})$ is isomorphic to $Y\times (\mathbb C\setminus\{0\})$ and $\pi^{-1}(\mathbb C\setminus\{0\})$ is dense in $M_{Z/Y}^0$. The special fiber $\pi^{-1}(0)$ is isomorphic to the normal cone $C_{Z/X}$.
	Moreover, if $Y$ is reduced, then also $M_{Z/Y}^0$ is reduced. 
	
	\begin{proof}[Proof of Proposition~\ref{prop:AnalyticTangentConeLagrangian}] Since $C_pL$ is pure of dimension $\operatorname{dim}(L)$, it is left to show that $C_pL_{\mathrm{red}}$ is an isotropic subvariety of $T_pX$.
	The normal cone $C_pL$ only depends on an analytic neighborhood of $p$. Hence, by the holomorphic Darboux theorem, we may assume that $X$ is an analytic neighborhood of the origin in $\mathbb C^{2n}$ and $\omega$ is the standard symplectic form
	\[
	\omega= \sum_{i=1}^n dx_i\wedge dx_i',
	\]
	where $(x_1,\ldots,x_n,x_1',\ldots,x_n')$ are the coordinates of $\mathbb C^{2n}$. We further may assume that $p$ equals the origin. By construction, $M^{0}_{p/\mathbb C^{2n}}$ is isomorphic to $\mathbb C^{2n}\times\mathbb C$ and the open embedding $\mathbb C^{2n}\times (\mathbb C\setminus\{0\})\hookrightarrow M^{0}_{p/\mathbb C^{2n}}$ corresponds under this identification to the morphism
	\begin{gather*}
	\iota:\mathbb C^{2n}\times (\mathbb C\setminus\{0\})\hookrightarrow \mathbb C^{2n}\times\mathbb C,\\ (x_1,\ldots,x_n,x_1',\ldots,x_n',t)\mapsto (t^{-1}x_1,\ldots,t^{-1}x_n,t^{-1}x_1',\ldots,t^{-1}x_n',t).
	\end{gather*}
	The closed embedding $T_p\mathbb C^{2n}\hookrightarrow M^{0}_{p/\mathbb C^{2n}}$ corresponds to
	\begin{gather*}
	\kappa:T_p\mathbb C^{2n}\hookrightarrow  \mathbb C^{2n}\times\mathbb C,\\
	(\alpha_1\frac{\partial}{\partial x_1 },\ldots,\alpha_n\frac{\partial}{\partial x_n},\alpha_1'\frac{\partial}{\partial x_1' },\ldots,\alpha_n'\frac{\partial}{\partial x_n'})\mapsto (\alpha_1,\ldots,\alpha_n,\alpha_1',\ldots,\alpha_n',0).
	\end{gather*}
	Let $\operatorname{pr}:\mathbb C^{2n}\times\mathbb C\rightarrow \mathbb C^{2n}$ be the projection to the first factor and $\omega'=\operatorname{pr}^\ast(\omega)$. Then $\omega'$ is a holomorphic alternating bilinear form on the tangent bundle $T(\mathbb C^{2n}\times\mathbb C)$. By definition, $\kappa^\ast \omega'=\omega$. Moreover, the holomorphic bilinear form $\iota^\ast \omega'$ on $T(\mathbb C^{2n}\times(\mathbb C\setminus \{0\}))$ is given by
	\begin{equation}\label{eq:pullBackOmegaPrime}
		\iota^\ast \omega'(x,t)=t^{-1}\sum_{i=1}^n dx_i\wedge dx_i',\quad\textup{where }{x\in\mathbb C^{2n}},t\in\mathbb C\setminus\{0\}.
	\end{equation}
	In the following, we will also denote by $\omega'$ the corresponding form on $TM_{p/\mathbb C^{2n}}^0$. We have the following commuting diagram, where the horizontal arrows are closed embeddings in the first column and open embeddings in the second column. 
	\[
	\begin{tikzcd}
	C_pL \arrow[r]\arrow[d]&
	T_pX\arrow[r,"\sim"]\arrow[d] &
	T_p\mathbb C^{2n}\arrow[d] \\
	M_{p/L}^0\arrow[r] &
	M_{p/X}^0\arrow[r] &
	M_{p/\mathbb C^{2n}}^0 \\
	L\times(\mathbb C\setminus \{0\})\arrow[r]\arrow[u] &
	X\times(\mathbb C\setminus \{0\})\arrow[r]\arrow[u] &
	\mathbb C^{2n}\times(\mathbb C\setminus \{0\})\arrow[u]
	\end{tikzcd}
	\]
	Since $L\subset X$ is lagrangian, we deduce from~\eqref{eq:pullBackOmegaPrime} that $\iota^{\ast}\omega'$ vanishes on the tangent bundle of $L_{\mathrm{sm}}\times(\mathbb C\setminus \{0\})$. Since $L_{\mathrm{sm}}\subset L$ and $L\times(\mathbb C\setminus \{0\})\subset M_{p/L}^0$ are both dense, we also have that $L_{\mathrm{sm}}\times(\mathbb C\setminus \{0\})\subset (M_{p/L}^0)_{\mathrm{sm}}$ is dense. It follows that $\omega'$ vanishes on $(M_{p/L}^0)_{\mathrm{sm}}$. By~\cite[Lemma~19.3]{whitney1965tangents}, there exists a smooth open dense subvariety $U\subset C_pL_{\mathrm{red}}$ such that $M_{p/L}^0$ is regular over $U$ (in the sense of \cite{whitney1965tangents}). This implies that the closure of $T(M_{p/L}^0)_{\mathrm{sm}}$ in $TM_{p/\mathbb C^{2n}}^0$ contains the tangent bundle $TU$. Hence, $\omega'$ also vanishes on $TU$ and thus on $T(C_pL_{\mathrm{red}})_{\mathrm{sm}}$. As $\kappa^{\ast}\omega'=\omega$, we conclude that $C_pL_{\mathrm{red}}$ is an isotropic subvariety of $T_pX$.
	\end{proof}
	
	\subsection{Flat deformations of conical lagrangian subvarieties}\label{subsection:flatDeformationsLagrangianVarieties}
	
	We now return to the algebraic setting. In the previous subsection, we proved that $C_pL$ is a lagrangian subvariety of $T_pX$. Furthermore, $C_pL$ is $T$-invariant and conical by construction. In this subsection, we show that $C_pL$ can be deformed to a possibly non-reduced union of lagrangian hyperplanes which enables us to characterize the equivariant multiplicity of $L$ at $p$.
	 
	 \begin{prop}\label{prop:FlatDeformationLagrangianToHyperplanes}
	 We have $[C_pL]^T=\sum_{i=1}^r m_i[H_i]^T$ in $H_T^\ast(T_pX)$ where $H_1,\ldots,H_r\subset T_pX$ are lagrangian hyperplanes and $m_1,\ldots,m_r\in\mathbb N_0$.
	 \end{prop}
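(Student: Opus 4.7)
The strategy is to flatly degenerate $C_pL$ through $T$-invariant conical Lagrangian subschemes of $V := T_pX$ to one whose underlying cycle is a non-negative integer combination of Lagrangian linear subspaces. As a first reduction, formula~\eqref{eq:EquivariantMultiplicities} applied with $Y = C_pL \subset V$ (whose tangent cone at the origin is $C_pL$ itself by conicality) gives $[C_pL]^T = \sum_i \mu_i [Z_i]^T$, where $Z_i$ are the reduced irreducible components of $C_pL$ with geometric multiplicities $\mu_i \in \mathbb{N}_0$. Each $Z_i$ is a conical, $T$-invariant, Lagrangian subvariety of $V$: conicality and $T$-invariance pass to irreducible components, while the Lagrangian property follows from Proposition~\ref{prop:AnalyticTangentConeLagrangian} (every irreducible component of $C_pL_{\mathrm{red}}$ is isotropic of dimension $n$). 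So it suffices to prove the proposition with $C_pL$ replaced by a single irreducible $Z$ of this form.

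Since $\omega$ is $T$-invariant, the weights of $T$ on $V$ come in pairs $\chi, -\chi$, so we may choose a symplectic Darboux basis $(x_1, \dots, x_n, y_1, \dots, y_n)$ of $V$ consisting of $T$-eigenvectors with $\omega(x_i, y_j) = \delta_{ij}$ and with $x_i, y_i$ of opposite $T$-weights. Pick generic positive integers $w_1, \dots, w_n$ and consider the one-parameter subgroup $\lambda : \mathbb{G}_m \to \operatorname{GL}(V)$ with $\lambda(t) \cdot x_i = t^{w_i} x_i$ and $\lambda(t) \cdot y_i = t^{-w_i} y_i$. Because the paired weights sum to zero, $\lambda$ preserves $\omega$; and $\lambda$ manifestly commutes with $T$. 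Gr\"obner degeneration of the homogeneous $T$-invariant ideal $I \subset \operatorname{Sym}(V^*)$ of $Z$ with respect to the weight vector $\mathbf{w} = (w_1, \dots, w_n, -w_1, \dots, -w_n)$ yields a flat $T$-equivariant family $\mathcal{Z} \to \mathbb{A}^1$ with $\mathcal{Z}_t \cong \lambda(t) \cdot Z$ for $t \neq 0$ and $Z_0 := \mathcal{Z}_0 = V(\operatorname{in}_{\mathbf{w}}(I))$. For generic $\mathbf{w}$ the initial ideal $\operatorname{in}_{\mathbf{w}}(I)$ is monomial, so $Z_0$ is scheme-theoretically a union of $n$-dimensional coordinate subspaces of $V$ with non-negative integer multiplicities.

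The crucial remaining point is that every coordinate subspace appearing in $Z_0$ is Lagrangian. Since $\lambda$ preserves $\omega$, each fiber $\mathcal{Z}_t$ for $t \neq 0$ is Lagrangian, and the Lagrangian condition is closed under flat limits; hence $Z_0$ is Lagrangian, and each of its reduced top-dimensional components is a Lagrangian coordinate subspace of $V$, i.e.\ spanned by basis vectors containing exactly one of $\{x_i, y_i\}$ for each $i$. Flatness and $T$-equivariance of $\mathcal{Z} \to \mathbb{A}^1$ give $[Z]^T = [Z_0]^T = \sum_j n_j [H_j]^T$ with $H_j$ Lagrangian linear subspaces and $n_j \in \mathbb{N}_0$, which completes the argument after substituting back into the decomposition of the first paragraph.

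The main obstacle is the Lagrangian-preservation step: verifying that the Gr\"obner limit $Z_0$ is genuinely Lagrangian, so that each monomial component is a Lagrangian coordinate subspace rather than an arbitrary top-dimensional coordinate subspace. This is precisely where the symplectic compatibility of $\lambda$, encoded in the paired weights $\pm w_i$ on dual Darboux coordinates, is essential, together with the closedness of the Lagrangian locus under flat specialization in the Hilbert scheme of subschemes of $V$.
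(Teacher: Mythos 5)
Your overall strategy (reduce to irreducible components, then flatly degenerate each one through $T$-invariant conical lagrangians to a union of coordinate lagrangian subspaces) is the same as the paper's, but the single-step Gr\"obner degeneration you propose has a genuine gap. You constrain the weight vector to the symplectic form $\mathbf{w}=(w_1,\dots,w_n,-w_1,\dots,-w_n)$ so that $\lambda$ preserves $\omega$, and then assert that for generic such $\mathbf{w}$ the initial ideal $\operatorname{in}_{\mathbf w}(I)$ is monomial. This is false: any binomial of the form $x_iy_i-x_jy_j$ has total $\mathbf{w}$-weight zero in both monomials for \emph{every} symplectic $\mathbf{w}$, so its initial form is never a monomial. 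Such elements really occur: the conormal variety $\Lambda_Q\subset T^*\mathbb C^3$ of the quadric cone $Q=\{x_1x_3=x_2^2\}$ is an irreducible, conical, torus-invariant lagrangian whose ideal contains $x_1y_1-x_3y_3$, so no symplectic Gr\"obner degeneration of it is monomial. Genericity of the weight vector and compatibility with $\omega$ are in direct tension, and your argument needs both at once. Moreover, the limit $Z_0$ of a single such degeneration is only invariant under a small torus (the original $T$, the dilation, and the rank-one $\lambda$), and invariance under a torus of that size does not force the components to be linear -- the example $\Lambda_Q$ is again a counterexample.

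The paper circumvents exactly this by a two-step structure that you would need to import. First (Lemma~\ref{lemma:flatDeformations}) it degenerates iteratively under $n+2$ cocharacters, including two that genuinely \emph{scale} $\omega$ rather than preserve it (acting by $t$ on all $x_i$, resp.\ all $y_i$); scaling is enough for the lagrangian property to survive the flat limit, and after all $n+2$ steps the components are invariant under the full rank-$(n+2)$ torus $T'$. Second (Lemma~\ref{lemma:InvariantConicalLagrangian}), it proves a structural statement: an irreducible $T'$-invariant lagrangian must be a coordinate lagrangian subspace, the proof pairing tangent vectors of two cocharacter orbits under $\omega$ at a smooth point with many nonzero coordinates. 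Linearity of the limit thus comes from torus invariance plus the symplectic form, not from monomiality of an initial ideal. Your reduction to irreducible components and your observation that lagrangian-ness must be checked to pass to the flat limit are both correct (the paper handles the latter with the Whitney-regularity argument from Proposition~\ref{prop:AnalyticTangentConeLagrangian}), but as written the core degeneration step does not go through.
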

	 
	 Assuming Proposition~\ref{prop:FlatDeformationLagrangianToHyperplanes}, we obtain directly a proof of Proposition~\ref{prop:equivariantMultiplicitiesLagrangian}.
	
	\begin{proof}[Proof of Proposition~\ref{prop:equivariantMultiplicitiesLagrangian}]
	Given a lagrangian hyperplane $H\subset T_pX$, we have $s^\ast([H]^T)=\pm \prod_{j=1}^n\chi_j$. Thus, Proposition~\ref{prop:FlatDeformationLagrangianToHyperplanes} yields
	that $s^\ast([C_pL]^T)$ is an integer multiple of $\prod_{j=1}^n\chi_j$ which completes the proof.
	\end{proof}
	 
	 The remainder of this subsection is devoted to the proof of Proposition~\ref{prop:FlatDeformationLagrangianToHyperplanes}.
	
	Consider the symplectic vector space $\mathbb C^{2n}$ with 
	basis $e_1,\ldots,e_n,e_1',\ldots,e_n'$ standard symplectic form 
	$\omega(e_i,e_j')=-\omega(e_j',e_i)=\delta_{ij}$,
	$\omega(e_i,e_j)=\omega(e_i',e_j')=0$. As before, we view $\mathbb C^{2n}$ as symplectic variety. Let $A$ be a torus which acts diagonally on $\mathbb C^{2n}$ preserving the symplectic form. Moreover, let $T'=(\mathbb C^\ast)^{n+2}$ acting on $\mathbb C^{2n}$ via
	\[
	(t_1,\ldots,t_{n+2})\mapsto \begin{pmatrix}
	t_1t_{n+1} \\
	&\ddots \\
	&&t_nt_{n+1}\\
	&&& t_1^{-1}t_{n+2} \\
	&&&&\ddots
	&&&&& t_n^{-1}t_{n+2}
	\end{pmatrix} .
	\]
	Proposition~\ref{prop:FlatDeformationLagrangianToHyperplanes} is basically a consequence of the following lemmas:
	
	\begin{lemma}\label{lemma:flatDeformations}
	Let $C\subset \mathbb C^{2n}$ be an $A$-invariant, conical, lagrangian subvariety. Then, there exist irreducible, $T'$-invariant, conical, lagrangian subvarieties $Z_1,\ldots,Z_r\subset \mathbb C^{2n}$ and natural numbers $m_1,\ldots,m_r$ such that $[C]^A=\sum_{i=1}^{r} m_i [Z_i]^A$ in $H^\ast_A(\mathbb C^{2n})$.
	\end{lemma}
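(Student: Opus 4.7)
The plan is to produce a flat degeneration of $C$ to a $T'$-invariant conical lagrangian cycle; since the $T'$-characters on the $2n$ basis vectors of $\mathbb C^{2n}$ are pairwise distinct as linear forms on the cocharacter lattice of $T'$, every $T'$-invariant irreducible subvariety of $\mathbb C^{2n}$ is a coordinate subspace $L_{I,J} = \operatorname{span}(e_i : i \in I) \oplus \operatorname{span}(e_j' : j \in J)$. Such a coordinate subspace is lagrangian exactly when $|I| + |J| = n$ and $I \cap J = \emptyset$ (the isotropy condition $\omega(e_i, e_j') = \delta_{ij} = 0$ forces $I \cap J = \emptyset$), i.e. precisely when $(I,J)$ is a disjoint partition of $\{1,\ldots,n\}$. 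Hence any $T'$-invariant conical lagrangian cycle decomposes as a nonnegative integer combination of these $2^n$ coordinate lagrangians, with multiplicities given by the geometric multiplicities of its irreducible components.

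To construct the degeneration, I would iterate the following one-step procedure. Let $A' \subseteq T'$ denote the subtorus of $T'$ under which the current cycle $C'$ (initially $C' = C$, $A' = A$) is invariant. If $A' \subsetneq T'$, pick a one-parameter subgroup $\lambda: \mathbb C^\ast \to T'$ whose weights on $\mathbb C^{2n}$ are all strictly positive and which does not lie in $A'$; such $\lambda$ exists because the positive-weight cone is an open cone of full dimension $n+2$ in the cocharacter lattice of $T'$ (it contains $(0,\ldots,0,1,1)$) and hence is not contained in the proper subspace cut out by $A'$. Form the family
\[
\mathcal F := \overline{\{(\lambda(t)\cdot x,t) : x \in C',\ t \in \mathbb C^\ast\}} \subset \mathbb C^{2n} \times \mathbb A^1,
\]
flat over $\mathbb A^1$ by construction from the positive-weight cocharacter $\lambda$. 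The new cycle $C'' := \mathcal F|_{t=0}$ is invariant under $A' \cdot \lambda(\mathbb C^\ast) \supsetneq A'$ (since $A'$ commutes with $\lambda$ in the abelian $T'$ and $C'$ is $A'$-invariant), conical (the $\mathbb C^\ast$-scaling on $\mathbb C^{2n}$ commutes with $\lambda$ and preserves each slice $\lambda(t) \cdot C'$), and satisfies $[C'']^A = [C']^A$ in $H^\ast_A(\mathbb C^{2n})$, since $A$-equivariant cycle classes are constant in $A$-equivariant flat families. Iterating strictly enlarges the invariance torus, so after at most $n+2 - \dim A$ steps the invariance torus equals $T'$, and the final cycle decomposes into coordinate lagrangian subspaces as above.

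The main technical point, and the principal obstacle, is the preservation of \emph{lagrangianness} in each flat limit. Each fiber $\lambda(t) \cdot C'$ for $t \in \mathbb C^\ast$ is again lagrangian because $T'$ scales $\omega$ by the character $(t_1,\ldots,t_{n+2}) \mapsto t_{n+1}t_{n+2}$, and scaling does not affect vanishing. Dimension is preserved in flat families, so every irreducible component of $C''$ is $n$-dimensional. For isotropy, view $\omega$ as a relative $2$-form on the relative tangent bundle of $\mathcal F \to \mathbb A^1$; by construction it vanishes on the smooth locus over $t \neq 0$, and by continuity it vanishes on the central fiber as well. Since the smooth points of $C''$ that are simultaneously smooth points of $\mathcal F$ where $\mathcal F \to \mathbb A^1$ is smooth form a dense open in the smooth locus of $C''$ (by generic smoothness in characteristic zero), $\omega$ vanishes on a dense open of the smooth locus of $C''$, hence identically on it. This parallels the deformation-to-the-normal-cone reasoning used in the proof of Proposition~\ref{prop:AnalyticTangentConeLagrangian}. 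Everything else in the argument (existence of suitable cocharacters, flatness of the degeneration, classification of $T'$-invariant irreducibles, invariance of equivariant classes in flat families) is standard, so this lagrangian preservation is the only step requiring genuinely geometric input.
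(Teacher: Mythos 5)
Your overall strategy is the paper's: degenerate $C$ step by step along cocharacters of $T'$, enlarging the invariance torus by one dimension at a time, and use that an $A$-equivariant flat (rational) equivalence preserves the class $[\,\cdot\,]^A$. The variations (choosing an arbitrary positive cocharacter $\lambda\notin A'$ instead of the fixed coordinate cocharacters $\sigma_1,\dots,\sigma_{n+2}$, and using positivity of the weights rather than a compactification over $\mathbb P^1$ to control the flat limit) are fine. However, the step you yourself single out as the main technical point --- isotropy of the central fiber --- has a genuine gap. Generic smoothness in characteristic zero gives a dense open subset of the \emph{base} $\mathbb A^1$ over which $\mathcal F\to\mathbb A^1$ is smooth; it says nothing about the special fiber $t=0$, which is precisely the fiber you need. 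Worse, for a flat morphism, smoothness at a point of the central fiber is equivalent to the central fiber being smooth there as a \emph{scheme}, and the flat limit $C''$ can be everywhere non-reduced (this is exactly the source of the multiplicities $m_i>1$ in the statement), in which case the relative smooth locus of $\mathcal F\to\mathbb A^1$ does not meet the central fiber at all. So "$\omega$ vanishes over $t\neq0$, hence by continuity on the central fiber" does not go through as written: you need to know that the tangent spaces of a dense open subset of $(C'')_{\mathrm{red}}$ are contained in limits of tangent spaces of the nearby fibers. This is Whitney's condition (a), supplied by \cite[Lemma~19.3]{whitney1965tangents}, and it is exactly the approximation argument the paper runs in the proof of Proposition~\ref{prop:AnalyticTangentConeLagrangian} and then reuses verbatim in this lemma. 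With that input replacing the generic-smoothness appeal, your degeneration step closes.

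A secondary point: your opening assertion that every $T'$-invariant irreducible subvariety of $\mathbb C^{2n}$ is a coordinate subspace is false. The $2n$ characters are pairwise distinct, which only forces invariant \emph{linear} subspaces to be coordinate subspaces; since $2n>n+2$ for $n\ge3$ (and already for $n\ge2$ one has the relation $\chi_i+\chi_i'=\epsilon_{n+1}+\epsilon_{n+2}$), there are monomial relations among the characters and hence non-linear invariant irreducible hypersurfaces such as $\{x_1y_1=x_2y_2\}$. The correct statement --- that an irreducible $T'$-invariant \emph{lagrangian} subvariety is a coordinate lagrangian subspace --- is Lemma~\ref{lemma:InvariantConicalLagrangian}, whose proof genuinely uses the symplectic pairing of orbit tangent vectors; it is not needed for the present lemma, only for the deduction of Proposition~\ref{prop:FlatDeformationLagrangianToHyperplanes}.
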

	
	\begin{lemma}\label{lemma:InvariantConicalLagrangian} Let $Z\subset \mathbb C^{2n}$ be an irreducible, $T'$-invariant, lagrangian subvariety. Then, there exist $v_1,\ldots,v_n\in \mathbb C^{2n}$ with $v_i\in\{e_i,e_i'\}$ for $i=1,\ldots,n$ and we have $Z=\mathrm{span}_{\mathbb C}(v_1,\ldots,v_n)$. 
	\end{lemma}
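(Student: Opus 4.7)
The plan is to induct on $n$, with base case $n=0$ trivial, using the symplectic decomposition of $\mathbb C^{2n}$ to reduce to the analogous statement for $\mathbb C^{2n-2}$.

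First I would show that at least one of the coordinate functions $x_1,\ldots,x_n,x_1',\ldots,x_n'$ must restrict to zero on $Z$. Otherwise $Z\cap (\mathbb C^\ast)^{2n}$ is nonempty, and for a point $p$ with all coordinates nonzero, a direct calculation from the given weights shows that its $T'$-stabilizer is one-dimensional, namely $\{(t,t,\ldots,t,t^{-1},t):t\in\mathbb C^\ast\}$. The orbit $T'\cdot p\subset Z$ would then have dimension $n+1$, contradicting $\dim Z=n$.

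Without loss of generality assume $x_1|_Z\equiv 0$, so $Z\subset V(x_1)=\mathrm{span}(e_1')\oplus\mathbb C^{2n-2}$, where $\mathbb C^{2n-2}=\mathrm{span}(e_2,\ldots,e_n,e_2',\ldots,e_n')$ is a symplectic subspace. If also $x_1'|_Z\equiv 0$, then $Z\subset\mathbb C^{2n-2}$ would be an $n$-dimensional isotropic subvariety of a $(2n-2)$-dimensional symplectic space, which is impossible for $n\geq 1$. Hence $x_1'|_Z\not\equiv 0$. I would then produce $\mathrm{span}(e_1')\subset Z$ by choosing $p\in Z$ with nonzero $e_1'$-coordinate $b_1$ and applying the one-parameter subgroup $t_1=1$, $t_i=s^{-1}$ for $i\geq 2$, $t_{n+1}=s^2$, $t_{n+2}=1$: the action sends $p$ to a point whose $e_j$ and $e_j'$-coordinates for $j\geq 2$ are scaled by $s$, while the $e_1'$-coordinate remains $b_1$. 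Taking $s\to 0$, closedness of $Z$ places $b_1e_1'$ in $Z$, and the cone property yields $\mathrm{span}(e_1')\subset Z$.

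Next let $\pi\colon V(x_1)\to\mathbb C^{2n-2}$ be the projection killing $e_1'$ and set $Z'=\pi(Z)$. Since $e_1'$ lies in the radical of $\omega|_{V(x_1)}$, the vanishing of $\omega|_Z$ descends to isotropy of $Z'$ in $\mathbb C^{2n-2}$, giving $\dim Z'\leq n-1$. On the other hand $Z\subset \pi^{-1}(Z')=Z'+\mathrm{span}(e_1')$, and $\dim Z=n$ forces $\dim Z'\geq n-1$. Hence $\dim Z'=n-1$ and by irreducibility $Z=Z'\oplus\mathrm{span}(e_1')$. The subvariety $Z'$ is irreducible, $T'$-invariant, and lagrangian in $\mathbb C^{2n-2}$, so the inductive hypothesis produces $Z'=\mathrm{span}(v_2,\ldots,v_n)$ with $v_i\in\{e_i,e_i'\}$; setting $v_1=e_1'$ completes the argument (symmetrically $v_1=e_1$ in the case $x_1'|_Z\equiv 0$). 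The main technical point is the construction of $\mathrm{span}(e_1')\subset Z$, which requires assembling an explicit algebraic one-parameter subgroup of $T'$ with carefully chosen weight exponents so that the resulting limit in $Z$ lies exactly on the $e_1'$-axis.
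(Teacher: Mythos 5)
Your proof is correct, but it follows a genuinely different route from the paper's. The paper argues directly and without induction: since $Z$ is irreducible of dimension $n$ and $T'$-invariant, it contains a smooth point $z$ with at least $n$ nonzero coordinates; a pairing computation with the tangent vectors of two explicit one-parameter orbits through $z$ shows (using the lagrangian condition) that $z_i$ and $z_i'$ cannot both be nonzero, so exactly one of each pair is nonzero; the $T'$-orbit closure of $z$ is then the coordinate lagrangian $\mathrm{span}_{\mathbb C}(v_1,\ldots,v_n)\subset Z$, and equality follows by dimension and irreducibility. You instead induct on $n$: you exclude $Z\cap(\mathbb C^\ast)^{2n}\ne\emptyset$ by the stabilizer/orbit-dimension count (your stabilizer computation $\{(t,\ldots,t,t^{-1},t)\}$ is right), locate an isotropic coordinate line $\mathrm{span}(e_1')\subset V(x_1)\supset Z$, and perform symplectic reduction by projecting along it to land in $\mathbb C^{2n-2}$. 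Your approach is longer but structurally robust (it is really a reduction-by-an-isotropic-line argument), while the paper's is shorter and purely local at one smooth point. Two small glosses in your write-up, neither fatal: (i) $\pi(Z)$ need not a priori be closed, so one should work with $\overline{\pi(Z)}$; the dimension count then gives $Z=\pi^{-1}(\overline{\pi(Z)})$, which shows a posteriori that $\pi(Z)$ was closed; (ii) your explicit one-parameter subgroup producing $\mathrm{span}(e_1')\subset Z$ is correct (all $s$-exponents are nonnegative and only the $e_1'$-coordinate has exponent $0$), but it is actually redundant, since $\mathrm{span}(e_1')=\pi^{-1}(0)\subset\pi^{-1}(\overline{Z'})=Z$ falls out of the reduction step anyway.
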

	
	\begin{proof}[Proof of Proposition~\ref{prop:FlatDeformationLagrangianToHyperplanes}]
	Choose a symplectic identification $T_pX\cong \mathbb C^{2n}$ where the form on $T_pX$ gets identified with $\omega$. By Lemma~\ref{lemma:flatDeformations}, there exist 
	$T'$-invariant, conical, lagrangian subvarieties $Z_1,\ldots,Z_r\subset \mathbb C^{2n}$ and natural numbers $m_1,\ldots,m_r$ such that $[C]^T=\sum_{i=1}^{r} m_i [Z_i]^T$. According to Lemma~\ref{lemma:InvariantConicalLagrangian}, $Z_1,\ldots,Z_r$ are lagrangian hyperplanes.
	\end{proof}
	
	We finish this subsection with the proofs of Lemmas~\ref{lemma:flatDeformations} and \ref{lemma:InvariantConicalLagrangian}.
	\begin{proof}[Proof of Lemma~\ref{lemma:flatDeformations}]
	Define subtori $T_0,\ldots,T_{n+2}\subset T'$ as
	\[
	T_i:=\{(t_1,\ldots,t_i,1,\ldots,1)\in T| t_1,\ldots,t_i\in\mathbb C^\ast\},\quad i=0,\ldots,n+2.
	\]
	Note that $T_0$ is the trivial subgroup and $T_{n+2}=T'$. We define co\-cha\-rac\-ters
	$\sigma_1,\ldots,\sigma_{n+2}:\mathbb C^\ast\rightarrow T'$ as
	\begin{equation}\label{eq:DefinitionOfCocharactersWRTSymplecticStructure}
	\sigma_i(t)_j=\begin{cases}
	t &\textup{if }j=i,\\
	1 &\textup{else},
	\end{cases}
	\end{equation}
	where $j=1,\ldots,n+2$. 
	\begin{claim}
	Let $i\in\{1,\ldots,n+2\}$ and $C\subset \mathbb C^{2n}$ be a $T_{i-1}$- and $A$-invariant, conical, lagrangian subvariety. Then, there exist irreducible, $T_i$-invariant, conical, lagrangian subvarieties $Z_1,\ldots,Z_r\subset V$ and natural numbers $m_1,\ldots,m_r$ such that $[C]^A=\sum_{i=1}^{r} m_i [Z_i]^A$ in $H^\ast_A(\mathbb C^{2n})$.
	\end{claim}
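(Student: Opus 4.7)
The plan is to use a Gröbner-style flat degeneration with respect to the extra cocharacter $\sigma_i$ to deform $C$ to a $T_i$-invariant subscheme whose reduced irreducible components give the desired $Z_1,\ldots,Z_r$. The pattern is the standard one: generic fibers of the degeneration are translates of $C$ (so they remain lagrangian and $T_{i-1}$- and $A$-invariant), while the special fiber acquires the additional $\sigma_i$-invariance needed to upgrade $T_{i-1}$-equivariance to $T_i$-equivariance.

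Concretely, I would define the flat family
\[
\mathcal{C} := \overline{\{(\sigma_i(t).c,\, t)\mid c\in C,\; t\in \mathbb{C}^\ast\}} \subset \mathbb{C}^{2n}\times \mathbb{A}^1,
\]
where the closure is taken scheme-theoretically. Dominance over the smooth curve $\mathbb{A}^1$ forces $\mathcal{C}$ to be flat over $\mathbb{A}^1$, and its generic fiber is isomorphic to $C$. Let $C_0$ denote the fiber over $t=0$. The tori $A$, $T_{i-1}$ and the conical $\mathbb{G}_m$-scaling are all diagonal, so they commute with $\sigma_i$; hence the construction of $\mathcal{C}$ is equivariant for $A\times T_{i-1}\times \mathbb{G}_m$ and these invariances pass to $C_0$. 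By the standard property of initial degenerations, $C_0$ is additionally $\sigma_i$-invariant, and together with $T_{i-1}$-invariance this yields $T_i$-invariance.

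I would then check that $C_0$ remains lagrangian. Flatness preserves pure dimension, so $C_0$ has pure dimension $n$. For isotropy, I would observe that $\sigma_i$ either preserves the symplectic form $\omega$ (when $i\le n$) or scales it by a nontrivial character of $\mathbb{C}^\ast$ (when $i\in\{n+1,n+2\}$); in either case every translate $\sigma_i(t).C$ with $t\in\mathbb{C}^\ast$ is isotropic. Since the locus of isotropic subschemes of pure dimension $n$ is closed in the Hilbert scheme, the flat limit $C_0$ is supported on an isotropic subvariety of dimension $n$. Each reduced irreducible component $Z_j$ of $C_0$ is therefore a conical $A$- and $T_i$-invariant lagrangian subvariety, where $T_i$-invariance of the individual $Z_j$ uses that the connected torus $T_i$ must preserve every irreducible component set-theoretically.

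Since flat deformations preserve equivariant fundamental classes, one has $[C]^A = [C_0]^A$ in $H_A^\ast(\mathbb{C}^{2n})$. Writing the fundamental cycle of $C_0$ as $\sum_{j=1}^r m_j Z_j$, where $m_j\in\mathbb{N}$ is the geometric multiplicity of the reduced component $Z_j$ in $C_0$, gives
\[
[C]^A \;=\; [C_0]^A \;=\; \sum_{j=1}^r m_j\, [Z_j]^A,
\]
which is the claim. I expect the subtlest step to be the verification that isotropy is preserved in the flat limit; this is precisely where the explicit behavior of $\sigma_i$ relative to $\omega$ is essential, since it rules out the a priori possibility that the limit picks up components on which $\omega$ is no longer identically zero.
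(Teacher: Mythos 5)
Your proposal is correct and follows essentially the same route as the paper: degenerate $C$ along the $\sigma_i$-orbit to a flat limit over $0$, observe that the special fiber inherits $A$-, $T_{i-1}$-, conical and additionally $\sigma_i$-invariance, and decompose its cycle into reduced irreducible components with geometric multiplicities. The only point handled differently is the isotropy of the limit, which the paper establishes not by a Hilbert-scheme closedness statement but by the same Whitney-regularity/density argument used in its proof of Proposition~\ref{prop:AnalyticTangentConeLagrangian}; you correctly identify this as the delicate step.
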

	\begin{proof}[Proof of Claim]
	Let $Y\subset \mathbb C\setminus\{0\}\times\mathbb C^{2n}$ be the flat family corresponding to the $\sigma_i$-orbit of $C$. Since, $\sigma_i$ scales the symplectic form on $\mathbb C^{2n}$, this is a family of $T_{i-1}$- and $A$-invariant, conical, lagrangian subvarieties of $\mathbb C^{2n}$. 
	Let $\overline Y\subset  \mathbb P^1\times\mathbb C^{2n}$ be the closure of $Y$ under the standard embedding $ \mathbb C\setminus\{0\}\times\mathbb C^{2n}\hookrightarrow \mathbb P^1\times\mathbb C^{2n}$.
	 According to~\cite[Proposition~II.9.7]{hartshorne1977algebraic}, $\overline Y$ is  a flat family over $\mathbb P^1$. Let $\pi:\mathbb P^1\times\mathbb C^{2n}\rightarrow \mathbb P^{1}$ be the projection and set $Z:=(\pi^{-1}(0))_{\mathrm{red}}$. As $\mathbb C\setminus\{0\}\times\{0\}\subset Y$, we have $0\in Z$. So, in particular, $Z$ is non-empty. By its construction as limit of the $\sigma_i$-action, it follows that $Z$ is $\sigma_i$-invariant. Moreover, since $Y$ is a $T_{i-1}$- and $A$-invariant, conical subvariety, we conclude that also $Z$ is a $T_{i-1}$- and $A$-invariant, conical subvariety. Finally, by same approximation argument as in the proof of Proposition~\ref{prop:AnalyticTangentConeLagrangian}, we conclude that $Z$ is also lagrangian.
	
	Since $\pi^{-1}(1)=C$, we obtain an $A$-equivariant rational equivalence between $C$ and $\pi^{-1}(0)$ provided by $Z$. It follows that $[C]^A=[\pi^{-1}(0)]^A$. Then the irreducible components of $Z$, say $Z_1,\ldots,Z_r$,  are also $T_{i}$- and $A$-invariant, conical, lagrangian subvarieties and $[\pi^{-1}(0)]^A=\sum_{i=1}^{r} m_i [Z_i]^A$, where  $m_i$ is the geometric multiplicity of $Z_i$.
	\end{proof}
	Using the above claim we can now easily deduce Lemma~\ref{lemma:flatDeformations} using a repetitive argument. Applying the claim to $T_1$ and $C$ we conclude that there exist irreducible, $T_1$- and $A$-invariant, conical, lagrangian subvarieties $Z_{1,1},\ldots,Z_{1,r_1}\subset \mathbb C^{2n}$ as well as natural numbers $m_{1,1},\ldots,m_{1,r_1}$ such that $[C]^A=\sum_{i=1}^{r_1} m_{l,i} [Z_{1,i}]^A$. Now, repeat this procedure by applying the claim to $T_2$ and $Z_{1,1},\ldots,Z_{1,r_1}$ and repeat this procedure. After $n+2$ repetitions, we obtain subvarieties $Z_1,\ldots,Z_r\subset \mathbb C^{2n}$ satisfying the desired conditions of Lemma~\ref{lemma:flatDeformations}. 
	\end{proof}
%
	
	
	\begin{proof}[Proof of Lemma~\ref{lemma:InvariantConicalLagrangian}]
	Since $Z$ is of dimension $n$, there exists a smooth point
	\[
	z=(z_1,\ldots,z_n,z_1',\ldots,z_n')\in Z_{\mathrm{red}}
	\]
	such that at least $n$ coordinates of $z$ are non-zero. We show that for each $i\in\{1,\ldots,n\}$ exactly one of the coordinates $z_i,z_i'$ is zero and the other non-zero.
	For the sake of contradiction, suppose that there exists $i\in\{1,\ldots,n\}$ such that $z_i$ and $z_i'$ are both non-zero. 
	Let $\sigma_i:\mathbb C^\ast\rightarrow T'$ be the cocharacter from \eqref{eq:DefinitionOfCocharactersWRTSymplecticStructure} and $\sigma:\mathbb C^\ast\rightarrow T'$ be the cocharacter given by
	\[
	\sigma(t)_j=\begin{cases} 1 &\textup{if }j\ne i,n+2, \\
	t &\textup{if }j=i,\\
	t^{-1}&\textup{if }j=n+2.
	\end{cases}
	\]
	Then, the pairing of tangent vectors at $z$ corresponding to the $\sigma$- and the $\sigma_i$-orbit under $\omega$ is non-zero which contradicts the assumption that $Z$ is lagrangian.
	
	For $i\in\{1,\ldots,n\}$, we define
	\[
	v_i=\begin{cases}
	e_i &\textup{if }z_i\ne 0,\\
	e_i'&\textup{if }z_i'\ne 0.
	\end{cases}
	\]
	As $Z$ is $T$-invariant, we conclude $\mathrm{span}_{\mathbb C}(v_1,\ldots,v_n)\subset Z$. Since $Z$ is reduced, irreducible and of dimension $n$, the inclusion must be an equality.
	\end{proof}

	\subsection{Example of construction of stable envelopes}\label{subsection:ExampleConstructionStableEnvelopes}
	
	We illustrate the construction of stable envelopes from Subsection~\ref{subsection:ProofExistenceStableEnvelopes} for the bow variety $\mathcal C(0\textcolor{red}{\slash}1\textcolor{blue}{\backslash}1\textcolor{red}{\slash}2\textcolor{blue}{\backslash}2\textcolor{blue}{\backslash}2\textcolor{red}{\slash}0)$. Let $\mathfrak C$ be the chamber containing the cocharacter $\sigma_0$. To compute for instance $\mathrm{Stab}_{\mathfrak C}(x_{D_3})$ we start with $\gamma_{3,3}:=[\overline{\operatorname{Attr}_{\sigma_0}(x_{D_3})}]^{\mathbb T}$. 
	According to Table~\ref{table:EquivariantMultiplicitiesAttractingCells}, we have
	\[
	\iota_{x_{D_2}}^\ast(\gamma_{3,3}) = (t_2-t_1+h)(t_2-t_3+h),\quad \iota_{x_{D_2}}^\ast([\overline{\operatorname{Attr}_{\sigma_0}(x_{D_2})}]^{\mathbb T}) = (t_1-t_2)(t_2-t_3+h).
	\]
	Thus, we set $\gamma_{3,2}=[\overline{\operatorname{Attr}_{\sigma_0}(x_{D_3})}]^{\mathbb T}+[\overline{\operatorname{Attr}_{\sigma_0}(x_{D_2})}]^{\mathbb T}$. By construction,
	\[
	\iota_{x_{D_1}}^\ast(\gamma_{3,2})= h(t_3-t_2+h).
	\]
	So $\gamma_{3,2}$ is already divisible by $h$ and hence we have 
	\[
	\mathrm{Stab}_{\mathfrak C}(x_{D_3}) = \gamma_{3,2} = [\overline{\operatorname{Attr}_{\sigma_0}(x_{D_3})}]^{\mathbb T}+[\overline{\operatorname{Attr}_{\sigma_0}(x_{D_2})}]^{\mathbb T}.
	\]
	The other stable basis elements can be computed in exactly the same way using Table~\ref{table:EquivariantMultiplicitiesAttractingCells}. They are given by
	\begin{align*}
	\mathrm{Stab}_{\mathfrak C}(x_{D_1}) &= [\overline{\operatorname{Attr}_{\mathfrak C}(x_{D_1})}]^{\mathbb T},\\
	\mathrm{Stab}_{\mathfrak C}(x_{D_2}) &= [\overline{\operatorname{Attr}_{\mathfrak C}(x_{D_2})}]^{\mathbb T}+[\overline{\operatorname{Attr}_{\mathfrak C}(x_{D_3})}]^{\mathbb T},\\
	\mathrm{Stab}_{\mathfrak C}(x_{D_4}) &= [\overline{\operatorname{Attr}_{\mathfrak C}(x_{D_4})}]^{\mathbb T}+[\overline{\operatorname{Attr}_{\mathfrak C}(x_{D_3})}]^{\mathbb T}+[\overline{\operatorname{Attr}_{\mathfrak C}(x_{D_2})}]^{\mathbb T},\\
	\mathrm{Stab}_{\mathfrak C}(x_{D_5}) &= [\overline{\operatorname{Attr}_{\mathfrak C}(x_{D_5})}]^{\mathbb T}+[\overline{\operatorname{Attr}_{\mathfrak C}(x_{D_4})}]^{\mathbb T}-[\overline{\operatorname{Attr}_{\mathfrak C}(x_{D_2})}]^{\mathbb T}.
	\end{align*}

	\section{Orthogonality of stable basis elements}\label{section:orthogonality}
	
	The Orthogonality Theorem from \cite[Theorem~4.4.1]{maulik2019quantum} states that stable basis elements corresponding to opposite chambers are orthogonal with respect to the virtual intersection pairing on $\mathcal C(\mathcal D)$. We finish with a proof of this result with a view towards explicit calculations.
	
	\begin{theorem}[Orthogonality Theorem]\label{thm:Orthogonality} We have
\[
(\mathrm{Stab}_{\mathfrak C}(p),\mathrm{Stab}_{{\mathfrak C}^{\mathrm{op}}}(q))_{\operatorname{virt}} = \begin{cases} 1&\textit{if }p=q, \\
0&\textit{if }p\ne q.
\end{cases}
\]
for all $p,q\in \mathcal C(\mathcal D)^{\mathbb T}$.
	\end{theorem}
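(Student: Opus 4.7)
The plan is to combine the three defining axioms of stable envelopes with the properness statement of Corollary~\ref{cor:slopesupport} and the equivariant localization theorem. Set $\alpha := \mathrm{Stab}_{\mathfrak C}(p)$, $\beta := \mathrm{Stab}_{\mathfrak C^{\mathrm{op}}}(q)$ and $d := \dim_{\mathbb C}\mathcal C(\mathcal D)$. Both classes sit in $H^d_{\mathbb T}(\mathcal C(\mathcal D))$, so $\alpha\cup\beta\in H^{2d}_{\mathbb T}(\mathcal C(\mathcal D))$. By the support axiom \ref{item:StableEnvelopesSupport}, $\alpha$ is supported on $\operatorname{Attr}^f_{\mathfrak C}(p)$ and $\beta$ on $\operatorname{Attr}^f_{\mathfrak C^{\mathrm{op}}}(q)$; by Corollary~\ref{cor:slopesupport} their intersection is proper over $\mathbb C$. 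The proper pushforward through this intersection defines the virtual pairing $(\alpha,\beta)_{\operatorname{virt}}$, which then lands in $H^0_{\mathbb T}(\operatorname{pt})=\mathbb Q$ and is in particular a constant, independent of the equivariant parameters $t_1,\ldots,t_N,h$.

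Next I would apply equivariant localization over the field of fractions to rewrite this constant as
\[
(\alpha,\beta)_{\operatorname{virt}} \;=\; \sum_{x\in\mathcal C(\mathcal D)^{\mathbb T}} \frac{\iota_x^\ast(\alpha)\cdot\iota_x^\ast(\beta)}{e_{\mathbb T}(T_x\mathcal C(\mathcal D))}.
\]
The support axiom gives $\iota_x^\ast(\alpha)=0$ unless $x\preceq_{\mathfrak C} p$ and $\iota_x^\ast(\beta)=0$ unless $x\preceq_{\mathfrak C^{\operatorname{op}}} q$. By Proposition~\ref{prop:OppositePartialOrdering} the latter is equivalent to $q\preceq_{\mathfrak C} x$, so only fixed points $x$ with $q\preceq_{\mathfrak C} x\preceq_{\mathfrak C} p$ contribute; in particular if $p,q$ are incomparable or if $p\prec_{\mathfrak C} q$, the sum is empty and the pairing vanishes.

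For $p=q$ only $x=p$ survives. The normalization axiom \ref{item:StableEnvelopesNormalization} together with the identification $T_p\mathcal C(\mathcal D)^-_{\mathfrak C^{\operatorname{op}}}=T_p\mathcal C(\mathcal D)^+_{\mathfrak C}$ then yields
\[
(\alpha,\beta)_{\operatorname{virt}} \;=\; \frac{e_{\mathbb T}(T_p\mathcal C(\mathcal D)^-_{\mathfrak C})\,e_{\mathbb T}(T_p\mathcal C(\mathcal D)^+_{\mathfrak C})}{e_{\mathbb T}(T_p\mathcal C(\mathcal D))} \;=\; 1,
\]
by multiplicativity of Euler classes on the splitting $T_p\mathcal C(\mathcal D)=T_p\mathcal C(\mathcal D)^+_{\mathfrak C}\oplus T_p\mathcal C(\mathcal D)^-_{\mathfrak C}$. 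For the remaining case $q\prec_{\mathfrak C} p$, every contributing $x$ satisfies at least one of the strict inequalities $x\prec_{\mathfrak C} p$ or $q\prec_{\mathfrak C} x$; applying the smallness axiom \ref{item:StableEnvelopesSmallnes} respectively to $\alpha$ or $\beta$, at least one factor in each numerator is divisible by $h$.

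The main obstacle will be converting this $h$-divisibility of the summands into a vanishing statement for the whole sum. Here Corollary~\ref{cor:tangentweights} is decisive: since every $\mathbb T$-weight at any fixed point has the form $t_i-t_j+mh$ with $i\ne j$, none of the denominators $e_{\mathbb T}(T_x\mathcal C(\mathcal D))$ vanishes modulo $h$ in $\mathbb Q(t_1,\ldots,t_N)$. Consequently the entire localization sum is divisible by $h$ in $\mathbb Q(t_1,\ldots,t_N)[h]$. But the left-hand side is a constant in $\mathbb Q$ by the dimension count, and the only constant divisible by $h$ is zero, so $(\alpha,\beta)_{\operatorname{virt}}=0$. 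The delicate point deserving care is the compatibility between the integer-valued virtual pairing coming from the proper intersection of supports and the localization formula valid a priori only in the fraction field; the properness of the support intersection (Corollary~\ref{cor:slopesupport}) is precisely what allows this comparison and lets the $h=0$ specialization force vanishing.
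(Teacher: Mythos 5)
Your argument is correct and follows essentially the same route as the paper: express the pairing by localization, use the support axiom together with the properness of $\operatorname{Attr}^f_{\mathfrak C}(p)\cap\operatorname{Attr}^f_{\mathfrak C^{\mathrm{op}}}(q)$ to see that the pairing is a genuine (degree-zero) element of $H^\ast_{\mathbb T}(\operatorname{pt})$, then combine the smallness axiom with Corollary~\ref{cor:tangentweights} to force vanishing for $p\ne q$, and the normalization axiom for $p=q$. The "delicate point" you flag is exactly what the paper isolates as Proposition~\ref{prop:nonlocalizedpairing}, proved via the Atiyah--Bott--Berline--Vergne formula on a smooth equivariant compactification.
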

	The \textit{virtual intersection pairing} is defined as
	\[
		(.,.)_{\mathrm{virt}}: H^\ast_{\mathbb T}(\mathcal C(\mathcal D))\times H^\ast_{\mathbb T}(\mathcal C(\mathcal D))  \rightarrow H^\ast_{\mathbb T}(\mathrm{pt})_{\mathrm{loc}},
		\quad (\beta,\gamma)\mapsto \sum_{p\in \mathcal C(\mathcal D)^{\mathbb T}} \frac{\iota_p^\ast(\beta \cup \gamma)}{e_{\mathbb T}(T_p\mathcal C(\mathcal D))}.
	\]
	This definition is motivated by the Atiyah--Bott--Berline--Vergne integration formula for smooth projective varieties, see e.g. \cite[Theorem~2.10]{anderson2012introduction}.
	
	The Orthogonality Theorem describes a parallel between stable basis elements and Schubert classes which also have an analogous orthogonality property. Moreover, it is useful for the concrete computation of multiplication matrices of equivariant cohomology classes in $H_\ast^{\mathbb T}(\mathcal C(\mathcal D))$ with respect to the stable envelope basis. For example, in~\cite{wehrhan2023chevalley} the Orthogonality Theorem is used to compute the multiplication matrices of first Chern classes of tautological bundles which can be seen as Chevalley--Monk formulas for bow varieties.

	\subsection{Polynomiality of matrix coefficients}
	Before we prove the Orthogonality Theorem, we first prove the following proposition:
	\begin{prop}[Polynomiality]\label{prop:nonlocalizedpairing}
	We have 
	\[
	(\mathrm{Stab}_{\mathfrak C}(p)\cup\gamma,\mathrm{Stab}_{{\mathfrak C}^{\mathrm{op}}}(q))_{\mathrm{virt}}\in H_{\mathbb T}^\ast(\operatorname{pt}),
	\]
	for all $\gamma\in H_{\mathbb T}^\ast(\mathcal C(\mathcal D))$ and $p,q\in\mathcal C(\mathcal D)^{\mathbb T}$.
	\end{prop}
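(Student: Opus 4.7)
The plan is to exploit the support axiom \ref{item:StableEnvelopesSupport} of stable envelopes together with the properness result Corollary~\ref{cor:slopesupport}. The virtual intersection pairing is defined a priori in $H_{\mathbb T}^\ast(\operatorname{pt})_{\mathrm{loc}}$ because it uses the localization formula, but the strategy is to realise it as the equivariant push-forward of a class supported on a proper subvariety, and such a push-forward automatically lies in $H_{\mathbb T}^\ast(\operatorname{pt})$.

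First I would rewrite the triple product $\mathrm{Stab}_{\mathfrak C}(p)\cup\gamma\cup\mathrm{Stab}_{{\mathfrak C}^{\mathrm{op}}}(q)$ using the support axiom. Namely, $\mathrm{Stab}_{\mathfrak C}(p)$ is the image, under proper push-forward from (a closed embedding of) $\operatorname{Attr}_{\mathfrak C}^f(p)$, of some equivariant Borel--Moore homology class; the analogous statement holds for $\mathrm{Stab}_{{\mathfrak C}^{\mathrm{op}}}(q)$. Since cup product corresponds to intersection product on the level of supports, the triple cup product is supported on
\[
\operatorname{Attr}_{\mathfrak C}^f(p)\cap\operatorname{Attr}_{{\mathfrak C}^{\mathrm{op}}}^f(q),
\]
which by Corollary~\ref{cor:slopesupport} is proper over $\operatorname{Spec}(\mathbb C)$.

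Next I would use the resulting proper push-forward. Denote by $Z$ the above intersection and by $\iota\colon Z\hookrightarrow \mathcal C(\mathcal D)$ the inclusion; since $Z$ is proper, the structure map $\pi_Z\colon Z\to\operatorname{pt}$ is proper and induces a push-forward $(\pi_Z)_\ast$ on equivariant Borel--Moore homology whose image lies in $H_{\mathbb T}^\ast(\operatorname{pt})$ (no localization needed). Writing the triple product as $\iota_\ast\alpha$ for some class $\alpha$ on $Z$, the push-forward $(\pi_Z)_\ast\alpha\in H_{\mathbb T}^\ast(\operatorname{pt})$ is well defined. The final step is then to identify this push-forward with the virtual intersection pairing: because $\mathcal C(\mathcal D)$ admits an open equivariant embedding into a smooth $\mathbb T$-compactification (as used in Subsection~\ref{subsection:affinestr}), the Atiyah--Bott--Berline--Vergne localization formula applies to the push-forward of any compactly supported equivariant class and yields exactly the sum over $\mathcal C(\mathcal D)^{\mathbb T}$ of $\iota_p^\ast(\cdot)/e_{\mathbb T}(T_p\mathcal C(\mathcal D))$, which by definition is the virtual pairing.

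The main obstacle is the bookkeeping in the third step, where one must justify applying the localization formula to a non-proper variety. The standard way around it is to pass to a smooth equivariant compactification $\mathcal C(\mathcal D)\hookrightarrow X$ (whose existence was already invoked in the proof of Proposition~\ref{prop:affinestructure} and Lemma~\ref{lemma:partialOrdering}), extend the class by zero outside of $Z$, which is legitimate because $Z$ is closed in $X$ as well, and then apply ABBV on $X$. One has to check that the $\mathbb T$-fixed points of $X$ outside $\mathcal C(\mathcal D)$ do not contribute because the extension vanishes there, so the sum reduces to $\mathcal C(\mathcal D)^{\mathbb T}$, yielding the virtual pairing and simultaneously proving it lies in $H_{\mathbb T}^\ast(\operatorname{pt})$.
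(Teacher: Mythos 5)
Your proposal is correct and follows essentially the same route as the paper: both use the support axiom to localize the triple product on $\operatorname{Attr}_{\mathfrak C}^f(p)\cap\operatorname{Attr}_{{\mathfrak C}^{\operatorname{op}}}^f(q)$, invoke the properness of this intersection (Corollary~\ref{cor:slopesupport}), and then apply the Atiyah--Bott--Berline--Vergne formula on a smooth equivariant compactification, noting that only fixed components inside $\mathcal C(\mathcal D)$ can contribute. No gaps.
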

	
%
	\begin{proof}[Proof of Proposition~\ref{prop:nonlocalizedpairing}]
Set $X:=\mathrm{Attr}^f_{\mathfrak C}(p)\cap \mathrm{Attr}^f_{{\mathfrak C}^{\mathrm{op}}}(q)$ and \[\beta:=\mathrm{Stab}_{\mathfrak C}(p)\cup\gamma\cup\mathrm{Stab}_{{\mathfrak C}^{\mathrm{op}}}(q). \] 
By definition, $\mathrm{Stab}_{\mathfrak C}(p)$ is supported on $\mathrm{Attr}^f_{{\mathfrak C}}(p)$ and $\mathrm{Stab}_{{\mathfrak C}^{\mathrm{op}}}(p)$ is supported on $\mathrm{Attr}^f_{{\mathfrak C}^{\mathrm{op}}}(q)$. It follows that $\beta$ is supported on $X$. Thus, there exists
$\alpha\in \overline{H}_\ast^{\mathbb T}(X)$ such that $\iota_\ast(\alpha)$ is Poincar\'e dual to $\beta$. Now, as in the proof of Proposition~\ref{prop:affinestructure}, choose a smooth $\mathbb T$-equivariant compactification $\mathcal C(\mathcal D)\hookrightarrow Y$. Let $C_1,\ldots,C_r$ be the $\mathbb T$-fixed components of $Y$. By~\cite{iversen1972fixed}, $C_1,\ldots,C_r$ are smooth varieties. We denote by $N_{C_1},\ldots,N_{C_r}$ the respective normal bundles and 
let $\kappa:X\hookrightarrow Y$ be the inclusion. Using the Atiyah--Bott--Berline--Vergne integration formula and the fact that $X$ can only intersect fixed point components that are contained in $\mathcal C(\mathcal D)$, we obtain
\begin{align*}
\int_Y \kappa_\ast(\alpha) &=  \sum_{i=1}^r \int_{C_i} \frac{\kappa_\ast(\alpha)}{e_{\mathbb T}(N_{C_i})}
= \sum_{p\in\mathcal C(\mathcal D)^{\mathbb T}} \int_{p} \frac{\kappa_\ast(\alpha)}{e_{\mathbb T}(T_p\mathcal C(\mathcal D))} 
= \sum_{p\in\mathcal C(\mathcal D)^{\mathbb T}} \frac{\iota_p^\ast (\beta)}{e_{\mathbb T}(T_p\mathcal C(\mathcal D))} \\
&= (\mathrm{Stab}_{\mathfrak C}(p)\cup\gamma,\mathrm{Stab}_{{\mathfrak C}^{\mathrm{op}}}(q))_{\mathrm{virt}}.
\end{align*}
Since $\int_Y \kappa_\ast(\alpha)$ is contained in $\overline H_\ast^{\mathbb T}(\mathrm{pt})\cong H_{\mathbb T}^\ast (\mathrm{pt})$, the proof is finished.
\end{proof}

	\subsection{Proof of the Orthogonality Theorem}
\begin{proof}[Proof of Theorem~\ref{thm:Orthogonality}] By definition of the virtual intersection form,
\begin{equation}\label{eq:stabpairing}
(\mathrm{Stab}_{\mathfrak C}(p),\mathrm{Stab}_{{\mathfrak C}^{\mathrm{op}}}(q))_{\mathrm{virt}} = \sum_{z\in\mathcal C(\mathcal D)^{\mathbb T}} \frac{\iota_z^{\ast}(\mathrm{Stab}_{\mathfrak C}(p)) \cup \iota_z^\ast(\mathrm{Stab}_{{\mathfrak C}^{\mathrm{op}}}(q)) }{e_{\mathbb T}(T_z\mathcal C(\mathcal D))}.
\end{equation}
Proposition~\ref{prop:nonlocalizedpairing} implies that~\eqref{eq:stabpairing} is actually contained in $H_{\mathbb T}^\ast(\mathrm{pt})$. If $p\ne q$, we know by the smallness axiom that $h$ divides $\iota_z^{\ast}(\mathrm{Stab}_{\mathfrak C}(p)) \cup \iota_z^\ast(\mathrm{Stab}_{{\mathfrak C}^{\mathrm{op}}}(q))$ for all $z\in \mathcal C(\mathcal D)^{\mathbb T}$. However, Corollary~\ref{cor:tangentweights} gives $h\nmid e_{\mathbb T}(T_z\mathcal C(\mathcal D))$ for all $z\in \mathcal C(\mathcal D)^{\mathbb T}$. It follows that~\eqref{eq:stabpairing} is divisible by $h$. As $\iota_z^{\ast}(\mathrm{Stab}_{\mathfrak C}(p)) \cup \iota_z^\ast(\mathrm{Stab}_{{\mathfrak C}^{\mathrm{op}}}(q))$ and $e_{\mathbb T}(T_z\mathcal C(\mathcal D))$ are homogeneous of the same degree, we conclude that~\eqref{eq:stabpairing} is a degree $0$ polynomial in the equivariant parameters. Hence, \eqref{eq:stabpairing} has to vanish. Now, let us consider the case $p=q$. By the normalization axiom, we can infer
\[
\frac{i_p^\ast (\mathrm{Stab}_{\mathfrak C}(p)) \cup  \iota_p^\ast(\mathrm{Stab}_{{\mathfrak C}^{\mathrm{op}}}(p))} {e_{\mathbb T}(T_p\mathcal C(\mathcal D))}= \frac{e_{\mathbb T}(T_p\mathcal C(\mathcal D)_{\mathfrak C}^- )\cup e_{\mathbb T}(T_p\mathcal C(\mathcal D)_{\mathfrak C}^+)} {e_{\mathbb T}(T_p\mathcal C(\mathcal D))}=1.
\]
In addition, the same argument as in the case $p\ne q$ gives
\[
\sum_{\substack{z\in\mathcal C(\mathcal D)^{\mathbb T}\\ z\ne p} } = \frac{\iota_z^{\ast}(\mathrm{Stab}_{\mathfrak C}(p)) \cup \iota_z^\ast(\mathrm{Stab}_{{\mathfrak C}^{\mathrm{op}}}(p)) }{e_{\mathbb T}(T_z\mathcal C(\mathcal D))}=0.
\]
Thus, we deduce 
\[
\sum_{z\in\mathcal C(\mathcal D)^{\mathbb T}} \frac{\iota_z^{\ast}(\mathrm{Stab}_{\mathfrak C}(p)) \cup \iota_z^\ast(\mathrm{Stab}_{{\mathfrak C}^{\mathrm{op}}}(p)) }{e_{\mathbb T}(T_z\mathcal C(\mathcal D))}=1.
\]
This finishes the proof of the Orthogonality Theorem.
\end{proof}
	
\appendix
\section{Partial orders of opposite chambers}\label{section:Appendix}

\subsection{Linearized embeddings and attracting cells}

Let $\mathcal C(\mathcal D)$ be a bow variety with $\mathbb T=\mathbb A\times \mathbb C^\ast_h$-action from Subsection~\ref{subsection:torusAction} and let $\sigma:\mathbb C^\ast\rightarrow \mathbb A$ a generic cocharater with chamber $\mathfrak C$. Via $\sigma$, we view $\mathcal C(\mathcal D)$ as $\mathbb C^\ast$-variety. Since $\mathcal C(\mathcal D)$ is a smooth and quasi-projective variety, there exists, by \cite[Theorem~1]{sumihiro1974equivariant}, a locally closed $\mathbb C^\ast$-equivariant embedding $\iota_\sigma:\mathcal C(\mathcal D)\hookrightarrow \mathbb P(V)$ for some finite dimensional $\mathbb C^\ast$-representation $V$. For given $p\in \mathcal C(\mathcal D)^{\mathbb T}$, we denote by $Z_{\sigma,p}$ the Zariski closure of $\iota_\sigma(\mathrm{Attr}_\sigma(p))$ in $\mathbb P(V)$. Thus, $Z_{\sigma,p}$ is a closed $\mathbb C^\ast$-invariant subvariety of $\mathbb P(V)$ that contains $\iota_\sigma(\mathrm{Attr}_\sigma(p))$ as open dense $\mathbb C^\ast$-invariant subvariety.

We have the weight space decomposition \[
V=\bigoplus_{a\in \mathbb Z}V_a, \quad\textup{where}\quad V_a=\{v\in V\mid t.v=t^av,\;\textup{for all $t\in\mathbb C^\ast$} \}.
\]
We denote the dimension of $V_a$ by $n_a$. The $\mathbb C^\ast$-fixed point locus of $\mathbb P(V)$ is given as
\[
\mathbb P(V)^{\mathbb C^\ast}=\{[v]\mid v\in V_a\textup{ for some $a\in \mathbb Z$}\}.
\]
Given $a\in \mathbb Z$ and $v\in V_a$, the attracting cell of $[v]$ in $\mathbb P(V)$ equals
\begin{equation}\label{eq:AttractingCellProjectiveSpace} 
\{x\in\mathbb P(V)\mid \lim_{t\to0}t.x =[v] \}= \{ [v+w]\mid w\in \bigoplus_{a'>a}V_{a'} \}.
\end{equation}
Its Zariski closure in $\mathbb P(V)$ is the projective subspace $\mathbb P(\mathrm{span}_{\mathbb C}(v) \oplus \bigoplus_{a'>a}V_{a'})$.  

For each $p\in \mathcal C(\mathcal D)^{\mathbb T}$ there exists a weight vector $v_p$ such that $\iota_\sigma(p)=[v_p]$. Let $a_p$ be the weight of $v_p$. Suppose $p\in \overline{\mathrm{Attr}_\sigma(q)}$ for some $q\in\mathcal C(\mathcal D)^{\mathbb T}$. Then \eqref{eq:AttractingCellProjectiveSpace} implies $a_q\le a_p$ and we have equality if and only if $p=q$. 

\subsection{Proof of Proposition~\ref{prop:OppositePartialOrdering}} We begin with the following lemma:

\begin{lemma}\label{lemma:OppositeAttractionOrdering}
For given $p,q\in \mathcal C(\mathcal D)^{\mathbb T}$ with $p\in \overline{\mathrm{Attr}_\sigma(q)}$ and $p\ne q$ there exists $p'\in \mathcal C(\mathcal D)^{\mathbb T}$ with $p'\in \overline{\mathrm{Attr}_\sigma(q)} \cap \overline{\mathrm{Attr}_{\sigma^{-1}}(p)}$ and $a_q\le a_{p'}<a_p$.
\end{lemma}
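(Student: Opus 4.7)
The plan is to produce a non-fixed point $y\in\overline{\operatorname{Attr}_\sigma(q)}\cap\operatorname{Attr}_{\sigma^{-1}}(p)$ and then define $p':=\lim_{t\to 0}\sigma(t).y$. That this limit exists in $\mathcal C(\mathcal D)$ will follow from Lemma~\ref{lemma:limitexistence} applied to the proper $\mathbb T$-equivariant morphism $\pi$ of Proposition~\ref{prop:properaffine}, combined with the observation that $\pi$ sends $\overline{\operatorname{Attr}_\sigma(q)}$ into the non-negative $\sigma$-weight affine subspace through $\pi(q)$ (on which the $\sigma$-flow always converges as $t\to 0$). Once $y$ is in hand the remaining verifications are routine: $p'$ lies in $\overline{\operatorname{Attr}_\sigma(q)}$ by closedness; it lies in $\overline{\operatorname{Attr}_{\sigma^{-1}}(p)}$ because that set is $\mathbb C^\ast$-invariant and contains $y$; the strict inequality $a_{p'}<a_p$ holds because the $\sigma$-orbit of the non-fixed $y$ has $p'$ as source and $p$ as sink, and the linearized embedding forces source weight strictly less than sink weight; finally $a_{p'}\geq a_q$ since $\iota_\sigma(p')\in Z_{\sigma,q}\subset\mathbb P(\mathbb C v_q\oplus\bigoplus_{a>a_q}V_a)$.

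The substance of the lemma is thus the construction of $y$. Set $W:=Z_{\sigma,q}$, an irreducible projective $\mathbb C^\ast$-variety containing both $\iota_\sigma(p)$ and $\iota_\sigma(q)$ as fixed points. The plan is to exhibit a $\mathbb C^\ast$-invariant rational curve in $W$ having $\iota_\sigma(p)$ as its sink. Since $W$ is irreducible and projective, any two of its points lie on a common irreducible curve, so I first pick an irreducible $C\subset W$ through both $\iota_\sigma(p)$ and $\iota_\sigma(q)$. Forming the flat limit $C_0:=\lim_{t\to 0}\sigma(t).C$ in the Chow variety of $W$ yields a $\mathbb C^\ast$-invariant effective $1$-cycle in $W$ whose support contains $\iota_\sigma(p)$ and $\iota_\sigma(q)$. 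Each irreducible component of this support is either pointwise $\mathbb C^\ast$-fixed or a rational $\mathbb C^\ast$-invariant curve with exactly two fixed points. Connectedness of the flat limit forces these components to assemble into a chain joining $\iota_\sigma(p)$ to $\iota_\sigma(q)$ through intermediate fixed points of $W$, and since the weight strictly decreases along the chain from $a_p$ down to $a_q$, the rational component $C'$ adjacent to $\iota_\sigma(p)$ has $\iota_\sigma(p)$ as its sink and some fixed point $F_1\in W^{\mathbb C^\ast}$ as its source, with $a_q\leq a_{F_1}<a_p$.

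To extract $y$, pick any non-fixed $x\in C'$ sufficiently close to $\iota_\sigma(p)$. Because $\iota_\sigma(\mathcal C(\mathcal D))$ is locally closed in $\mathbb P(V)$ and $W\subset\overline{\iota_\sigma(\mathcal C(\mathcal D))}$, a small Zariski neighborhood of $\iota_\sigma(p)$ in $\mathbb P(V)$ meets $W$ entirely inside $\iota_\sigma(\mathcal C(\mathcal D))$, so $x\in\iota_\sigma(\mathcal C(\mathcal D))$ and $y:=\iota_\sigma^{-1}(x)$ is the desired point; with this choice $p'=\iota_\sigma^{-1}(F_1)$. The step I expect to be the main obstacle is the rigorous treatment of the Chow-limit: namely, verifying that $\lim_{t\to 0}\sigma(t).C$ defines a connected $\mathbb C^\ast$-invariant effective $1$-cycle inside $W$ whose support realizes a genuine chain of $\mathbb C^\ast$-invariant rational curves joining $\iota_\sigma(p)$ to $\iota_\sigma(q)$ through intermediate fixed points. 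Everything else reduces to local bookkeeping with the linearized embedding developed earlier in this appendix.
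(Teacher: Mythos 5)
Your overall strategy coincides with the paper's: produce a non-fixed point $y\in\overline{\mathrm{Attr}_\sigma(q)}\cap\mathrm{Attr}_{\sigma^{-1}}(p)$ and let $p'$ be its source. Your route to $y$, however, is genuinely different: you degenerate an irreducible curve $C\subset Z_{\sigma,q}$ through $\iota_\sigma(p)$ and $\iota_\sigma(q)$ to a $\mathbb C^\ast$-invariant chain via a flat (Chow) limit, whereas the paper runs a continuous path inside $\iota_\sigma(\mathrm{Attr}_\sigma(q))$ into $\iota_\sigma(p)$ (Lemma~\ref{lemma:ExistenceOfSuitablePaths}), rescales it by the torus, and extracts an analytic limit point $z'$. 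Your back-end steps are fine: $W\cap\iota_\sigma(\mathcal C(\mathcal D))$ is open in $W$ and contains $\iota_\sigma(p)$, the two closures are closed and $\mathbb C^\ast$-invariant, the limit $\lim_{t\to0}\sigma(t).y$ exists by Proposition~\ref{prop:properaffine} and Lemma~\ref{lemma:limitexistence}, and the weight inequalities follow once $y$ is in hand.

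The gap is precisely at the step you flag as the main obstacle, and the justification you offer for it is false. You assert that ``the weight strictly decreases along the chain from $a_p$ down to $a_q$,'' so that the component of $C_0$ adjacent to $\iota_\sigma(p)$ has $\iota_\sigma(p)$ as its \emph{sink}. Flat limits of curves are not weight-monotone chains: in $\mathbb P^3$ with weights $0,1,2,3$, the irreducible conic $(s:u)\mapsto[\alpha su:s^2:u^2:\beta su]$ joining the weight-$1$ and weight-$2$ fixed points degenerates to the union of the two coordinate lines through the weight-$0$ fixed point, a connected chain whose weights run $1\to 0\to 2$. If the component of $C_0$ through $\iota_\sigma(p)$ had $\iota_\sigma(p)$ as its \emph{source}, your point $x$ near $\iota_\sigma(p)$ would lie in $\mathrm{Attr}_\sigma(p)$ rather than $\mathrm{Attr}_{\sigma^{-1}}(p)$ and the construction would collapse. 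The conclusion can be rescued, but only by an argument you do not give: since $\iota_\sigma(q)\in C$ and $C$ is irreducible, the generic point of $C$ lies in $\iota_\sigma(\mathrm{Attr}_\sigma(q))$; a point $[v_p+w]$ with $w\in\bigoplus_{a\ge a_p}V_a$ flows as $t\to0$ to a fixed point of weight $a_p\ne a_q$, so every local branch of $C$ at $\iota_\sigma(p)$ must carry a nonzero component in $\bigoplus_{a<a_p}V_a$; a Newton-polygon rescaling of the leading term of that negative part against $t$ then exhibits, inside the support of $C_0$, the orbit closure of some $[v_p+v]$ with $0\ne v$ of weight $<a_p$, i.e.\ an invariant curve with sink $\iota_\sigma(p)$. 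This local computation is exactly what the paper's rescaling of the path $\gamma$ into the subspace $W'$ accomplishes; without it (or an equivalent) your proof is incomplete. Two smaller loose ends: you must also exclude pointwise-fixed components of $C_0$ through $\iota_\sigma(p)$ (this follows from Theorem~\ref{thm:fixedPointsGeneric1ParameterTorus} because a neighbourhood of $\iota_\sigma(p)$ in $W$ lies in $\iota_\sigma(\mathcal C(\mathcal D))$), and connectedness of the limit should be obtained from the Hilbert-scheme flat limit via Zariski connectedness before passing to the underlying cycle.
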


\begin{proof}[Proof of Proposition~\ref{prop:OppositePartialOrdering}] Assuming Lemma~\ref{lemma:OppositeAttractionOrdering}, let $p,q\in \mathcal C(\mathcal D)^{\mathbb T}$ be distinct with $p\preceq_{\mathfrak C} q$. Thus, by definition of $\preceq_{\mathfrak C}$, there exists pairwise distinct $q_1,\ldots,q_r\in \mathcal C(\mathcal D)^{\mathbb T}$ with $q_1=q$, $q_r=p$ and $q_{i+1}\in \overline{\mathrm{Attr}_\sigma(q_i)}$ for all $i$. In order to show $q\preceq_{\mathfrak C^{\operatorname{op}}} p$, we prove that $q_{i}\in \overline{\mathrm{Attr}_{\sigma^{-1}}(q_{i+1})}$ for all $i$. 
For given $i$, there exists, by Lemma~\ref{lemma:OppositeAttractionOrdering}, a sequence $p_{i,n}$ in $\overline{\mathrm{Attr}_\sigma(q)}\cap \mathcal C(\mathcal D)^{\mathbb T}$ such that
\begin{enumerate}[label=(\alph*)]
\item $p_{i,n} \in \overline{\mathrm{Attr}_\sigma(q_i)} \cap \overline{\mathrm{Attr}_{\sigma^{-1}}(q_{i+1})}$ for all $n$,
\item $a_{p_{i,n}}=a_{q_i}$ for almost all $n$.
\end{enumerate}
Since $\{p'\in \overline{\mathrm{Attr}_\sigma(q_i)}\cap \mathcal C(\mathcal D)^{\mathbb T} \mid a_{p'}=a_{q_i}\}=\{q_i\}$, we have $p_{i,n}=q_i$ for almost all $n$ which yields $q_i\in \overline{\mathrm{Attr}_{\sigma^{-1}}(q_{i+1})}$.
\end{proof}

To prove Lemma~\ref{lemma:OppositeAttractionOrdering}, we use an analytic limit argument. Let $p,q\in \mathcal C(\mathcal D)^{\mathbb T}$ with $p\in \overline{\mathrm{Attr}_\sigma(q)}$ and $p\ne q$. We choose bases $(v_{a,1},\ldots,v_{a,n_a})$ of the weight spaces $V_a$. Without loss of generality, $v_p=v_{a_p,1}$ and $v_q=v_{a_q,1}$. Moreover, let 
\begin{equation}\label{eq:ProofOfLemma:OppositeAttractionOrderingDefinitionW}
W:=\Big(\bigoplus_{a<a_p}V_a\Big) \oplus \mathrm{span}_{\mathbb C}(v_{a_p,2},\ldots,v_{a_p,n_{a_p}}) \oplus  \Big(\bigoplus_{a>a_p}V_a\Big)
\end{equation}
and let $U_p=\{[v_p+w]\mid w\in W\}\subset \mathbb P(V)$ be the coordinate chart with origin $[v_p]$. We have that $U_p$ is $\mathbb C^\ast$-invariant and $t.[v_p+v]=v_p+t^{a-a_p}v$ for all $v\in V_a$, $a\in\mathbb Z$ and $t\in\mathbb C^\ast$. We equip $W$ with a hermitian product with unitary basis given by \eqref{eq:ProofOfLemma:OppositeAttractionOrderingDefinitionW} and thus view $W$ as metric space.
Via the isomorphism of (analytic) varieties $W\xrightarrow\sim U_p,w\mapsto [v_p+w]$, we also view $U_p$ as metric space and denote by $|\cdot|$ the induced absolute value and by $\operatorname{dist}(.,.)$ the induced distance function on $U_p$.

We set
\[
W':=\{ [v_p+\lambda v_q+w] \mid \lambda\in\mathbb C, w\in\bigoplus_{a_q<a<a_p}V_a \}\subset U_p.
\]
Note that $W'$ is a $\mathbb C^\ast$-invariant linear subspace of $U_p$.

\begin{proof}[Proof of Lemma~\ref{lemma:OppositeAttractionOrdering}] We want to construct a sequence of elements in $\mathrm{Attr}_\sigma(q)\cap U_p$ which approaches $W'$ but is far away from $[v_p]$. First, we show that for all $\varepsilon>0$ there exists a $z\in \iota_\sigma(\mathrm{Attr}_\sigma(q))\cap U_p$ such that
\begin{equation}\label{eq:ExistenceOfSuitableElements}
|z|\in [1,2]\quad \textit{and}\quad \operatorname{dist}(z,W')<\varepsilon.
\end{equation}
By Lemma~\ref{lemma:ExistenceOfSuitablePaths} below, there exists a path $\gamma:[0,1]\rightarrow Z_p\cap U_p$, continuous in the analytic topology, such that $\gamma([0,1))\subset \iota(\mathrm{Attr}_\sigma(q))$ and $\gamma(1)=[v_p]$. According to our choice of basis, we can write
\begin{align*}
\gamma(s)= \Bigl[\gamma_{a_q}(s)v
+\Big( \sum_{a_q<a'<a_p}\sum_{i=1}^{n_{a'}} &\gamma_{a',i}(s)v_{a',i}
\Big)+v_p \\
&+
\Big(
\sum_{i=2}^{n_{a_p}} \gamma_{a_p,i}(s) v_{a_p,i}
\Big)
+
\Big( \sum_{a'>a_p}\sum_{i=1}^{n_{a'}} \gamma_{a',i}(s)v_{a',i}
\Big)
\Bigr].
\end{align*}
The property $\gamma([0,1))\subset \iota(\mathrm{Attr}_\sigma(q))$ implies $\gamma_{a_q}(s)\ne 0$ for $s\in [0,1)$. Since $\gamma(1)=[v_p]$, we have $\gamma_{i,j}(s)\to 0$ for $s\to 1$ for all $i,j$. Hence, we may assume that all $\gamma_{a_{p+i},j}$ satisfy for all $s\in [0,1]$
\begin{equation}\label{eq:PathCoefficientsSufficientlySmall}
|\gamma_{a_{p+i},j}(s)|<n^{-1} \varepsilon,
\quad \textup{where 
$n=\sum_{a'\geq a_p} n_{a'}$.}
\end{equation}
Choose $t_0\in\mathbb C^\ast$ with $|t_0|<1$ such that $|t_0^{a_p-a_q}\gamma_{a_q}(0)|>2$. Then, also $|t_0.\gamma(0)|>2$. Thus, as $t_0.\gamma(1)=[v_p]$, the intermediate value theorem implies that there exists $s_0\in (0,1)$ such that 
$|t_0.\gamma(s_0)|\in [1,2]$. In addition, \eqref{eq:PathCoefficientsSufficientlySmall} yields
\[
\operatorname{dist}(t_0.\gamma(s_0),W')
= \Big| \Big(
\sum_{i=2}^{n_{a_p}} \gamma_{a_p,i}(s) v_{a_p,i}
\Big)
+
\Big( \sum_{a'>a_p}\sum_{i=1}^{n_{a'}} t_0^{a'-a_p} \gamma_{a',i}(s)v_{a',i}
\Big)
\Big| < \varepsilon.
\]
Hence $z:=t_0.\gamma(s_0)$ satisfies \eqref{eq:ExistenceOfSuitableElements}. Since $\iota_\sigma(\mathrm{Attr}_\sigma(q))\cap U_p$ is $\mathbb C^\ast$-invariant, we conclude $z\in \iota_\sigma(\mathrm{Attr}_\sigma(q))$. Thus, $z$ satisfies all desired properties. 

As a direct consequence of \eqref{eq:ExistenceOfSuitableElements}, we conclude that there exist a sequence $z_n\in \iota(\mathrm{Attr}_\sigma(q))$ such that 
$
\operatorname{dist}(z_n,[v_p])\in [1,2]
$,
for all $n$ and $\operatorname{dist}(z_n,W')\to 0$, for $n\to\infty$. By the Heine--Borel theorem, $z_n$ has a convergent subsequence with limit $z'\in U_p\cap Z_q$. As $\operatorname{dist}(z_n,W')\to 0$, we also have $z'\in W'$. The condition $\operatorname{dist}(z_n,[v_p])\in [1,2]$ yields $z'\ne[v_p]$. So by the definition of $W'$, we can write
\begin{equation}\label{eq:ZPrimeExpression}
z'=[w_{a_q} + w_{a_q+1}+\ldots + w_{a_p-1} + v_p ],\quad  w_{a_q}\in \mathrm{span}_{\mathbb C}(v_q) \oplus w_{a_q+i}\in V_{a_q+i}\quad \textup{for $i>1$.}
\end{equation}
As $z'\ne[v_p]$, we have $w_{a_q+r}\ne 0$ for some $r\in\{0,\ldots,a_p-a_q+1\}$. Set
\begin{equation}\label{eq:ChoiceOfWeightVector}
r_0:=\mathrm{min}(\{ r\in\{0,\ldots,a_p-a_q+1\}\mid  w_{a_q+r}\ne 0\}).
\end{equation}
Then, $\lim_{t\to 0} t.z'=[w_{a_q+r_0}]$ and $\lim_{t\to \infty} t.z'=[v_p]$. Recall that $\iota_\sigma(\mathrm{Attr}_\sigma(q))$ is an open dense $\mathbb C^\ast$-subvariety of $Z_q$. Since $[v_p]$ is contained in the orbit closure $\overline{\mathbb C^\ast.z'}$, the intersection $\iota_\sigma(\mathcal C(\mathcal D))\cap \overline{\mathbb C^\ast.z'}$ is a non-empty open $\mathbb C^\ast$-invariant subvariety of $\overline{\mathbb C^\ast.z'}$. Hence, $z'\in \iota_\sigma(\mathcal C(\mathcal D))$. Since $z'\in Z_q$, we have $z'\in \iota_\sigma(\overline{\mathrm{Attr}_\sigma(q)})$. As $\lim_{t\to \infty} t.z'=[v_p]$, we also have $z'\in \iota_\sigma(\overline{\mathrm{Attr}_{\sigma^{-1}}(p)})$. By Theorem~\ref{thm:slopesupport}, $\overline{\mathrm{Attr}_\sigma(q)} \cap \overline{\mathrm{Attr}_{\sigma^{-1}}(p)}$ is a closed proper $\mathbb C^\ast$-invariant subvariety of $\mathcal C(\mathcal D)$ which implies 
$
\lim_{t\to 0} t.z'=[w_{a_q+r_0}] \in \iota_\sigma(\overline{\mathrm{Attr}_\sigma(q)} \cap \overline{\mathrm{Attr}_{\sigma^{-1}}(p)}).
$
Set $p':=\iota^{-1}_\sigma([w_{a_q+r_0}])$. Then, as $[w_{a_q+r_0}]$ is a $\mathbb C^\ast$-fixed point of $\mathbb P(V)$, we have $p'\in\mathcal C(\mathcal D)^\sigma$. The Generic Cocharacter Theorem then gives that $p'\in \mathcal C(\mathcal D)^{\mathbb T}$. Moreover, \eqref{eq:ChoiceOfWeightVector} yields $a_{p'}=a_q+r_0<a_p$ . So $p'$ satisfies all desired properties.
\end{proof}

\begin{remark}\label{imZug}
The above proof implies that the partial order $\preceq_{\mathfrak C}$ is completely determined by the $\mathbb T$-invariant curves of $\mathcal C(\mathcal D)$. Namely, given $p,q\in \mathcal C(\mathcal D)^{\mathbb T}$ such that $p\in\overline{\mathrm{Attr}_\sigma(q)}$, our construction of the element $z$ with the properties \eqref{eq:ExistenceOfSuitableElements} implies that there exist closed immersions $f_1,\ldots,f_r:\mathbb P^1\rightarrow \mathcal C(\mathcal D)$
such that all $f_i(\mathbb P^1)$ are $\sigma$-invariant satisfying $f_1(0)=q$, $f_r(\infty )=p$ and $f_{i+1}(0)=f_i(\infty)$ for $i=1,\ldots,r-1$. Using the same deformation argument as in Lemma~\ref{lemma:flatDeformations}, we then deduce that we can actually choose $f_1,\ldots,f_r$ such that all $f_i(\mathbb P^1)$ are $\mathbb T$-invariant. 
\end{remark}
\begin{remark}
Independently of our work,  Foster and Shou obtained in \cite{foster2023tangent} a classification of the $\mathbb T$-invariant curves from Remark~\ref{imZug}. In \cite{botta2023geometric} this is used to explicitly identify the partial order $\preceq_{\mathfrak C}$ with the secondary Bruhat order on $(0,1)$-matrices. Our approach is more direct allowing to establish the relevant properties of the ordering using only the existence, without a full knowledge, of the $\mathbb T$-invariant curves. 
\end{remark}

\subsection{Approximation of boundary points via paths} In the proof of Lemma~\ref{lemma:OppositeAttractionOrdering}, we used the following statement:

\begin{lemma}\label{lemma:ExistenceOfSuitablePaths}
Let $X$ be a smooth algebraic variety of dimension $d$ which is embedded into a projective variety $Y$ as open dense subvariety. Then, for all $y\in Y\setminus X=:Z$ there exists a path $\gamma:[0,1]\rightarrow Y$ continuous with respect to the analytic topology on $Y$ such that $\gamma([0,1))\subset X$ and $\gamma(1)=y$.
\end{lemma}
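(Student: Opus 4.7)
The plan is to reduce to the case of a smooth ambient variety via a resolution of singularities, and then to construct the path along a suitably chosen complex line in a local chart.

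First, I would apply Hironaka's resolution of singularities to obtain a smooth variety $\tilde Y$ together with a proper birational morphism $\mu: \tilde Y \to Y$ which is an isomorphism over the smooth locus of $Y$. Since $X$ is open and smooth in $Y$ of pure dimension $d$, it is contained in the smooth locus of $Y$, so $\mu$ restricts to an isomorphism $\mu^{-1}(X) \xrightarrow\sim X$. Because $X$ is open dense in $Y$, one checks that $\tilde Y$ is pure of dimension $d$ and that $\mu^{-1}(X)$ is open dense in $\tilde Y$; consequently $\tilde Z := \mu^{-1}(Z) = \tilde Y \setminus \mu^{-1}(X)$ is a closed analytic subvariety of complex codimension $\geq 1$. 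Now pick a preimage $\tilde y \in \mu^{-1}(y) \subset \tilde Z$.

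Next, choose a holomorphic chart around $\tilde y$ on $\tilde Y$, identified with an open ball $B \subset \mathbb{C}^d$ centered at the origin. After shrinking $B$ if necessary, the codimension $\geq 1$ analytic subset $\tilde Z \cap B$ is contained in the zero locus of some non-zero holomorphic function $f$ on $B$ (any non-trivial generator of the ideal sheaf of $\tilde Z$ at $\tilde y$ works). Expanding $f$ in a power series around the origin, one sees that $f$ does not vanish identically on a generic complex line $\ell$ through the origin. On such a line, the restriction $f|_\ell$ is a non-zero holomorphic function of one variable, so $\ell \cap \tilde Z \subset \ell \cap \{f = 0\}$ is discrete. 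Shrinking $\ell$ to a small disc $D$ around the origin arranges $D \cap \tilde Z = \{\tilde y\}$. Then any radial path $\tilde\gamma: [0,1] \to D$ from a point $\tilde x \in D \setminus \{\tilde y\}$ to $\tilde y$ is continuous in the analytic topology and satisfies $\tilde\gamma(1) = \tilde y$ and $\tilde\gamma([0,1)) \subset D \setminus \{\tilde y\} \subset \mu^{-1}(X)$.

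Finally, setting $\gamma := \mu \circ \tilde\gamma: [0,1] \to Y$ produces the desired path: it is continuous in the analytic topology since $\mu$ is, $\gamma(1) = y$, and $\gamma([0,1)) \subset \mu(\mu^{-1}(X)) = X$. The only substantive input beyond elementary complex analytic geometry is the existence of a global resolution of singularities, which I invoke for notational convenience; one could alternatively produce an irreducible curve $C \subset Y$ through $y$ with $C \cap X \neq \emptyset$ by a Bertini-type argument adapted to pass through the (possibly singular) fixed point $y$, and then work on the normalization $\tilde C$, but this approach requires slightly more delicate reasoning at $y$.
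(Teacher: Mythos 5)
Your proof is correct. It follows the same overall strategy as the paper --- resolve singularities to reduce to a smooth ambient variety, then build the path along a line in a local holomorphic chart --- but the local argument is genuinely different. The paper invokes the stronger principalization/log-resolution statement so that the pullback of $Z$ becomes a \emph{normal crossing divisor}, writes it locally as $\{f_1\cdots f_r=0\}$ with independent linear parts, and uses the quantitative estimate $|f_i(z)-z_i|<C|z|^2$ to verify that the explicit radial path in the direction $(1,\ldots,1)$ avoids $Z$ near the origin. You instead use only that $\tilde Z$ is a proper analytic subset: it lies locally in the zero set of one non-zero holomorphic $f$, a generic complex line through $\tilde y$ is not contained in $\{f=0\}$ (look at the lowest-degree homogeneous part of $f$), and the zeros of the resulting non-zero one-variable holomorphic function are isolated. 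This is softer and more robust --- it needs neither the normal crossing structure nor any estimate, and would work even if the boundary had higher codimension --- at the cost of being less explicit about which direction the path takes. Two cosmetic points: the non-zero element of the ideal of $\tilde Z$ at $\tilde y$ need not be a ``generator'' (the ideal need not be principal), only a non-zero element vanishing on $\tilde Z$ near $\tilde y$, which is what you actually use; and you should note (as is standard) that $\mu^{-1}(y)\ne\emptyset$ because a resolution is proper and dominant, hence surjective onto each component of $Y$. Neither affects the validity of the argument.
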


\begin{proof}
By the monomalization theorem, see e.g. \cite[Theorem~3.35]{kollar2009lectures}, there exists a smooth projective variety $Y'$ and a morphism of varieties $f:Y'\rightarrow Y$ such that
\begin{enumerate}[label=(\alph*)]
\item $f$ restricts to an isomorphism $f^{-1}(X)\xrightarrow\sim X$,
\item $f^{-1}(Z)$ is a normal crossing divisor. 
\end{enumerate}
Thus, we may assume that $Z$ is a normal crossing divisor. Given $y\in Z$. Then, as $Z$ is a normal crossing divisor, there exists an analytic neighborhood of $y$ in $Y$ which is analytically isomorphic to a neighborhood $U$ of the origin in $\mathbb C^d$ such that under this isomorphism $y$ is identified with the origin and $Z$ equals the vanishing locus of the function $f_1\cdots f_r$, where $f_1,\ldots,f_r:U\rightarrow \mathbb C$ are holomorphic functions with $r\le d$ and $l_1,\ldots,l_r$ are linearly independent, where $l_i$ denotes the first order approximation of $f_i$. After applying a linear transformation, we may assume that $l_i$ is the projection the $i$-th coordinate in $\mathbb C^d$. By further shrinking $U$, we can thus assume that there exists a constant $C>0$ such that
\[
|f_i(z)-z_i|< C|z|^2,\quad z=(z_1,\ldots,z_d)\in U,i=1,\ldots,r.
\]
Hence, we conclude
$
\{z\in U\mid |z_i|>C|z|^2 \} \cap Z=\emptyset
$. It follows that $\mu(1,\ldots,1)\notin Z$ for $0<\mu<(C\sqrt{d})^{-1}$. 
By choosing $C$ large enough, we may assume that the closed ball centered at the origin with radius $C'=\frac12 (C\sqrt{d})^{-1}$ is entirely contained in $U$. Thus, if we set
\[
\gamma:[0,1]\rightarrow U,\quad s\mapsto sC'(1,\ldots,1)
\]
then $\gamma$ yields a path with the desired properties.
\end{proof}

	\bibliographystyle{alpha}
	\bibliography{References.bib}

\end{document}